\numberwithin{equation}{section}
\def\Ddots{\mathinner{\mkern1mu\raise\p@
\vbox{\kern7\p@\hbox{.}}\mkern2mu
\raise4\p@\hbox{.}\mkern2mu\raise7\p@\hbox{.}\mkern1mu}}
\newcommand{\ds}{\displaystyle}
\DeclareMathOperator{\Sp}{Sp}
\DeclareMathOperator{\GL}{GL}
\DeclareMathOperator{\OO}{SO}
\renewcommand{\sp}{\mathrm{sp}}
\DeclareMathOperator{\oo}{so}
\renewcommand{\OE}{\mathrm{O}}
\renewcommand{\oe}{\mathrm{o}^{\text{even}}}
\DeclareMathOperator{\sgn}{sgn}
\newcommand{\floor}[1]{\left\lfloor #1 \right\rfloor}
\newcommand{\upperRomannumeral}[1]{\uppercase\expandafter{\romannumeral#1}}
\newcommand{\core}[2]{\mathrm{core}_{#2}{(#1)}}
\newcommand{\quo}[2]{\mathrm{quo}_{#2}{(#1)}}
\newcommand{\x}{\bar{x}}
\newcommand{\X}{\overline{X}}
\DeclareMathOperator{\rk}{rk}
\DeclareMathOperator{\rev}{rev}
\DeclareMathOperator{\inv}{inv}
\newtheorem{thm}{Theorem}[section]
\newtheorem{prop}[thm]{Proposition}
\newtheorem{cor}[thm]{Corollary}
\newtheorem{lem}[thm]{Lemma}
\newtheorem{defn}[thm]{Definition}
\newtheorem{rem}[thm]{Remark}
\newtheorem{eg}[thm]{Example}
\title
{Factorization of classical characters twisted by roots of unity: \upperRomannumeral{2}}
\subjclass[2010]{20G05, 20G20, 05A15, 05E05, 05E10}
\keywords{classical groups, Weyl character formula, twisted characters, factorizations, $(z_1,z_2,k)$-asymmetric partitions, generating functions}
\author{Nishu Kumari}
\address{Nishu Kumari, Department of Mathematics, 
Indian Institute of Science, Bangalore  560012, India.}
\email{nishukumari@iisc.ac.in}
\date{\today}
\begin{document}
\begin{abstract}
Fix natural numbers $n \geq 1$, $t \geq 2$ and a primitive $t^{\text{th}}$ root of unity $\omega$. In previous work with A. Ayyer (J. Alg., 2022), we studied the factorization of specialized irreducible characters of  $\GL_{tn}$, $\text{SO}_{2tn+1},$ $  \text{Sp}_{2tn}$ and $\text{O}_{2tn}$ evaluated at elements to $\omega^j x_i$ for $0 \leq j \leq t-1$ and $1 \leq i \leq n$. In this work, we extend the results to the groups $\GL_{tn+m}$ $(0 \leq m \leq t-1)$, $\OO_{2tn+3}$, $\Sp_{2tn+2}$ and $\OE_{2tn+2}$ evaluated at similar specializations: (1) for the $\GL_{tn+m}(\mathbb{C})$ case, we set the first $tn$ elements to $\omega^j x_i$ for $0 \leq j \leq t-1$ and $1 \leq i \leq n$ and the remaining $m$ to $y, \omega y, \dots, \omega^{m-1} y$; (2) for the other three families, the same specializations but with $m=1$. The main results of this paper are a characterization of partitions for which these characters vanish and a factorization of nonzero characters into those of smaller classical groups. Our motivation is the conjectures of Wagh and Prasad (Manuscripta Math., 2020) relating the irreducible representations of $\text{Spin}_{2n+1}$ and $\text{SL}_{2n}$, $\text{SL}_{2n+1}$ and $\Sp_{2n}$ as well as $\text{Spin}_{2n+2}$ and $\Sp_{2n}$. Our proofs use the Weyl character formulas and the beta-sets of $t$-core partitions. Lastly, we give a bijection to prove that there infinitely many $t$-core partitions for which these characters are nonzero. 
\end{abstract}

\maketitle 

\section{introduction}
The irreducible characters of classical Lie groups, namely the general linear, symplectic and orthogonal groups play a prominent role in representation theory. They have been the objects of active research, and numerous results and applications have been established for them. Some of the combinatorial applications are enumerating plane partitions and alternating sign matrices~\cite{okada2020intermediate, ayyer-behrend-2019,CiuKra09}. For background, see \cite{FulHar91}.

The specialized characters satisfy nontrivial relations which are not well-understood from the point of view of representation theory~\cite{CiuKra09,ayyer-fischer-2020,AyyBehFis16}. 
In previous work with Ayyer~\cite{ayyer-2021}, we
considered the irreducible classical characters in $tn$ variables, for $t \geq 2$ a fixed positive integer, specialized to $(\exp(2 \pi \iota k/t) x_j)_{0 \leq k \leq t-1, 1 \leq j \leq n}$, motivated by the works of Littlewood~\cite{littlewood-1950} and  Prasad~\cite{prasad-2016}.
% We also looked at $\GL_{tn+1}$ where we specialize the elements as before and set the last variable to 1.
We showed that such a character is nonzero if and only if the corresponding $t$-core is of some special form, and if it is nonzero, it factors into the characters of smaller groups. 
% They further generalized this result to $tn$ variables,   obtaining similar results. 
% We will think of these as twisted characters, where the twists are by all the $t$'th roots of unity.

% We note in passing that Schur polynomials evaluated at roots of unity and their powers have been considered in \cite{macdonald-2015,rhoades-2010}.

In this work, we first consider the characters of $\GL_{tn+m}$, where we specialize the first $tn$ variables as before and the last $m$ variables as
%add $m$ extra variables
$y,\dots,\omega^{m-1} y$. (See \cref{thm:schur-k}).
We then consider the characters of classical groups $\OO_{2tn+3}$, $\Sp_{2tn+2}$ and $\OE_{2tn+2}$, where an extra variable $y$ is added. In each case, we characterize partitions for which the character value is nonzero in terms of what we call $(z_1,z_2,k)$-asymmetric partitions, where $z_1$, $z_2$ and $k$ are integers which depend on the group.
%We show that the characters are nonzero if and only if the $t$-core of the indexed partition is of special form which we call $(z_1,z_2,k)$-asymmetric partitions. 
(See \cref{sec:summary} for the definitions). These are stated as \cref{thm:odd}, \cref{thm:symp} and \cref{thm:even} respectively. 
%Our proofs are more involved for obvious reasons. 
% For the general linear group, there are finitely many $t$-core partitions for which the twisted character is nonzero, namely partitions of length at most $m$ and first part at most $t-m$. For the other classical characters, there are infinitely many  $t$-core partitions for which the character is nonzero. We will show that these are $t$-cores which can be written in Frobenius coordinates as $(\alpha | \alpha + z)$, where the value of $z$ depends on the group, and which we call $z$-asymmetric partitions. 

 The plan for the rest of the paper is as follows.
 We give all the definitions, statements of our results and illustrative examples in \cref{sec:summary}.
 We give uniform proofs of the factorization  of characters of classical groups of type $B$, $C$ and $D$. To that end, we formulate results on beta sets, generating functions and determinant identities in \cref{sec:back}.
 We prove the Schur factorization result in \cref{sec:schur-k}. 
 We prove the new factorizations of classical characters in \cref{sec:other}. We give all the details for the factorization of characters of group of type $B$, and are much more sketchy about the type $C$ and $D$ character factorization.
 Finally, we prove generating function formulas for $(z_1,z_2,k)$-asymmetric partitions
and $(z_1,z_2,k)$-asymmetric $t$-cores in \cref{sec:gf}. In particular, we will prove \cref{thm:inf-cores} there, showing that there are infinitely many $t$-cores at which the specialized character values are nonzero.

\section{Summary of the main Results}
\label{sec:summary}
A {\em partition} $\lambda = (\lambda_1,\dots,\lambda_{\ell})$ 
% with parts $\lambda_i$ 
is a weakly decreasing sequence of nonnegative integers with all but finite number of parts are zero. 
The number of nonzero parts of $\lambda$ is the {\em length} $\ell(\lambda)$ of $\lambda$. For a partition $\lambda$ of length at most $\ell$,
define the {\em beta-set of $\lambda$} by $\beta(\lambda) \equiv \beta(\lambda,\ell) \coloneqq (\beta_1(\lambda,\ell),\dots,\beta_{\ell}(\lambda,\ell))$ by $\beta_i(\lambda,\ell) \coloneqq \lambda_i+\ell-i$. We will simplify the notation $\beta(\lambda,\ell)$ as $\beta(\lambda)$ whenever $\ell$ is clear from the context.
We use the shorthand notation $a + \lambda$ for the partition $(a + \lambda_1,\dots, a + \lambda_{\ell})$ for $a \geq 0$.

Throughout fix an integer $t \geq 2$ and a primitive $t^{\text{th}}$ root of unity $\omega$, i.e. $\omega^t = 1$.
Let $X = (x_1,\dots,x_n)$ be a tuple of commuting indeterminates for a fixed positive integer $n$. 
Define $\x \coloneqq 1/x$ for an indeterminate $x$ and write $\X = (\x_1,\dots,\x_n)$.
For $j \in \mathbb{Z}$, we set $X^j = (x_1^j,\dots,x_n^j)$, and for $a \in \mathbb{R}$, set $aX = (a x_1,\dots,a x_n)$.

We begin by recalling the Weyl character formulas of all the classical groups at the representation indexed by $\lambda=(\lambda_1,\ldots,\lambda_n)$. (See ~\cite{FulHar91} for more details).
The \emph{Schur polynomial} or the \emph{general linear (type A) character} of $\GL_n$ is defined as
\begin{equation}
\label{gldef}
s_\lambda(X) \coloneqq \frac{\displaystyle\det_{1\le i,j\le n}\Bigl(x_i^{\beta_j(\lambda)}\Bigr)}
{\displaystyle\det_{1\le i,j\le n}\Bigl(x_i^{n - j}\Bigr)}.
\end{equation}
% The denominator is the standard Vandermonde determinant,
% \begin{equation}
% \label{gldenom}
% \det_{1\le i,j\le n}\Bigl(x_i^{n - j}\Bigr)
% = \prod_{1\le i<j\le n}(x_i-x_j).
% \end{equation}
The \emph{odd orthogonal (type B) character} of the group $\OO(2n+1)$ is defined as
\begin{equation}
\label{oodef}
\oo_\lambda(X) \coloneqq
\frac{\ds \det_{1\le i,j\le n}\Bigl(x_i^{\beta_j(\lambda)+1/2}-\x_i^{\beta_j(\lambda)+1/2}\Bigr)}
{\ds \det_{1\le i,j\le n}\Bigl(x_i^{n-j+1/2}-\x_i^{n-j+1/2}\Bigr)} 
= \frac{\ds \det_{1\le i,j\le n}\Bigl(x_i^{\beta_j(\lambda)+1}-\x_i^{\beta_j(\lambda)}\Bigr)}
{\ds \det_{1\le i,j\le n}\Bigl(x_i^{n-j+1}-\x_i^{n-j}\Bigr)}.
\end{equation}
% and the denominator here is
% \begin{equation}
% \label{oodenom}
% \det_{1\le i,j\le n}\Bigl(x_i^{n-j+1/2}-\x_i^{n-j+1/2}\Bigr)
% = \prod_{i=1}^n\bigl(x_i^{1/2}-\x_i^{1/2}\bigr)\,\prod_{1\le i<j\le n}(x_i+\x_i-x_j-\x_j).
% \end{equation}
The \emph{symplectic (type C) character} of the group $\Sp(2n)$ is defined as
\begin{equation}
\label{spdef}
\sp_\lambda(X) \coloneqq
\frac{\ds \det_{1\le i,j\le n}\Bigl(x_i^{\beta_j(\lambda)+1}-\x_i^{\beta_j(\lambda)+1}\Bigr)}
{\ds \det_{1\le i,j\le n}\Bigl(x_i^{n-j+1}-\x_i^{n-j+1}\Bigr)}.
\end{equation}
% and the denominator here is
% \begin{equation}
% \label{spdenom}
% \det_{1\le i,j\le n}\Bigl(x_i^{n-j+1}-\x_i^{n-j+1}\Bigr)
% = \prod_{i=1}^n(x_i-\x_i)\,\prod_{1\le i<j\le n}(x_i+\x_i-x_j-\x_j).
% \end{equation}
Lastly, the \emph{even orthogonal (type D) character} of the group $\OE(2n)$ is defined as
\begin{equation}
\label{oedef}
\oe_\lambda(X) \coloneqq
\frac{\ds 2 \det_{1\le i,j\le n}\Bigl(x_i^{\beta_j(\lambda)}+\x_i^{\beta_j(\lambda)}\Bigr)}
{\displaystyle (1+\delta_{\lambda_n,0})
\det_{1\le i,j\le n}\Bigl(x_i^{n-j}+\x_i^{n-j}\Bigr)},
\end{equation}
where $\delta$ is the Kronecker delta.
% The determinant here factorizes as
% \begin{equation}
% \label{oedenom}
% \det_{1\le i,j\le n}\Bigl(x_i^{n-j}+\x_i^{n-j}\Bigr)
% = \prod_{1\le i<j\le n}(x_i+\x_i-x_j-\x_j).
% \end{equation}
We note the trivial cases: 
\[
s_\lambda(x_1,\ldots,x_n) = \sp_\lambda(x_1,\allowbreak\dots,x_n)  = \oo_\lambda(x_1,\dots,x_n) = \oe_\lambda(x_1,\dots,x_n) = 0, \quad
\text{if } n < \ell(\lambda).
\]
We also note a general relation between even and odd orthogonal characters.
For a partition $ \lambda = (\lambda_1,\dots,\lambda_n)$ of length at most $n$, we have
\begin{equation}
 \label{oeshifted}
\oe_{(\lambda_1+1/2,\ldots,\lambda_n+1/2)}(x_1,\ldots,x_n)=
(-1)^{\sum_{i=1}^n\lambda_i}
\prod_{i=1}^n\bigl(x_i^{1/2}+\x_i^{1/2}\bigr)
\:\oo_\lambda(-x_1,\ldots,-x_n).   
\end{equation}

We recall Macdonald's definition of the $t$-core and $t$-quotient of a partition after setting up some notations.
For a partition $\lambda$ of length at most $\ell$ and $i \in [0,t-1]$,
let $n_{i}(\lambda) \equiv n_{i}(\lambda,m)$ be the number of parts of $\beta(\lambda)$ congruent to $i \pmod{t}$, and  $\beta_j^{(i)}(\lambda)$ for $1 \leq j \leq n_{i}(\lambda)$ be the $n_{i}(\lambda)$ parts of $\beta(\lambda)$ that are congruent to $i \pmod t$ in decreasing order.

\begin{defn}[{\cite[Example~I.1.8]{macdonald-2015}}]
\label{prop:mcd-t-core-quo}
Let $\lambda$ be a partition of length at most $\ell$.

\begin{enumerate}
\item The $\ell$ numbers $tj+i$, where $0 \leq j \leq n_{i}(\lambda)$ and $0 \leq i \leq t-1$, are all distinct. Arrange them in descending order, say $\tilde{\beta}_1>\dots>\tilde{\beta}_\ell$. Then the $t$-core of $\lambda$ has parts $(\core \lambda t)_i=\tilde{\beta}_i-\ell+i$.  
Thus, $\lambda$ is a $t$-core if and only if these $\ell$ numbers $tj+i$, where $0 \leq j \leq n_{i}(\lambda)$ and $0 \leq i \leq t-1$ form its beta-set $\beta(\lambda)$.

\item The parts $\beta_j^{(i)}(\lambda)$ may be written in the form $t\tilde{\beta}_j^{(i)}+i$, $1 \leq j \leq n_{i}(\lambda)$,
where $\tilde{\beta}_1^{(i)}>\dots>\tilde{\beta}_{n_{i}(\lambda)}^{(i)}\geq 0$. 
Let $\lambda_j^{(i)}=\tilde{\beta}_j^{(i)}-n_{i}(\lambda)+j$, 
so that $\lambda^{(i)}=(\lambda_1^{(i)},\dots,\lambda_{n_{i}(\lambda)}^{(i)})$ is a partition. 
Then the $t$-quotient $\quo \lambda t$ of $\lambda$ is a cyclic permutation of $\lambda^{\star} = (\lambda^{(0)},\lambda^{(1)},\dots,\lambda^{(t-1)})$.
The effect of changing $\ell \geq \ell(\lambda)$ is to permute the $\lambda^{(j)}$ cyclically, so that $\lambda^{\star}$ should perhaps be thought of as a `necklace' of partitions.
\end{enumerate}
\end{defn}

% For a partition $\lambda$ of length at most $tn+1$, 
% let $\sigma_{\lambda} \in S_{tn+1}$ be the permutation that rearranges the parts of $\beta(\lambda)$ as follows:
% \begin{equation}
% % \label{sigma-perm}
%  \beta_{\sigma_{\lambda}(j)}(\lambda) \equiv q \pmod t, \quad  \text{ for } \quad \sum_{i=0}^{q-1} n_{i}(\lambda)+1 \leq j \leq \sum_{i=0}^{q} n_{i}(\lambda),   
% \end{equation}
% arranged in decreasing order for each $q \in \{0,1,\dots,t-1\}$.
% For the empty partition,  $\beta(\emptyset,tn+1)=(tn,tn-1,tn-2,\dots,0)$ with
% \[ n_{q}(\emptyset,tn+1) = 
% \begin{cases}
% n+1 & q=0,\\
% n & 1 \leq q \leq t-1,
% \end{cases}
% \]
% and 
% \begin{equation}
% % \label{sigma0}
% \sigma_{\emptyset}={\displaystyle (1,t+1, \dots , nt+1, t, \dots ,nt, \dots,2, \dots ,(n-1)t+2}),
% \end{equation}
% in one-line notation with $\sgn(\sigma_{\emptyset})=(-1)^{\frac{t(t-1)}{2}\frac{n(n+1)}{2}}$.

% Fix $0 \leq m \leq t-1$. 
Let $E = (e_1, \dots, e_m)$ such that $t-1 \geq e_1 > \cdots > e_m \geq 0$. We extend $E$ for enumerating the set $\{0, \ldots, t-1\} \setminus \{e_i\}_{i\in [m]}$ as $\{e_{m+1}< \cdots < e_t\}$, denoted $\bar{E}$, for convenience.
For a partition $\lambda$ of length at most $tn+m$, let $\sigma_{\lambda}^{E}$ be the permutation in $S_{tn+m}$ such that it rearranges parts of $\beta(\lambda)$ in the following way: 

\begin{equation}
\label{sigma-perm-m}
 \beta_{\sigma^E_{\lambda}(j)}(\lambda) \equiv e_q \pmod t, \quad  \text{ for } \quad \sum_{i=1}^{{q-1}} n_{e_i}(\lambda)+1 \leq j \leq \sum_{i=1}^{q} n_{e_i}(\lambda),   
\end{equation}
arranged in decreasing order for each $q \in \{1,\dots,t\}$. For simplicity, we write $\sigma_{\lambda}^{\emptyset}$ as $\sigma_{\lambda}$.
For the empty partition,  $\beta(\emptyset,tn+1)=(tn,tn-1,tn-2,\dots,0)$ with
\[ n_{q}(\emptyset,tn+1) = 
\begin{cases}
n+1 & q=0,\\
n & 1 \leq q \leq t-1,
\end{cases}
\]
and 
\begin{equation}
\label{sigma0m}
\sigma_{\emptyset}={\displaystyle (1,t+1, \dots , nt+1, t, \dots ,nt, \dots,2, \dots ,(n-1)t+2})
\end{equation}
in one-line notation with $\sgn(\sigma_{\emptyset})=(-1)^{\frac{t(t-1)}{2}\frac{n(n+1)}{2}}$.

Recall, $X = (x_1,\dots,x_n)$ and $\omega$ is a primitive $t$'th root of unity. Fix $0 \leq m \leq t-1$. We first consider the specialized Schur polynomial evaluated at elements twisted by the $t$'th roots of unity. We denote the indeterminates by $X,\omega X,\omega^2X, \dots ,\omega^{t-1}X$, $y, \dots , \omega^{m-1} y$.  

\begin{thm}
\label{thm:schur-k}
Fix $0 \leq m \leq t-1$. Let $\lambda$ be a partition of length at most $tn+m$ indexing an irreducible representation of $\GL_{tn+m}$ and $\quo \lambda t = (\lambda^{(0)},\dots,\lambda^{(t-1)})$. Then the $\GL_{tn+m}$-character $s_{\lambda}(X, \omega X,\dots,\omega^{t-1} X, y,\omega y, \dots, \omega^{m-1}y )$ is as follows:
\begin{enumerate}
    \item If $\core \lambda t= \nu \coloneqq (\nu_1,\dots,\nu_m)$ for some $\nu_1\leq t-m$, then
    \begin{equation}
    \label{schur-k}
        \begin{split}
       s_{\lambda}(X, \omega X,\dots,\omega^{t-1} X,  y,\omega y, \dots, & \omega^{m-1}y) 
       =  \sgn(\sigma_{\lambda}^{\beta(\nu)})
       \sgn(\sigma_{\emptyset}^{\beta(\nu)}) \\
    \times    s_{\nu}(1,\omega,&\dots,\omega^{m-1})
       \prod_{i=1}^{m} s_{\lambda^{(\beta_i(\nu))}} (X^t,y^t) \prod_{\substack{j=0\\ j \neq \beta_i(\nu),1 \leq i \leq m}}^{t-1}  s_{\lambda^{(j)}} (X^t). % C+\delta_m = \beta(C)    
        \end{split}
    \end{equation}
    \item Otherwise,
    \begin{equation}
  s_{\lambda}(X, \omega X,\dots,\omega^{t-1} X, y,\omega y, \dots, \omega^{m-1}y) = 0.
    \end{equation}
\end{enumerate}
\end{thm}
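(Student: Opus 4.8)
The plan is to evaluate the bialternant \eqref{gldef} of the specialized Schur polynomial directly, exploiting the fact that the $N\coloneqq tn+m$ evaluation points split into the $n$ complete orbits $\{x_i,\omega x_i,\dots,\omega^{t-1}x_i\}$ under multiplication by $\omega$, together with one incomplete orbit $\{y,\omega y,\dots,\omega^{m-1}y\}$. Write $(z_1,\dots,z_N)$ for the tuple $(X,\omega X,\dots,\omega^{t-1}X,y,\dots,\omega^{m-1}y)$ and $D_\mu\coloneqq\det_{1\le a,b\le N}\bigl(z_a^{\beta_b(\mu)}\bigr)$, so that $s_\lambda(z_1,\dots,z_N)=D_\lambda/D_\emptyset$, where $D_\emptyset$ is the Vandermonde in the $z_a$'s, a nonzero element of $\mathbb C(X,y)$. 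First I would permute the columns of $D_\lambda$ so that the exponents $\beta_b(\lambda)$ are sorted by residue modulo $t$, the residue classes appearing in the order $\beta_1(\nu)>\dots>\beta_m(\nu)$ followed by the remaining classes in increasing order: this is the permutation $\sigma_\lambda^{\beta(\nu)}$ of \eqref{sigma-perm-m}, contributing the sign $\sgn(\sigma_\lambda^{\beta(\nu)})$. Then I would group the rows so that each $x_i$-orbit is contiguous (with the $m$ rows $y,\dots,\omega^{m-1}y$ last) and, within each complete orbit, pass to the discrete Fourier basis, replacing its rows by $\sum_{k=0}^{t-1}\omega^{-ke}\bigl(\text{row }\omega^k x_i\bigr)$ for $0\le e\le t-1$. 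Since $(\omega^k x_i)^{t\widetilde\beta+e}=\omega^{ke}(x_i^t)^{\widetilde\beta}x_i^{e}$, the $e$-th new row of the $i$-th orbit then vanishes off the residue-$e$ columns, where its entries are $t\,x_i^{e}(x_i^t)^{\widetilde\beta}$; all these row and column operations have nonzero, $\lambda$-independent determinant.

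The next step is a generalized Laplace expansion along the $t$ residue column-blocks. After the operations above, the block of columns of residue $e$ has $n_e(\lambda)$ columns on which only the $n$ Fourier rows of residue $e$ (one from each $x_i$-orbit) and the $m$ dense $y$-rows can be nonzero, the $y$-row $\omega^l y$ contributing there the entry $\omega^{le}\,y^{e}(y^t)^{\widetilde\beta}$. By Macdonald's description in \cref{prop:mcd-t-core-quo}(1), hypothesis (1) is equivalent to $n_e(\lambda)=n+1$ exactly for $e\in\{\beta_1(\nu),\dots,\beta_m(\nu)\}$ and $n_e(\lambda)=n$ otherwise; in particular every residue block has $n$ or $n+1$ columns. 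In the Laplace expansion the $n$ Fourier rows of residue $e$ are forced into block $e$, so an $n$-column block is exactly filled by them, while an $(n+1)$-column block needs in addition precisely one $y$-row — hence the surviving terms are indexed by the bijections $\pi\in S_m$ between the $m$ $y$-rows and the $m$ special $(n+1)$-column blocks $\beta_1(\nu),\dots,\beta_m(\nu)$. Using \cref{prop:mcd-t-core-quo}(2) to write the staircase exponents as $\beta_p\bigl(\lambda^{(e)},n_e(\lambda)\bigr)$ and pulling out the monomials $x_i^{e}$, resp.\ $y^{e}$, the non-special $e$-block evaluates, via \eqref{gldef}, to $t^{\,n}\bigl(\prod_i x_i\bigr)^{e}\Van(X^t)\,s_{\lambda^{(e)}}(X^t)$, and the special $\beta_q(\nu)$-block carrying the $y$-row $\omega^l y$ to $t^{\,n}\bigl(\prod_i x_i\bigr)^{\beta_q(\nu)}y^{\beta_q(\nu)}\omega^{l\beta_q(\nu)}\Van(X^t,y^t)\,s_{\lambda^{(\beta_q(\nu))}}(X^t,y^t)$. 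Summing over $\pi\in S_m$, the root-of-unity factors assemble into $\det_{1\le a,q\le m}\bigl((\omega^{a-1})^{\beta_q(\nu)}\bigr)$, which by \eqref{gldef} equals $s_\nu(1,\omega,\dots,\omega^{m-1})$ times the Vandermonde of $1,\omega,\dots,\omega^{m-1}$; combined with the pulled-out powers $y^{\beta_q(\nu)}$ this produces the factor $s_\nu(y,\omega y,\dots,\omega^{m-1}y)$, equal to $y^{|\nu|}s_\nu(1,\omega,\dots,\omega^{m-1})$.

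Collecting everything, $D_\lambda$ equals $\sgn(\sigma_\lambda^{\beta(\nu)})$ times a scalar assembled from $t^{\,tn}$, a monomial in $\prod_i x_i$ and in $y$, powers of $\Van(X^t)$ and $\Van(X^t,y^t)$, a root-of-unity Vandermonde, and a root-of-unity constant from the Fourier bases, times $s_\nu(y,\omega y,\dots,\omega^{m-1}y)\prod_{i=1}^m s_{\lambda^{(\beta_i(\nu))}}(X^t,y^t)\prod_{j\neq\beta_i(\nu)}s_{\lambda^{(j)}}(X^t)$. Running the identical computation with $\lambda=\emptyset$ (whose $t$-core is $\emptyset$ and all of whose $t$-quotient parts vanish) evaluates $D_\emptyset$, and a short comparison of the two scalars — an explicit evaluation of the Vandermonde $D_\emptyset$ together with the sign and beta-set identities of \cref{sec:back} relating $\sigma_\emptyset^{\beta(\nu)}$ to $\sigma_\emptyset$ — then shows that $D_\lambda/D_\emptyset$ is exactly the asserted right-hand side, with overall sign $\sgn(\sigma_\lambda^{\beta(\nu)})\sgn(\sigma_\emptyset^{\beta(\nu)})$. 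For part (2), since $\sum_{e=0}^{t-1}n_e(\lambda)=tn+m$, failure of hypothesis (1) forces some residue class to have $n_e(\lambda)\le n-1$ — in which case the $n$ Fourier rows of residue $e$ lie in an $n_e(\lambda)$-dimensional coordinate subspace and are linearly dependent — or some residue class to have $n_e(\lambda)\ge n+2$ — in which case, because every column of that block restricts on the $y$-rows to a scalar multiple of the single vector $(1,\omega^{e},\dots,\omega^{(m-1)e})$, those columns span a subspace of dimension at most $n+1$ and are linearly dependent; either way $D_\lambda=0$, so $s_\lambda$ vanishes.

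The new ingredient compared with the $m=0$ case of \cite{ayyer-2021} is precisely that the $y$-orbit is incomplete, so the Fourier change of basis cannot diagonalize it, and the whole content of the theorem — both the factor involving $s_\nu$ and the exact vanishing criterion — is squeezed out of how the $m$ leftover rows are forced among the residue blocks in the Laplace step. The part I expect to be hardest is the sign bookkeeping: tracking the signs contributed by the column sort, the block Fourier transforms, the row regrouping and the Laplace expansion, and checking that after dividing by $D_\emptyset$ they collapse to exactly $\sgn(\sigma_\lambda^{\beta(\nu)})\sgn(\sigma_\emptyset^{\beta(\nu)})$; this is where the auxiliary determinant and beta-set lemmas of \cref{sec:back} will do the work.
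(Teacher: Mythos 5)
Your proposal is correct and is essentially the paper's own argument in different clothing: the column sort by residue, the discrete Fourier transform of each complete $x_i$-orbit followed by a Laplace expansion along the residue column-blocks, is exactly the paper's factorization of the permuted numerator as $\det \Gamma_m(\lambda)$ times a block-diagonal determinant (your sum over bijections $\pi\in S_m$ reproduces its computation of $\det\Gamma_m(\lambda)$ as an $m\times m$ $\omega$-determinant), your rank argument for the vanishing cases matches the paper's row-operation argument for $n_c(\lambda)>n+1$ together with the non-square-block observation, and the final division by the empty-partition evaluation is identical. One small point in your favor: your bookkeeping yields the factor $s_\nu(y,\omega y,\dots,\omega^{m-1}y)=y^{|\nu|}\,s_\nu(1,\omega,\dots,\omega^{m-1})$, which agrees with the worked example (e.g.\ $s_{(1)}(x,-x,y)=y$) and indicates that the power $y^{|\nu|}$ is inadvertently dropped from the displayed formula \eqref{schur-k}.
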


In other words, the nonzero $\GL_{tn+m}$ character is the product of $m$ $\GL_{n+1}$ and $(t-m)$ $\GL_{n}$ characters.
For $m=0$ and $m=1$, \cref{thm:schur-k} is proved by Littlewood~\cite[Equation (7.3;3)]{littlewood-1950},~\cite[Chapter VII, Section IX]{littlewood-1950}  and independently by Prasad~\cite[Theorem 2]{prasad-2016},~\cite[Theorem 4.5]{LuPra21} for $t=2$. For $m=0$ and $m=1$ the result is also proven in~\cite{ayyer-2021}. 

% \begin{eg}
% For $t=3$, $m=2$, \cref{thm:schur-k} says that 
% the irreducible character of $\GL_5$ indexed by $\lambda = (a,b,c,d,e)$ is non-zero if and only if $\core \lambda 3$ is $\emptyset$, $(1)$, $(1,1)$. 
% \end{eg}
\begin{eg}
\label{eg:schur}
For $t=2$, $m=1$, \cref{thm:schur-k} 
says that the character of the group $\GL_{3}$ of the representation indexed by the partition $(a,b,c)$, $a \geq b \geq c \geq 0$, evaluated at $(x,-x,y)$ is non-zero if and only if $a$ and $b$ have the same parity or $a$ and $c$ have the opposite parity. If $\core {a,b,c} 2$ is empty, then
\[
s_{(a,b,c)}(x,-x,
y)=
\begin{cases}
-s_{(\frac{a}{2},\frac{b+1}{2})}(x^2,y^2) s_{(\frac{c-1}{2})}(x^2) & \text{$a$  even, $b$ and $c$ odd,}\\
s_{(\frac{a}{2},\frac{c}{2})}(x^2,y^2) s_{(\frac{b}{2})}(x^2) & a,b,c \text{ even,}\\
-s_{(\frac{b-1}{2},\frac{c}{2})}(x^2,y^2) s_{(\frac{a+1}{2})}(x^2) & \text{$a$ and $b$ odd, $c$ even,}
\end{cases}
\]
and if $\core {a,b,c} 2 = (1)$, then
\[
s_{(a,b,c)}(x,-x,y)=
\begin{cases}
y \, s_{(\frac{a-1}{2},\frac{b}{2})}(x^2,y^2) s_{(\frac{c}{2})}(x^2) & \text{$a$ odd, $b$ and $c$ even,}\\
-y \, s_{(\frac{a-1}{2},\frac{c-1}{2})}(x^2,y^2) s_{(\frac{b+1}{2})}(x^2) & a,b,c \text{ odd,}\\
y\,  s_{(\frac{b-2}{2},\frac{c-1}{2})}(x^2,y^2) s_{(\frac{a+2}{2})}(x^2) & \text{ $a$ and $b$ even, $c$ odd.}
\end{cases}
\] 
\end{eg}

We now generalize \cref{thm:schur-k} to other classical characters for $m=1$. We first need some definitions.
The {\em Young diagram} of a partition $\lambda$ is its pictorial representation whose $i^{\text{th}}$ row has $\lambda_i$ left-justified boxes. The {\em conjugate of a partition $\lambda$} is the partition $\lambda'$ obtained by transposing the Young diagram of $\lambda$.  For example, the Young diagram of $\lambda=(4,2,1)$ and its conjugate $\lambda'=(3,2,1,1)$ are 
\begin{figure}[H]
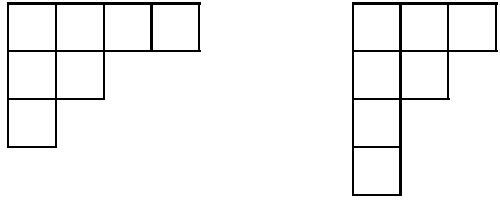

    \centering
    \begin{subfigure}[b]{0.25\textwidth}
         \centering
    \ydiagram{4,2,1} 
     % \caption{$\lambda=(4,2,1)$} 
    \end{subfigure}
     % \hfill
    \begin{subfigure}[b]{0.25\textwidth}
         \centering
    % \qquad \qquad 
\ydiagram{3,2,1,1}
% \caption{$\lambda'=(3,2,1,1)$}
\end{subfigure}
    \caption{$\lambda=(4,2,1)$  and $\lambda'=(3,2,1,1)$}
    \label{fig:my_label}
\end{figure}
% \[
% \ydiagram{4,2,1}
% \qquad \qquad 
% \ydiagram{3,2,1,1} 
% \]
The largest integer $j$ such that
$\lambda_j \geq j$ is the ($\text{\em Frobenius}$) rank of the partition $\lambda$, denoted by $\rk(\lambda)$. The {\em Frobenius coordinates} of a partition $\lambda$, denoted $(\alpha|\beta)$, is a pair of strict partitions of length at most $\rk(\lambda)$ given by $\alpha_i=\lambda_i-i$ and  $\beta_j=\lambda'_j-j$.

\begin{defn}
\label{defn:asym}
Suppose $z_1 > z_2 \geq 0$ and $\lambda$ is a partition of rank $r$. We say $\lambda$ is {\em $(z_1,z_2,k)$-asymmetric} for some $0 \leq k \leq r$, if $\lambda=(\alpha_1,\dots,\alpha_k,\dots,\alpha_r|\alpha_1+z_1,\dots,\widehat{\alpha_k+z_1},\dots,\alpha_r+z_1,z_2)
$, in Frobenius coordinates for some strict partition $\alpha$, where a hat on a coordinate denotes its omission. $($Here $k=0$ means no part is omitted and therefore no part is added$)$. 
If in addition a
$(z_1,z_2,k)$-asymmetric partition is also a $t$-core, we call it a {\em $(z_1,z_2,k)$-asymmetric $t$-core.} We denote the set of $(z_1,z_2,k)$-asymmetric partitions and $(z_1,z_2,k)$-asymmetric $t$-cores by $\mathcal{Q}_{z_1,z_2,k}$ and $\mathcal{Q}^{(t)}_{z_1,z_2,k}$ respectively.
\end{defn}

Note that the $(z,0,0)$-asymmetric partition is the $z$-asymmetric partition defined in~\cite[Definition 2.9]{ayyer-2021}. Recall that a partition $\lambda$ is $z$-asymmetric if $\lambda=(\alpha|\beta)$ where $\beta_i=\alpha_i+z$ for $1 \leq i \leq \rk(\lambda)$. 

To state our results, define, for $\lambda = (\lambda_1,\dots,\lambda_n)$, the reverse of $\lambda$ as $\rev(\lambda) = (\lambda_n,\dots,\lambda_1)$. Moreover, if $\mu = (\mu_1,\dots,\mu_j)$ is a partition such that $\mu_1 \leq \lambda_n$, then we write the concatenated partition $(\lambda,\mu) = (\lambda_1,\dots,\lambda_n,\mu_1,\dots,\mu_j)$. 
% Recall that the {\em $q$-number} of $n$, denoted $[n]_q$ is $\frac{q^n-1}{q-1}$.

For the odd orthogonal case, we take $G = \OO_{2tn+3}$, the orthogonal group of $(2tn+3) \times (2tn+3)$ square matrices. For a partition $\lambda$,
% is a partition of length at most $tn+1$ and 
if $\core \lambda t$ is either $(2,0,k)$- or $(2,1,k)$-asymmetric  
    % $\in \mathcal{Q}^{(t)}_{2,0,k} \cup \mathcal{Q}^{(t)}_{2,1,k}$ 
    for some $k \in [\rk(\core \lambda t)]$, then by \cref{cor:z=0}, there exists a unique $i_0 \in [0, \floor{\frac{t-1}{2}}]$ such that 
    \begin{equation*}
    n_{i}(\lambda)+n_{t-1-i}(\lambda) =
    \begin{cases}
    2n+1+\delta_{i_0,\frac{t-1}{2}} & \text{ if } i=i_0,\\
    2n & \text{ otherwise},
    \end{cases}
    \quad  \quad  0 \leq i \leq  \floor{\frac{t-1}{2}}.
\end{equation*}
    For such a partition $\lambda$, let
\begin{equation}
\label{epsilon}
\epsilon_1(\lambda) \coloneqq
\left(\ds 
\sum_{i=t-i_0}^{t-1} n_{i}(\lambda) \right)
+ \ds \sum_{i=\floor{\frac{t+1}{2}}}^{t-1} \left( \binom{n_{i}(\lambda)+1}{2} 
+ t n (n_{i}(\lambda)-n) \right),
% \epsilon_1(\lambda) \coloneqq
% \left(\ds 
% \sum_{i=t-i_0}^{t-1} n_{i}(\lambda) \right)
% + \ds \sum_{i=\floor{\frac{t+1}{2}}}^{t-1} 
% t n (n_{i}(\lambda)-n) 
% + \sum_{q=0}^{\floor{\frac{t-2}{2}}} \left( \frac{n_{t-1-q}(\lambda)
% (n_{t-1-q}(\lambda)+1)}
% {2} \right)
\end{equation}
and $\ds \pi^{(1)}_i \coloneqq  \lambda^{(t-1-i)}_1 +
\left(\lambda^{(i)}, 0,\dots,0, -\rev(\lambda^{(t-1-i)})\right)$ has $2n+\delta_{i,i_0}$ parts for 
$0 \leq i \leq \floor{\frac{t-1}{2}}$. We note that the empty partition is vacuously $(2,0,0)$-asymmetric with $i_0=0$. Our result for the factorization of odd orthogonal characters is as follows.
\begin{thm}
\label{thm:odd}
Let $\lambda$ be a partition of length at most $tn+1$.
% and $\core \lambda t=(\alpha|\beta)$, in Frobenius coordinates. 
Then the odd orthogonal character $\oo_{\lambda}(X,\, \omega X, \dots,\omega^{t-1}X,y)$ is as follows:
\begin{enumerate}
%     \item If $\core \lambda t$ is empty, then 
%     \begin{equation}
%     \begin{split}
%         \oo_{\lambda}( X,\, \omega X, \dots, & \omega^{t-1}X,y)  \\
%         =  \sgn(\sigma_{\lambda}) & \frac{(-1)^{\epsilon_1} }{(y-1)}
%         \Big( y^{-t(\lambda_1^{(t-1)})+1}  s_{\pi_{0}^{(1)}}(X^t,\X^t,y^t)  -
% y^{t(\lambda_1^{(t-1)})} 
% s_{\pi_{0}^{(1)}}(X^t,\X^t, \bar{y}^t) \Big) \\
%  \times  &
%  \prod_{i=1}^{\floor{\frac{t-2}{2}}} 
%  s_{\pi^{(1)}_i} (X^t,\X^t) 
%         \times \begin{cases}
%      \oo_{\lambda^{\left(\frac{t-1}{2}\right)}}(X^t)   
%      & t \text{ is odd},\\
% 1 & t \text{ is even}.
%         \end{cases}
%         \end{split}
%     \end{equation}
    \item If $\core \lambda t$ is either $(2,0,k)$ or $(2,1,k)$-asymmetric 
    % $\in \mathcal{Q}^{(t)}_{2,0,k} \cup \mathcal{Q}^{(t)}_{2,1,k}$ 
    for some $k \in [\rk(\core \lambda t)]$ and $i_0=\frac{t-1}{2}$,
    % $\alpha_k+1 \equiv \frac{t-1}{2} \pmod{t}$, 
    then 
    \begin{equation}
    \begin{split}
        \oo_{\lambda}(X,\, \omega X, \dots,&\omega^{t-1}X,y)\\
        = (-1)^{\epsilon_1(\lambda)+n} & \sgn(\sigma_{\lambda})
        \oo_{\left(\frac{t-1}{2}\right)}(y)
        % \frac{(y^t-1)}{y^{(t-1)/2}  (y-1)}
        \oo_{\lambda^{\left(\frac{t-1}{2}
        \right)}}(X^t,y^t) \times  \prod_{i=0}^{\frac{t-3}{2}} s_{\pi^{(1)}_i} (X^t,\X^t).
        \end{split}
    \end{equation}
    \item If $\core \lambda t$ is either $(2,0,k)$ or $(2,1,k)$-asymmetric 
    % =(\alpha|\beta) \in \mathcal{Q}_{2,0,k} \cup \mathcal{Q}_{2,1,k}$ 
    for some $k \in [\rk(\core \lambda t)]$ and 
    % $\alpha_k+1 \not \equiv \frac{t-1}{2} \pmod{t}$
    $i_0 \not = \frac{t-1}{2}$, then 
    \begin{equation}
    \label{odd}
    \begin{split}
        \oo_{\lambda}( X,\, \omega X, \dots,  \omega^{t-1}X,y) &\\ 
        =  
        \sgn(\sigma_{\lambda}) (-1)^{\epsilon_1(\lambda)} &
        \frac{ \Big( y^{-\mu^{(1)}_{i_0}+1}   s_{\pi_{i_0}^{(1)}}(X^t,\X^t,y^t)  -
y^{\mu^{(1)}_{i_0}} 
s_{\pi_{i_0}^{(1)}}(X^t,\X^t, \bar{y}^t) \Big)  }{(y-1)} \\
 \times & \prod_{\substack{i=0\\i \neq i_0}}^{\floor{\frac{t-2}{2}}} s_{\pi^{(1)}_i} (X^t,\X^t) 
        \times \begin{cases}
     \oo_{\lambda^{\left(\frac{t-1}{2}\right)}}(X^t)   & t \text{ odd},\\
1 & t \text{ even},
        \end{cases} 
        \end{split}
    \end{equation}
where $\mu^{(1)}_{i_0}=t(\lambda_1^{(t-1-i_0)}
+n_{(t-1-i_0)}(\lambda)-n)-i_0$.
% \[
% i_0 = \begin{cases}
% (\alpha_k+1) \pmod{t}  & \text{if }  \in [0,\floor{\frac{t-1}{2}}],\\
% t-1-(\alpha_k+1) \pmod{t}  & \text{otherwise}.
% \end{cases}
% \]  
\item If neither of the above conditions hold, then
\begin{equation}
    \oo_{\lambda}( X,\, \omega X, \dots,  \omega^{t-1}X,y) =0.
\end{equation}
\end{enumerate}
\end{thm}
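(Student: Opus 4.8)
The plan is to reduce the $\OO_{2tn+3}$-character to a determinant via the Weyl character formula \eqref{oodef} (with $n$ replaced by $tn+1$), evaluated at the specialization $(X,\omega X,\dots,\omega^{t-1}X,y)$, and then to sort the columns of the resulting $(tn+1)\times(tn+1)$ matrix according to the residue of $\beta_j(\lambda)$ modulo $t$. The row index naturally splits into $t$ blocks of $n$ rows indexed by $\omega^k X$ for $0\le k\le t-1$, plus one extra row coming from the variable $y$. After applying the permutation $\sigma_\lambda$ to the columns (contributing $\sgn(\sigma_\lambda)$) and grouping the $\omega^k$-rows, each off-diagonal block involving two residue classes $i$ and $t-1-i$ will combine — because $x_i^{a+1/2}-\bar x_i^{a+1/2}$ is odd under $x\mapsto\bar x$ while $\omega^{k}$-twisting permutes residues — into a block that, after extracting a Vandermonde-type factor and rewriting $x^t=:u$, is a Schur-type determinant in $X^t$ and $\X^t$. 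This is exactly the mechanism already used in the $m=0$ case in \cite{ayyer-2021}; here the single extra $y$-row forces one residue class (call its representative $i_0$) to have one more beta-value than its partner, which is where the asymmetry parameter $k$ and the $(2,0,k)$/$(2,1,k)$ dichotomy enter. I would first establish, using \cref{prop:mcd-t-core-quo} and the beta-set bookkeeping of \cref{sec:back}, the claimed balance condition on $n_i(\lambda)+n_{t-1-i}(\lambda)$, and show the character vanishes unless $\core\lambda t$ has this special asymmetric form (this is the combinatorial heart, analogous to \cref{cor:z=0}).

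Next, assuming the core is $(2,z_2,k)$-asymmetric with the distinguished index $i_0$, I would carry out the block decomposition explicitly. For each $i\ne i_0$ with $i<t-1-i$, the pair of residue classes $\{i,t-1-i\}$ contributes a $2n\times 2n$ block whose determinant, after pulling out the standard denominator factors, equals $s_{\pi^{(1)}_i}(X^t,\X^t)$ where $\pi^{(1)}_i$ is the concatenation $\lambda^{(t-1-i)}_1+(\lambda^{(i)},0,\dots,0,-\rev(\lambda^{(t-1-i)}))$ — the ``$-\rev$'' reflecting the $x\mapsto\bar x$ antisymmetry and the shift by $\lambda^{(t-1-i)}_1$ making all entries nonnegative so it is a genuine partition. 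The self-paired class $i=(t-1)/2$ (only when $t$ is odd and $i_0\ne(t-1)/2$) contributes an odd orthogonal determinant $\oo_{\lambda^{((t-1)/2)}}(X^t)$ in its own right, because that block is square of size $2n$ and already has the type-$B$ antisymmetric shape. The distinguished class $i_0$ is the one that swallows the $y$-row: its block has size $2n+1$ (or $2n+2$ when $i_0=(t-1)/2$), and expanding along the $y$-row produces the two-term numerator $y^{-\mu^{(1)}_{i_0}+1}s_{\pi^{(1)}_{i_0}}(X^t,\X^t,y^t)-y^{\mu^{(1)}_{i_0}}s_{\pi^{(1)}_{i_0}}(X^t,\X^t,\bar y^t)$ divided by $(y-1)$, which is precisely the shape of a type-$B$ character in the extra variable; when $i_0=(t-1)/2$ this collapses, via \eqref{oeshifted} or a direct factorization of the $(2n+2)\times(2n+2)$ antisymmetric determinant, into $\oo_{((t-1)/2)}(y)\,\oo_{\lambda^{((t-1)/2)}}(X^t,y^t)$, giving case (1). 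Throughout, the scalar $(-1)^{\epsilon_1(\lambda)}$ collects the sign from reordering rows within blocks and from the Vandermonde factors $\prod(\omega^a x_i)^{\cdots}$, evaluated using $\sum_{k}\omega^{k\ell}=t\,[\ell\equiv0]$; I would verify the exponent formula \eqref{epsilon} by tracking the binomial contributions $\binom{n_i(\lambda)+1}{2}$ (row transpositions inside class $i$) and the $tn(n_i(\lambda)-n)$ terms (shifts from the unequal block sizes).

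The main obstacle I anticipate is not any single determinant identity but the sign bookkeeping: correctly normalizing $\pi^{(1)}_i$ so the claimed partition has the stated number of parts $2n+\delta_{i,i_0}$, and reconciling the three sources of signs — $\sgn(\sigma_\lambda)$ from column sorting, $(-1)^{\epsilon_1(\lambda)}$ from row operations and Vandermonde normalization, and the $(-1)^n$ appearing only in case (1) from the type-$D\to$ type-$B$ conversion. To manage this I would first do the $t=2$, $m=1$ case in full detail (matching it against \cref{eg:schur}'s orthogonal analogue and against \cite{prasad-2016,LuPra21}), extract the pattern, and then state the general computation, relegating the bulk of the sign verification to the ``uniform proof'' machinery of \cref{sec:back} and being ``much more sketchy'' about types $C$ and $D$ exactly as the introduction promises. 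The vanishing statement in case (3) follows from the same analysis: if $\core\lambda t$ is not of the asymmetric form, then the balance condition fails for every candidate $i_0$, forcing two sorted columns in some block to coincide and the determinant to vanish.
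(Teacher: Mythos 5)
Your proposal follows essentially the same route as the paper's proof: write the character via the Weyl formula as a $(tn+1)\times(tn+1)$ determinant, sort columns by $\sigma_\lambda$ into residue classes mod $t$, pair classes $i$ and $t-1-i$ into blocks identified as $s_{\pi_i^{(1)}}(X^t,\X^t)$ via the block-determinant and Schur-rewriting lemmas of \cref{sec:back}, attach the extra $y$-row to the distinguished class $i_0$ (whose top-row difference of $y$- and $\bar y$-powers splits by multilinearity into the two-term numerator), and derive the vanishing/balance dichotomy from \cref{cor:z=0}. The only slip is that when $i_0=\frac{t-1}{2}$ the self-paired block together with the $y$-row has size $(n+1)\times(n+1)$ rather than $2n+2$, since $n_{(t-1)/2}(\lambda)=n+1$; this does not affect the factorization you state.
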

\begin{rem}
    We note that the first factor on the right side of \eqref{odd} is a Laurent polynomial and  approaches to $(1-{2 \mu^{(1)}_{i_0}}) s_{\pi_{i_0}^{(1)}}(X^t,\X^t,1)$ as $y \to 1$.
    % $-y^{-\mu^{(1)}_{i_0}+1}(1+y+\dots+y^{2 \mu^{(1)}_{i_0} -2}) s_{\pi_{i_0}^{(1)}}(X^t,\X^t,1)$.
\end{rem}
% \begin{rem}
% The factorization of $\oo_{\lambda}( X,\, \omega X, \dots,  \omega^{t-1}X)$ was considered in~\cite{ayyer-2021}. We will not recover the result~\cite[Theorem 2.17]{ayyer-2021} by substituting $y=0$ as these are not universal characters. 
% \end{rem}
% Note: Either $\core \lambda 2$ is empty or 
% \[
% \core \lambda 2 = \begin{cases}
% (m,m-2,\dots,0|m,m-2,\dots,0) & \text{if } m \text{ is even},\\
% (m,m-2,\dots,1|m,m-2,\dots,1) & \text{if } m \text{ is odd}.
% \end{cases}
% \]
% So, either $(2,0,1)$ or $(2,1,1)$-asymmetric.
\begin{eg}
For $t=2$, \cref{thm:odd} says that the character of the group $\OO_{7}$ of the representation indexed by the partition $(a,b,c)$, $a \geq b \geq c \geq 0$, evaluated at $(x,-x,y)$ is non-zero.
If $\core {a,b,c} 2$ is empty, which is $(2,0,0)$-asymmetric and $i_0=0$, then
\begin{multline*}
\oo_{(a,b,c)}(x,-x,y)\\
= 
\begin{cases}
-\frac{y^{2-c}}{y-1}
s_{\left(\frac{a+c-1}{2},\frac{b+c}{2}\right)}
(x^2,\x^2,y^2)
+\frac{y^{c-1}}{y-1} s_{\left(\frac{a+c-1}{2},\frac{b+c}{2}\right)}
(x^2,\x^2,\bar{y}^2) 
& \text{$a$  even, $b$ and $c$ odd,}\\
\frac{y^{1-b}}{y-1}
s_{\left(\frac{a+b}{2},\frac{b+c}{2}\right)}
(x^2,\x^2,y^2)
-\frac{y^{b}}{y-1} s_{\left(\frac{a+b}{2},\frac{b+c}{2}\right)}
(x^2,\x^2,\bar{y}^2) & a,b,c \text{ even,}\\
-\frac{y^{-a}}{y-1}
s_{\left(\frac{a+b}{2},\frac{a+c+1}{2}\right)}
(x^2,\x^2,y^2)
+\frac{y^{a+1}}{y-1} s_{\left(\frac{a+b}{2},\frac{a+c+1}{2}\right)}
(x^2,\x^2,\bar{y}^2) 
& \text{$a$ and $b$ odd, $c$ even}.
\end{cases}
\end{multline*}
If $\core {a,b,c} 2 = (1)=(0|0)$, which is $(2,0,1)$-asymmetric and $i_0=1$, then
\begin{multline*}
\oo_{(a,b,c)}(x,-x,y) \\= 
\begin{cases}
-\frac{y^{-a}}{y-1} \, s_{(\frac{a+c-1}{2},\frac{a-b-1}{2})}
(x^2,\x^2,y^2) 
+ \frac{y^{a+1}}{y-1} 
s_{(\frac{a+c-1}{2},\frac{a-b-1}{2})}
(x^2,\x^2,\bar{y}^2)
& \text{$a$ odd, $b$ and $c$ even,}  \\
\frac{y^{-a}}{y-1} \, s_{(\frac{a+b}{2},\frac{a-c}{2})}
(x^2,\x^2,y^2) -
\frac{y^{a+1}}{y-1} s_{(\frac{a+b}{2},\frac{a-c}{2})}
(x^2,\x^2,\bar{y}^2)
& a,b,c \text{ odd,}\\
-\frac{y^{1-b}}{y-1}  s_{(\frac{a+b}{2},\frac{b-c-1}{2})}
(x^2,\x^2,y^2) +
\frac{y^{b}}{y-1} s_{(\frac{a+b}{2},\frac{b-c-1}{2})}
(x^2,\x^2,\bar{y}^2)
& \text{ $a$ and $b$ even, $c$ odd.}
\end{cases}
\end{multline*}
If $a$ and $c$ are even, and $b$ is odd, then $\core {a,b,c} 2 = (2,1)=(1|1)$, which is $(2,1,1)$-asymmetric and $i_0=0$, and in this case,
\[
\oo_{(a,b,c)}(x,-x,y)= \frac{y^{3}}{y-1}  s_{(\frac{a-2}{2},\frac{b-1}{2},\frac{c}{2})}
(x^2,\x^2,y^2) -
\frac{y^{-2}}{y-1} s_{(\frac{a-2}{2},\frac{b-1}{2},\frac{c}{2})}
(x^2,\x^2,\bar{y}^2).
\]
Lastly, if $a$ and $c$ are odd, and $b$ is even, then $\core {a,b,c} 2 = (3,2,1)=(2,0|2,0)$, which is $(2,0,1)$-asymmetric and $i_0=1$, then
\[
\oo_{(a,b,c)}(x,-x,y)= \frac{y^{-a}}{y-1}  s_{(\frac{a-c-2}{2},\frac{a-b-1}{2})}
(x^2,\x^2,y^2) -
\frac{y^{a+1}}{y-1} s_{(\frac{a-c-2}{2},\frac{a-b-1}{2})}
(x^2,\x^2,\bar{y}^2). 
\]
\end{eg}
% \begin{eg}
% Let $\lambda$ be a partition of length at most $2n+1$. Then the odd orthogonal character $\oo_{\lambda}(X,-X,y)$ is given by
%     \begin{equation*}
%     \begin{split}
%         \oo_{\lambda}(X,-X,y) 
%         =  & \frac{(-1)^
%         {
%         \frac{n_{1}(\lambda)(n_{1}(\lambda)+1)}{2} 
% } \sgn(\sigma_{\lambda})}{(y-1)} \\ &
%      \times   \Big( y^{-2(\lambda_1^{(1)}
% +n_{1}(\lambda)-n)+1}  s_{\pi_{0}^{(1)}}(X^2,\X^2,y^2)  -
% y^{2(\lambda_1^{(1)} 
% +n_{1}(\lambda)-n)} 
% s_{\pi_{0}^{(1)}}(X^2,\X^2, \bar{y}^2) \Big),  \end{split}
%     \end{equation*}
% where 
% $\ds \pi^{(1)}_0=  \lambda^{(1)}_1 +
% \left(\lambda^{(0)}, 0, -\rev(\lambda^{(1)})\right)$ has $2n+1$ parts.
% \end{eg}

For the symplectic case, we take $G = \Sp_{2tn+2}$, the symplectic group of $(2tn+2) \times (2tn+2)$ matrices. 
If $\lambda$ is a partition such that $\core \lambda t$ is either $(3,0,k)$ or $(3,2,k)$-asymmetric  
    for some $k \in [\rk(\core \lambda t)]$, then by \cref{cor:z=1}, there exists a unique $i_0 \in [0, \floor{\frac{t-2}{2}}] \cup \{t-1\}$ such that
    \begin{equation*}
\begin{split}
    n_{i}(\lambda)+n_{t-2-i}(\lambda) =& 
    \begin{cases}
    2n+1+\delta_{i_0,\frac{t-2}{2}} & \text{ if } i=i_0,\\
    2n & \text{ otherwise},
    \end{cases}
    \quad \text{  for} \quad  0 \leq i \leq  \floor{\frac{t-2}{2}}, \\ 
    \text{and} \quad n_{t-1}(\lambda) =& \begin{cases}
    n+1 & \text{ if } i=i_0,\\
    n & \text{ otherwise}.
    \end{cases} 
\end{split}
\end{equation*}
    % such that \eqref{n_{t-1}} holds. 
    % We call it $i_0(\lambda)$. 
    For such a partition $\lambda$, let
    \begin{equation}
        \label{epsilon22}
        \epsilon_2(\lambda) \coloneqq \ds
           \sum_{i=t-i_0}^{t-1} n_{i-1}(\lambda) 
+ \ds \sum_{i=\floor{\frac{t+2}{2}}}^{t-1} \left( \binom{n_{i-1}(\lambda)+1}{2}+(t-1) n (n_{i-1}(\lambda)-n)      \right),
%         \sum_{q=0}^{\floor{\frac{t-2}{2}}}  
% \frac{n_{t-2-q}(\lambda)
% (n_{t-2-q}(\lambda)+1)}
% {2}  + \sum_{i=t-i_0}^{t-1} n_{i-1}(\lambda) 
% + \ds \sum_{i=\floor{\frac{t+2}{2}}}^{t-1} (t-1) n (n_{i-1}(\lambda)-n).
    \end{equation}
and $\ds \pi^{(2)}_i \coloneqq  \lambda^{(t-2-i)}_1 +
\left(\lambda^{(i)}, 0,\dots,0, -\rev(\lambda^{(t-2-i)})\right)$ has $2n+\delta_{i,i_0}$ parts for 
$0 \leq i \leq \floor{\frac{t-3}{2}}$. We note that the empty partition is vacuously $(3,0,0)$-asymmetric with $i_0=0$. 
\begin{thm}
\label{thm:symp}
Let $\lambda$ be a partition of length at most $tn+1$. The symplectic character $\sp_{\lambda}(X,\, \omega X, \dots,\omega^{t-1}X,y)$ is as follows:
\begin{enumerate}
% \item If $\core \lambda t$ is empty, then 
% \begin{equation}
%     \begin{split}
%         \sp_{\lambda}( X,\, \omega X, \dots,  \omega^{t-1}X,y)  
%         =  \sgn(\sigma_{\lambda}) 
%         \frac{(-1)^{\epsilon_2} }{(y-1)} & \sp_{\lambda^{(t-1)}}(X^t)  \\
%       \times  \Big( y^{-t(\lambda_1^{(t-2)})+1}  s_{\pi_{0}^{(1)}}(X^t,\X^t,y^t)  - &
% y^{t(\lambda_1^{(t-2)})} 
% s_{\pi_{0}^{(1)}}(X^t,\X^t, \bar{y}^t) \Big) \\
%  \times  \prod_{i=1}^{\floor{\frac{t-3}{2}}}
%  s_{\pi^{(2)}_i} (X^t,\X^t) 
%       &   \times \begin{cases}
%      \oo_{\lambda^{\left(\frac{t-2}{2}\right)}}(X^t)   & t \text{ is even},\\
% 1 & t \text{ is odd}.
%         \end{cases}
%         \end{split}
%     \end{equation}
    \item If $\core \lambda t$ is either $(3,0,k)$ or $(3,2,k)$-asymmetric, 
    % \in \mathcal{Q}^{(t)}_{3,0,k} \cup \mathcal{Q}^{(t)}_{3,2,k}$ 
    for some $k \in [\rk(\core \lambda t)]$ and $i_0=t-1$,
    % $\alpha_k+1 \equiv t-1 \pmod{t}$,
    then
    \begin{equation}
    \begin{split}
    \sp_{\lambda}(X,\, \omega X, \dots,\omega^{t-1}X,y)= (-1)^{\epsilon_2(\lambda)} \sgn(\sigma_{\lambda}) \, \sp_{(t-1)}(y) \,
    % \frac{(y^{2t}-1)}{y^t(y-\bar{y})} 
    & \sp_{\lambda^{(t-1)}}(X^t, y^t)
    \\ \times  \prod_{i=0}^{\floor{\frac{t-3}{2}}} 
        s_{\pi_i^{(2)}} (X^t, \X^t) &
\times 
\begin{cases}
\oo_{\lambda^{\left(\frac{t-2}{2}\right)}} (X^t) 
& t \text{ even,}\\
1 & t \text{ odd}.
\end{cases} 
\end{split}
    \end{equation}
    \item If $\core \lambda t$ is either $(3,0,k)$ or $(3,2,k)$-asymmetric, 
    % \in \mathcal{Q}^{(t)}_{3,0,k} \cup \mathcal{Q}^{(t)}_{3,2,k}$ 
    for some $k \in [\rk(\core \lambda t)]$ and $i_0=\frac{t-2}{2}$,
    % $\alpha_k+1 \equiv t-1 \pmod{t}$,
    then
    \begin{equation}
    \begin{split}
        \sp_{\lambda}(X,\, \omega X, \dots,\omega^{t-1}X,y)= (-1)^{\epsilon_2(\lambda)+n} \sgn(\sigma_{\lambda}) \,
        \sp_{\left(\frac{t-2}{2}\right)}(y) \,
        % \frac{(y^t-1)}{y^{t/2}(y-\bar{y})}  
        &
        \sp_{\lambda^{(t-1)}}(X^t) \\ \times \prod_{i=0}^{\floor{\frac{t-3}{2}}} s_{\pi_i^{(2)}} (X^t, \X^t) &
\times 
\oo_{\lambda^{\left(\frac{t-2}{2}\right)}} (X^t,y^t).
\end{split}
    \end{equation}
    \item 
    If $\core \lambda t$ is either $(3,0,k)$ or $(3,2,k)$-asymmetric, 
    % \in \mathcal{Q}^{(t)}_{3,0,k} \cup \mathcal{Q}^{(t)}_{3,2,k}$ 
    for some $k \in [\rk(\core \lambda t)]$ and $i_0 \neq t-1, \frac{t-2}{2}$,
    % $\alpha_k+1 \equiv t-1 \pmod{t}$,
    then
    % \begin{multline}
    \begin{equation}
    \label{symp}
    \begin{split}
        \sp_{\lambda}( X,\, \omega X, \dots,  \omega^{t-1}X,y) & \\
        =  \sgn(\sigma_{\lambda}) (-1)^{\epsilon_2(\lambda)} \,
      & \frac{ \Big(y^{-\mu^{(2)}_{i_0}}  s_{\pi_{i_0}^{(2)}}(X^t,\X^t,y^t)   -
y^{\mu^{(2)}_{i_0}} 
s_{\pi_{i_0}^{(2)}}(X^t,\X^t, \bar{y}^t) \Big) }{(y-\bar{y})} \\
 \times  \prod_{\substack{i=0\\i \neq i_0}}^{\floor{\frac{t-3}{2}}}
& s_{\pi^{(2)}_i} (X^t,\X^t) 
      \times  \sp_{\lambda^{(t-1)}}(X^t)     \times \begin{cases}
     \oo_{\lambda^{\left(\frac{t-2}{2}\right)}}(X^t)   & t \text{ is even},\\
1 & t \text{ is odd}, 
        \end{cases}
        % \end{multline}
        \end{split}
    \end{equation}
where $\mu^{(2)}_{i_0}=t(\lambda_1^{(t-2-i_0)}
+n_{(t-2-i_0)}(\lambda)-n)-i_0$.    
    \item If none of the above conditions hold, then
    \begin{equation}
        \sp_{\lambda}(X,\, \omega X, \dots,\omega^{t-1}X,y)=0.
    \end{equation}
\end{enumerate}
\end{thm}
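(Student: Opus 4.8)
The plan is to mirror the proof of \cref{thm:odd}, carried out in full in \cref{sec:other}, indicating the modifications forced by the type $C$ Weyl denominator. Write $\sp_\lambda$ as in \eqref{spdef} with $N \coloneqq tn+1$ arguments, so that the numerator is $\det_{1\le i,j\le N}(z_i^{\gamma_j}-z_i^{-\gamma_j})$ with $\gamma_j \coloneqq \beta_j(\lambda)+1$, and the denominator is the same determinant with $\lambda=\emptyset$ (i.e.\ with $\gamma_j$ replaced by $N-j+1$). After the substitution, the entry in the row indexed by $\omega^a x_i$ ($0\le a\le t-1$, $1\le i\le n$) and column $j$ becomes $\omega^{a\gamma_j}x_i^{\gamma_j}-\omega^{-a\gamma_j}x_i^{-\gamma_j}$, while the $y$-row has entries $y^{\gamma_j}-\bar{y}^{\gamma_j}$; thus every entry depends on $j$ only through $\gamma_j \bmod t$, equivalently through $\beta_j(\lambda)\bmod t$. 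Permuting columns by $\sigma_\lambda$ sorts them into the $t$ residue blocks---block $i$ being the $n_i(\lambda)$ columns with $\beta_j(\lambda)\equiv i\pmod t$---and contributes $\sgn(\sigma_\lambda)$; the denominator is handled the same way with $\sigma_\emptyset$, whose sign is recorded in \eqref{sigma0m}.

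Next I would invoke the determinant factorization lemma of \cref{sec:back}: within residue block $i$ the rows indexed by the $t$ twists $\omega^a x_i$ decouple across the blocks (a discrete-Fourier / twisted-Vandermonde phenomenon), and after extracting monomial and $\omega$-power prefactors the block becomes a determinant in the variables $X^t$ whose exponents are the numbers $t\tilde{\beta}_j^{(i)}+i$ of \cref{prop:mcd-t-core-quo}, i.e.\ essentially $\beta(\lambda^{(i)})$. The type $C$ antisymmetry $z\leftrightarrow z^{-1}$ identifies $\gamma$ with $-\gamma$ and hence pairs residue $i$ with residue $t-2-i$ (since $\gamma\equiv i+1$): for $i\neq t-2-i$ the two blocks recombine into a single Schur determinant in the $2n$ variables $(X^t,\X^t)$ via the shift-and-reverse shape $\pi^{(2)}_i$, giving $s_{\pi^{(2)}_i}(X^t,\X^t)$; the self-paired residue $i=\frac{t-2}{2}$, possible only when $t$ is even, has exponents $\gamma$ a half-integer multiple of $t$ and so yields an odd orthogonal character $\oo_{\lambda^{((t-2)/2)}}(X^t)$ (compare the first form of \eqref{oodef}); and residue $t-1$, for which $\gamma$ is a multiple of $t$, yields a genuine symplectic character $\sp_{\lambda^{(t-1)}}(X^t)$ in the $n$ variables $X^t$. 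Applying the same decomposition to the denominator ($\lambda=\emptyset$) produces the matching product of Weyl denominators, so the ratio collapses to the stated product of characters at $X^t$, up to one leftover factor $(y-\bar{y})$ coming from the type $C$ denominator factor attached to the argument $y$.

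The single extra variable $y$ (together with $\bar{y}$, from the antisymmetrization) contributes one extra row, and which residue block it enlarges is dictated by the profile $\{n_i(\lambda)\}$. By \cref{cor:z=1}, the hypothesis that $\core\lambda t$ is $(3,0,k)$- or $(3,2,k)$-asymmetric is equivalent to the existence of a unique $i_0$ with the values of $n_i(\lambda)+n_{t-2-i}(\lambda)$ and $n_{t-1}(\lambda)$ displayed before the theorem, i.e.\ to block $i_0$ having exactly one more column than its conjugate partner. If $i_0=t-1$, the $y$-row enlarges the symplectic block; since there $\gamma$ is a multiple of $t$ one gets $\sp_{\lambda^{(t-1)}}(X^t,y^t)$ and a leftover $(y^t-\bar{y}^t)/(y-\bar{y})=\sp_{(t-1)}(y)$. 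If $i_0=\frac{t-2}{2}$ (hence $t$ even), it enlarges the middle block, giving $\oo_{\lambda^{((t-2)/2)}}(X^t,y^t)$ and a leftover $(y^{t/2}-\bar{y}^{t/2})/(y-\bar{y})=\sp_{((t-2)/2)}(y)$. Otherwise $\gamma\equiv i_0+1\not\equiv 0\pmod t$ in the enlarged block, so expanding the determinant by Laplace along the $y$-row, keeping the $y^{\gamma_j}$ and $y^{-\gamma_j}$ families separate, and recombining block $i_0$ with block $t-2-i_0$ as above, produces $\big(y^{-\mu^{(2)}_{i_0}}s_{\pi^{(2)}_{i_0}}(X^t,\X^t,y^t)-y^{\mu^{(2)}_{i_0}}s_{\pi^{(2)}_{i_0}}(X^t,\X^t,\bar{y}^t)\big)/(y-\bar{y})$, where $\mu^{(2)}_{i_0}$ is the residual power of $y$. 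Collecting: the un-enlarged blocks yield $\prod_{i\neq i_0}s_{\pi^{(2)}_i}(X^t,\X^t)$, together with $\sp_{\lambda^{(t-1)}}(X^t)$ when $i_0\neq t-1$ and $\oo_{\lambda^{((t-2)/2)}}(X^t)$ when $t$ is even and $i_0\neq\frac{t-2}{2}$, reproducing the three stated formulas.

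Finally, for the vanishing statement: if $\core\lambda t$ is neither $(3,0,k)$- nor $(3,2,k)$-asymmetric, then by \cref{cor:z=1} no such $i_0$ exists, so after sorting the numerator some conjugate pair of blocks, or the residue-$(t-1)$ block, cannot be balanced by the single $y$-row; by the type $C$ antisymmetry this forces two proportional rows among the $\omega^a x_i$ (or forces the $y$-row to coincide with an $x$-row after the $z\leftrightarrow z^{-1}$ folding), so the determinant vanishes. I expect the main obstacle to be the sign and prefactor bookkeeping: tracking the powers of $\omega$, $x_i$ and $y$ pulled out of each block together with the signs of the two column permutations, and verifying that they combine into exactly $(-1)^{\epsilon_2(\lambda)}\sgn(\sigma_\lambda)$ alongside the stated one-variable characters---this is what the formula \eqref{epsilon22} for $\epsilon_2(\lambda)$ is designed to capture---and, entangled with this, keeping the even-versus-odd-$t$ dichotomy (the $\oo_{\lambda^{((t-2)/2)}}$ versus $1$ alternative) consistent throughout.
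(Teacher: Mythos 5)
Your proposal follows essentially the same route as the paper's proof: sort columns by the residue of $\beta_j(\lambda)+1$ via $\sigma_\lambda$, decouple the $\omega$-twisted rows by the block-determinant lemmas of \cref{sec:back} (pairing residue $i$ with $t-2-i$, with the self-paired residue $\tfrac{t-2}{2}$ giving an odd orthogonal factor and residue $t-1$ a symplectic one), place the extra $y$-row in the unique unbalanced block $i_0$ supplied by \cref{cor:z=1}, and identify the resulting blocks with $s_{\pi_i^{(2)}}(X^t,\X^t)$ and the $y$-dependent combination via \cref{lem:s-new}, \cref{lem:s-new-1} and \cref{lem:DI}. The case analysis, the one-variable factors $\sp_{(t-1)}(y)$ and $\sp_{(\frac{t-2}{2})}(y)$, the vanishing mechanism (rank deficiency of an unbalanced block), and the acknowledged sign bookkeeping all match the paper's argument.
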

\begin{rem}
    We note that the first factor on the right side of \eqref{symp} is a Laurent polynomial and  approaches to $-{\mu^{(2)}_{i_0}} s_{\pi_{i_0}^{(2)}}(X^t,\X^t,1)$ as $y \to 1$.
    % $-y^{-\mu^{(1)}_{i_0}+1}(1+y+\dots+y^{2 \mu^{(1)}_{i_0} -2}) s_{\pi_{i_0}^{(1)}}(X^t,\X^t,1)$.
\end{rem}

\begin{eg}
\label{eg:symp}
For $t=2$, \cref{thm:symp} 
says that the character of the group $\Sp_{6}$ of the representation indexed by the partition $(a,b,c)$, $a \geq b \geq c \geq 0$, evaluated at $(x,-x,y)$ is non-zero if and only if $a$ and $b$ have the same parity or $a$ and $c$ have the opposite parity same as in \cref{eg:schur}. If $\core {a,b,c} 2$ is empty, which is $(3,0,0)$-asymmetric and $i_0=0$, then
\[
\sp_{(a,b,c)}(x,-x,y)=
\begin{cases}
\oo_{(\frac{a}{2},\frac{b+1}{2})}(x^2,y^2) \sp_{(\frac{c-1}{2})}(x^2) & \text{$a$  even, $b$ and $c$ odd,}\\
\oo_{(\frac{a}{2},\frac{c}{2})}(x^2,y^2) \sp_{(\frac{b}{2})}(x^2) & a,b,c \text{ even,}\\
-\oo_{(\frac{b-1}{2},\frac{c}{2})}(x^2,y^2) \sp_{(\frac{a+1}{2})}(x^2) & \text{$a$ and $b$ odd, $c$ even,}
\end{cases}
\]
and if $\core {a,b,c} 2 = (1)=(0|0)$, which is $(3,0,1)$-asymmetric and $i_0=1$, then
\[
\sp_{(a,b,c)}(x,-x,y)=
\begin{cases}
 (y+\bar{y}) \sp_{(\frac{a-1}{2},\frac{b}{2})}(x^2,y^2) \oo_{(\frac{c}{2})}(x^2) & \text{$a$ odd, $b$ and $c$ even,}\\
- (y+\bar{y}) \sp_{(\frac{a-1}{2},\frac{c-1}{2})}(x^2,y^2) \oo_{(\frac{b+1}{2})}(x^2) & a,b,c \text{ odd,}\\
(y+\bar{y}) \sp_{(\frac{b-2}{2},\frac{c-1}{2})}(x^2,y^2) \oo_{(\frac{a+2}{2})}(x^2) & \text{ $a$ and $b$ even, $c$ odd.}
\end{cases}
\] 
\end{eg}

% \begin{eg}
% Let $\lambda$ be a partition of length at most $2n+1$. The symplectic character $\sp_{\lambda}(X,-X,y)$ is given by
% \begin{enumerate}
% \item If $\core \lambda 2$ is empty, then
% \begin{equation}
%         \sp_{\lambda}( X,-X,y)  
%         =  \sgn(\sigma_{\lambda}) (-1)^{\epsilon_2}
%          \sp_{\lambda^{(1)}}(X^2)  
%       \oo_{\lambda^{(0)}}(X^2,y^2)  
%     \end{equation}
%     \item If $\core \lambda 2 =(1)$, then
%     \begin{equation*}
%     \sp_{\lambda}(X,-X,y)=
%         (-1)^{\epsilon_2} \sgn(\sigma_{\lambda})  \sp_{\lambda^{(1)}}(X^2, y^2) 
% \oo_{\lambda^{(0)}} (X^2) 
%     \end{equation*}
%     \item Otherwise
%     \begin{equation}
%         \sp_{\lambda}(X,-X,y)=0.
%     \end{equation}
% \end{enumerate}
% \end{eg} 
For the even orthogonal case, we take $G = \OE_{2tn+2}$, the orthogonal group of $(2tn+2) \times (2tn+2)$ square matrices. If $\lambda$ is a partition such that $\core \lambda t$ is $(1,0,k)$-asymmetric for some $k \in [\rk(\core \lambda t)]$, then by \cref{cor:z=-1}, there exists a unique $i_0 \in [\floor{\frac{t}{2}}]$ such that 
\begin{equation*}
 n_0(\lambda)=n, \quad 
    n_{i}(\lambda)+n_{t-i}(\lambda) =
    \begin{cases}
    2n+1+\delta_{i_0,\frac{t}{2}} & \text{ if } i=i_0\\
    2n & \text{ otherwise}
    \end{cases}
    \quad \quad  1 \leq i \leq  \floor{\frac{t}{2}}.
\end{equation*}
% \eqref{n_{t-1}} holds. 
For such a partition $\lambda$, let
\begin{equation}
\label{epsilon3}
\epsilon_3(\lambda) \coloneqq
\left(\ds 
\sum_{i=t+1-i_0}^{t-1} n_{i}(\lambda) \right)
+ \ds \sum_{i=\floor{\frac{t+2}{2}}}^{t-1}  \left( \binom{n_i(\lambda)}{2} +
(t-1) n (n_{i}(\lambda)-n) \right),
% \left(\ds 
% \sum_{i=t+1-i_0}^{t-1} n_{i}(\lambda) \right)
% + \ds \sum_{i=\floor{\frac{t+2}{2}}}^{t-1} 
% (t-1) n (n_{i}(\lambda)-n) 
% + \sum_{q=0}^{\floor{\frac{t-1}{2}}} \left( \frac{n_{t-q}(\lambda)
% (n_{t-q}(\lambda)-1)}
% {2} \right)
\end{equation}
and $\ds \pi^{(3)}_i=  \lambda^{(t-i)}_1 +
\left(\lambda^{(i)}, 0,\dots,0, -\rev(\lambda^{(t-i)})\right)$ has $2n+\delta_{i,i_0}$ parts for 
$1 \leq i \leq \floor{\frac{t-1}{2}}$. Note that for $(1,0,0)$-asymmetric $t$-cores, $i_0=0$.
\begin{thm}
\label{thm:even}
Let $\lambda$ be a partition of length at most $tn+1$. The even orthogonal character $\oe_{\lambda}(X,\, \omega X, \dots,\omega^{t-1}X,y)$ is as follows:
\begin{enumerate}
    \item If $\core \lambda t$ is 
    % symplectic
    $(1,0,0)$-asymmetric, then 
    \begin{equation}
    \begin{split}
   \oe_{\lambda}(X,\omega X, \dots ,\omega^{t-1}X,y) =
 (-1)^{\epsilon_3(\lambda)} \, \sgn(\sigma_{\lambda}) \oe_{\lambda^{(0)}}(X^t,y^t)  \prod_{i=1}^{\floor{\frac{t-1}{2}}} 
 s_{\pi_i^{(3)}}(X^t,\X^t) \\
 \times \begin{cases}
%  {\frac{1+\delta_{\lambda_n^{(t/2)}}}{2}}
(-1)^{\sum_i \lambda_i^{(t/2)}} \oo_{\lambda^{(t/2)}}(-X^t)& t \text{ even,}\\
1 & t \text{ odd.}
\end{cases}        
\end{split}
    \end{equation}
\item If $\core \lambda t$ is $(1,0,k)$-asymmetric for some $k \in [\rk(\core \lambda t)]$ 
% =(\alpha|\beta) \in \mathcal{Q}^{(t)}_{1,0,k}$ 
and 
% $t$-even 
% $i_0 \coloneqq$ 
% $\alpha_k+1 \equiv t/2 \pmod{t}$, 
$i_0=\frac{t}{2}$, then 
\begin{equation}
    \begin{split}
    \oe_{\lambda}(X,\omega X, \dots ,\omega^{t-1}X,y) =     (-1)^{\epsilon_3(\lambda)}  \sgn(\sigma_{\lambda}) \, &
% (1+\delta_{\lambda^{(t/2)}_{n+1},0}) 
\oe_{\left(\frac{t}{2}\right)}(y)
% (y^{t/2}+\bar{y}^{t/2})
\, 
\oe_{\lambda^{(0)}}(X^t)  \\ \times  \prod_{q=1}^{\frac{t-2}{2}}  s_{\pi_q^{(3)}}(X^t,\X^t) 
\times &(-1)^{ \sum_i \lambda_i^{(t/2)}}  \oo_{\lambda^{(t/2)}}(-X^t,-y^t).
    \end{split}
\end{equation}
\item If $\core \lambda t$ is $(1,0,k)$-asymmetric for some $k \in [\rk(\core \lambda t)]$ 
% =(\alpha|\beta) \in \mathcal{Q}^{(t)}_{1,0,k}$ 
and 
% $t$-even 
% $i_0 \coloneqq$ 
% $\alpha_k+1 \equiv t/2 \pmod{t}$, 
$i_0 \neq \frac{t}{2}$, then 
\begin{equation}
    \begin{split}
     \oe_{\lambda}(X,\omega X, \dots ,\omega^{t-1}X,y) =     (-1)^{\epsilon_3(\lambda)+n} \, \sgn(\sigma_{\lambda}) \oe_{\lambda^{(0)}}(X^t) & \\
\times \left( y^{-\mu^{(3)}_{i_0}} s_{\pi_{i_0}^{(3)}}(X^t,\X^t,y^t) +
y^{\mu^{(3)}_{i_0}}
s_{\pi_{i_0}^{(3)}}(X^t,\X^t, \bar{y}^t) \right) & \prod_{\substack{j=1 \\ j \neq i_0}}^{\floor{\frac{t-1}{2}}} s_{\pi_j^{(3)}}(X^t,\X^t) \\  \times 
 \begin{cases}
%  (1+\delta_{\lambda^{(t/2)}_n,0}) \frac{\oe_{\lambda^{(t/2)}+1/2}(X^t)}{\ds \prod_{i=1}^n (x_i^{t/2}+\bar{x}_i^{t/2})} 
 (-1)^{\sum_i \lambda_i^{(t/2)}} 
 \oo_{\lambda^{(t/2)}}(-X^t) & t \text{ even},\\
1 & t \text{ odd},
 \end{cases} &
    \end{split}
\end{equation}
where $\mu^{(3)}_{i_0}=t(\lambda_1^{(t-i_0)}
+n_{(t-i_0)}(\lambda)-n)-i_0$.
\item If none of the above conditions hold, then
\begin{equation*}
\oe_{\lambda}(X,\omega X, \dots ,\omega^{t-1}X,y) =0.    
\end{equation*}
\end{enumerate}
\end{thm}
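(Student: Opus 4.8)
\textbf{Proof strategy for \cref{thm:even}.}

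The plan is to mirror the proof architecture already used for the type $B$ case (\cref{thm:odd}), exploiting the Weyl character formula \eqref{oedef} together with the beta-set description of the $t$-core and $t$-quotient from \cref{prop:mcd-t-core-quo}. First I would write $\oe_{\lambda}(X,\omega X,\dots,\omega^{t-1}X,y)$ as the ratio of two $(tn+1)\times(tn+1)$ determinants built from the entries $x_i^{\pm\beta_j(\lambda)}+\x_i^{\mp\beta_j(\lambda)}$ (with the extra variable $y$ contributing one row/column), and then permute the columns of both numerator and denominator by $\sigma_{\lambda}$ so that the exponents $\beta_j(\lambda)$ are grouped by their residue class mod $t$. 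This is where the sign $\sgn(\sigma_{\lambda})$ enters. After the rearrangement, substituting $\omega^k x_i$ for the variables makes the matrix block-structured: the rows indexed by $(\omega^k x_i)_{0\le k\le t-1}$ interact with a column of residue $r$ only through the factor $\omega^{kr}$, and a Vandermonde-type / roots-of-unity argument (the same one that makes the $m=0$ case work in \cite{ayyer-2021}) collapses each block to a determinant in the power-sum variables $X^t$. The novelty is the single extra variable $y=\omega^0 y$: it only pairs with columns of one residue class, which forces the constraint on the $n_i(\lambda)$ stated before the theorem, namely that exactly one pair $\{i_0,t-i_0\}$ (or the singleton $t/2$) absorbs the extra column — this is exactly the content of \cref{cor:z=-1}, so the $(1,0,k)$-asymmetry of $\core\lambda t$ and the value of $i_0$ are forced, and outside these cases the relevant determinant has a repeated or missing column and vanishes.

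Next I would identify the surviving blocks. For residue classes $i$ and $t-i$ with $0<i<t/2$, the two blocks pair up: the columns of residue $i$ carry exponents $\approx t\beta^{(i)}_j+i$ and those of residue $t-i$ carry $\approx t\beta^{(t-i)}_j+(t-i)$, and because $\x^{t\beta+i}=\x^{t\beta+i}$, the combined $2n$ (or $2n+1$) columns assemble — after pulling out powers of $x_i$ and a sign — into a single Schur determinant in the alphabet $(X^t,\X^t)$ (and $y^t$ when $i=i_0$), indexed by the concatenated/reversed partition $\pi^{(3)}_i=\lambda^{(t-i)}_1+(\lambda^{(i)},0,\dots,0,-\rev(\lambda^{(t-i)}))$. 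The residue-$0$ block, which has $n_0(\lambda)=n$ columns involving $x_i^{t\beta}+\x_i^{t\beta}$, is itself of type $D$ shape and yields $\oe_{\lambda^{(0)}}(X^t)$, upgraded to $\oe_{\lambda^{(0)}}(X^t,y^t)$ precisely when $i_0=0$ (case (1)). When $t$ is even the middle residue $t/2$ behaves like a type $B$ block after the standard shift, producing the $(-1)^{\sum_i\lambda^{(t/2)}_i}\oo_{\lambda^{(t/2)}}(-X^t)$ factor via \eqref{oeshifted}, again picking up the $-y^t$ in case (2). The bookkeeping of all the scalar prefactors — powers of $x_i$ pulled out of rows, the Vandermonde normalizations in the denominator, and the binomial/quadratic exponents coming from reindexing $\tilde\beta^{(i)}_j$ — is what produces the sign $(-1)^{\epsilon_3(\lambda)}$ with $\epsilon_3$ as in \eqref{epsilon3}; I would verify this by the same careful sign-tracking lemma established in \cref{sec:back} for the type $B$ computation, applied verbatim with the shift $z=-1$ (i.e.\ $z_1=1$) in place of $z=2$.

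Finally, for the block that absorbs the extra variable $y$ (residue $i_0$, when $i_0\ne 0,t/2$), the $2n+1$ columns split as $2n$ "$x$-columns" plus one column coming from $y$; expanding the determinant along the $y$-row and recognizing the two cofactors as $s_{\pi^{(3)}_{i_0}}$ evaluated at $(X^t,\X^t,y^t)$ and at $(X^t,\X^t,\bar y^t)$ respectively, with relative weight $y^{\pm\mu^{(3)}_{i_0}}$ where $\mu^{(3)}_{i_0}=t(\lambda^{(t-i_0)}_1+n_{(t-i_0)}(\lambda)-n)-i_0$ is the exponent attached to that column, gives the $y^{-\mu}s(\cdots,y^t)+y^{\mu}s(\cdots,\bar y^t)$ combination in case (3); the "$+$" (rather than the "$-$" of type $B$) reflects the "$+\x^\beta$" in \eqref{oedef}. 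I expect the main obstacle to be purely combinatorial rather than conceptual: getting the sign $\epsilon_3(\lambda)$ and the prefactors exactly right, since one must consistently track the permutation $\sigma_\lambda$, the omitted/added Frobenius coordinate encoded by $k$ in the $(1,0,k)$-asymmetry, and the parity contributions from reversing blocks — the same delicate accounting that the paper flags as the reason it treats type $B$ in full detail and types $C,D$ "more sketchily," so here I would lean on the type $B$ computation and only indicate the changes ($z_1=1$, the symmetry "$+$" instead of "$-$", and the resulting type-$D$ rather than type-$B$ residue-$0$ block).
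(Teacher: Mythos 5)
Your proposal follows essentially the same route as the paper's proof: the Weyl determinant ratio, the column permutation $\sigma_{\lambda}$ grouping beta-numbers by residue mod $t$, the roots-of-unity block reduction (the paper's \cref{lem:det-blockmatrix} and \cref{lem:det-blockmatrix-1}, specialized in \cref{cor:det-even} and \cref{det-even-1}), the characterization of non-vanishing via \cref{cor:z=-1}, the pairing of residues $i$ and $t-i$ into $s_{\pi_i^{(3)}}(X^t,\X^t)$ via \cref{lem:s-new} and \cref{lem:s-new-1}, the type-$D$ residue-$0$ block, the shifted type-$B$ treatment of residue $t/2$ via \eqref{oeshifted}, and the sign bookkeeping for $\epsilon_3$. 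The only cosmetic differences are that the paper isolates the residue-$0$ block by explicit row operations before invoking the block lemmas, and the two terms $y^{-\mu}s(\cdots,y^t)+y^{\mu}s(\cdots,\bar y^t)$ arise from multilinearity in the single $y$-row (splitting $B+\bar B$) rather than a cofactor expansion, but these do not change the argument.
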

\begin{eg}
For $t=2$, \cref{thm:even} 
says that the character of the group $\OE_{6}$ of the representation indexed by the partition $(a,b,c)$, $a \geq b \geq c \geq 0$, evaluated at $(x,-x,y)$ is non-zero if and only if $a$ and $b$ have the same parity or $a$ and $c$ have the opposite parity same as in \cref{eg:schur} and \cref{eg:symp}. If $\core {a,b,c} 2$ is empty, which is $(1,0,0)$-asymmetric, then
\[
\oe_{(a,b,c)}(x,-x,y)=
\begin{cases}
(-1)^{\frac{c+1}{2}}\oe_{(\frac{a}{2},\frac{b+1}{2})}(x^2,y^2) \oo_{(\frac{c-1}{2})}(-x^2) & \text{$a$  even, $b$ and $c$ odd,}\\
(-1)^{\frac{b}{2}} \oe_{(\frac{a}{2},\frac{c}{2})}(x^2,y^2) \oo_{(\frac{b}{2})}(-x^2) & a,b,c \text{ even,}\\
(-1)^{\frac{a+1}{2}} \oe_{(\frac{b-1}{2},\frac{c}{2})}(x^2,y^2) \oo_{(\frac{a+1}{2})}(-x^2) & \text{$a$ and $b$ odd, $c$ even,}
\end{cases}
\]
and if $\core {a,b,c} 2 = (1)=(0|0)$, which is $(1,0,1)$-asymmetric and $i_0=1$, then
\[
\oe_{(a,b,c)}(x,-x,y)=
\begin{cases}
(-1)^{\frac{a+b-1}{2}} (y+\bar{y}) \, \oo_{(\frac{a-1}{2},\frac{b}{2})}(-x^2,-y^2) \, \oe_{(\frac{c}{2})}(x^2) & \text{$a$ odd, $b$ and $c$ even,}\\
(-1)^{\frac{a+c}{2}} (y+\bar{y}) \, \oo_{(\frac{a-1}{2},\frac{c-1}{2})}(-x^2,-y^2) \, \oe_{(\frac{b+1}{2})}(x^2) & a,b,c \text{ odd,}\\
(-1)^{\frac{b+c-3}{2}} (y+\bar{y}) \,  \oo_{(\frac{b-2}{2},\frac{c-1}{2})}(-x^2,-y^2) \, \oe_{(\frac{a+2}{2})}(x^2) & \text{ $a$ and $b$ even, $c$ odd.}
\end{cases}
\] 
\end{eg}

\begin{rem}
The factorization of characters of classical groups of type $B$, $C$ and $D$ specialized with $tn$ variables are considered  in~\cite{ayyer-2021}. We will not recover the results~\cite[Theorem 2.11, Theorem 2.15, Theorem 2.17]{ayyer-2021} by substituting $y=0$ in the above factorization results as these are Laurent polynomials in $\mathbb{C}[x_1,x_1^{-1},\dots,x_n,x_n^{-1},y,y^{-1}]$. $($See~\cite{koike1987young}$)$.   
\end{rem}

It is natural to ask if there are infinitely many $(z_1,z_2,k)$-asymmetric $t$-cores. Our last result answers this question in a special case. For $z_1>z_2$, let 
\[
\mathcal{Q}^{(t)}_{z_1,z_2} \coloneqq \ds \bigcup_k \mathcal{Q}^{(t)}_{z_1,z_2,k}.
\]

\begin{thm} 
\label{thm:inf-cores}
There are infinitely many $t$-cores $\mathcal{Q}^{(t)}_{z+2,0} \cup \mathcal{Q}^{(t)}_{z+2,z+1}$ for $t \geq z$.
\end{thm}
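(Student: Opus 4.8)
The plan is to translate everything into the abacus (beta-set) description of $t$-cores provided by \cref{prop:mcd-t-core-quo} and then to exhibit an explicit infinite family, presented as a bijection from a visibly infinite index set onto a sub-collection of $\mathcal{Q}^{(t)}_{z+2,0}\cup\mathcal{Q}^{(t)}_{z+2,z+1}$. Recall that a $t$-core $\mu$ is encoded by a charge vector $c=(c_0,\dots,c_{t-1})\in\mathbb{Z}^t$ with $\sum_i c_i=0$ (runner $i$ carries the beads at integers $\equiv i\pmod t$ below level $c_i$), that conjugation acts by $c\mapsto(-c_{t-1},\dots,-c_0)$, and that $|\mu|$ is a positive-definite quadratic form in $c$. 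Writing $P(\mu)$ for the set of nonnegative beads of $\mu$ and $z_1:=z+2$, the Frobenius-coordinate form of \cref{defn:asym}, after conjugating, says exactly that $\mu$ is $(z+2,z_2,k)$-asymmetric if and only if
\[
P(\mu')=\bigl(z_1+(P(\mu)\setminus\{p_0\})\bigr)\ \sqcup\ \{z_2\}
\]
for some $p_0\in P(\mu)$ (the coordinate $\alpha_k$), the case $k=0$ being the clean equality $P(\mu')=z_1+P(\mu)$. Using the beta-set criteria already established (\cref{cor:z=0}, \cref{cor:z=1}, \cref{cor:z=-1}), this is the identity to solve at the level of charge vectors.

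Second, I would solve this identity by casework on $q:=\floor{z_1/t}$ and $s:=z_1\bmod t$; the hypothesis $t\ge z$ is precisely $z_1\le t+2$, which forces $q\le 1$ and, when $q=1$, $s\le 2$, so the shift by $z_1$ cyclically rotates the runners by $s$ and raises levels by at most $q+1$. When $q=0$ and $t-s\ge 2$, one finds that the $k=0$ solutions are exactly the $t$-cores whose charge vector is ``anti-palindromic'' on a block of $t-s$ consecutive runners, the remaining $s$ runners being empty on one side; this leaves a free integer parameter, yielding charge vectors of the shape $(a,0,\dots,0,-a,0,\dots,0)$ (suitably placed) for every $a\ge 0$, all lying in $\mathcal{Q}^{(t)}_{z+2,0}$, pairwise non-conjugate, and of strictly increasing size. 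In the remaining, tight regime (which includes $t=2$, $z=0$, where $z_1=t$), the $k=0$ family collapses to $\{\emptyset\}$ because a shift raising every level cannot produce a level-$0$ bead; there I would instead use the branch $z_2=z+1=z_1-1$ with $k=1$, where the one-unit relaxation makes the identity solvable with a free parameter and produces a chain of $t$-cores alternating between $\mathcal{Q}^{(t)}_{z+2,0}$ and $\mathcal{Q}^{(t)}_{z+2,z+1}$ --- the direct generalization of the alternation between $(2,0,1)$- and $(2,1,1)$-asymmetric staircase $2$-cores visible in the examples following \cref{thm:odd}. In either case the resulting map $a\mapsto\mu_a$ is injective with infinite image, which proves \cref{thm:inf-cores} and furnishes the claimed bijection onto that image.

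The main obstacle is the bookkeeping in the second step: carrying the Frobenius-coordinate definition of $(z_1,z_2,k)$-asymmetry through conjugation into a clean system of equations and inequalities on the charge vector, and then running the casework on $s=z_1\bmod t$ and $q=\floor{z_1/t}$ carefully enough to see, uniformly for every $t\ge z$, that the system admits an integer one-parameter family of solutions. This is where $t\ge z$ is used: it keeps the wrap-around of the shift by $z_1$ small enough for a free parameter to survive --- for $z_1$ much larger than $t$ one expects only finitely many such $t$-cores --- and it is also where one sees why \cref{thm:inf-cores} must be phrased for the union $\mathcal{Q}^{(t)}_{z+2,0}\cup\mathcal{Q}^{(t)}_{z+2,z+1}$ rather than either family alone, since in the tight regime neither family is infinite by itself, just as no single family contains all the staircase $2$-cores. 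The remaining points are routine: distinctness of the cores in the family is immediate from the quadratic size formula, and checking that they realize the specific parameters $z_2\in\{0,z+1\}$ (rather than merely some admissible parameters) is a direct comparison of Frobenius coordinates.
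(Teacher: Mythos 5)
Your proposal is correct and takes essentially the same route as the paper: both reduce the problem to the abacus, characterize membership in $\mathcal{Q}^{(t)}_{z+2,0}\cup\mathcal{Q}^{(t)}_{z+2,z+1}$ by linear conditions on the runner counts $n_i(\lambda)-n$ (the ``bookkeeping'' you flag as the main obstacle is precisely \cref{lem:converse: sym}, already proven in \cref{sec:back}, so you need not re-derive it from the Frobenius coordinates), and then read off an integer-parametrized infinite family. The only substantive difference is that the paper pushes this to a complete bijection $\psi$ onto $\mathbb{Z}^{\floor{\frac{t-z}{2}}}\times\{0,\dots,\floor{\frac{t-z-1}{2}},t-z,\dots,t-1\}$ together with a quadratic formula for $|\lambda|$ (\cref{thm:bijection}), whereas you exhibit only a one-parameter subfamily via casework on $z_1\bmod t$; your version suffices for infinitude but yields less.
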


This is proved in \cref{sec:gf}.
\section{Background results}
\label{sec:back}
% In \cref{sec:betasets}, we study beta sets of $(z_1,z_2,k)$-asymmetric partitions defined in \cref{defn:asym}. We will derive generating functions for such partitions and prove that there are infinitely many $t$-cores in \cref{sec:gf}. Lastly, we will derive determinant identities for block matrices in \cref{sec:det}.

\subsection{Properties of beta sets}
\label{sec:betasets}
We use the shorthand
notations $[m]=\{1,\dots,m\}$, $[m_1, m_2]=\{m_1,\dots,m_2\}$ and $m_+ := \max(m,0)$.
We first recall a useful property of the beta numbers. 
For a partition $\lambda$ of length at most $m$, we see by~\cite[Equation (3.1)]{ayyer-2021}:
\begin{equation}
\label{no-parts-partition=core}
 n_{i}(\lambda,m)=n_{i}(\core \lambda t, m), 
 \quad 0 \leq i \leq t-1.  
\end{equation}
\begin{lem}[{\cite[Lemma 3.10]{ayyer-2021}}] 
If $\lambda$ is a $t$-core of length at most $tn$, then  \begin{equation}
\label{rnk}
 \rk(\lambda) =\sum_{i=0}^{t-1}(n_{i}(\lambda)-n)_+.
\end{equation}
\end{lem}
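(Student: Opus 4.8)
The plan is to work directly from the beta-set description of the Frobenius rank together with the characterization of $t$-cores in \cref{prop:mcd-t-core-quo}. Recall that for a partition $\lambda$ of length at most $tn$, the beta-set $\beta(\lambda,tn)=(\beta_1,\dots,\beta_{tn})$ consists of the distinct nonnegative integers $\lambda_i+tn-i$, and that the Frobenius rank $\rk(\lambda)$ equals the number of $i$ with $\lambda_i\ge i$. The first step is to re-express $\rk(\lambda)$ in terms of the beta-set: I claim $\rk(\lambda)$ equals the number of parts $\beta_i$ of $\beta(\lambda,tn)$ that are at least $tn$, i.e. $\rk(\lambda)=\#\{i:\beta_i\ge tn\}$. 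Indeed, $\beta_i\ge tn\iff \lambda_i+tn-i\ge tn\iff \lambda_i\ge i$, so the largest index with $\lambda_i\ge i$ is exactly the Frobenius rank, and since $\beta$ is strictly decreasing these indices form an initial segment. This converts the whole problem into counting beta-numbers lying in $[tn,\infty)$.

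Next I would invoke the $t$-core condition. By \cref{prop:mcd-t-core-quo}(1), since $\lambda$ is a $t$-core of length at most $tn$, its beta-set (taken with $\ell=tn$) consists precisely of the numbers $tj+i$ for $0\le i\le t-1$ and $0\le j\le n_i(\lambda)$, with the normalization $n_i(\lambda,tn)$. Also, by \eqref{no-parts-partition=core} the counts $n_i$ are intrinsic to $\lambda$. So for each residue class $i\pmod t$, the beta-numbers congruent to $i$ are exactly $\{i, t+i, 2t+i,\dots, (n_i(\lambda)-1)\,t+i\}$ when there are $n_i(\lambda)$ of them — i.e. the $n_i(\lambda)$ smallest nonnegative integers in that residue class. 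The key counting step is then: within residue class $i$, how many of these beta-numbers are $\ge tn$? The numbers are $i, t+i,\dots,(n_i(\lambda)-1)t+i$, and $jt+i\ge tn$ forces $j\ge n$ (using $0\le i\le t-1$, so $jt+i\ge tn\iff j\ge n$). Hence the count in class $i$ is $\#\{j: n\le j\le n_i(\lambda)-1\}=(n_i(\lambda)-n)_+$. Summing over $i$ gives exactly $\sum_{i=0}^{t-1}(n_i(\lambda)-n)_+$, which equals $\rk(\lambda)$ by the first step.

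The main obstacle, and the point deserving care rather than difficulty, is the bookkeeping of the normalization: one must fix $\ell=tn$ consistently so that the beta-numbers in class $i$ really are the $n_i(\lambda)$ smallest representatives $\{i,t+i,\dots\}$ rather than some shifted set, and verify that the $t$-core hypothesis (as opposed to a general partition) is exactly what guarantees this contiguity. I would isolate this as a short lemma: for a $t$-core with $\ell=tn$, the beta-numbers congruent to $i$ are $\{jt+i:0\le j\le n_i(\lambda)-1\}$, which is immediate from \cref{prop:mcd-t-core-quo}(1) since the defining set of a $t$-core's beta-set is closed downward in each residue class. With that lemma in hand the threshold computation $jt+i\ge tn\iff j\ge n$ is elementary, and the two expressions for $\rk(\lambda)$ coincide term by term.
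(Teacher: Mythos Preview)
Your proposal is correct. The paper does not give its own proof of this statement---it is cited verbatim from \cite[Lemma~3.10]{ayyer-2021}---but your argument is essentially the same as the one the paper uses immediately afterward to prove the analogous result for length at most $tn+1$ (\cref{lem:rank-core}): identify the rank with the number of beta-numbers at or above the threshold $tn$, then use the $t$-core structure to see that the beta-numbers in each residue class $i$ are the contiguous block $\{i,t+i,\dots,(n_i(\lambda)-1)t+i\}$, so exactly $(n_i(\lambda)-n)_+$ of them meet the threshold.
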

\begin{lem}
\label{lem:rank-core}
If $\lambda$ is a $t$-core of length at most $tn+1$, then  
\begin{equation}
\label{rnk1}
 \rk(\lambda) =(n_{0}(\lambda)-n-1)_++\sum_{i=1}^{t-1}(n_{i}(\lambda)-n)_+.
\end{equation}
\end{lem}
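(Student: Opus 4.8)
\textbf{Proof plan for \cref{lem:rank-core}.}

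The plan is to mimic the proof of \cref{rnk} (that is, \cite[Lemma 3.10]{ayyer-2021}) but for beta-sets of length $tn+1$ rather than $tn$. First I would recall from \cref{prop:mcd-t-core-quo}(1) that since $\lambda$ is a $t$-core of length at most $tn+1$, its beta-set $\beta(\lambda,tn+1)$ consists precisely of the numbers $tj+i$ for $0 \leq i \leq t-1$ and $0 \leq j \leq n_i(\lambda)-1$ (using $n_i(\lambda) \equiv n_i(\lambda,tn+1)$). Set $\ell = tn+1$. The key observation is that $\rk(\lambda)$ equals the number of parts $\beta_s(\lambda)$ of the beta-set that are at least $\ell - s + 1 = tn+2-s$ when $\beta$ is written in decreasing order $\beta_1 > \dots > \beta_{\ell}$; equivalently, using the standard fact that $\rk(\lambda) = \#\{i : \beta_i(\lambda) \geq \ell - i + 1\}$ (this is just the Frobenius-rank characterization in beta-set language, and is exactly the computation underlying \cite[Lemma 3.10]{ayyer-2021}).

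Next I would compare $\beta(\lambda,tn+1)$ with the beta-set $\beta(\emptyset,tn+1) = (tn, tn-1, \dots, 1, 0)$ of the empty partition, which by the excerpt has $n_0(\emptyset) = n+1$ and $n_i(\emptyset) = n$ for $1 \leq i \leq t-1$. The rank counts, in the sorted beta-set, how many entries have ``moved up'' past the staircase $(tn, tn-1, \dots, 0)$. Sorting the multiset $\{tj+i : 0 \leq i \leq t-1,\ 0 \leq j \leq n_i(\lambda)-1\}$ against this staircase, an entry in residue class $i$ contributes to the rank exactly for those $j$ with $j \geq n$ when $1 \leq i \leq t-1$, and for those $j \geq n+1$ when $i = 0$ — because the ``free'' slots available in residue class $i$ below the staircase threshold are $n$ many for $i \neq 0$ and $n+1$ many for $i = 0$, matching $\beta(\emptyset, tn+1)$. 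Hence residue class $i \neq 0$ contributes $(n_i(\lambda) - n)_+$ to the rank, while residue class $0$ contributes $(n_0(\lambda) - n - 1)_+$, giving \cref{rnk1}. To make this rigorous I would phrase it via the bijection/counting argument exactly as in \cite[Lemma 3.10]{ayyer-2021}: write $\rk(\lambda) = \sum_{i=0}^{t-1} \#\{j \geq 0 : j \leq n_i(\lambda)-1 \text{ and } tj+i \text{ exceeds its staircase position}\}$ and identify the threshold in each residue class by direct comparison with the empty-partition beta-set.

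The main obstacle — really the only delicate point — is correctly pinning down the threshold in residue class $0$: one must verify that the number of entries $tj+0$ that sit at or below their staircase position is exactly $n+1$ rather than $n$, which is where the ``$+1$'' in $tn+1$ enters and forces the asymmetric term $(n_0(\lambda)-n-1)_+$. This follows because $\beta_\ell(\emptyset,tn+1) = 0 \equiv 0 \pmod t$, so the extra slot is absorbed by the $0$-residue class; I would make this explicit by noting $\sum_i n_i(\lambda) = tn+1 = (n+1) + (t-1)n$ and tracking the sorted order near the bottom of the staircase. Everything else is a routine repetition of the cited lemma's argument, so the write-up will be short once this bookkeeping is stated carefully.
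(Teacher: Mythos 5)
Your counting at the end is right, but the ``key observation'' on which you hang the whole argument is stated incorrectly, and since everything else is routine this is the one thing that must be exact. You claim $\rk(\lambda)=\#\{s:\beta_s(\lambda)\geq \ell-s+1\}$. This is false in general: take $\lambda=(1,1)$ (a $3$-core) with $t=3$, $n=1$, $\ell=tn+1=4$, so $\beta(\lambda,4)=(4,3,1,0)$. Then $\#\{s:\beta_s\geq 5-s\}=2$ (positions $s=1,2$ both qualify), while $\rk(1,1)=1$. The correct beta-set characterization is $\rk(\lambda)=\#\{s:\beta_s(\lambda,\ell)\geq \ell\}$, since $\lambda_s\geq s$ iff $\beta_s=\lambda_s+\ell-s\geq\ell$ and $\lambda_s-s$ is strictly decreasing. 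Fortunately, the threshold you actually apply in the residue-class count is $tj+i>tn$, i.e.\ $tj+i\geq tn+1=\ell$, which is the correct criterion and not the one you stated; so the slip is repairable, but as written the proof justifies its central step by a false identity. With the corrected characterization, your bookkeeping is right: for $1\leq i\leq t-1$ the entries $tj+i\geq tn+1$ are exactly those with $j\geq n$, contributing $(n_i(\lambda)-n)_+$, and for $i=0$ one needs $j\geq n+1$, contributing $(n_0(\lambda)-n-1)_+$, which is \cref{rnk1}.

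Once repaired, your route differs mildly from the paper's. The paper splits into two cases: for $\ell(\lambda)\leq tn$ it reduces to \cref{rnk} via the residue shift $n_0(\lambda,tn+1)=n_{t-1}(\lambda,tn)+1$, $n_i(\lambda,tn+1)=n_{i-1}(\lambda,tn)$; for $\ell(\lambda)=tn+1$ it observes that $n_0(\lambda)=0$ (so the asymmetric term vanishes), counts the beta-numbers exceeding $tn$ to get $r$, and verifies directly that $\lambda_r\geq r$ and $\lambda_{r+1}\leq r$, thereby re-deriving the rank characterization by hand rather than citing it. Your uniform count over all residue classes is cleaner and avoids the case split, at the cost of having to invoke the rank--beta-set identity correctly; the paper's version is self-contained but longer.
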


\begin{proof} If $\ell(\lambda) \leq tn$, then using $n_0(\lambda,tn+1)=n_{t-1}(\lambda,tn)+1$ and $n_{i}(\lambda,tn+1)=n_{i-1}(\lambda,tn)$, $1 \leq i \leq t-1$ in \cref{rnk}, we see that the result holds.
% If $n_{i}(\lambda) = n$ for all $0 \leq i \leq t-1$ but , then $\beta(\lambda)=(
% tn-1,tn-2,\dots,1,0)$ which implies $\lambda$ is an empty partition. So, the result holds in this case.
% Otherwise, a
Assume $\ell(\lambda) = tn+1$. Since $\lambda$ is a $t$-core, $n_0(\lambda)=0$. Let $1 \leq i_k < \dots < i_1 \leq t-1$ 
% $\{i_1,i_2,\dots,i_k\}_{>} \subset \{1,\dots,t-1\}$ 
such that $n_{i_j}(\lambda)>n \text{ for } 1 \leq j \leq k$.  
Since $\lambda$ is a $t$-core, the parts of $\beta(\lambda)$ greater than $tn$ for each $j$ are:
\[
i_j+tn<i_j+t(n+1)<\dots<i_j+t(n_{i_j}(\lambda)-1).
\]
If $r$ is the number of parts of  $\beta(\lambda)$ greater than  $tn$, then
\[ 
r= \sum_{j=1}^k(n_{i_j}(\lambda)-n)=\sum_{i=1}^{t-1}(n_{i}(\lambda)-n)_+.
\]
Moreover, $\beta_{r}(\lambda)$ is the smallest part of  $\beta(\lambda)$ greater than  $tn$ and is therefore equal to $i_k+tn$. This implies $\ds \lambda_{r}=\beta_{r}(\lambda)-(tn+1-r) = tn+i_k-(tn+1-r)=i_k+r-1 \geq r$ and 
$\lambda_{r+1} \leq tn-(tn-r) \leq r$,
 which implies the rank of $\lambda$ is $r$.
\end{proof}

\begin{lem}[{\cite[Corollary 3.7]{ayyer-2021}}]
\label{lem:sym-tn}
Let $\lambda$ be a partition of length at most $tn$. Then $\core \lambda t$ is $(1,0,0)$-asymmetric if and only if 
\[n_i(\lambda,tn)+n_{t-2-i}(\lambda,tn)=2n, \,\,\, 0 \leq i \leq t-2, \quad  n_{t-1}(\lambda,tn)=n+1.\]
\end{lem}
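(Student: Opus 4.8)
The plan is to work entirely at the level of beta-sets (equivalently, the $t$-abacus), showing that $(1,0,0)$-asymmetry of a $t$-core is a single reflection symmetry of its bead set, which then reads off directly as the asserted balance conditions on the runner occupations $n_i$.

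First I would reduce to the core: by \eqref{no-parts-partition=core} we have $n_i(\lambda,tn)=n_i(\core\lambda t,tn)$ for all $i$, so both sides depend only on $\mu \coloneqq \core\lambda t$, and I may assume $\lambda=\mu$ is a $t$-core of length at most $tn$. Encode $\mu$ by the infinite bead set $\mathcal M(\mu)=\{\mu_i-i : i\ge 1\}$; its nonnegative elements are exactly the Frobenius arms $\alpha_1>\dots>\alpha_r$, while its negative holes sit at $-1-\beta_1<\dots<-1-\beta_r$, where $(\alpha\mid\beta)$ are the Frobenius coordinates and $r=\rk(\mu)$. Recalling from \cref{defn:asym} that $(1,0,0)$-asymmetric means $\beta_i=\alpha_i+1$ for all $i$, the holes are forced to $-2-\alpha_i$. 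I would then establish the reformulation that $\mu$ is $(1,0,0)$-asymmetric if and only if
\[
p\in\mathcal M(\mu)\ \Longleftrightarrow\ -2-p\notin\mathcal M(\mu)\qquad(p\ne -1),\qquad -1\in\mathcal M(\mu),
\]
i.e. $\mathcal M(\mu)$ is particle--hole antisymmetric under the involution $R\colon p\mapsto -2-p$ away from its unique fixed point $-1$, which must be a bead. The forward direction is immediate from the hole description; the reverse holds because $R$ then pairs each nonnegative bead with a negative hole and conversely, recovering $\beta_i=\alpha_i+1$.

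Next I would pass to runners. Since $\mu$ is a $t$-core, \cref{prop:mcd-t-core-quo} says each runner $i$ (positions $\equiv i\bmod t$) is a down-set, occupied up to a top bead $L_i=i+t(n_i-n-1)$, and a short count gives $n_i=(L_i-i)/t+n+1$. The involution $R$ sends residue $i$ to residue $t-2-i\bmod t$, hence pairs runner $i$ with runner $t-2-i$ for $0\le i\le t-2$ and fixes runner $t-1$ (which contains the center $-1$). Imposing the $R$-antisymmetry between the two down-sets on a pair $\{i,\,t-2-i\}$, and accounting for the one-step residue shift $R$ produces, forces the relation equivalent to $n_i+n_{t-2-i}=2n$. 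On the self-paired runner $t-1$, $R$ acts as $k\mapsto -k$ in the coordinate $p=-1+tk$ with a fixed bead at $k=0$; antisymmetry together with $0$ being a bead forces the top bead to be the center, $L_{t-1}=-1$, i.e. $n_{t-1}(\lambda,tn)=n$. (Equivalently, once the $t-1$ pair conditions hold one has $\sum_{i=0}^{t-2}n_i=(t-1)n$, so $n_{t-1}=n$ is forced by $\sum_{i=0}^{t-1}n_i=tn$.) Running these equivalences backwards yields the converse: the balance conditions reconstruct an $R$-antisymmetric bead set with occupied center, hence a $(1,0,0)$-asymmetric core.

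The main obstacle I anticipate is the bookkeeping at the center: the fixed point of $R$ on runner $t-1$ cannot satisfy the strict particle--hole symmetry, so one must argue separately that it is always a bead and that this pins $n_{t-1}$, rather than naively extending the symmetry across all of $\mathbb Z$. A secondary technical point is the residue shift: $R$ carries runner $i$ onto runner $t-2-i$ but displaces beads by a non-multiple of $t$, so translating ``antisymmetric down-sets'' into the clean equality $n_i+n_{t-2-i}=2n$ requires matching top beads modulo $t$ rather than equating the raw thresholds $L_i$. Everything else is routine once the reflection reformulation is in place, and the rank formula \eqref{rnk} provides a consistency check, since $r=\sum_i(n_i-n)_+$ should equal the number of nonnegative beads.
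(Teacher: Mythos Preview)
The paper does not give its own proof of this lemma; it is quoted from \cite[Corollary~3.7]{ayyer-2021}, so there is no in-paper argument to compare against. Your abacus/bead-set approach via the reflection $R\colon p\mapsto -2-p$ on the Maya diagram is correct and is the standard way such Frobenius-asymmetry conditions are converted into runner-occupation constraints.

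One substantive remark: your argument (correctly) produces $n_{t-1}(\lambda,tn)=n$, not $n+1$ as printed. Indeed, summing $n_i+n_{t-2-i}=2n$ over $0\le i\le t-2$ already forces $\sum_{i=0}^{t-2}n_i=(t-1)n$ and hence $n_{t-1}=tn-(t-1)n=n$; the empty partition, which is vacuously $(1,0,0)$-asymmetric, likewise has $n_{t-1}(\emptyset,tn)=n$. The proof of \cref{lem:symp-1} immediately afterward applies this lemma with $n$ replaced by $n+1$ and records $n_{t-1}(\lambda,tn+t)=n+1$, which is consistent only with $n_{t-1}(\lambda,tn)=n$ here. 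So you have established the intended statement; the disagreement is with a misprint, not with the mathematics.
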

\begin{lem}
\label{lem:symp-1}
Let $\lambda$ be a partition of length at most $tn+1$. Then $\core \lambda t$ is $(1,0,0)$-asymmetric if and only if 
\[n_0(\lambda,tn+1)=n+1, \quad n_i(\lambda,tn+1)+n_{t-i}(\lambda,tn+1)=2n, \,\,\,1 \leq i \leq t-1.\]
\end{lem}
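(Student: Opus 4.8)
The statement to prove is Lemma~\ref{lem:symp-1}: for a partition $\lambda$ of length at most $tn+1$, the $t$-core $\core \lambda t$ is $(1,0,0)$-asymmetric if and only if $n_0(\lambda,tn+1)=n+1$ and $n_i(\lambda,tn+1)+n_{t-i}(\lambda,tn+1)=2n$ for $1\le i\le t-1$. The natural strategy is to reduce to the already-proved companion statement Lemma~\ref{lem:sym-tn}, which handles the length-$\le tn$ case, by comparing beta-sets taken with respect to $tn+1$ versus $tn$ positions. By \eqref{no-parts-partition=core} we may replace $\lambda$ by $\core\lambda t$ throughout, so it suffices to prove the equivalence when $\lambda$ is itself a $t$-core. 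Note $(1,0,0)$-asymmetry is a property of the partition independent of how many trailing zeros (i.e.\ which $\ell$) we use, so the left-hand side is genuinely the same object in both lemmas.

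**Main step: the shift $tn \to tn+1$.** Recall from the proof of Lemma~\ref{lem:rank-core} the elementary relations between beta-numbers at two consecutive lengths: $\beta(\lambda,tn+1) = (\,tn,\;\beta(\lambda,tn)+1\,)$ as multisets (prepend $tn$ and add $1$ to every old entry). Adding $1$ to every part of $\beta(\lambda,tn)$ shifts residues by $1$ mod $t$: a part $\equiv i-1 \pmod t$ becomes $\equiv i \pmod t$. Hence for $1\le i\le t-1$ we get $n_i(\lambda,tn+1) = n_{i-1}(\lambda,tn)$, while the prepended part $tn \equiv 0\pmod t$ together with the shifted class gives $n_0(\lambda,tn+1) = n_{t-1}(\lambda,tn) + 1$. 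Substituting these identities into the right-hand condition of Lemma~\ref{lem:symp-1}: the equation $n_0(\lambda,tn+1)=n+1$ becomes $n_{t-1}(\lambda,tn)=n$, and $n_i(\lambda,tn+1)+n_{t-i}(\lambda,tn+1)=2n$ becomes $n_{i-1}(\lambda,tn)+n_{t-1-i}(\lambda,tn)=2n$ for $1\le i\le t-1$, i.e.\ (reindexing $i-1\mapsto j$, so $j$ runs over $0\le j\le t-2$) exactly $n_j(\lambda,tn)+n_{t-2-j}(\lambda,tn)=2n$. But this is precisely the system appearing in Lemma~\ref{lem:sym-tn} — wait, one must be slightly careful: Lemma~\ref{lem:sym-tn} demands in addition $n_{t-1}(\lambda,tn)=n+1$, whereas here we derived $n_{t-1}(\lambda,tn)=n$. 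The resolution is that $\lambda$ has length at most $tn+1$, not $tn$, so $\beta(\lambda,tn)$ makes sense only after checking $\ell(\lambda)\le tn$, and when $\ell(\lambda)=tn+1$ one has $n_0(\lambda,tn+1)=0$ for a $t$-core, forcing $n_{t-1}(\lambda,tn)=-1$, which is impossible; so in fact for the right-hand conditions to hold we must be in the case $\ell(\lambda)\le tn$. This is exactly the case analysis already carried out in the proof of Lemma~\ref{lem:rank-core}, and I would mirror it: first dispose of $\ell(\lambda)=tn+1$ by observing both sides fail (the RHS fails because $n_0(\lambda,tn+1)=0\ne n+1$; the LHS fails because a $(1,0,0)$-asymmetric $t$-core has $n_{t-1}$ large and $n_0$ small at length $tn$, contradicting $\ell(\lambda)=tn+1$ — or more cleanly, invoke that a $(1,0,0)$-asymmetric partition's beta-set structure is incompatible with all beta-numbers $\ge$ some bound). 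Then for $\ell(\lambda)\le tn$ apply the residue-shift identities above to translate the RHS of Lemma~\ref{lem:symp-1} into the RHS of Lemma~\ref{lem:sym-tn}, whose equivalence with the LHS is given.

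**Where the difficulty lies.** The computational content is entirely routine once the beta-set shift relations are in hand; the only genuine subtlety — and the step I would write most carefully — is the boundary case $\ell(\lambda)=tn+1$, where $\beta(\lambda,tn)$ is undefined and one cannot blindly invoke Lemma~\ref{lem:sym-tn}. I would handle it by a direct argument: if $\lambda$ is a $t$-core of length exactly $tn+1$ then its largest part $\lambda_1\ge 1$ forces $\beta_1(\lambda,tn+1)=\lambda_1+tn\ge tn+1$, and more to the point $n_0(\lambda,tn+1)=0$ since a $t$-core of this length has no part of $\beta$ divisible by $t$ in the relevant way (this is the observation used verbatim in the proof of Lemma~\ref{lem:rank-core}); hence the RHS condition $n_0=n+1$ fails. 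For the LHS, I would note that if $\core\lambda t=\lambda$ is $(1,0,0)$-asymmetric then it is in particular $z$-asymmetric with $z=1$, and Lemma~\ref{lem:sym-tn} together with \eqref{no-parts-partition=core} pins down its beta-numbers at length $tn$, which via the shift relations forces $n_0(\lambda,tn+1)=n+1\ne 0$, contradiction; so the LHS also fails, and the equivalence holds vacuously in this case. Assembling the two cases completes the proof.
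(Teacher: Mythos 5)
Your overall strategy --- reduce to \cref{lem:sym-tn} by comparing residue counts of beta-numbers at lengths $tn$ and $tn+1$ --- is close in spirit to the paper's, but the step you yourself flag with ``one must be slightly careful'' is a genuine unresolved gap, not a subtlety you have dispatched. Your shift identities $n_0(\lambda,tn+1)=n_{t-1}(\lambda,tn)+1$ and $n_i(\lambda,tn+1)=n_{i-1}(\lambda,tn)$ are correct (though your stated multiset relation is not: the new beta-entry at length $tn+1$ is an appended $0$, not a prepended $tn$; the residue counts survive only because $tn\equiv 0\pmod t$). They translate the right-hand condition of \cref{lem:symp-1} into $n_j(\lambda,tn)+n_{t-2-j}(\lambda,tn)=2n$ for $0\le j\le t-2$ together with $n_{t-1}(\lambda,tn)=n$, whereas \cref{lem:sym-tn} as quoted demands $n_{t-1}(\lambda,tn)=n+1$. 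These are different conditions, and the ``resolution'' you offer --- disposing of the boundary case $\ell(\lambda)=tn+1$ --- is a non sequitur: even after restricting to $\ell(\lambda)\le tn$, your translated condition still reads $n_{t-1}(\lambda,tn)=n$ and still does not match the cited hypothesis, so as written your argument would show the two sides of \cref{lem:symp-1} are \emph{inequivalent}. The missing observation is that \cref{lem:sym-tn} as printed cannot be taken at face value: its conditions force $\sum_{i}n_i(\lambda,tn)=(t-1)n+(n+1)=tn+1$, which is impossible since these counts must sum to $tn$; the intended reading is $n_{t-1}(\lambda,tn)=n$, and with that reading your translation closes. You needed to identify and justify this, rather than wave at the discrepancy.

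The paper avoids all of this by going up rather than down: since $tn+1\le t(n+1)$, it applies \cref{lem:sym-tn} with $n$ replaced by $n+1$ at length $tn+t$ (where the beta-set is always defined) and then uses $n_0(\lambda,tn+1)=n_{t-1}(\lambda,tn+t)$ and $n_i(\lambda,tn+1)=n_{i-1}(\lambda,tn+t)-1$. This eliminates your boundary case $\ell(\lambda)=tn+1$ entirely --- which matters, because your treatment of the left-hand side in that case (``invoke that a $(1,0,0)$-asymmetric partition's beta-set structure is incompatible\ldots'') is too vague to count as a proof. If you insist on descending to length $tn$, you must both resolve the $n$ versus $n+1$ discrepancy explicitly and give a concrete argument for why a $(1,0,0)$-asymmetric $t$-core cannot have length exactly $tn+1$ (for instance: such a core would have $\lambda_1=tn$, hence $2tn\in\beta(\lambda,tn+1)$, contradicting $n_0(\lambda,tn+1)=0$).
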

\begin{proof} As $\ell(\lambda) \leq tn+1 \leq t(n+1)$, considering $\lambda$ with $\ell(\lambda) \leq tn+t$, we get by \cref{lem:sym-tn}:
\[
n_{i}(\lambda,tn+t)+n_{t-2-i}(\lambda,tn+t)=2n+2, \,\,\, 0 \leq i \leq t-2, \quad n_{t-1}(\lambda,tn+t)=n+1.\]
Now the proof of the lemma follows by noting:
\[
n_i(\lambda,tn+1)=\begin{cases}
n_{t-1}(\lambda,tn+t) & i=0,\\
n_{i-1}(\lambda,tn+t)-1 & 1 \leq i \leq t-1.
\end{cases}
\]
\end{proof}

Recall the definitions $\mathcal{Q}_{z_1,z_2,k}$ and $\mathcal{Q}^{(t)}_{z_1,z_2,k}$ from \cref{defn:asym}.
% of $(z_1,z_2,k)$-asymmetric partition from \cref{defn:asym}. Let $\mathcal{Q}_{z_1,z_2,k}$ be the set of $(z_1,z_2,k)$-asymmetric partitions and $\mathcal{Q}^{(t)}_{z_1,z_2,k}$ be the set of $(z_1,z_2,k)$-asymmetric $t$-cores.
\begin{lem}
\label{condd}
Let $\lambda
% =(\alpha|\beta)
$ be a partition of length at most $\ell$ and rank $r$.
Then the following statements are equivalent.
\begin{enumerate}
    \item 
    \label{condd1QQ} 
    $\lambda \in \mathcal{Q}_{z_1,z_2,k}$.
\item 
\label{condd2QQ}
 $\beta(\lambda,\ell)$ is obtained from the sequence $(\alpha_1+\ell,\dots,\alpha_r+\ell, \ell-1,\dots,1,0)$
by deleting the numbers $\ell-1-z_2> \ell-1-z_1-\alpha_r > \dots> \ell-1-z_1-\alpha_{k+1} > \ell-1-z_1-\alpha_{k-1} > \dots > \ell-1-z_1-\alpha_1$.             
\end{enumerate}
\end{lem}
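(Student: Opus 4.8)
The plan is to unwind both conditions in terms of the Frobenius coordinates of $\lambda$ and show they describe the same beta-set. Recall that if $\lambda$ has rank $r$ and Frobenius coordinates $(\alpha \mid \gamma)$, then a standard fact (which I would first record, or cite from the manipulation of beta-sets in \cite{macdonald-2015} or \cite{ayyer-2021}) is that for $\ell \geq \ell(\lambda)$ the beta-set $\beta(\lambda,\ell)$ consists of the $r$ large numbers $\alpha_i + \ell$ for $1 \leq i \leq r$ together with the $\ell - r$ numbers in $\{0,1,\dots,\ell-1\}$ that are \emph{not} of the form $\ell - 1 - \gamma_j$ for $1 \leq j \leq r$. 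Equivalently, $\beta(\lambda,\ell)$ is obtained from the staircase $(\alpha_1+\ell,\dots,\alpha_r+\ell,\ell-1,\ell-2,\dots,1,0)$ by deleting exactly the $r$ entries $\ell-1-\gamma_1 > \dots > \ell-1-\gamma_r$ (these lie in $[0,\ell-1]$ because $\gamma_j \le \ell-1$, and they are distinct from the $\alpha_i+\ell$ since the latter are $\ge \ell$). This reduces the lemma to a purely combinatorial comparison of which entries get deleted.

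Next I would substitute the hypothesis $\lambda \in \mathcal{Q}_{z_1,z_2,k}$, i.e. $\gamma = (\alpha_1+z_1,\dots,\widehat{\alpha_k+z_1},\dots,\alpha_r+z_1, z_2)$ as a strict partition. The point is that $\gamma$ has $r$ parts: it takes the $r-1$ values $\alpha_i + z_1$ for $i \neq k$, plus the extra value $z_2$. So the set of deleted entries $\{\ell-1-\gamma_j\}$ is exactly $\{\,\ell-1-z_1-\alpha_i : 1\le i\le r,\ i\neq k\,\}\cup\{\ell-1-z_2\}$. Writing these in decreasing order and using $z_1 > z_2 \geq 0$ together with $\alpha_r \geq 0$ (so $\ell-1-z_2 > \ell - 1 - z_1 \geq \ell - 1 - z_1 - \alpha_r$), one gets precisely the chain $\ell-1-z_2 > \ell-1-z_1-\alpha_r > \dots > \ell-1-z_1-\alpha_{k+1} > \ell-1-z_1-\alpha_{k-1} > \dots > \ell-1-z_1-\alpha_1$ displayed in \ref{condd2QQ}. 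This gives \ref{condd1QQ} $\Rightarrow$ \ref{condd2QQ}.

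For the converse, I would run the same computation backwards: given that $\beta(\lambda,\ell)$ is the staircase with those specific entries deleted, the general beta-set description forces the large parts to be $\alpha_1+\ell > \dots > \alpha_r+\ell$, hence these are the Frobenius $\alpha$-coordinates, and forces $\{\ell-1-\gamma_j\}_{j=1}^{r}$ to equal the deleted set, hence $\gamma = (\alpha_1+z_1,\dots,\widehat{\alpha_k+z_1},\dots,\alpha_r+z_1,z_2)$ as a multiset; strictness of $\gamma$ (automatic since it is a Frobenius coordinate) and the decreasing order pin down the claimed form, so $\lambda \in \mathcal{Q}_{z_1,z_2,k}$. The only real thing to be careful about — and the step I expect to be the main obstacle — is the bookkeeping that ensures the deleted numbers genuinely interleave correctly with the staircase and that none of them coincides with a surviving entry or with one of the large parts $\alpha_i+\ell$; this is where the inequalities $z_1 > z_2 \geq 0$, $\alpha$ strict, and $\ell \geq \ell(\lambda)$ all get used, and one must check in particular that $\ell - 1 - z_2$ really is the largest deleted entry and that omitting $\alpha_k + z_1$ from $\gamma$ corresponds exactly to \emph{not} deleting $\ell-1-z_1-\alpha_k$ (equivalently, to that entry surviving in $\beta(\lambda,\ell)$). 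Once the dictionary between "omit $\alpha_k+z_1$ from the $\gamma$-coordinates" and "keep $\ell-1-z_1-\alpha_k$ in the beta-set" is stated cleanly, the equivalence is immediate.
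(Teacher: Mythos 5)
Your proposal is correct, and it reaches the same identity as the paper but by a cleaner route. The paper's proof is a direct computation: it writes out a $(z_1,z_2,k)$-asymmetric partition explicitly in one-line notation from its Frobenius coordinates (a long display with repeated blocks of equal parts) and then transcribes that into an explicit listing of $\beta(\lambda,\ell)$, from which one reads off which staircase entries are missing. You instead invoke the standard complementarity between $\beta(\lambda,\ell)$ and the set $\{\ell-1-\gamma_j\}_{j=1}^{r}$ built from the Frobenius $\gamma$-coordinates — namely that $\beta(\lambda,\ell)$ consists of the large entries $\alpha_i+\ell$ together with $\{0,\dots,\ell-1\}\setminus\{\ell-1-\gamma_j\}$ — and then simply substitute $\gamma=(\alpha_1+z_1,\dots,\widehat{\alpha_k+z_1},\dots,\alpha_r+z_1,z_2)$. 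That fact is indeed standard (it follows from the complementarity of $\{\lambda_i+\ell-i\}$ and $\{\ell-1+j-\lambda'_j\}$ in Macdonald), and once it is recorded both implications are immediate, including the converse: the entries $\ge \ell$ recover $\alpha$ and the deleted entries recover $\gamma$, which pins down $\lambda$. Your bookkeeping on the ordering is also right: $\ell-1-z_2>\ell-1-z_1-\alpha_r$ follows from $z_1>z_2$ and $\alpha_r\ge 0$, and $\ell-1-z_1-\alpha_1\ge 0$ follows from $\gamma_1\le \ell(\lambda)-1\le \ell-1$. What your approach buys is that the "main obstacle" you flag — checking that the deleted entries interleave correctly and miss the surviving ones — is absorbed entirely into the cited standard fact, so nothing remains to verify beyond sorting the deleted set; the paper pays for avoiding that citation with the lengthy explicit display. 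Either write-up is acceptable; yours would be shorter provided the beta-set/Frobenius dictionary is stated precisely once.
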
  
\begin{proof} 
First, note that $\lambda \in \mathcal{Q}_{z_1,z_2,k}$ if and only if $\lambda$ is of the form
\begin{multline*}
    \lambda=(\alpha_1+1,\dots,\alpha_r+r,\underbrace{r,\dots,r}_{z_2},\underbrace{r-1,\dots,r-1}_{\alpha_r+z_1-z_2-1},\underbrace{r-2,\dots,r-2}_{\alpha_{r-1}-\alpha_{r}-1},\dots,\underbrace{k,\dots,k}_{\alpha_{k+1}-\alpha_{k+2}-1},\\
    \underbrace{k-1,\dots,k-1}_{\alpha_{k-1}-\alpha_{k+1}-1},
    \underbrace{k-2,\dots,k-2}_{\alpha_{k-2}-\alpha_{k-1}-1} \dots,\underbrace{1,\dots,1}_{\alpha_{1}-\alpha_{2}-1}).
\end{multline*}
In that case, its beta set reads as:
\begin{multline*}
    \beta(\lambda,\ell)=(\alpha_1+\ell,\dots,\alpha_r+\ell,
    \underbrace{\ell-1,\dots,\ell-z_2}_{z_2},
    \underbrace{\ell-z_2-2,\dots,
    \ell-(\alpha_r+z_1)}_{\alpha_r+z_1-z_2-1},\\
    \widehat{\ell-\alpha_r-z_1-1},
    \underbrace{\ell-\alpha_r-z_1-2,\dots,\ell-(\alpha_{r-1}+z_1)}_{\alpha_{r-1}-\alpha_{r}-1},
    \widehat{\ell-\alpha_{r-1}-z_1-1},\\
    \dots,  
    \widehat{\ell-\alpha_{k+2}-z_1-1},
    \underbrace{\ell-\alpha_{k+2}-z_1-2,\dots,\ell-(\alpha_{k+1}+z_1)}_{\alpha_{k+1}-\alpha_{k+2}-1},
    \widehat{\ell-\alpha_{k+1}-z_1-1},\\ \underbrace{\ell-\alpha_{k+1}-z_1-2,\dots,\ell-\alpha_k-z_1,\dots,\ell-(\alpha_{k-1}+z_1)}_{\alpha_{k-1}-\alpha_{k+1}-1},
    \widehat{\ell-\alpha_{k-1}-z_1-1}\\
    \underbrace{\ell-\alpha_{k-1}-z_1-2,\dots,\ell-(\alpha_{k-2}+z_1)}_{\alpha_{k-2}-\alpha_{k-1}-1},\widehat{\ell-\alpha_{k-2}-z_1-1},
    \dots,\widehat{\ell-\alpha_{2}-z_1-1},\\
    \underbrace{\ell-\alpha_{2}-z_1-2,\dots,\ell-(\alpha_{1}+z_1)}_{\alpha_{1}-\alpha_{2}-1},
    \widehat{\ell-\alpha_{1}-z_1-1},\underbrace{\ell-\alpha_{1}-z_1-2,\dots,0}_{\ell-\alpha_{1}-z_1-1}).
\end{multline*} 
% where a hat on a coordinate denotes its omission from the tuple.
So, \cref{condd1QQ} and \cref{condd2QQ} are equivalent. 
% See \cref{fig:z-asymmetric}(a) and (b) for the last few rows of the Young diagram of a $z$-asymmetric partition and its beta set.
\end{proof}
\begin{lem} 
\label{lem:converse: sym}
Let $\lambda$ be a $t$-core of length at most $tn+1$ and $0 < z+2 \leq t+2$. Then for $i_0 \in [0,\floor{\frac{t-z-1}{2}}] \cup [t-z,t-1]$,
\begin{equation}
\begin{split}
  \label{n_{t-1}} 
    n_{i}(\lambda)+n_{t-z-1-i}(\lambda) =& 
    \begin{cases}
    2n+1+\delta_{i_0,\frac{t-z-1}{2}} & \text{ if } i=i_0,\\
    2n & \text{ otherwise},
    \end{cases}
    \quad \text{  for} \quad  0 \leq i \leq  \floor{\frac{t-z-1}{2}}, \\ 
    \text{and} \quad n_{i}(\lambda) =& \begin{cases}
    n+1 & \text{ if } i=i_0,\\
    n & \text{ otherwise},
    \end{cases} \quad \text{  for} \quad t-z \leq i \leq t-1,
\end{split}
\end{equation}
if and only if $\lambda \in \mathcal{Q}^{(t)}_{z+2,0,k} \cup \mathcal{Q}^{(t)}_{z+2,z+1,k}$ for some $1 \leq k \leq \rk(\lambda)$.
\end{lem}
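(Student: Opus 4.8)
The plan is to prove \cref{lem:converse: sym} by translating the condition on the numbers $n_i(\lambda)$ into an explicit description of the beta-set $\beta(\lambda) = \beta(\lambda, tn+1)$, using the fact that $\lambda$ is a $t$-core, and then matching this against the characterization of $(z+2,0,k)$- and $(z+2,z+1,k)$-asymmetric partitions provided by \cref{condd}. Since $\lambda$ is a $t$-core of length at most $tn+1$, by \cref{prop:mcd-t-core-quo}(1) its beta-set consists, for each residue $i \in \{0,\dots,t-1\}$, of exactly the numbers $i, i+t, \dots, i + t(n_i(\lambda)-1)$; so the data $(n_0(\lambda),\dots,n_{t-1}(\lambda))$ determines $\beta(\lambda)$ completely, subject to $\sum_i n_i(\lambda) = tn+1$. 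The pairing condition \eqref{n_{t-1}} says that $n_i + n_{t-z-1-i} = 2n$ for $i \neq i_0$ in the ``paired'' range and $n_i = n$ for $i \neq i_0$ in the ``unpaired'' range $[t-z,t-1]$, with a single excess of $1$ (or $1+\delta$ in the self-paired case) located at $i_0$. The count $\sum_i n_i = tn+1$ is automatically consistent with this.

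First I would set up the forward direction. Assuming \eqref{n_{t-1}} holds, I would write out $\beta(\lambda)$ residue-block by residue-block. For residues $i$ with $t-z \le i \le t-1$, $i \neq i_0$, the block is $i, i+t, \dots, i+t(n-1)$; for residues in a pair $\{i, t-z-1-i\}$ with $i \neq i_0$, the two block sizes add to $2n$; and at $i_0$ there is one extra element. The key computation is to identify which integers in the range $[0, tn+t]$ (or the appropriate truncation to $\beta(\emptyset,tn+1) = (tn,\dots,1,0)$ enlarged suitably) are \emph{missing} from $\beta(\lambda)$ relative to the ``staircase'' — because \cref{condd}(2) characterizes $\mathcal{Q}_{z_1,z_2,k}$ by exactly such a list of deleted numbers $\ell-1-z_2 > \ell-1-z_1-\alpha_r > \dots$. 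I would extract the strict partition $\alpha$ (the Frobenius $\alpha$-coordinates) directly from the ``excess'' parts of $\beta(\lambda)$ — those congruent to $i_0$ lying above the bulk — and then verify that the deleted positions match the arithmetic progression structure forced by pairing. The role of $z_2 \in \{0, z+1\}$ is dictated by the residue class of the special deleted number $\ell - 1 - z_2$ modulo $t$: the parity/residue of $i_0$ relative to the self-paired index $(t-z-1)/2$ decides whether $z_2 = 0$ or $z_2 = z+1$, which is exactly why the two cases $\mathcal{Q}^{(t)}_{z+2,0,k}$ and $\mathcal{Q}^{(t)}_{z+2,z+1,k}$ appear together.

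For the converse, I would start from $\lambda \in \mathcal{Q}^{(t)}_{z+2,0,k} \cup \mathcal{Q}^{(t)}_{z+2,z+1,k}$, use \cref{condd}(2) with $\ell = tn+1$ to write down $\beta(\lambda)$ as the staircase minus the explicit deleted list, and then simply count, for each residue $i \bmod t$, how many elements survive. The deleted numbers $\ell-1-z_1-\alpha_j$ for $j \neq k$ together with $\ell-1-z_2$ come in residues that, because $z_1 = z+2$ and consecutive $\alpha_j$ differences, are distributed so that deleting one element of residue $i$ forces deleting one of residue $t-z-1-i$ (the $\alpha_j$ and $\alpha_j + z_1$ appear symmetrically as surviving parts $\alpha_j + \ell$ while $\ell-1-z_1-\alpha_j$ is deleted); this is what produces $n_i + n_{t-z-1-i} = 2n$. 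The single unmatched deletion $\ell-1-z_2$ (not paired with any surviving large part in the same way) accounts for the $+1$ excess at a single index $i_0$, whose residue I would compute explicitly as a function of $z_2$, $k$, and $\alpha_k$.

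The main obstacle will be the careful bookkeeping of residues and block sizes: making sure that the ``staircase minus deletions'' picture and the ``residue blocks of a $t$-core'' picture are reconciled without off-by-one errors, especially at the boundary cases $i_0 = (t-z-1)/2$ (self-paired, giving the $\delta_{i_0,(t-z-1)/2}$ term) and $i_0 \in [t-z, t-1]$ (in the unpaired range). I expect the cleanest route is to reduce to \cref{lem:sym-tn} / \cref{lem:symp-1} (the $z=-1$, i.e.\ $(1,0,0)$ case already handled) by an inductive or shifting argument on $z$: adding $1$ to $z$ shifts the whole pairing by one residue and introduces one more column structure, so the general statement should follow from the base case by tracking how $\beta(\lambda, \ell)$ and the deleted list transform under $z \mapsto z+1$. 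If that shifting argument turns out to be awkward, I would instead give the direct residue count sketched above, relegating the explicit verification of the $i_0$ residue formula and the $z_2$-dichotomy to a short case analysis.
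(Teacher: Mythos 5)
Your primary plan---translating \eqref{n_{t-1}} into an explicit description of $\beta(\lambda,tn+1)$ and matching it in both directions against the deleted-numbers characterization of \cref{condd}---is essentially the paper's own proof, which likewise uses \cref{lem:rank-core} to identify $\rk(\lambda)$ with the number of beta-parts exceeding $tn$ and, in the converse, a successive descent on the residues $\theta_i$ of the deleted numbers $tn-z-2-\alpha_i$ to show $\theta_i\in[0,t-z-2]$ for $i\neq k$. Two small cautions: the Frobenius $\alpha$-coordinates arise from \emph{all} residues $i$ with $n_i(\lambda)>n$ (one such residue can occur in each pair $\{i,t-z-1-i\}$), not only from parts congruent to $i_0$, and your speculative reduction to the $z=-1$ case by shifting is not what the paper does and would be awkward for $k\geq 1$---your fallback direct residue count is the right route.
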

\begin{proof} 
Assume \eqref{n_{t-1}} holds for $\lambda$. 
% If $\lambda$ is the empty partition, then it belongs to $\mathcal{P}_{z,t}$ vacuously. Let $\lambda$ be non-empty. 
Suppose we have $0 \leq i_m < i_{m-1} < \dots < i_1 \leq t-z-1$ 
such that $n_{i_j}(\lambda)>n$ for all $j \in [m]$. Since $\lambda$ is a $t$-core, for each $j$, the parts of  $\beta(\lambda)$ greater than and equal to $tn$ are:
\[
i_j+t(n_{i_j}(\lambda)-1)>\dots>i_j+t(n+1)>i_j+tn
\]
Note that by \cref{lem:rank-core}, the rank $r$ of $\lambda$ is same as the number of parts of  $\beta(\lambda)$ greater than $tn$. Let $\gamma_s$, $1 \leq s \leq r$ be the sequence of these parts greater than $tn$ arranged in decreasing order. Note that $\gamma_s=\alpha_s+tn+1$ for some $\alpha_s>0$, $1 \leq s \leq r$.
Since $n_{t-z-1-i_j}(\lambda) \leq n$ for $j \in [m]$, $i_j \neq \frac{t-z-1}{2}$, the parts of $\beta(\lambda)$ lesser than $tn$ are obtained from the sequence $(tn-1,tn-2,\dots,0)$ by deleting the numbers
\[
t(n_{t-z-1-i_j}(\lambda)+1)-i_j-z-1< t(n_{t-z-1-i_j}(\lambda)+2)-z-i_j-1< \dots< tn-i_j-z-1.
\]

Suppose $i_0 \in [0,t-z-1]$. Then either $n_{i_0}(\lambda) >n$, or $n_{t-z-1-i_0}(\lambda)>n$. If $n_0(\lambda) \geq n$, then $tn \in \beta(\lambda)$, and the deleted numbers are $tn-z-1$, $2tn-z-1-\gamma_s$,  $s \in [r]$, $\gamma_s \neq i_0+t(n_{i_0}(\lambda)-1)$ or $t-z-1-i_0+t(n_{t-z-1-i_0}(\lambda)-1)$. So,   
$\beta(\lambda,tn+1)$ is obtained from the sequence $(\alpha_1+tn+1,\dots,\alpha_r+tn+1,tn,\dots,1,0)$
by deleting the numbers $tn-z-1>tn-z-2-\alpha_r > \dots> tn-z-2-\alpha_{k+1} > tn-z-2-\alpha_{k-1} > \dots > tn-z-2-\alpha_1$. Therefore by \cref{condd}, $\lambda \in \mathcal{Q}_{z+2,z+1,k}$. Here $k$ is the position of $t(n_{i_0}(\lambda)-1)+i_0$ or $t-z-1-i_0+t(n_{t-z-1-i_0}(\lambda)-1)$ in $\beta(\lambda,tn+1)$, because their counterpart $2tn-z-1-t(n_{i_0}(\lambda)-1)-i_0
=t\,n_{t-z-1-i_0}(\lambda)-i_0-z-1$ or $2tn-z-1-t+z+1+i_0-t(n_{t-z-1-i_0}(\lambda)-1)=t(n_{i_0}(\lambda)-1)+i_0$ weren't removed from the sequence  $(tn-1,tn-2,\dots,0)$. If $n_0(\lambda) < n$, then
$\beta(\lambda,tn+1)$ is obtained from the sequence $(\alpha_1+tn+1,\dots,\alpha_r+tn+1, tn-1,\dots,1,0$)
by deleting the numbers $tn-z-2-\alpha_r > \dots> tn-z-2-\alpha_{k+1} > tn-z-2-\alpha_{k-1} > \dots > tn-z-2-\alpha_1$. Therefore by \cref{condd}, $\lambda \in \mathcal{Q}_{z+2,0,k}$.

Suppose $i_0 \in [t-z, t-1]$. In this case $n_{i_0}(\lambda)=n+1$. If $n_0(\lambda) \geq n$, then $tn \in \beta(\lambda)$. 
% rank=$r_1+r_2+1$. $r_2<$
and $\beta(\lambda,tn+1)$ is obtained from the sequence $(\alpha_1+tn+1,\dots,\alpha_r+tn+1, tn,tn-1,\dots,1,0$)
by deleting the numbers $tn-z-1> tn-z-2-\alpha_r > \dots> tn-z-2-\alpha_{k+1} > tn-z-2-\alpha_{k-1} > \dots > tn-z-2-\alpha_1$. So by \cref{condd}, $\lambda \in \mathcal{Q}_{z+2,z+1,k}$.
If $n_0(\lambda) \leq n$, then
$\beta(\lambda,tn+1)$ is obtained from the sequence $(\alpha_1+tn+1,\dots,\alpha_r+tn+1, tn-1,\dots,1,0$)
by deleting the numbers $tn-z-2-\alpha_r > \dots> tn-z-2-\alpha_{k+1} > tn-z-2-\alpha_{k-1} > \dots > tn-z-2-\alpha_1$. So, $\lambda \in \mathcal{Q}_{z+2,0,k}$.

Conversely, suppose $\lambda \in \mathcal{Q}_{z+2,0,k}$ and $\rk(\lambda)=r$.
By \cref{condd},  $\beta(\lambda)$ is obtained from the sequence $(\alpha_1+tn+1,\dots,\alpha_r+tn+1, tn-1,\dots,1,0$) by deleting the numbers 
$tn-z-2-\alpha_r > \dots >tn-z-2-\alpha_{k+1} >tn-z-2-\alpha_{k-1} > \dots > tn-z-2-\alpha_1$. 
% Since $n_{i}(\emptyset,tn) = n$ for all $i$, \eqref{n_{t-1}} trivially holds for the empty partition.
Note that if $tn-z-2-\alpha_i \equiv \theta_i \pmod{t}$, then $\alpha_i+tn+1  \equiv t-z-1-\theta_i \pmod{t}$ for $i \in [r]$. In that case $n_{t-z-1-\theta_i}(\lambda)$ increases by one and $n_{\theta_i}(\lambda)$ decreases by one. Since $\lambda$ is a $t$-core and $tn \not \in \beta(\lambda)$, $\theta_i$, for all $i \in [r]$, $i \neq k$ can not be equal to $t-z-1$. If
\[
i_0 = \begin{cases}
t-z-1-\theta_k   & \text{if } t-z-1-\theta_k \in [0,\floor{\frac{t-z-1}{2}}] \cup [t-z,t-1], \\
\theta_k & \text{otherwise},
\end{cases}
\]
then it is suffices to show that $\theta_i \in [0,t-z-2]$, for each $i \in [r] \setminus \{k\}$, to prove \eqref{n_{t-1}}.
We prove this successively in reverse order starting from $\theta_r$ and going all the way to $\theta_1$. Since $\lambda$ is a $t$-core, if $tn-z-2-\alpha_r$ does not occur in $\beta(\lambda)$, then neither does $tn-z-2-\alpha_r+t$. 
Since $tn-z-2-\alpha_r$ is the largest number deleted  from $(tn-1,\dots,0)$ to get $\beta(\lambda)$, $tn-z-2-\alpha_r+t \geq tn$. So,
$\alpha_r+z+2 \in [z+2,t]$; and $\theta_r \in [0,t-z-2]$.
There is nothing to show if $\theta_{r-1}=\theta_r$. So, 
assume $\theta_{r-1} \neq \theta_{r}$. Similarly, since $\lambda$ is a $t$-core, if $tn-z-2-\alpha_{r-1}$ does not occur in $\beta(\lambda)$, then neither does $tn-z-2-\alpha_{r-1}+t$. 
Since $tn-z-2-\alpha_{r-1}$ is the largest number congruent to $\theta_{r-1}$ deleted from $(tn-1,\dots,0)$ to get $\beta(\lambda)$, 
$\alpha_{r-1}+z+2 \in [z+2,t]$ and $\theta_{r-1} \in [0,t-z-2]$.
Proceeding in this manner, $\theta_i \in [0,t-z-2]$ for all $i \in [r] \setminus \{k\}$. 

It is easy to see that $\lambda \in \mathcal{Q}_{z+2,z+1,k}$ implies \eqref{n_{t-1}} using similar arguments.

\end{proof}

% \begin{defn}
% Associate $\lambda$ to $i_0(\lambda)$. For empty $t$-core, assume $i_0(\lambda)=0.$
% \end{defn}

The following three corollaries are needed in the proofs of the main results. These are easily shown by applying \cref{lem:converse: sym} for $z = -1,0,1$ respectively, and using the facts that $\ell(\core \lambda t) \leq \ell(\lambda) \leq tn+1$ 
and \eqref{no-parts-partition=core} for $m=tn+1$.

\begin{cor}
\label{cor:z=-1}
Let $\lambda$ be a partition of length at most $tn+1$, and $i_0 \in [1,\floor{\frac{t}{2}}]$. Then
\begin{equation}
  \label{n_{-1}} 
  n_0(\lambda)=n, \quad 
    n_{i}(\lambda)+n_{t-i}(\lambda) =
    \begin{cases}
    2n+1+\delta_{i_0,\frac{t}{2}} & \text{ if } i=i_0\\
    2n & \text{ otherwise}
    \end{cases}
    \quad \text{for} \quad  1 \leq i \leq  \floor{\frac{t}{2}}, 
\end{equation}
if and only if $\core \lambda t \in \mathcal{Q}^{(t)}_{1,0,k}$ for some $1 \leq k \leq \rk(\core \lambda t)$.
\end{cor}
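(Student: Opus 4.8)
The plan is to derive \cref{cor:z=-1} as a direct specialization of \cref{lem:converse: sym}. Recall that \cref{lem:converse: sym} is stated for a $t$-core $\lambda$ of length at most $tn+1$ and for $0 < z+2 \leq t+2$, with index $i_0$ ranging over $[0,\floor{\frac{t-z-1}{2}}] \cup [t-z,t-1]$. Setting $z = -1$, the hypothesis $0 < z+2 = 1 \leq t+2$ holds for every $t \geq 2$; the range $t-z = t+1 \leq i \leq t-1$ is empty, so the second family of conditions in \eqref{n_{t-1}} disappears entirely, and only the first family for $0 \leq i \leq \floor{\frac{t}{2}}$ survives. Moreover $t-z-1-i = t-i$, so \eqref{n_{t-1}} becomes precisely the statement that $n_i(\lambda)+n_{t-i}(\lambda) = 2n$ for $i \neq i_0$ (in $[0,\floor{t/2}]$) and $= 2n+1+\delta_{i_0,t/2}$ for $i = i_0$. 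The $i=0$ instance reads $2 n_0(\lambda) = 2n$ when $i_0 \neq 0$, i.e. $n_0(\lambda) = n$; and since $\lambda$ is a $t$-core of length at most $tn+1$ with $\ell(\lambda)=tn+1$ forcing $n_0(\lambda)=0$, the case $i_0 = 0$ is incompatible with the length bound (or can simply be excluded, which is why the corollary restricts $i_0 \in [1,\floor{t/2}]$). Thus for $i_0 \in [1,\floor{t/2}]$, \eqref{n_{t-1}} with $z=-1$ is exactly \eqref{n_{-1}}, and \cref{lem:converse: sym} gives the equivalence with $\lambda \in \mathcal{Q}^{(t)}_{1,0,k} \cup \mathcal{Q}^{(t)}_{0,-1,k}$; but $z_2 = z+1 = 0$ here, and $\mathcal{Q}_{z_1,z_2,k}$ is only defined for $z_1 > z_2 \geq 0$, so $\mathcal{Q}^{(t)}_{0,-1,k}$ is empty (there is no $(0,-1,k)$-asymmetric partition), leaving just $\mathcal{Q}^{(t)}_{1,0,k}$.

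The second ingredient is to pass from a statement about $t$-cores to a statement about arbitrary partitions $\lambda$ of length at most $tn+1$. Here I would invoke \eqref{no-parts-partition=core}, which says $n_i(\lambda,m) = n_i(\core \lambda t, m)$ for all $i$, applied with $m = tn+1$. Since $\ell(\core \lambda t) \leq \ell(\lambda) \leq tn+1$, the $t$-core $\core \lambda t$ is itself a $t$-core of length at most $tn+1$, so \cref{lem:converse: sym} (equivalently its $z=-1$ reduction above) applies to it. The numerical conditions \eqref{n_{-1}} on $\lambda$ are literally the same as the conditions on $\core \lambda t$ by \eqref{no-parts-partition=core}, and membership $\core \lambda t \in \mathcal{Q}^{(t)}_{1,0,k}$ is a condition on the core only; hence the equivalence transfers verbatim. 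This is exactly the recipe indicated in the sentence preceding the corollary ("these are easily shown by applying \cref{lem:converse: sym} for $z=-1,0,1$ respectively, and using the facts that $\ell(\core \lambda t) \leq \ell(\lambda) \leq tn+1$ and \eqref{no-parts-partition=core} for $m = tn+1$").

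I do not anticipate a serious obstacle: the content is entirely contained in \cref{lem:converse: sym}, and the work is bookkeeping — checking that the parameter substitution $z = -1$ collapses the index set and the second condition family correctly, noting that $i_0 = 0$ must be excluded because $n_0 = n$ cannot coexist with the extremal length (and that $\mathcal{Q}^{(t)}_{0,-1,k} = \emptyset$), and confirming via \eqref{no-parts-partition=core} that the core and the partition satisfy identical hypotheses. The one point requiring a line of care is the $\delta_{i_0,t/2}$ term, which is only active when $t$ is even and $i_0 = t/2$; in \cref{lem:converse: sym} this came from $i_0 = \frac{t-z-1}{2}$, and with $z=-1$ that is $i_0 = t/2$, so the two match. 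Therefore the proof is short: state that by \eqref{no-parts-partition=core} with $m = tn+1$ and $\ell(\core \lambda t) \leq tn+1$ we may assume $\lambda$ is a $t$-core, then apply \cref{lem:converse: sym} with $z = -1$, observe that the range $t-z \leq i \leq t-1$ is empty and that $z_2 = 0$ kills the $\mathcal{Q}^{(t)}_{z+2,z+1,k} = \mathcal{Q}^{(t)}_{1,0,k}$ redundancy while $\mathcal{Q}^{(t)}_{0,-1,k}$ is vacuous, and restrict to $i_0 \in [1,\floor{t/2}]$ since $i_0 = 0$ forces the contradiction noted above. \qedhere
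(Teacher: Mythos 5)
Your proposal is correct and follows exactly the route the paper indicates (it gives no further detail itself): specialize \cref{lem:converse: sym} to $z=-1$, observe that the second index range $[t-z,t-1]$ is then empty so only the conditions for $0\le i\le\floor{t/2}$ survive, and transfer between $\lambda$ and $\core{\lambda}{t}$ via \eqref{no-parts-partition=core} with $m=tn+1$. Two cosmetic slips worth fixing: the second set in the lemma's union is $\mathcal{Q}^{(t)}_{z+2,z+1,k}=\mathcal{Q}^{(t)}_{1,0,k}$, which simply coincides with the first set (not $\mathcal{Q}^{(t)}_{0,-1,k}$), and the cleanest reason $i_0=0$ cannot occur is parity --- reading the index $t-0$ modulo $t$, the case $i=i_0=0$ would force $2n_0(\lambda)=2n+1$, which is impossible.
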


\begin{cor}
\label{cor:z=0}
Let $\lambda$ be a partition of length at most $tn+1$, and $i_0 \in [0,\floor{\frac{t-1}{2}}]$. Then
\begin{equation}
  \label{n_{00}} 
    n_{i}(\lambda)+n_{t-1-i}(\lambda) = 
    \begin{cases}
    2n+1+\delta_{i_0,\frac{t-1}{2}} & \text{ if } i=i_0\\
    2n & \text{ otherwise}
    \end{cases}
    \quad \text{for} \quad  0 \leq i \leq  \floor{\frac{t-1}{2}}, 
\end{equation}
if and only if $\core \lambda t \in \mathcal{Q}^{(t)}_{2,0,k} \cup \mathcal{Q}^{(t)}_{2,1,k}$ for some $1 \leq k \leq \rk(\core \lambda t)$.
\end{cor}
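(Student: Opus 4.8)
The plan is to obtain \cref{cor:z=0} as the $z=0$ specialization of \cref{lem:converse: sym}, after transporting the characterization from $\core{\lambda}{t}$ to $\lambda$. First I would note that $\core{\lambda}{t}$ is a $t$-core whose length satisfies $\ell(\core{\lambda}{t}) \le \ell(\lambda) \le tn+1$, so \cref{lem:converse: sym} is applicable to it for any admissible $z$; in particular $z=0$ is allowed since $0 < z+2 = 2 \le t+2$ for every $t \ge 2$. Applied to $\core{\lambda}{t}$, the lemma says that for $i_0$ ranging over $[0,\floor{\frac{t-1}{2}}] \cup [t,t-1]$, the pair of conditions
\[
    n_{i}(\core{\lambda}{t})+n_{t-1-i}(\core{\lambda}{t}) =
    \begin{cases}
    2n+1+\delta_{i_0,\frac{t-1}{2}} & i=i_0,\\
    2n & \text{otherwise},
    \end{cases}
    \qquad 0 \le i \le \floor{\tfrac{t-1}{2}},
\]
together with $n_i(\core{\lambda}{t}) = n + \delta_{i,i_0}$ for $t \le i \le t-1$, is equivalent to $\core{\lambda}{t} \in \mathcal{Q}^{(t)}_{2,0,k} \cup \mathcal{Q}^{(t)}_{2,1,k}$ for some $1 \le k \le \rk(\core{\lambda}{t})$.

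Next I would simplify the bookkeeping forced by $z=0$: the extra index interval $[t-z,t-1] = [t,t-1]$ is empty, so $i_0$ ranges exactly over $[0,\floor{\frac{t-1}{2}}]$ as in the corollary, and the auxiliary block of equalities (indexed by $t-z \le i \le t-1$) is vacuous; hence the displayed condition is the \emph{entire} numerical hypothesis. Finally I would invoke \eqref{no-parts-partition=core} with $m = tn+1$, that is $n_i(\lambda,tn+1) = n_i(\core{\lambda}{t},tn+1)$ for $0 \le i \le t-1$, to replace every occurrence of $n_i(\core{\lambda}{t})$ by $n_i(\lambda)$. This turns the displayed condition into precisely \eqref{n_{00}}, and the equivalence of \cref{cor:z=0} follows.

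There is essentially no hard step: all the substance lives in \cref{lem:converse: sym}, and the proof here is purely a matter of specialization. The only points needing care are the bookkeeping ones — verifying that the auxiliary family of conditions in \cref{lem:converse: sym} collapses to nothing at $z=0$, checking the admissibility bound $0 < z+2 \le t+2$, and observing that the rank bound in the conclusion is $\rk(\core{\lambda}{t})$ rather than $\rk(\lambda)$ because the lemma is being applied to the $t$-core $\core{\lambda}{t}$ itself. The same argument with $z=-1$ and $z=1$ yields \cref{cor:z=-1} and its symplectic analogue verbatim.
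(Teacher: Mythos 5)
Your proposal is correct and is essentially identical to the paper's own argument: the paper likewise obtains \cref{cor:z=0} by applying \cref{lem:converse: sym} with $z=0$ to $\core{\lambda}{t}$ (valid since $\ell(\core{\lambda}{t})\le\ell(\lambda)\le tn+1$) and then transferring the counts via \eqref{no-parts-partition=core} with $m=tn+1$. Your observations that the interval $[t-z,t-1]$ empties out at $z=0$ and that the rank bound refers to $\rk(\core{\lambda}{t})$ are exactly the bookkeeping the paper leaves implicit.
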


\begin{cor}
\label{cor:z=1}
Let $\lambda$ be a partition of length at most $tn+1$, and $i_0 \in [0,\floor{\frac{t-2}{2}}] \cup \{t-1\}$. Then 
\begin{equation}
\begin{split}
  \label{n_{11}} 
    n_{i}(\lambda)+n_{t-2-i}(\lambda) =& 
    \begin{cases}
    2n+1+\delta_{i_0,\frac{t-2}{2}} & \text{ if } i=i_0\\
    2n & \text{ otherwise}
    \end{cases}
    \quad \text{for} \quad  0 \leq i \leq  \floor{\frac{t-2}{2}}, \\ 
    \text{and} \quad n_{t-1}(\lambda) =& \begin{cases}
    n+1 & \text{ if } i_0=t-1\\
    n & \text{ otherwise}
    \end{cases} 
\end{split}
\end{equation}
if and only if $\core \lambda t \in \mathcal{Q}^{(t)}_{3,0,k} \cup \mathcal{Q}^{(t)}_{3,2,k}$ for some $1 \leq k \leq \rk(\core \lambda t)$.
\end{cor}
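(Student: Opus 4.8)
The plan is to reduce the statement to \cref{lem:converse: sym} by specializing $z = 1$ and transferring every numerical condition on $n_i(\lambda)$ to the corresponding condition on the $t$-core $\core \lambda t$. First I would observe that since $\ell(\core \lambda t) \leq \ell(\lambda) \leq tn+1$, the partition $\core \lambda t$ is itself a $t$-core of length at most $tn+1$, so \cref{lem:converse: sym} applies to it. Next I would invoke \eqref{no-parts-partition=core} with $m = tn+1$, which gives $n_i(\lambda) = n_i(\lambda, tn+1) = n_i(\core \lambda t, tn+1) = n_i(\core \lambda t)$ for every $0 \leq i \leq t-1$. Consequently the counts appearing in \eqref{n_{11}} for $\lambda$ coincide with the beta-set counts of $\core \lambda t$, so the condition \eqref{n_{11}} holds for $\lambda$ if and only if the same condition holds for $\core \lambda t$.

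Then I would specialize \cref{lem:converse: sym} to $z = 1$ and check that the parameters line up. With $z = 1$ we have $z + 2 = 3$ and $t - z - 1 = t - 2$, hence $\floor{\frac{t-z-1}{2}} = \floor{\frac{t-2}{2}}$ and $\delta_{i_0,(t-z-1)/2} = \delta_{i_0,(t-2)/2}$, while the tail index range $[t-z, t-1]$ collapses to the single value $\{t-1\}$. The first line of \eqref{n_{t-1}} then reads $n_i(\lambda) + n_{t-2-i}(\lambda) = 2n + 1 + \delta_{i_0,(t-2)/2}$ if $i = i_0$ and $= 2n$ otherwise, for $0 \le i \le \floor{\frac{t-2}{2}}$, and the second line reduces to the single requirement $n_{t-1}(\lambda) = n+1$ if $i_0 = t-1$ and $= n$ otherwise. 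These are exactly the two displays of \eqref{n_{11}}, and the admissible set for $i_0$ becomes $[0, \floor{\frac{t-2}{2}}] \cup \{t-1\}$, matching the hypothesis of the corollary. Correspondingly the right-hand membership $\core \lambda t \in \mathcal{Q}^{(t)}_{z+2,0,k} \cup \mathcal{Q}^{(t)}_{z+2,z+1,k}$ becomes $\core \lambda t \in \mathcal{Q}^{(t)}_{3,0,k} \cup \mathcal{Q}^{(t)}_{3,2,k}$.

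Chaining these equivalences finishes the argument: condition \eqref{n_{11}} on $\lambda$ is equivalent (by \eqref{no-parts-partition=core}) to the same condition on the $t$-core, which in turn is equivalent (by the $z=1$ specialization of \cref{lem:converse: sym}, applied to $\core \lambda t$ with $1 \le k \le \rk(\core \lambda t)$) to $\core \lambda t \in \mathcal{Q}^{(t)}_{3,0,k} \cup \mathcal{Q}^{(t)}_{3,2,k}$. Since every step is a direct substitution, no genuine obstacle arises; the only point demanding care is the bookkeeping of the index shift $t - z - 1 = t - 2$ and the verification that the tail range $[t-z, t-1]$ truly reduces to the single residue $t-1$ when $z = 1$, so that the isolated condition on $n_{t-1}(\lambda)$ in \eqref{n_{11}} is reproduced exactly.
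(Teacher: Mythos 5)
Your proposal is correct and follows exactly the paper's own route: the paper derives \cref{cor:z=1} by applying \cref{lem:converse: sym} with $z=1$ to $\core \lambda t$, using $\ell(\core \lambda t) \leq \ell(\lambda) \leq tn+1$ and the identity \eqref{no-parts-partition=core} with $m = tn+1$ to transfer the counts $n_i$ between $\lambda$ and its $t$-core. Your bookkeeping of the specialization ($t-z-1 = t-2$, the tail range $[t-z,t-1]$ collapsing to $\{t-1\}$, and the membership becoming $\mathcal{Q}^{(t)}_{3,0,k} \cup \mathcal{Q}^{(t)}_{3,2,k}$) is precisely the verification the paper leaves implicit.
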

\subsection{Determinantal Identities}
\label{sec:det} 
Let $\lambda$ be a partition with $\ell(\lambda) \leq tn+1$.
Recall, $\beta_j^{(p)}(\lambda),$ for $0 \leq p \leq t-1$,
$1 \leq j \leq n_{p}(\lambda)$
are the parts of $\beta(\lambda)$ that are congruent to 
$p$ modulo $t$, arranged in decreasing order.
Additionly, for $q \in \mathbb{Z} \cup (\mathbb{Z}+1/2)$,
define $n \times n_{p}(\lambda)$ matrices
\begin{equation}
\label{def AB}
A_{p,q}^{\lambda}=\left(
x_i^{\beta^p_j(\lambda)+q}
\right)_{\substack{1 \leq i \leq n\\1 \leq j \leq n_p(\lambda)}}, 
\bar{A}_{p,q}^{\lambda}=\left(
\x_i^{\beta^p_j(\lambda)+q}
\right)_{\substack{1 \leq i \leq n\\1 \leq j \leq n_p(\lambda)}},
\end{equation}
and $1 \times n_{p}(\lambda)$ matrices
\begin{equation}
\label{def 1AB}
B_{p,q}^{\lambda}=\left(
y^{\beta^p_j(\lambda)+q}
\right)_{1 \leq j \leq n_p(\lambda)}, 
\bar{B}_{p,q}^{\lambda}=\left(
\bar{y}^{\beta^p_j(\lambda)+q}
\right)_{1 \leq j \leq n_p(\lambda)}.
\end{equation}
The corresponding matrices for the empty partition are denoted by
\begin{equation}
\label{def AB0}
A_{p,q}=\left(
x_i^{t(n+\delta_{p,0}-j)+p+q}
\right)_{\substack{1 \leq i \leq n\\1 \leq j \leq n+\delta_{p,0}}}, 
\bar{A}_{p,q}=\left(
\x_i^{t(n+\delta_{p,0}-j)+p+q}
\right)_{\substack{1 \leq i \leq n\\1 \leq j \leq n+\delta_{p,0}}},
\end{equation}
and
\begin{equation}
\label{def 1AB0}
B_{p,q}=\left(
y^{t(n+\delta_{p,0}-j)+p+q}
\right)_{1 \leq j \leq n+\delta_{p,0}}, \qquad
\bar{B}_{p,q}=\left(
y^{t(n+\delta_{p,0}-j)+p+q}
\right)_{1 \leq j \leq n+\delta_{p,0}}.
\end{equation}
In all cases, whenever $q=0$, we will omit it. For example, we will write $A^{\lambda}_{p}$ instead of $A^{\lambda}_{p,0}$.
Recall, the $t$-quotient of $\lambda$ is given by
$\quo \lambda t = (\lambda^{(0)},\dots, \lambda^{(t-1)})$.
We write down alternate formulas for the classical characters using the relation(s):
% \cref{prop:mcd-t-core-quo}(2),
\[
t\beta_j(\lambda^{(p)})=\beta_j^{(p)}(\lambda)-p, \quad 1\leq j\leq n.
\]
Recall, $X^t = (x_1^t,\dots,x_n^t)$. 
% and $Y^t = (x_1^t,\dots,x_n^t,y^t)$. 
If $n_{p}(\lambda) = n$, then using the above notations and formulas/definitions \eqref{gldef}--\eqref{oedef} respectively, we have:
\begin{eqnarray}
\label{E1} & \ \  \text{Schur polynomial}:  \hspace{3.5em}
s_{\lambda^{(p)}}(X^t) & =  \frac{\det A^{\lambda}_{p,-p}}
  {\det \left(x_i^{t(n - j)}\right)_{1 \leq i,j \leq n}},
   \\
  \label{E2} & \ \ \text{symplectic character}:  \hspace{2em}
  \sp_{\lambda^{(p)}}(X^t) & = \frac{\det \left(A^{\lambda}_{p,t-p}-\bar{A}^{\lambda}_{p,t-p}\right)}
  {\det  \left(x_i^{t(n+1 - j)} - \x_i^{t(n+1 - j)}\right)_{1 \leq i,j \leq n}},
  \\
 \label{E3} &\ \ \text{odd orthogonal character}:  \hspace{0.2em} \oo_{\lambda^{(p)}}(X^t)& =  \frac{\det \left(A^{\lambda}_{p,t-p}-\bar{A}^{\lambda}_{p,-p}\right)}
 {\det  \left(x_i^{t(n+1 - j)} - \x_i^{t(n - j)}\right)_{1 \leq i,j \leq n}}, \\
\label{E4}  &\ \ \text{even orthogonal character}:   \oe_{\lambda^{(p)}}(X^t)& =  \frac{2 \det \left(A^{\lambda}_{p,-p}+\bar{A}^{\lambda}_{p,-p}\right)}
  {(1+\delta_{n,0}) \det  \left(x_i^{t(n- j)} - \x_i^{t(n - j)}\right)_{1 \leq i,j \leq n}}.
\end{eqnarray}
If $n_{p}(\lambda) = n+1$, then using formulas \eqref{gldef}--\eqref{oedef} respectively, we have: 

\begin{eqnarray}
   \label{F1} & \text{Schur polynomial}:  \hspace{3.5em} s_{\lambda^{(p)}}(X^t,y^t)
 & = \frac{\det \left(
  \begin{array}{c}
       A^{\lambda}_{p,-p}  \\\hline 
       B^{\lambda}_{p,-p} 
  \end{array} \right)} 
  {\det \left(
  \begin{array}{c}
       A_0  \\\hline 
       B_0
  \end{array} \right)},\\
  \label{F2} & \text{symplectic character}:   \hspace{2em} \sp_{\lambda^{(p)}}(X^t,y^t)
 & = \frac{  \det  \left(
\begin{array}{c}
    A_{p,t-p}^{\lambda}
    -\bar{A}^{\lambda}_{p,t-p}   \\[0.2cm]
    \hline\\[-0.3cm]
     B_{p,t-p}^{\lambda}
     -\bar{B}^{\lambda}_{p,t-p}
\end{array}
 \right)}
 {\det  \left(
\begin{array}{c}
    A_{0,t}-
    \bar{A}_{0,t}   \\[0.2cm]
   \hline \\[-0.3cm] 
     B_{0,t}-
     \bar{B}_{0,t} 
\end{array}
 \right)},\\
\label{oo-new-1} & \text{odd orthogonal character}:  \hspace{0.2em} \oo_{\lambda^{(p)}}(X^t,y^t)
 & = \frac{  \det  \left(
\begin{array}{c}
    A_{p,t-p}^{\lambda}-
    \bar{A}^{\lambda}_{p,-p}   \\[0.2cm]
    \hline\\[-0.3cm]
     B_{p,t-p}^{\lambda}-
     \bar{B}^{\lambda}_{p,-p}
\end{array}
 \right)}
 {\det  \left(
\begin{array}{c}
    A_{0,t}-\bar{A}_{0}   \\[0.2cm]
   \hline \\[-0.3cm] 
     B_{0,t}-\bar{B}_{0} 
\end{array}
 \right)},\\
  \label{F4} & \text{even orthogonal character}:  \oe_{\lambda^{(p)}}(X^t,y^t) 
  &= \frac{ 2 \det  \left(
\begin{array}{c}
    A_{p,-p}^{\lambda}+\bar{A}^{\lambda}_{p,-p}   \\[0.2cm]
    \hline\\[-0.3cm]
     B_{p,-p}^{\lambda}+\bar{B}^{\lambda}_{p,-p}
\end{array}
 \right)}
 { (1+\delta_{n+1,0}) \det  \left(
\begin{array}{c}
    A_0+\bar{A}_0   \\[0.2cm]
   \hline \\[-0.3cm] 
     B_0+\bar{B}_0 
\end{array}
 \right)}.
\end{eqnarray}

\begin{lem}[{\cite[Lemma 3.14]{ayyer-2021}}]
\label{lem:s-new}
Let $\lambda$ be a partition of length at most $tn$ with 
$\quo \lambda t = (\lambda^{(0)},\dots,\lambda^{(t-1)})$. 
If $p,q \in \{0,1,\dots,t-1\}$ such that 
$n_{p}(\lambda)+n_{q}(\lambda)=2n$, then we define
$\rho_{p,q}=\lambda^{(p)}_1 + (\lambda^{(q)}, 0, -\rev(\lambda^{(p)}))$,
where we pad $0'$s in the middle so that $\rho_{p,q}$ is of length $2n$. 
Then the Schur function $s_{\rho_{p,q}}(X^t,{\X}^t)$ can be written as
\begin{equation}
\label{lemmaa}
 s_{\rho_{p,q}}(X^t,{\X}^t)
 = \frac{(-1)^{\frac{n_{p}(\lambda)(n_{p}(\lambda)-1)}{2}} }
 {(-1)^{\frac{n(n-1)}{2}} }
\frac{  \det \left( \begin{array}{c|c}
   A^{\lambda}_{q,-q}  & \bar{A}^{\lambda}_{p,t-p} \\[0.2cm]
   \hline \\[-0.3cm]
   \bar{A}^{\lambda}_{q,-q}  & A^{\lambda}_{p,t-p}
\end{array}\right)}
{\det \left( \begin{array}{c|c}
   A_{q,-q}  & \bar{A}_{p,t-p} \\[0.2cm]
   \hline \\[-0.3cm]
   \bar{A}_{q,-q}  & A_{p,t-p}
\end{array}\right)}.
\end{equation}
\end{lem}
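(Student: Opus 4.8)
The plan is to expand $s_{\rho_{p,q}}(X^t,\X^t)$ by the bialternant formula \eqref{gldef} in the $2n$ variables $(x_1^t,\dots,x_n^t,\x_1^t,\dots,\x_n^t)$ and then to transform both the numerator determinant and the normalizing (denominator) determinant into the $2\times 2$ block determinants appearing in \eqref{lemmaa} by pulling scalars out of rows and reordering columns. Write $n_p=n_p(\lambda)$ and $n_q=n_q(\lambda)$, so that $n_p+n_q=2n$.

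First I would record the beta-set of $\rho_{p,q}$. A direct computation from $\beta_j(\rho_{p,q},2n)=(\rho_{p,q})_j+2n-j$ and the definition $\rho_{p,q}=\lambda_1^{(p)}+(\lambda^{(q)},0,\dots,0,-\rev(\lambda^{(p)}))$ shows that $\rho_{p,q}$ is a genuine partition of length at most $2n$ (weak monotonicity on each of the three blocks, with matching values at the two junctions), and that
\[
\beta(\rho_{p,q},2n)=\Bigl(\lambda_1^{(p)}+n_p+\beta_j(\lambda^{(q)},n_q)\Bigr)_{j=1}^{n_q}\ \sqcup\ \Bigl((\lambda_1^{(p)}+n_p-1)-\beta_r(\lambda^{(p)},n_p)\Bigr)_{r=1}^{n_p},
\]
where the first block lists the $n_q$ largest beta-numbers in decreasing order of $j$, and in the second block the index $r$ runs through $n_p,n_p-1,\dots,1$ as the column position runs $1,\dots,n_p$ (i.e.\ the $\lambda^{(p)}$-columns appear reversed).

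Next I substitute this into the Weyl numerator $\det_{1\le i,j\le 2n}(z_i^{\beta_j(\rho_{p,q})})$ with $z=(x_1^t,\dots,x_n^t,\x_1^t,\dots,\x_n^t)$, and split the matrix into a $2\times 2$ block form whose row split is at $n$ (rows $1,\dots,n$ carry $x_i^t$; rows $n+1,\dots,2n$ carry $\x_i^t$) and whose column split is at $n_q$. Using the $t$-quotient identity $\beta_j^{(p)}(\lambda)=t\beta_j(\lambda^{(p)})+p$ (so $\beta_j^{(p)}(\lambda)-p=t\beta_j(\lambda^{(p)})$ and $\beta_j^{(p)}(\lambda)+t-p=t(\beta_j(\lambda^{(p)})+1)$, and likewise for $q$), each entry in a row $i\le n$ equals $x_i^{t(\lambda_1^{(p)}+n_p)}$ times an entry of $A^{\lambda}_{q,-q}$ (left block) or of $\bar A^{\lambda}_{p,t-p}$ (right block), cf.\ \eqref{def AB}; each entry in a row $n+i'$ equals $\x_{i'}^{t(\lambda_1^{(p)}+n_p)}$ times an entry of $\bar A^{\lambda}_{q,-q}$ or of $A^{\lambda}_{p,t-p}$. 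Pulling these scalars out of the rows, the product $\prod_{i=1}^{n}x_i^{t(\lambda_1^{(p)}+n_p)}$ from the top rows and the product $\prod_{i'=1}^{n}\x_{i'}^{t(\lambda_1^{(p)}+n_p)}$ from the bottom rows are reciprocal and cancel, leaving no prefactor. What remains is exactly the matrix in the numerator of \eqref{lemmaa} but with its last $n_p$ columns in reversed order; restoring that order contributes the sign $(-1)^{n_p(\lambda)(n_p(\lambda)-1)/2}$. For the normalizing determinant $\det_{1\le i,j\le 2n}(z_i^{2n-j})$ one runs the same argument at $\lambda=\emptyset$ (where $\rho_{p,q}=\emptyset$, all $\lambda^{(i)}=\emptyset$ and $n_p(\emptyset)=n_q(\emptyset)=n$), producing the denominator of \eqref{lemmaa} together with the sign $(-1)^{n(n-1)/2}$ and the empty-partition matrices of \eqref{def AB0}. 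Dividing gives \eqref{lemmaa}.

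The main obstacle is the sign and column bookkeeping: checking that the reversal of the $\lambda^{(p)}$-columns contributes precisely $(-1)^{\binom{n_p(\lambda)}{2}}$ in the numerator and $(-1)^{\binom{n}{2}}$ in the denominator, and that the row-scalars genuinely cancel between the $x_i^{t}$-rows and the $\x_i^{t}$-rows (this is what makes the leftover prefactor trivial). Everything else is routine bialternant manipulation together with the beta-set arithmetic above; the degenerate cases $n_p=0$ or $n_q=0$ (one of $\lambda^{(p)},\lambda^{(q)}$ empty) are immediate.
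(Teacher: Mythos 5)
Your argument is correct and is essentially the argument the paper uses: this lemma is quoted from \cite[Lemma 3.14]{ayyer-2021}, and the paper's own proof of the analogous \cref{lem:s-new-1} proceeds exactly as you do — expand the bialternant in $(X^t,\X^t)$, identify the exponents $\rho_j+2n-j$ with shifted entries of $A^{\lambda}_{q,-q}$, $\bar A^{\lambda}_{p,t-p}$, etc., pull the factor $(x_i\x_i)^{t(\lambda_1^{(p)}+n_p)}=1$ out of the rows, reverse the last $n_p(\lambda)$ columns for the sign $(-1)^{\binom{n_p(\lambda)}{2}}$, and divide by the same computation at the empty partition. Your beta-set bookkeeping and the cancellation of the row scalars check out, so there is nothing to add.
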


% \begin{proof}
% $p,q \neq 0$. $\ell(\lambda) \leq tn+1$.
% \end{proof}

\begin{lem}
\label{lem:s-new-1}
Let $\lambda$ be a partition of length at most $tn+1$ and $0 \leq p,q \leq t-1$. If
% If $p,q \in \{1,\dots,t-1\}$ such that 
$n_{p}(\lambda)+n_{q}(\lambda)=2n+1$, then define
$\rho^*_{p,q}=\lambda^{(p)}_1 + (\lambda^{(q)}, 0, -\rev(\lambda^{(p)}))$,
where we pad $0'$s in the middle so that $\rho^*_{p,q}$ is of length $2n+1$. 
Then the Schur function $s_{\rho^*_{p,q}}(X^t,{\X}^t,y^t)$ can be written as
\begin{equation}
\label{lemmaa1}
 s_{\rho^*_{p,q}}(X^t,{\X}^t,y^t)
 =\frac
 {(-1)^{\frac{n_{p}(\lambda)(n_{p}(\lambda)-1)}{2}}
 y^{t(\lambda_1^{(p)}+n_{p}(\lambda))}}
 {V\left(X^t,{\X}^t,y^t\right) } 
 \det \left(\begin{array}{c|c}
  A_{q,-q}^{\lambda}  
  & \bar{A}_{p,t-p}^{\lambda}
  \\\\\hline\\
    \bar{A}_{q,-q}^{\lambda}  
   & A_{p,t-p}^{\lambda} 
   \\\\\hline\\
   B_{q,-q}^{\lambda}  
  & \bar{B}_{p,t-p}^{\lambda}
\end{array}
\right),
\end{equation}
where $V \left(X^t,{\X}^t,y^t\right) \coloneqq \ds \prod_{1 \leq i < j \leq n} (x_i^t-x_j^t)(x_i^t-\x_j^t) (x_j^t-\x_i^t) (\x_i^t-\x_j^t) \ds \prod_{i=1}^n (x_i^t-y^t) (x_i^t-\x_i^t) (\x_i^t-y^t)$.
% is the Vandermonde determinant.
\end{lem}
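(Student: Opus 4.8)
The plan is to adapt the argument for \cref{lem:s-new} (that is, \cite[Lemma 3.14]{ayyer-2021}) to the case of $2n+1$ variables, the only genuinely new ingredient being the extra variable $y^t$. First I would write the bialternant formula \eqref{gldef} for $s_{\rho^*_{p,q}}$ in the $2n+1$ variables $\zeta=(x_1^t,\dots,x_n^t,\x_1^t,\dots,\x_n^t,y^t)$: the numerator is $\det_{1\le a,b\le 2n+1}\bigl(\zeta_a^{\beta_b(\rho^*_{p,q},2n+1)}\bigr)$ and the denominator is $\det_{1\le a,b\le 2n+1}\bigl(\zeta_a^{2n+1-b}\bigr)=\prod_{1\le a<b\le 2n+1}(\zeta_a-\zeta_b)$.

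Second, I would compute $\beta(\rho^*_{p,q},2n+1)$ explicitly. Since $n_p(\lambda)+n_q(\lambda)=2n+1$, there are no padding zeros, so the first $n_q(\lambda)$ parts come from $\lambda^{(q)}$ and the last $n_p(\lambda)$ from $-\rev(\lambda^{(p)})$; a direct computation gives $\beta_b(\rho^*_{p,q})=\lambda^{(p)}_1+n_p(\lambda)+\beta_b(\lambda^{(q)})$ for $1\le b\le n_q(\lambda)$ and $\beta_{n_q(\lambda)+c}(\rho^*_{p,q})=\lambda^{(p)}_1+n_p(\lambda)-1-\beta_{n_p(\lambda)+1-c}(\lambda^{(p)})$ for $1\le c\le n_p(\lambda)$. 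Using $t\beta_j(\lambda^{(r)})=\beta^{(r)}_j(\lambda)-r$ for $r\in\{p,q\}$, I would then factor out of each row of the numerator matrix the monomials $x_i^{t(\lambda^{(p)}_1+n_p(\lambda))}$ (rows $1\le i\le n$), $\x_i^{t(\lambda^{(p)}_1+n_p(\lambda))}$ (rows $n+i$), and $y^{t(\lambda^{(p)}_1+n_p(\lambda))}$ (last row). The crux is that the shift $t-p$ built into $A^\lambda_{p,t-p}$, $\bar A^\lambda_{p,t-p}$, $\bar B^\lambda_{p,t-p}$ is precisely what makes this rescaling turn the first $n_q(\lambda)$ columns into those of the vertically stacked blocks $A^\lambda_{q,-q}$, $\bar A^\lambda_{q,-q}$, $B^\lambda_{q,-q}$, and the last $n_p(\lambda)$ columns into those of $\bar A^\lambda_{p,t-p}$, $A^\lambda_{p,t-p}$, $\bar B^\lambda_{p,t-p}$ with the column order reversed; the $x_i$- and $\x_i$-monomials multiply to $1$, whereas the $y$-monomial survives, producing the prefactor $y^{t(\lambda^{(p)}_1+n_p(\lambda))}$.

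Third, reversing the last $n_p(\lambda)$ columns contributes the sign $(-1)^{\binom{n_p(\lambda)}{2}}=(-1)^{n_p(\lambda)(n_p(\lambda)-1)/2}$, which is exactly the sign in \eqref{lemmaa1}. For the denominator I would simply sort the $\binom{2n+1}{2}$ factors of the Vandermonde $\prod_{a<b}(\zeta_a-\zeta_b)$ according to whether each index lands among the $x$'s, among the $\x$'s, or equals $y$, and observe that the resulting product is literally $V(X^t,\X^t,y^t)$ as defined --- in particular with no residual sign. Combining the numerator and denominator computations gives \eqref{lemmaa1}.

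The main obstacle is the bookkeeping in the second step: one must track the exponent shifts carefully so that the rescaled lower-left, lower-right and bottom blocks land on $\bar A^\lambda_{p,t-p}$, $A^\lambda_{p,t-p}$ and $\bar B^\lambda_{p,t-p}$ (rather than on $A^\lambda_{p,-p}$, etc.), and confirm that the three per-row monomials collapse to a single surviving power of $y$; this is the phenomenon responsible for the factor $y^{t(\lambda^{(p)}_1+n_p(\lambda))}$, which has no analogue in \cref{lem:s-new}. Everything else is formally the same as the $2n$-variable computation, so I would present it tersely, highlighting only the changes forced by the additional variable and row.
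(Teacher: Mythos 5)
Your proposal is correct and follows essentially the same route as the paper's proof: write the bialternant for $s_{\rho^*_{p,q}}$ in the $2n+1$ variables, extract $x_i^{t(\lambda_1^{(p)}+n_p(\lambda))}$, $\x_i^{t(\lambda_1^{(p)}+n_p(\lambda))}$ and $y^{t(\lambda_1^{(p)}+n_p(\lambda))}$ from the three row blocks (only the $y$-power survives), reverse the last $n_p(\lambda)$ columns to pick up $(-1)^{n_p(\lambda)(n_p(\lambda)-1)/2}$, and identify the denominator Vandermonde with $V(X^t,\X^t,y^t)$. The exponent bookkeeping you flag as the main obstacle does work out exactly as you describe, landing on $A^{\lambda}_{q,-q}$, $\bar A^{\lambda}_{p,t-p}$, etc., so there is no gap.
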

\begin{proof}
We think of the first $n_{q}(\lambda)$ parts of $\rho^*_{p,q}$ as coming from $\lambda^{(q)}$, and the rest from $\lambda^{(p)}$.
Using the Schur polynomial expression \eqref{gldef} for $s_{\rho^*_{p,q}}(X^t,{\X}^t,y^t)$, 
the numerator in the expression is
\[
\det \left(\begin{array}{c|c}
  \left(
  x_i^{t(\lambda^{(p)}_1+\lambda^{(q)}_j+2n+1-j)}
  \right)_{\substack{1 \leq i \leq n\\1 \leq j \leq n_{q}(\lambda)}}  
  & \left(
  x_i^{t(\lambda^{(p)}_1-\lambda^{(p)}_{2n+2-j}+2n+1-j)}
  \right)_{\substack{1 \leq i \leq n\\ n_{q}(\lambda)+1 \leq j \leq 2n+1}}  \\\\\hline\\
   \left(\bar{x}_i^{t(\lambda^{(p)}_1+\lambda^{(q)}_j+2n+1-j)}\right)_{\substack{1 \leq i \leq n\\1 \leq j \leq n_{q}(\lambda)}}  
   & \left(\bar{x}_i^{t(\lambda^{(p)}_1-\lambda^{(p)}_{2n+2-j}+2n+1-j)}\right)_{\substack{1 \leq i \leq n\\n_{q}(\lambda)+1 \leq j \leq 2n+1}}  \\\\\hline\\
   \left(y^{t(\lambda^{(p)}_1+\lambda^{(q)}_j+2n+1-j)}\right)_{1 \leq j \leq n_{q}(\lambda)}  & \left(y^{t(\lambda^{(p)}_1-\lambda^{(p)}_{2n+2-j}+2n+1-j)}\right)_{n_{q}(\lambda)+1 \leq j \leq 2n+1}
\end{array}
\right). \]
Multiplying row $i$ in the top block and middle block of the numerator by  $\bar{x}_i^{t(\lambda_1^{(p)}+n_{p}(\lambda))}$ and ${x}_i^{t(\lambda_1^{(p)}+n_{p}(\lambda))}$ respectively, 
for each $i=1,2,\dots,n$, the last row by   $\bar{y}^{t(\lambda_1^{(p)}+n_{p}(\lambda))}$ and then reversing the last $n_{p}(\lambda)$ columns, the numerator equals
\begin{equation}
\begin{split}
(-1)^{\frac{n_{p}(\lambda)(n_{p}(\lambda)-1)}{2}} {y}^{t(\lambda_1^{(p)}+n_{p}(\lambda))} \det \left(\begin{array}{c|c}
  \left(
  x_i^{t(\lambda^{(q)}_j+n_q-j)}
  \right)_{\substack{1 \leq i \leq n\\1 \leq j \leq n_{q}(\lambda)}}  
  & \left(
  \x_i^{t(\lambda^{(p)}_{j}+n_p+1-j)}
  \right)_{\substack{1 \leq i \leq n\\ 1 \leq j \leq n_p(\lambda)}}  
  \\\\\hline\\
   \left(\bar{x}_i^{t(\lambda^{(q)}_j+n_q-j)}\right)_{\substack{1 \leq i \leq n\\1 \leq j \leq n_{q}(\lambda)}}  
   & \left(
  x_i^{t(\lambda^{(p)}_{j}+n_p+1-j)}
  \right)_{\substack{1 \leq i \leq n\\ 1 \leq j \leq n_p(\lambda)}}  
   \\\\\hline\\
   \left(
  y^{t(\lambda^{(q)}_j+n_q-j)}
  \right)_{1 \leq j \leq n_{q}(\lambda)}  & \left(
  \bar{y}^{t(\lambda^{(p)}_{j}+n_p+1-j)}
  \right)_{1 \leq j \leq n_{p}(\lambda)}
\end{array}
\right)\\
= (-1)^{\frac{n_{p}(\lambda)(n_{p}(\lambda)-1)}{2}} {y}^{t(\lambda_1^{(p)}+n_{p}(\lambda))} \det \left(\begin{array}{c|c}
  A_{q,-q}^{\lambda}  
  & \bar{A}_{p,t-p}^{\lambda}
  \\\\\hline\\
    \bar{A}_{q,-q}^{\lambda}  
   & A_{p,t-p}^{\lambda} 
   \\\\\hline\\
   B_{q,-q}^{\lambda}  
  & \bar{B}_{p,t-p}^{\lambda}
\end{array}
\right). 
\end{split} 
\end{equation}
% Using the Schur polynomial expression \eqref{gldef}, 
% the denominator of $s_{\rho_{p,q}}(X^t,{\X}^t)$ is
% \[\det \left(\begin{array}{c}
% (x_i^{t(2n+1-j)})_{\substack{1 \leq i \leq n\\1 \leq j \leq 2n+1}}\\\\\hline\\
% (\x_i^{t(2n+1-j)})_{\substack{1 \leq i \leq n\\1 \leq j \leq 2n+1}}\\\\\hline\\
% (y^{t(2n+1-j)})_{1 \leq j \leq 2n+1}    
% \end{array}\right)=  V(X^t,\X^t,y^t)
% (-1)^{\frac{n(n-1)}{2}} \det \left(\begin{array}{c|c|c}
%   A_{0}  
%   & 1 &\bar{A}_{0}
%   \\&&\\\hline&&\\
%     \bar{A}_{0} & 1 
%   & A_{0} 
%   \\&&\\\hline&&\\
%   B_{0}  
%   & 1 &\bar{B}_{0}
% \end{array}
% \right).
% \]
Hence, \eqref{lemmaa1} holds. 
\end{proof}
\begin{lem}[{\cite[Lemma 2]{kratt-1999}}] 
\label{lem:DI}
The following identities hold true.
\begin{multline*}
  \det \left(x_i^{n+2-j}-\x_i^{n+2-j}\right)_{1 \leq i,j \leq n+1}\\
  = (-1)^n 
x_{n+1}^{-n}  (x_{n+1}-\x_{n+1})
\prod_{i=1}^{n} (x_i-x_{n+1})(\x_i-x_{n+1}) \det\left(x_i^{n+1-j}-\x_i^{n+1-j}\right)_{1 \leq i,j \leq n}. \end{multline*}
\begin{multline*}
\det\left(x_i^{n+1-j}+\x_i^{n+1-j}\right)_{1 \leq i,j \leq n+1} \\= (-1)^n 
x_{n+1}^{-n}  
\prod_{i=1}^{n} (x_i-x_{n+1})(\x_i-x_{n+1})  \det \left(x_i^{n-j}+\x_i^{n-j}\right)_{1 \leq i,j \leq n}.   
\end{multline*}
\begin{multline*}
 \det(x_i^{n-j+3/2}+\x_i^{n-j+3/2})_{1 \leq i,j \leq n+1} \\= (-1)^n 
x_{n+1}^{-n-1/2} (x_{n+1}+1)
\prod_{i=1}^{n} (x_i-x_{n+1})(\x_i-x_{n+1}) \det(x_i^{n-j+1/2}+\x_i^{n-j+1/2})_{1 \leq i,j \leq n}.   
\end{multline*}
\begin{multline*}
    \det(x_i^{n-j+3/2}-\x_i^{n-j+3/2})_{1 \leq i,j \leq n+1} \\= (-1)^n 
x_{n+1}^{-n-1/2}  (x_{n+1}-1)
\prod_{i=1}^{n} (x_i-x_{n+1})(\x_i-x_{n+1})
 \det(x_i^{n-j+1/2}-\x_i^{n-j+1/2})_{1 \leq i,j \leq n}.
\end{multline*}
% \begin{equation*}
%  \det(x_i^{n-j+1/2}+\x_i^{n-j+1/2})_{1 \leq i,j \leq n} 
% = \frac{1}{2} \prod_{i=1}^{n} (x_{i}^{1/2}+\x_{i}^{1/2}) \det\left(x_i^{n-j}+\x_i^{n-j}\right)_{1 \leq i,j \leq n}.   
% \end{equation*}
\end{lem}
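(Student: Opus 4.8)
The four identities share one shape: an $(n+1)\times(n+1)$ determinant of ``$\pm$'' type is reduced to the corresponding $n\times n$ determinant. The plan is to prove each by the standard factor-exhaustion (identification-of-zeros) method, regarding the left-hand determinant as a function of the single variable $x_{n+1}$ (and $x_{n+1}^{1/2}$ in the half-integer cases), while treating $x_1,\dots,x_n$ as algebraically independent so that all linear factors produced below are genuinely distinct. I will carry out the argument for the first identity and then indicate the (minor) modifications for the other three. Set $D \coloneqq \det_{1\le i,j\le n+1}\bigl(x_i^{n+2-j}-\x_i^{n+2-j}\bigr)$. Each entry is odd under $x\mapsto\x=1/x$, so Laplace expansion along the last row shows $D$ is a Laurent polynomial in $x_{n+1}$ supported on exponents in $\{-(n+1),\dots,-1,1,\dots,n+1\}$; thus $D=x_{n+1}^{-(n+1)}P(x_{n+1})$ for a polynomial $P$ with $\deg P\le 2n+2$.

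Next I would locate the zeros of $P$. Setting $x_{n+1}=x_i$ makes rows $i$ and $n+1$ equal, and setting $x_{n+1}=\x_i$ makes row $n+1$ equal to $-1$ times row $i$ (since $x^a-x^{-a}$ is odd); hence $D$ vanishes at each of these $2n$ points, yielding the factor $\prod_{i=1}^n(x_{n+1}-x_i)(x_{n+1}-\x_i)$. Moreover every entry of the last row vanishes at $x_{n+1}=1$ and at $x_{n+1}=-1$ (using $(-1)^{-m}=(-1)^m$), so $(x_{n+1}-1)(x_{n+1}+1)$ also divides $P$. These are $2n+2$ distinct roots, matching $\deg P\le 2n+2$, so $P$ equals a constant in $x_{n+1}$ times the product of all these factors.

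Finally I would pin down that constant by comparing the coefficient of $x_{n+1}^{n+1}$ on both sides. On the left, this top power occurs only in the $(n+1,1)$ entry $x_{n+1}^{n+1}$, whose cofactor is $(-1)^{n}$ times the minor deleting the last row and first column; after reindexing columns that minor is exactly $\det_{1\le i,j\le n}\bigl(x_i^{n+1-j}-\x_i^{n+1-j}\bigr)$, the $n\times n$ determinant on the right. On the factored side the same coefficient is just the constant. Hence the constant is $(-1)^n$ times the $n\times n$ determinant, and rewriting $x_{n+1}^{-(n+1)}(x_{n+1}-1)(x_{n+1}+1)=x_{n+1}^{-n}(x_{n+1}-\x_{n+1})$ together with $(x_{n+1}-x_i)(x_{n+1}-\x_i)=(x_i-x_{n+1})(\x_i-x_{n+1})$ produces the stated first identity verbatim.

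The remaining three identities run identically, with only the catalogue of zeros and the support changing. For the ``$+$'' integer determinant (type $D$) the entries are even under $x\mapsto\x$ and include the constant entry (exponent $0$), so $D=x_{n+1}^{-n}Q$ with $\deg Q\le 2n$; the row-coincidence zeros $x_i,\x_i$ already exhaust the degree, there is no zero at $\pm1$, and the leading-coefficient step gives $(-1)^n$ with no $(x_{n+1}\mp1)$ factor. For the two half-integer (type $B$) determinants I would use $x_{n+1}^{1/2}$ as the working variable; the row-coincidence factors are unchanged, while the ``$-$'' determinant vanishes at $x_{n+1}=1$ (giving $x_{n+1}-1$) and the ``$+$'' determinant vanishes at $x_{n+1}=-1$ (giving $x_{n+1}+1$), the latter checked on the branch $x_{n+1}^{1/2}=\iota$ via $x_{n+1}^{k+1/2}+\x_{n+1}^{k+1/2}=0$, and the prefactor $x_{n+1}^{-n-1/2}$ then emerges from the support bound. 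I expect the main technical care to lie precisely here: confirming in the half-integer cases that each special zero is simple and that the support count in $x_{n+1}^{1/2}$ is tight, so the quotient is forced to be constant; once that bookkeeping is settled, the leading-coefficient identification is routine.
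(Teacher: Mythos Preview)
Your factor-exhaustion argument is correct in all four cases; the degree counts, the catalogue of zeros (including the $\pm1$ analysis in the half-integer cases via $x_{n+1}^{1/2}=\iota$), and the leading-coefficient identification all check out. The paper itself does not prove this lemma at all: it is quoted verbatim from Krattenthaler's determinant calculus paper, so there is nothing to compare against here beyond noting that the ``identification of factors'' method you used is precisely the technique that reference is known for.
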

\begin{lem}
\label{lem:matrix}
For $i = 1,\dots, k$, fix $\ell_i$, $m_i \in \mathbb{Z}^+$ such that $1+ \ell_1 + \cdots + \ell_k = m_1 + \cdots + m_k = d$. Let $S_i$ and $T_i$ be matrices of order $1 \times m_i$ and $\ell_i \times m_i$ respectively. Define a $(k+1) \times k$ block matrix
\[
U_k \coloneqq \left(
\begin{array}{ccccc}
S_1 & S_2 & \dots & S_k\\
T_1 &&& \\
 & T_2  & & \text{\huge0}\\
 && \ddots \\
  \text{\huge0} &    &   & T_k
\end{array}
\right).
\]
\begin{enumerate}
    \item If for some  $i_0 \in [k]$, 
    \[
     m_j=\begin{cases}
\ell_j+1 & j=i_0,\\
\ell_j & \text{otherwise},
\end{cases} \quad  1 \leq j \leq k,
    \]
 then 
    \begin{equation}
    \label{dtw}
      \det(U_k)= (-1)^{\sum_{i < i_0} \ell_i} \det \left(
 \begin{array}{c}
      S_{i_0} \\\hline
      T_{i_0}
 \end{array}
 \right) \ds \prod_{\substack{i=1\\i \neq i_0}}^k \det \left(
      T_i 
 \right).  
    \end{equation}
    \item Otherwise 
    \[
    \det(U_k) =  0.
    \]
\end{enumerate}
\end{lem}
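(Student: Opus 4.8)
The plan is to evaluate $\det(U_k)$ by a single cofactor expansion along the first row: this is the only row that meets more than one of the column blocks, whereas each of the remaining rows lies entirely within one block column, so the expansion decouples the blocks. First I would fix notation for the column blocks: the $j$-th block occupies the columns $\sum_{i<j}m_i+1,\dots,\sum_{i\le j}m_i$, and I would write the entry of the first row sitting in position $p$ of block $j$ as $(S_j)_p$ with $1\le p\le m_j$. Laplace expansion along row $1$ then gives
\[
\det(U_k)=\sum_{j=1}^{k}\sum_{p=1}^{m_j}(-1)^{1+\sum_{i<j}m_i+p}\,(S_j)_p\,M_{j,p},
\]
where $M_{j,p}$ is the minor obtained by deleting row $1$ and the column in question.

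The key step is to decide when $M_{j,p}$ can be nonzero. Deleting row $1$ leaves the block-diagonal matrix with diagonal blocks $T_1,\dots,T_k$ of sizes $\ell_i\times m_i$; deleting one further column from block $j$ produces a square $(d-1)\times(d-1)$ matrix that is still block-diagonal, now with block sizes $\ell_1\times m_1,\dots,\ell_j\times(m_j-1),\dots,\ell_k\times m_k$. Here I would invoke the elementary fact that a block-diagonal matrix with rectangular diagonal blocks has determinant $0$ unless every diagonal block is square — if some block has strictly more columns than rows its columns are linearly dependent, and since $\sum_i\ell_i=\sum_i m_i-1$ the presence of any non-square block forces one block of this type — in which case the determinant is the product of the diagonal-block determinants. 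Consequently $M_{j,p}=0$ unless $\ell_i=m_i$ for all $i\ne j$ and $\ell_j=m_j-1$, which is precisely the hypothesis of part (1) with $i_0=j$. This settles part (2) immediately: if no such index $i_0$ exists, every $M_{j,p}$ vanishes, so $\det(U_k)=0$.

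For part (1) only the terms with $j=i_0$ survive, and there $M_{i_0,p}=\bigl(\prod_{i\ne i_0}\det T_i\bigr)\det\bigl(T_{i_0}^{(p)}\bigr)$, where $T_{i_0}^{(p)}$ denotes $T_{i_0}$ with its $p$-th column removed. Using $m_i=\ell_i$ for $i<i_0$, the sign splits as $(-1)^{1+\sum_{i<i_0}m_i+p}=(-1)^{\sum_{i<i_0}\ell_i}(-1)^{1+p}$, so
\[
\det(U_k)=(-1)^{\sum_{i<i_0}\ell_i}\Bigl(\prod_{i\ne i_0}\det T_i\Bigr)\sum_{p=1}^{m_{i_0}}(-1)^{1+p}(S_{i_0})_p\,\det\bigl(T_{i_0}^{(p)}\bigr).
\]
The remaining sum is exactly the cofactor expansion along the first row of the $m_{i_0}\times m_{i_0}$ matrix $\left(\begin{array}{c}S_{i_0}\\\hline T_{i_0}\end{array}\right)$ — which is square because $1+\ell_{i_0}=m_{i_0}$ — hence equals its determinant, and \eqref{dtw} follows.

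I expect the only genuine obstacle to be the bookkeeping: pinning down the global column index of position $p$ within block $j$, tracking the induced sign, and stating the ``rectangular block-diagonal determinant vanishes unless all blocks are square'' fact crisply (I would prove it by the short dimension/rank argument above rather than by iterating the expansion). Everything else is a direct consequence of these two observations.
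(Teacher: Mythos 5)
Your proof is correct, and it takes a genuinely different route from the paper's. You evaluate $\det(U_k)$ in one shot by a Laplace expansion along the first row, observing that every resulting minor is a block-diagonal matrix with rectangular blocks of sizes $\ell_1\times m_1,\dots,\ell_j\times(m_j-1),\dots,\ell_k\times m_k$, and then invoking the fact that such a determinant vanishes unless every block is square (a block with more columns than rows has its columns supported on too few rows). This single vanishing criterion delivers part (2) and isolates the surviving terms of part (1) simultaneously, and the sign $(-1)^{\sum_{i<i_0}\ell_i}$ drops out of the column index $\sum_{i<i_0}m_i+p$ once you use $m_i=\ell_i$ for $i\neq i_0$; the inner sum over $p$ reassembles into the first-row cofactor expansion of $\det\left(\begin{smallmatrix}S_{i_0}\\ T_{i_0}\end{smallmatrix}\right)$. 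The paper instead argues recursively: it disposes of the case $m_1\geq\ell_1+1$ directly, and otherwise swaps the $S$-row block past the $T_1$-block (cost $(-1)^{\ell_1}$), getting $0$ if $m_1<\ell_1$ and peeling off a factor $\det T_1$ times $\det U_{k-1}$ if $m_1=\ell_1$, accumulating the sign $(-1)^{\sum_{i<i_0}\ell_i}$ one block at a time. The two arguments are of comparable length; yours has the advantage of treating both parts uniformly through one explicitly stated rank/dimension fact (where the paper leaves its base case as ``easy to see''), while the paper's recursion avoids any bookkeeping of global column indices and cofactor signs.
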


\begin{proof}
It is easy to see that the lemma holds in the case when 
$m_1 \geq \ell_1+1$.  
%then $\det(U_k) =0$ Similarly if $m_1<\ell_1$, then $\det(U_k) =  0.$
%If $m_1=\ell_1+1$, then \eqref{dtw} holds.
If $m_1 \leq \ell_1$, then 
%we prove the lemma by induction on $k$.
%Permuting the first $\ell+1$ rows cyclically, 
applying the blockwise row operations $R_1 \leftrightarrow R_2$,
we see that
\[
\det U_k= 
\begin{cases}
0 & m_1<\ell_1,\\
(-1)^{\ell_1} U_{k-1} & m_1=\ell_1.
\end{cases}
\]
Proceeding recursively in the case $m_1=\ell_1$, \eqref{dtw} holds. This completes the proof. 
\end{proof}

% \tcr{change k to w}
\begin{lem}[{\cite[Lemma 3.17]{ayyer-2021}}] 
\label{lem:det-blockmatrix}
Suppose $u_1,\dots,u_k$ are positive integers summing up to $kn$.
Further, let $\left(\gamma_{i,j}\right)_{1 \leq i \leq k, 1 \leq j \leq k+1}$ 
be a matrix of parameters such that $\gamma_{i,k+1}=\gamma_{i,k}$,  $1 \leq i \leq k$ and $\Gamma$ be the square matrix consisting of its first $k$ columns.
Let $U_{j}$ and $V_{j}$ be matrices of order $n \times u_j$ for $j \in [k]$.
Finally, define a $kn \times kn$ matrix 
with $k \times k$ blocks as
\[
\ds \Pi
\coloneqq \left(
\begin{array}{c|c}
 \left(\gamma_{i,2j-1}U_{j}-\gamma_{i,2j}V_{j}\right)_{\substack{1 \leq i\leq k\\1 \leq j\leq \floor{\frac{k+1}{2}}}}
 & 
 \left(\gamma_{i,2k+2-2j}U_{j}-\gamma_{i,2k+1-2j}V_{j}\right)_{\substack{1 \leq i\leq k \\\floor{\frac{k+3}{2}} \leq j\leq k}}
 \end{array}
 \right).
\] 

\begin{enumerate}
\item If $u_p+u_{k+1-p} \neq 2n$  for some $p \in [k]$, then $\det \Pi = 0$.  

\item Else if $u_p+u_{k+1-p}=2n$ for all $p \in [k]$, then
 \begin{equation}
   \label{lem2}
 \det \Pi
 = (-1)^{\Sigma} 
 (\det \Gamma)^n
 \prod_{i=1}^{\floor{\frac{k+1}{2}}}
 \det W_i, 
 \end{equation}
where
\[
W_i= \begin{cases}
\left(\begin{array}{c|c}U_i  & -V_{k+1-i} \\\hline
-V_i  & U_{k+1-i}
\end{array}\right) & 1 \leq i \leq \floor{\frac{k}{2}}, \\
\left( U_{\frac{k+1}{2}}-V_{\frac{k+1}{2}}  \right) 
& \text{$k$ odd and $i = \frac{k+1}{2}$},
\end{cases}
\]
and
\[
\Sigma=
% \sum_{i=1}^{\floor{\frac{k}{2}}}(n+u_{i})+
\begin{cases}
0 &  k \text{ even},\\
% n \ds \sum_{i=1}^{\frac{k-1}{2}} u_i = 
n \ds \sum_{i=\frac{k+3}{2}}^{k} u_i
&  k \text{ odd}.
\end{cases} 
\]
\end{enumerate}

\end{lem}
\begin{lem} 
\label{lem:det-blockmatrix-1}
Suppose $u_1,\dots,u_k$ are positive integers summing up to $kn+1$.
Further, let $\left(\gamma_{i,j}\right)_{1 \leq i \leq k, 1 \leq j \leq k+1}$ 
be a matrix of parameters such that $\gamma_{i,k+1}=\gamma_{i,k}$,  $1 \leq i \leq k$ and $\Gamma$ be the square matrix consisting of its first $k$ columns.
Let $M_{j}$ and $N_{j}$ be matrices of order $1 \times u_j$ for $j \in [k]$, and $U_{j}$ and $V_{j}$ be matrices of order $n \times u_j$ for $j \in [k]$.
Finally, define a $(kn+1) \times (kn+1)$ matrix 
with $(k+1) \times k$ blocks as
\[
\ds \Delta
\coloneqq \left(
\begin{array}{c|c}
\left(M_j \pm N_j\right)_{1 \leq j\leq \floor{\frac{k+1}{2}}}
& \left(M_j \pm N_j\right)_{\floor{\frac{k+3}{2}} \leq j\leq k}
\\\\\hline\\
 \left(\gamma_{i,2j-1}U_{j}-\gamma_{i,2j}V_{j}\right)_{\substack{1 \leq i\leq k\\1 \leq j\leq \floor{\frac{k+1}{2}}}}
 & 
 \left(\gamma_{i,2k+2-2j}U_{j}-\gamma_{i,2k+1-2j}V_{j}\right)_{\substack{1 \leq i\leq k \\\floor{\frac{k+3}{2}} \leq j\leq k}}
 \end{array}
 \right).
\] 
\begin{enumerate}
\item If for some $1 \leq i_0 \leq \floor{\frac{k+1}{2}},$ 
\[ u_j+u_{k+1-j}=\begin{cases}
2n+1+\delta_{i_0,\frac{k+1}{2}} & j=i_0,\\
2n & \text{otherwise},
\end{cases}
\]
then
\begin{equation}
    \det \Delta
 = (-1)^{\chi} 
 (\det \Gamma)^n \det \left( \begin{array}{c}
      O_{i_0}  \\\hline 
      W_{i_0}
 \end{array} \right)
 \prod_{\substack{i=1\\i \neq i_0}}^{\floor{\frac{k+1}{2}}}
 \det W_i,
\end{equation}
where
\[
O_i= \begin{cases}
\left(\begin{array}{c|c}
   M_i \pm N_i  & M_{k+1-i} \pm N_{k+1-i} 
\end{array} \right) & \text{if } 1 \leq i \leq \floor{\frac{k}{2}},\\
\left(M_{\frac{k+1}{2}} \pm N_{\frac{k+1}{2}}\right) & \text{$k$ odd and $i = \frac{k+1}{2}$},
\end{cases}
\]
\[
W_i= \begin{cases}
\left(\begin{array}{c|c}U_i  & -V_{k+1-i} \\\hline
-V_i  & U_{k+1-i}
\end{array}\right) & 1 \leq i \leq \floor{\frac{k}{2}}, \\
\left( U_{\frac{k+1}{2}}-V_{\frac{k+1}{2}}  \right) 
& \text{$k$ odd and $i = \frac{k+1}{2}$},
\end{cases} 
\]
and
\[
\chi=
% \sum_{i=1}^{\floor{\frac{k}{2}}}(n+u_{i})+
% \left(1-\delta_{i_0,\frac{k+1}{2}}\right) 
\left(\ds \sum_{i=k+2-i_0}^{k} u_i \right)
+ \ds \sum_{i=\floor{\frac{k+3}{2}}}^{k} k n u_i.
% = 
% \begin{cases}
% \ds \sum_{i=k+2-i_0}^{k} u_i &  k \text{ even},\\
% (n+1) \left(\ds \sum_{i=k+2-i_0}^{k} u_i \right) + n \left(\ds \sum_{i=\floor{\frac{k+3}{2}}}^{k+1-i_0} u_i \right) &  k \text{ odd}.
% \end{cases}
\]
\item Otherwise \[
\det \Delta =0.
\]
\end{enumerate}

\end{lem}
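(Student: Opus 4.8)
The plan is to mirror the proof of \cref{lem:det-blockmatrix}, threading the extra top row $\left(M_j\pm N_j\right)_{j}$ through the same steps; the one genuinely new feature is that, once the parameter matrix $\Gamma$ has been unscrambled, the surviving determinant is no longer block diagonal but is precisely of the shape treated in \cref{lem:matrix}. Concretely, first I would factor out $\Gamma$ exactly as in the proof of \cref{lem:det-blockmatrix}: writing $g_1,\dots,g_k$ for the columns of $\Gamma$ and recalling $\gamma_{i,k+1}=\gamma_{i,k}$, and viewing the $kn$ rows of $\Delta$ below the first as $\mathbb{C}^{k}\otimes\mathbb{C}^{n}$, every column of $\Delta$ lying in the $j$-th column strip is a fixed signed sum of two vectors (just one vector for the middle strip when $k$ is odd) whose lower parts are pure tensors $g_a\otimes(\cdot)$, and the strips $j$ and $k+1-j$ together involve only $g_{2j'-1},g_{2j'}$ with $j'=\min(j,k+1-j)$ (the middle strip only $g_k$). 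Expanding $\det\Delta$ multilinearly over these choices, a rank count in the $\mathbb{C}^{k}\otimes\mathbb{C}^{n}$ part — where the single remaining coordinate, the top row, cannot rescue two independent relations — shows that a term survives only when exactly one $g_{a^{\star}}$ is carried by $n+1$ columns and each other $g_a$ by exactly $n$, the widths summing to $kn+1$. Re-summing the survivors gives
\[
\det\Delta=(-1)^{\Sigma_{0}}\,(\det\Gamma)^{n}\,\det\Xi ,
\]
where $\Xi$ is a $(kn+1)\times(kn+1)$ matrix of the form appearing in \cref{lem:matrix}: its $\floor{\frac{k+1}{2}}$ column strips are indexed by the reflection pairs $\{i,k+1-i\}$, the top strips are the concatenations $O_i$ of the blocks $M_\bullet\pm N_\bullet$, the diagonal blocks are the $W_i$, and $\Sigma_0$ plays the role of the exponent $\Sigma$ in \cref{lem:det-blockmatrix}.

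Next I would apply \cref{lem:matrix} to $\Xi$, using $\sum_i m_i=\sum_j u_j=kn+1=1+\sum_i\ell_i$, where $m_i=u_i+u_{k+1-i}$ (with $m_{\frac{k+1}{2}}=u_{\frac{k+1}{2}}$ when $k$ is odd) is the width and $\ell_i\in\{2n,n\}$ the height of $W_i$. If no index $i_0$ satisfies $m_{i_0}=\ell_{i_0}+1$ with $m_i=\ell_i$ for the rest — equivalently, if the width pattern of part~(1) fails for every $i_0$ — then $\det\Xi=0$ and we are in part~(2); a short count shows that when the pattern does hold, $i_0$ is unique. Otherwise \cref{lem:matrix} gives
\[
\det\Xi=(-1)^{\sum_{i<i_0}\ell_i}\,\det\left(\begin{array}{c}O_{i_0}\\\hline W_{i_0}\end{array}\right)\prod_{i\neq i_0}\det W_i ,
\]
and the condition $m_{i_0}=\ell_{i_0}+1$ unwinds to $u_{i_0}+u_{k+1-i_0}=2n+1+\delta_{i_0,\frac{k+1}{2}}$, the hypothesis. (An essentially equivalent route is to expand $\det\Delta$ along its first row: each minor is of the type governed by \cref{lem:det-blockmatrix} with one width lowered by one, which again forces the stated pattern, the degenerate case $u_j=1$ — where a strip collapses — being handled separately.)

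The hard part will be the sign. One has to combine $\Sigma_0$, the factor $(-1)^{\sum_{i<i_0}\ell_i}$ from \cref{lem:matrix}, and the permutation signs picked up in passing from the column order of $\Delta$ to the pair-by-pair order of $\Xi$ — equivalently, to track every sign in the multilinear expansion above, in particular the relative $(-1)^n$ arising between the strips $i_0$ and $k+1-i_0$ when $k$ is odd. Carrying out this bookkeeping is the main obstacle, and it is what yields the stated exponent $\chi=\bigl(\sum_{i=k+2-i_0}^{k}u_i\bigr)+\sum_{i=\floor{\frac{k+3}{2}}}^{k}kn\,u_i$.
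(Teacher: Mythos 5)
Your proposal follows essentially the same route as the paper: reorder the column strips of $\Delta$ into the reflection pairs $\{j,k+1-j\}$ (picking up the permutation sign $\inv(\zeta)$), factor the resulting matrix as a block-diagonal matrix with blocks $1$ and $\Gamma\otimes I_n$ times the matrix with top strips $O_i$ and diagonal blocks $W_i$, and then obtain both the vanishing statement and the factorization from \cref{lem:matrix}. The sign bookkeeping you defer is precisely the combination of $\inv(\zeta)$ with the factor $(-1)^{\sum_{i<i_0}\ell_i}$ coming from \cref{lem:matrix}, which is also how the paper arrives at $\chi$.
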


\begin{proof}
Consider the permutation $\zeta$ in $S_{kn+1}$ which rearranges the columns of $\Delta$ blockwise in the following order: $1, k, 2, k-1, \dots$. 
In other words, $\zeta$ can be written in one-line notation as
\begin{multline*}  
  \zeta = (\underbrace{1,\dots,u_1}_{u_1},\underbrace{kn-u_k+2,\dots,kn+1}_{u_k}, \\
  \underbrace{u_1+1,\dots,u_1+u_2}_{u_2},\underbrace{kn-u_k-u_{k-1}+2,\dots,kn+1-u_k}_{u_{k-1}},\dots).
  \end{multline*}
Then, the number of inversions of $\zeta$ is
\begin{equation}
    % \begin{split}
        \label{inversion}
 \inv(\zeta)
 = \sum_{i=\floor{\frac{k+3}{2}}}^k u_i(kn+1-(u_1+\dots+u_{k+1-i})-(u_i+\dots+u_k)).
    % \end{split}
\end{equation}
 Here we note that
 \begin{equation}
\label{mk}
  \det \Delta =
  \sgn (\zeta)
  \det\left(\begin{array}{c} 
  M_{j''} \pm N_{j''} \\\\\hline\\
 \gamma_{i,j}U_{j''}-\gamma_{i,j'}V_{j''}
 \end{array}\right)_{1 \leq i,j\leq k},   
\end{equation}
 where 
\[
j'= j-(-1)^j \quad \text{and} \quad 
j''=\begin{cases}
 \frac{j+1}{2} & j \text{ odd}, \\
 k+1-\frac{j}{2} & j \text{ even}.
 \end{cases}
\] 
Now we see that
\begin{equation*}
\left(\begin{array}{c}
M_{j''} \pm N_{j''} \\\\\hline\\
 \gamma_{i,j}U_{j''}-\gamma_{i,j'}V_{j''}
 \end{array}\right)_{1 \leq i,j\leq k} 
 = \left( \begin{array}{c|c}
     1 & 0 \\\\ \hline\\
    0  & (\gamma_{i,j} I_n)_{1 \leq i,j\leq k}
 \end{array} \right) \times
\left(
\begin{array}{ccccc}
O_1 & O_2 && O_{\floor{\frac{k+1}{2}}}\\
W_1 &&& \\
 & W_2  & &\text{\huge0} \\
 &     & \ddots \\
\text{\huge0} &    &   & W_{\floor{\frac{k+1}{2}}}
\end{array}
\right).    
\end{equation*}
\noindent
Since the matrix $(\gamma_{i,j} I_n)_{1 \leq i,j \leq k}$ is the tensor product $\Gamma \otimes I_n$,
\[
\det \left( \begin{array}{c|c}
     1 & 0 \\\\ \hline\\
    0  & (\gamma_{i,j} I_n)_{1 \leq i,j\leq k}
 \end{array} \right)=\left(\det \Gamma \right)^n.
\]
Therefore, 
\begin{equation}
 \label{mk2}
 \det \left(\begin{array}{c}
M_{j''} \pm N_{j''} \\\\\hline\\
 \gamma_{i,j}U_{j''}-\gamma_{i,j'}V_{j''}
 \end{array}\right)_{1 \leq i,j\leq k}  =\left(\det \Gamma \right)^n 
 \det \left(
\begin{array}{ccccc}
O_1 & O_2 && O_{\floor{\frac{k+1}{2}}}\\
W_1 &&& \\
 & W_2  & &\text{\huge0} \\
 &     & \ddots \\
\text{\huge0} &    &   & W_{\floor{\frac{k+1}{2}}}
\end{array}
\right).  
\end{equation}
If 
\[ u_j+u_{k+1-j}=\begin{cases}
2n+1+\delta_{i_0,\frac{k+1}{2}} & j=i_0,\\
2n & \text{ otherwise},
\end{cases}
\]
for some $1 \leq i_0 \leq \floor{\frac{k+1}{2}},$ then using \cref{lem:matrix} in \eqref{mk2} and substituting in \eqref{mk}, we have
\begin{equation}
    \det \Delta
 = 
 (-1)^{\chi} 
 (\det \Gamma)^n \det \left( \begin{array}{c}
      O_{i_0}  \\\hline 
      W_{i_0}
 \end{array} \right)
 \prod_{\substack{i=1\\i \neq i_0}}^{\floor{\frac{k+1}{2}}}
 \det W_i.
\end{equation}
Otherwise, using \cref{lem:matrix}, the determinant of the last matrix in \eqref{mk2} is zero. Hence, by \eqref{mk},
     \[
\det \Delta = 0.
\]
This completes the proof.
\end{proof}

\section{Schur Factorization}
\label{sec:schur-k}
In this section, we give a proof of \cref{thm:schur-k}. 
\begin{lem} 
\label{lem:corek}
Fix $0 \leq m \leq t-1$ and $0 \leq \nu_m \leq \dots \leq \nu_1\leq t-m$. Let $\lambda$ be a partition of length at most $tn+m$. Then 
\[\core \lambda t=(\nu_1,\dots,\nu_m) \text{ if and only if } n_i(\lambda)=
\begin{cases}
n+1 & \text{ if } i=\nu_i+m-i \text{ for some } i\\
n & \text{ otherwise}.
\end{cases}
\]
\end{lem}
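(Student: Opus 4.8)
The plan is to reduce the statement to the description of $t$-cores via beta-sets given in \cref{prop:mcd-t-core-quo}(1), together with the stability property \eqref{no-parts-partition=core} which tells us that $n_i(\lambda,tn+m) = n_i(\core \lambda t, tn+m)$ for all $i$. So without loss of generality we may assume $\lambda = (\nu_1,\dots,\nu_m)$ is itself the $t$-core, and we must compute $n_i(\lambda, tn+m)$ directly from $\beta(\lambda, tn+m)$. First I would write down this beta-set explicitly: since $\ell = tn+m$, we have $\beta_i(\lambda) = \nu_i + tn + m - i$ for $1 \le i \le m$, and $\beta_i(\lambda) = tn+m-i$ for $m+1 \le i \le tn+m$, i.e. the beta-set is $(\nu_1 + tn+m-1,\dots,\nu_m+tn,\ tn-1, tn-2,\dots,1,0)$ (using $\nu_1 \le t-m$, so these $m$ "large" parts are all $\ge tn$ and the two blocks do not interleave badly).

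The second block $\{0,1,\dots,tn-1\}$ contributes exactly $n$ parts in each residue class mod $t$. So the whole computation comes down to: in which residue class mod $t$ does the $i$-th large part $\nu_i + tn + m - i$ lie? Since $tn \equiv 0$, this residue is $\nu_i + m - i \bmod t$. Now I claim the $m$ values $\nu_i + m - i$, $1 \le i \le m$, are distinct elements of $\{0,1,\dots,t-1\}$: they lie in the correct range because $0 \le \nu_m \le \cdots$ and $\nu_1 \le t-m$ give $0 \le \nu_i + m - i \le t-1$ (check the two extremes: $i=m$ gives $\nu_m \ge 0$, and $i=1$ gives $\nu_1 + m - 1 \le t-1$); and they are strictly decreasing in $i$ because $\nu_i + m - i \ge \nu_{i+1} + m - i > \nu_{i+1} + m - (i+1)$ using $\nu_i \ge \nu_{i+1}$. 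Hence each large part adds exactly one to the count $n_{\nu_i+m-i}$, and these are $m$ distinct residues, giving $n_i(\lambda) = n+1$ when $i = \nu_i + m - i$ for some (necessarily unique) index and $n_i(\lambda) = n$ otherwise. This proves the "only if" direction, and in fact the "if" direction as well once we invoke \cref{prop:mcd-t-core-quo}(1): the displayed condition on the $n_i$ determines $\core \lambda t$ uniquely (the $t$-core is a function of the multiset $\{n_i(\lambda)\}$ arranged by residue), so matching the $n_i$ of $\lambda$ with those we just computed for the specific core $(\nu_1,\dots,\nu_m)$ forces $\core \lambda t = (\nu_1,\dots,\nu_m)$.

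The main obstacle I anticipate is purely bookkeeping: verifying carefully that the two blocks of the beta-set do not overlap or interleave in a way that changes the residue counts — this is where the hypothesis $\nu_1 \le t-m$ is used, ensuring the smallest large part $\nu_m + tn \ge tn$ while the largest small part is $tn-1$, and that the map $i \mapsto \nu_i + m - i$ is a bijection onto its image inside $\{0,\dots,t-1\}$. Once the strict monotonicity and range of $\nu_i + m - i$ are nailed down, the rest is immediate. I would also remark that the indexing "$i = \nu_i + m - i$" in the statement is slightly informal — it should be read as "$i$ equals $\nu_j + m - j$ for some $j \in [m]$" — and I would phrase it that way in the write-up to avoid the clash of the dummy variable.
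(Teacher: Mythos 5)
Your proposal is correct and follows essentially the same route as the paper: the paper's own proof simply asserts that $\core\lambda t=(\nu_1,\dots,\nu_m)$ iff $\beta(\core\lambda t, tn+m)=(\nu_1+tn+m-1,\dots,\nu_m+tn,tn-1,\dots,0)$ and that the residue counts then follow, which is exactly the computation you carry out in detail (including the range/distinctness check on $\nu_i+m-i$ and the appeal to \eqref{no-parts-partition=core} that the paper leaves implicit). Your remark about the dummy-variable clash in ``$i=\nu_i+m-i$'' is also apt.
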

\begin{proof}
It is obvious that $\core \lambda t=(\nu_1,\dots,\nu_m)$ if and only if $\beta(\core \lambda t)=(\nu_1+tn+m-1,\dots,\nu_m+tn,tn-1,\dots,0)$. This further implies that $\core \lambda t=(\nu_1,\dots,\nu_m)$ if and only if  
\[ n_i(\lambda)=
\begin{cases}
n+1 & \text{ if } i=\nu_i+m-i \text{ for some } i\\
n & \text{ otherwise}.
\end{cases}
\]
\end{proof}

\begin{proof}[Proof of \cref{thm:schur-k}] 
By the Definition \eqref{gldef}, we see that the desired Schur polynomial is
\begin{equation}
\label{num-1}
   s_{\lambda}(X,\omega X, \dots ,\omega^{t-1}X,y, \dots, \omega^{m-1}y) 
   = \frac{\det \left(
    \begin{array}{c}
    \left( \left( 
    (\omega^{p-1}x_i)^{\beta_{j}(\lambda)}
    \right)_{\substack{1 \leq i\leq n \\ 1\leq j\leq tn+m}}
    \right)_{1\leq p \leq t} \\[0.4cm]  
      \hline     \\[-0.3cm]
       \left( (\omega^{i-1} y)^{\beta_{j}(\lambda)} \right)_{\substack{1 \leq i\leq m \\1 \leq j \leq tn+m}}
    \end{array} 
    \right)}
    {\det \left( \begin{array}{c}  
    \left( \left( (\omega^{p-1}x_i)^{tn+m-j}
    \right)_{\substack{1 \leq i\leq n \\ 1\leq j\leq tn+m}}
    \right)_{1\leq p \leq t} \\[0.4cm]  
      \hline     \\[-0.3cm]
    \left( (\omega^{i-1} y)^{tn+m-j} \right)_{\substack{1 \leq i\leq m \\1 \leq j \leq tn+m}} \end{array} \right)}.
\end{equation}
% \tcr{Permuting rows of the determinant in the numerator, we see that the numerator is} 
% \begin{equation}
% \label{num-1}
%         \det  \left(
%     \begin{array}{c} 
%     \left( \left( 
%     (\omega^{p-1}x_i)^{\beta_{j}(\lambda)}
%     \right)_{\substack{1 \leq i\leq n+1 \\ 1\leq j\leq tn+k}}
%     \right)_{1\leq p \leq k} \\[0.4cm]  
%       \hline     \\[-0.3cm]
%       \left( \left( 
%     (\omega^{p-1}x_i)^{\beta_{j}(\lambda)}
%     \right)_{\substack{1 \leq i\leq n \\ 1\leq j\leq tn+k}} \right)_{k+1 \leq p \leq t}
%       \end{array} 
%     \right)
% \end{equation}
We first consider the case when $n_{c}(\lambda) > n+1$ for some $0 \leq c \leq t-1$. 
Permuting the columns of the matrix in the numerator in \eqref{num-1} by $\sigma^c_{\lambda}$ from \eqref{sigma-perm-m} $(m=1,e_1=c)$, we see that the numerator in the right hand side of \eqref{num-1} is
\begin{equation}
\sgn(\sigma^c_{\lambda}) \det \left( \begin{array}{c|c}
     (\omega^{(p-1)(c)} A^{\lambda}_{c})_{1 \leq p \leq t}    &  (\omega^{(p-1)(j-1)} A^{\lambda}_{j-1})_{\substack{1 \leq p \leq t\\ 1 \leq j \leq t \\ j \neq c+1}}    \\\\ \hline\\
    (\omega^{(p-1)(c)} B^{\lambda}_{c})_{1 \leq p \leq m}     &  (\omega^{(p-1)(j-1)} B^{\lambda}_{j-1})_{\substack{1 \leq p \leq m \\ 1 \leq j \leq t \\ j \neq c+1}} 
    \end{array} \right),    
\end{equation}
where $A^{\lambda}_s
 =\left(x_i^{\beta^{(s)}_j(\lambda)}\right)_{\substack{1 \leq i \leq n \\ 1 \leq j \leq n_s(\lambda)}}
$ and $B^{\lambda}_s
 =\left(y^{\beta^{(s)}_j(\lambda)}
 \right)_{1 \leq j \leq n_s(\lambda)}.
$ 
% are defined in \eqref{def AB} and \eqref{def 1AB} respectively. 
For $p \in [t],$ multiplying the rows of the $p^{\text{th}}$'th block by $\omega^{(1-p)c}$ and for $p \in [m]$ and multiplying the row of the $(t+p)^{\text{th}}$ block by $\omega^{(1-p)c}$, we get
\begin{equation}
    \sgn(\sigma^c_{\lambda}) \det \left( \begin{array}{c|c}
     ( A^{\lambda}_{c})_{1 \leq p \leq t}    &  (\omega^{(p-1)(j-c-1)} A^{\lambda}_{j-1})_{\substack{1 \leq p \leq t\\ 1 \leq j \leq t \\ j \neq c+1}}    \\\\ \hline\\
    (B^{\lambda}_{c})_{1 \leq p \leq m}     &  (\omega^{(p-1)(j-c-1)} B^{\lambda}_{j-1})_{\substack{1 \leq p \leq m \\ 1 \leq j \leq t \\ j \neq c+1}} 
    \end{array} \right)
\end{equation}
% \begin{equation}
% \sgn(\sigma^v_{\lambda}) \det \left( \begin{array}{c|c}
%      (A^{\lambda}_{v})_{1 \leq p \leq k}    &  (\omega^{(p-1)(j-v-1)} A^{\lambda}_{j-1})_{\substack{1 \leq p \leq k\\ 1 \leq j \leq t \\ j \neq v+1}}    \\\\ \hline\\
%     ( B^{\lambda}_{v})_{k+1 \leq p \leq t}     &  (\omega^{(p-1)(j-v-1)} B^{\lambda}_{j-1})_{\substack{k+1 \leq p \leq t \\ 1 \leq j \leq t \\ j \neq v+1}} 
%     \end{array} \right),    
% \end{equation}
Applying the blockwise row operations $R_i \rightarrow R_i-R_1$ for $i \in [2,t]$, $R_i \rightarrow R_i-R_{t+1}$ for $i \in [t+2,t+m]$ and then permuting rows $R_i$, $i \in [2,t+1]$ cyclically, we see that the numerator is 
\begin{equation}
\label{iszero}
\sgn(\sigma^c_{\lambda}) (-1)^{t-1} 
\det \left( \begin{array}{c|c}
A^{\lambda}_{c} & (A^{\lambda}_{j-1})_{\substack{1 \leq j \leq t \\ j \neq c+1}} \\\\\hline\\
B^{\lambda}_{c} & (B^{\lambda}_{j-1})_{\substack{1 \leq j \leq t \\ j \neq c+1}}
\\\\\hline\\
(0)_{2 \leq p \leq t}    &  ((\omega^{(p-1)(j-c-1)}-1) A^{\lambda}_{j-1})_{\substack{2 \leq p \leq t\\ 1 \leq j \leq t \\ j \neq c+1}}    
\\\\ \hline\\
(0)_{2 \leq p \leq m}     &  ((\omega^{(p-1)(j-c-1)}-1) B^{\lambda}_{j-1})_{\substack{2 \leq p \leq m \\ 1 \leq j \leq t \\ j \neq c+1}} 
    \end{array} \right).    
\end{equation}
Since $n_{c}(\lambda) > n+1$, the determinant in \eqref{iszero} is zero. Substituting in \eqref{num-1}, we see that the required Schur polynomial vanishes. Now consider the case when
$n_{i}(\lambda) \leq n+1$ for all $i \in [0,t-1]$. Since $\sum_i n_i(\lambda)=tn+m$,
using pigeonhole principle there exist $\{e_1,\dots,e_m\} \subset [0,t-1]$ such that $n_{e_i}(\lambda)=n+1$, $i \in [m]$. Let $e_i=\nu_i+m-i$ for all $i \in [m]$.  
Permuting the columns of the determinant in the numerator of \eqref{num-1} by $\sigma^{E}_{\lambda}$ from \eqref{sigma-perm-m}, we see that the numerator is
\begin{equation}
\sgn(\sigma^{\nu+\delta_m}_{\lambda}) \det \left( \begin{array}{c|c}
     (\omega^{(p-1)(e_j)} A^{\lambda}_{e_j})_{\substack{1 \leq p \leq t\\1 \leq j \leq m}} &
     (\omega^{(p-1)(j-1)} A^{\lambda}_{j-1})_{\substack{1 \leq p,j \leq t\\ j \neq e_1+1,\dots,e_m+1}}
     \\\\ \hline\\
    (\omega^{(p-1)(e_j)} B^{\lambda}_{e_j})_{\substack{1 \leq p \leq m \\1 \leq j \leq m}} &
     (\omega^{(p-1)(j-1)} B^{\lambda}_{j-1})_{\substack{1 \leq p,j \leq t\\ j \neq e_1+1,\dots,e_m+1}}
    \end{array} \right)
    % = \sgn(\sigma_{\lambda}) \det \left( \begin{array}{c}
    %  (\omega^{(p-1)(j-1)} C^{\lambda}_{j-1})_{\substack{1 \leq p \leq k \\1 \leq j \leq t}}    \\\\ \hline\\
    % (\omega^{(p-1)(j-1)} A^{\lambda}_{j-1})_{\substack{k+1 \leq p \leq t \\1 \leq j \leq t}} 
    % \end{array} \right),    
\end{equation}
% where $C^{\lambda}_s=(x_i^{\beta^{(s)}_j(\lambda)})_{\substack{1 \leq i \leq n+1 \\ 1 \leq j \leq n_s}}$ and $x_{n+1}=y$. 
Consider the permutation $\sigma^*$ in $S_{tn+m}$ which rearranges the rows of the numerator blockwise as: $1, t+1, 2, t+2, \dots, m, t+m, m+1, \dots, t$. Then it can be seen that the numerator is
% In other words,  can be written in one line notation
% as We note that
% \[
% \left( \begin{array}{c}
%      (\omega^{(p-1)(j-1)} C^{\lambda}_{j-1})_{\substack{1 \leq p \leq k \\1 \leq j \leq t}}    \\\\ \hline\\
%     (\omega^{(p-1)(j-1)} A^{\lambda}_{j-1})_{\substack{k+1 \leq p \leq t \\1 \leq j \leq t}} 
%     \end{array} \right)
%     = \left( 
%     \begin{array}{c}
%      \left(\gamma_{i,j} I_{n+1 \times n_j}\right)_{\substack{1 \leq i \leq k \\1 \leq j \leq t}}  \\\\\hline\\
%     \left(\gamma_{i,j} I_{ n \times n_j}\right)_{\substack{k+1 \leq i \leq t \\1 \leq j \leq t}}
%     \end{array}
% \right) 
% \left(
% \begin{array}{cccccc}
% C^{\lambda}_0  \\
% & \ddots  &  & & \text{\huge0}\\
% & & C^{\lambda}_{k-1} & \\
% & &  & A^{\lambda}_{k} \\
% & \text{\huge0} &  & & \ddots \\
% &  &   & &  & A^{\lambda}_{t-1} 
% \end{array}
% \right).
% \]
\begin{equation}
\label{nu}
 \sgn(\sigma^*) \sgn(\sigma^{\nu+\delta_m}_{\lambda}) \det(\Gamma_m(\lambda)) \det
\left(
\begin{array}{cccccc}
\left( \begin{array}{c}
     A^{\lambda}_{e_1}  \\\hline
     B^{\lambda}_{e_1}
\end{array} \right) \\
& \ddots  &  & & \text{\huge0}\\
& & \left( \begin{array}{c}
     A^{\lambda}_{e_m}  \\\hline
     B^{\lambda}_{e_m}
\end{array} \right) & \\
& &  & A^{\lambda}_{e_{m+1}} \\
& \text{\huge0} &  & & \ddots \\
&  &   & &  & A^{\lambda}_{e_t} 
\end{array}
\right),    
\end{equation}
where the set $\{e_{m+1}< \cdots < e_t\}$ is same as $\{0, \ldots, t-1\} \setminus \{e_i\}_{i\in [m]}$ and 
\[ \Gamma_m(\lambda) \coloneqq 
\left( 
    \begin{array}{c|c}
     \left(\omega^{(i-1)(e_j)} I_{n+1 \times n+1}\right)_{\substack{1 \leq i \leq m \\1 \leq j \leq m}}  &
     \left(\omega^{(i-1)(j-1)} I_{n+1 \times n}\right)_{\substack{1 \leq i \leq m \\1 \leq j \leq t\\j \neq e_1+1,\dots,e_m+1
     }}
     \\\\\hline\\
    \left(\omega^{(i-1)(e_j)} I_{ n \times n+1}\right)_{\substack{m+1 \leq i \leq t \\1 \leq j \leq m}} &
    \left(\omega^{(i-1)(j-1)} I_{ n \times n }\right)_{\substack{m+1 \leq i \leq t \\1 \leq j \leq t\\
    j \neq e_1+1,\dots,e_m+1}}
    \end{array}
\right). 
\]
% and
% \[
% \left(\gamma_{i,j}\right)_{1 \leq i,j \leq t} \coloneqq  \left( \begin{array}{c|c}
%  (\omega^{(i-1)(d_j)})_{\substack{1 \leq i \leq t\\1 \leq j \leq m}} &
%      (\omega^{(i-1)(j-1)})_{\substack{1 \leq i,j \leq t\\ j \neq d_1+1,\dots,d_m+1}} 
% \end{array} \right).
% \]
We note that the last determinant in \eqref{nu} is non-zero if and only if
\[
n_j(\lambda)=\begin{cases}
n+1 & \text{ if } j=e_1, \dots, e_m,\\
n & \text{otherwise}.
\end{cases}
\]
So, by \cref{lem:corek}, we see that the Schur polynomial is non-zero if and only if $\core \lambda t=(\nu_1,\dots,\nu_m)$. In this case, the numerator is
\begin{equation}
\label{nums}
  \sgn(\sigma^*) \sgn(\sigma^{\nu+\delta_m}_{\lambda}) \det(\Gamma_m(\lambda)) \prod_{i=1}^{m} \det \left( \begin{array}{c}
     A^{\lambda}_{e_i}  \\\hline
     B^{\lambda}_{e_i}
\end{array} \right) \prod_{\substack{i=0\\i \neq e_i, i \in [m]}}^{t-1} \det A^{\lambda}_{i}.  
\end{equation}
Permuting the columns $C_{sn+s},\dots,C_{tn+m}$ of $\Gamma_m(\lambda)$ cyclically in succession for $s=1,\dots,m$ and then rows in the similar way, we have
\[ \det \Gamma_m(\lambda) =  
\left( 
    \begin{array}{c|c|c}
     \left(\omega^{(i-1)(e_j)} I_{n \times n}\right)_{\substack{1 \leq i \leq t \\1 \leq j \leq m}}  & \left(\omega^{(i-1)(j-1)} I_{n \times n}\right)_{\substack{1 \leq i \leq t \\1 \leq j \leq t\\j \neq e_1+1,\dots,e_m+1}} & 
     \text{\huge0}
     \\&&\\\hline&&\\
   \text{\huge0} & \text{\huge0} & \left(\omega^{(i-1)(e_j)} \right)_{1 \leq i,j \leq m} 
    \end{array}
\right). 
\]
Finally, we evaluate $\det \Gamma_m(\lambda)$ at the empty partition and note that 
\begin{equation}
    \frac{\det \Gamma_m(\lambda)}
    {\det \Gamma_m(\emptyset)} = 
    % \pm 
    % \frac{\left(\omega^{(i-1)(d_j)} \right)_{1 \leq i,j \leq m}}{\left(\omega^{(i-1)(j-1)} \right)_{1 \leq i,j \leq m} } = 
    s_{\core \lambda t}(1,\omega,\dots,\omega^{m-1}).
\end{equation}
Since the denominator in \eqref{num-1} is same as the numerator evaluated at the empty partition. Evaluating \eqref{nums} for the empty partition and substituting in \eqref{schur-k} completes the proof.
\end{proof}
\section{Factorization of Other classical characters}
\label{sec:other}
In this section, we prove \cref{thm:odd}, \cref{thm:symp} and \cref{thm:even}. Recall, the matrices 
$A_{p,q}^{\lambda}$,  $\bar{A}_{p,q}^{\lambda}$ from \eqref{def AB} and $B_{p,q}^{\lambda}$,  $\bar{B}_{p,q}^{\lambda}$ from \eqref{def 1AB}.

\subsection{odd orthogonal} Consider the $(tn+1) \times (tn+1)$ block matrix
\begin{equation}
    \label{delta1}
    \Delta_1 \coloneqq \left( 
    \begin{array}{c}
(B_{q-1,1}^{\lambda} - \bar{B}_{q-1,0}^{\lambda})_{1 \leq q \leq t}  \\\\\hline \\
\left( 
\omega^{(p-1)q} A^{\lambda}_{q-1,1}
-\bar{\omega}^{(p-1)(q-1)}\bar{A}^{\lambda}_{q-1,0}
\right)_{1\leq p,q\leq t}
    \end{array}
    \right). 
\end{equation}
Substituting $M_j=B_{j-1,1}^{\lambda}$, $N_j=\bar{B}_{j-1,0}^{\lambda}$, $U_j=A_{j-1,1}^{\lambda}$, $V_j=\bar{A}_{j-1,0}^{\lambda}$ for $1 \leq j \leq t$ and
\[
\gamma_{i,j} = \begin{cases}
\omega^{\frac{(i-1)(j+1)}{2}} & j \text{ odd }\\
\omega^{-\frac{(i-1)(j-2)}{2}} & j \text{ even } 
\end{cases}
\]
in \cref{lem:det-blockmatrix-1} proves the following corollary.
\begin{cor}
\label{cor:delta1}

\begin{enumerate}
    \item If \begin{equation}
    n_{i}(\lambda)+n_{t-1-i}(\lambda) = 
    \begin{cases}
    2n+1+\delta_{i_0,\frac{t-1}{2}} & \text{ if } i=i_0,\\
    2n & \text{ otherwise},
    \end{cases}
    \quad \quad  0 \leq i \leq  \floor{\frac{t-1}{2}}, 
\end{equation}
for some $0 \leq i_0 \leq \floor{\frac{t-1}{2}}$, then \begin{equation}
\label{delta31}
    \det \Delta_1
 = (-1)^{\chi_1} 
 (\det \Gamma)^n \det \left( \begin{array}{c}
      O_{i_0}^{(1)}  \\[0.1cm]
     \hline \\[-0.4cm]
      W_{i_0}^{(1)}
 \end{array} \right)
 \prod_{\substack{i=0\\i \neq i_0}}^{\floor{\frac{t-1}{2}}}
 \det W_i^{(1)},
\end{equation}
where 
\[
O_i^{(1)}= \begin{cases}
\left(\begin{array}{c|c}
   B_{i,1}^{\lambda} - \bar{B}_{i,0}^{\lambda}  & B_{t-1-i,1}^{\lambda}  - \bar{B}_{t-1-i,0}^{\lambda}  
\end{array} \right) & \text{if  } 0 \leq i \leq \floor{\frac{t-2}{2}}\\
\left(B_{\frac{t-1}{2},1}^{\lambda}  - \bar{B}_{\frac{t-1}{2},0}^{\lambda} \right) & \text{$t$ odd and $i = \frac{t-1}{2}$},
\end{cases}
\]
\[
W_i^{(1)}= \begin{cases}
\left(\begin{array}{c|c}
A_{i,1}^{\lambda}  & -\bar{A}_{t-1-i,0}^{\lambda} \\[0.1cm]
     \hline \\[-0.4cm]
-\bar{A}_{i,0}^{\lambda}  & A_{t-1-i,1}^{\lambda}
\end{array}\right) 
& 0 \leq i \leq \floor{\frac{t-2}{2}}, \\
\left( A_{\frac{t-1}{2},1}^{\lambda} - \bar{A}_{\frac{t-1}{2},0}^{\lambda}  \right) 
& \text{$t$ odd and $i = \frac{t-1}{2}$},
\end{cases}
\]
and
\[
\chi_{1}
=
% \sum_{q=1}^{\floor{\frac{t}{2}}} 
% \left(n+n_{q-1}(\lambda)\right)+
\left(\ds \sum_{i=t+1-i_0}^{t} n_{i-1}(\lambda) \right)
+ \ds \sum_{i=\floor{\frac{t+3}{2}}}^{t} t n n_{i-1}(\lambda).
\]
\item Otherwise 
\begin{equation}
\label{delta3-other}
    \det \Delta_1=0.
\end{equation}
\end{enumerate}
\end{cor}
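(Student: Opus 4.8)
The plan is to recognize $\Delta_1$ as the specialization of the matrix $\Delta$ of \cref{lem:det-blockmatrix-1} obtained from the substitution announced just before the corollary, and then to transcribe that lemma's two conclusions. Concretely, take $k=t$ and $u_j=n_{j-1}(\lambda)$ for $1\le j\le t$; since $\beta(\lambda,tn+1)$ has $tn+1$ parts, $u_1+\cdots+u_t=\sum_{i=0}^{t-1}n_i(\lambda)=tn+1=kn+1$, as \cref{lem:det-blockmatrix-1} requires. Set $M_j=B^{\lambda}_{j-1,1}$, $N_j=\bar B^{\lambda}_{j-1,0}$, $U_j=A^{\lambda}_{j-1,1}$ and $V_j=\bar A^{\lambda}_{j-1,0}$ (the matrices from \eqref{def AB} and \eqref{def 1AB}), choose the $-$ sign throughout, and let $\gamma_{i,j}$ be as displayed before the corollary, i.e. $\gamma_{i,j}=\omega^{(i-1)(j+1)/2}$ for $j$ odd and $\gamma_{i,j}=\omega^{-(i-1)(j-2)/2}$ for $j$ even.

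The only nonroutine point is to verify that these data genuinely produce $\Delta_1$ and that the hypothesis $\gamma_{i,t+1}=\gamma_{i,t}$ of \cref{lem:det-blockmatrix-1} holds. The top block row of $\Delta$ is $(M_j-N_j)_{1\le j\le t}=(B^{\lambda}_{j-1,1}-\bar B^{\lambda}_{j-1,0})_{1\le j\le t}$, which is literally the first row of $\Delta_1$. For the lower $t\times t$ array one must reconcile the two-piece block description of $\Delta$ with the single uniform block $\omega^{(p-1)q}A^{\lambda}_{q-1,1}-\bar\omega^{(p-1)(q-1)}\bar A^{\lambda}_{q-1,0}$ of $\Delta_1$: for the columns $q\le\floor{\frac{t+1}{2}}$ one computes $\gamma_{i,2q-1}=\omega^{(i-1)q}$ and $\gamma_{i,2q}=\omega^{-(i-1)(q-1)}$, an exact match; for $q\ge\floor{\frac{t+3}{2}}$ the coefficients of $U_q$ and $V_q$ prescribed by $\Delta$ come out to $\omega^{-(i-1)(t-q)}$ and $\omega^{(i-1)(t+1-q)}$, and these equal $\omega^{(i-1)q}$ and $\omega^{-(i-1)(q-1)}$ precisely because $\omega^{t}=1$. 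The same relation $\omega^t=1$ gives $\gamma_{i,t+1}=\gamma_{i,t}$ (checked separately for $t$ even and $t$ odd). Finally $\Gamma=(\gamma_{i,j})_{1\le i,j\le t}$ is, after rescaling rows, a Vandermonde matrix in the distinct values $\omega^{0},\dots,\omega^{t-1}$, so $\det\Gamma\neq0$ and the factor $(\det\Gamma)^n$ is harmless.

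With the identification in place, \cref{lem:det-blockmatrix-1} applies directly. Its balance hypothesis $u_j+u_{t+1-j}=2n+1+\delta_{i_0,\frac{t+1}{2}}$ for $j=i_0$ and $2n$ otherwise becomes, after $u_j=n_{j-1}(\lambda)$ and the reindexing $i=j-1$, exactly the displayed condition $n_i(\lambda)+n_{t-1-i}(\lambda)=2n+1+\delta_{i_0,\frac{t-1}{2}}$ for $i=i_0$ and $2n$ otherwise, with the corollary's $i_0$ being one less than the lemma's. Carrying the blocks $O_{i_0}$, $W_{i_0}$, $W_i$ and the exponent $\chi=\big(\sum_{i=k+2-i_0}^{k}u_i\big)+\sum_{i=\floor{\frac{k+3}{2}}}^{k}kn\,u_i$ of the lemma through the same substitution and reindexing yields $O^{(1)}_i$, $W^{(1)}_i$ and $\chi_1$ exactly as stated, proving part (1). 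Part (2) is immediate: if the balance condition fails, the second part of \cref{lem:det-blockmatrix-1} gives $\det\Delta=0$, hence $\det\Delta_1=0$.

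The main (and essentially only) obstacle is the reconciliation in the second paragraph, namely matching the uniform block of $\Delta_1$ to the bipartite block formula of $\Delta$ and checking $\gamma_{i,t+1}=\gamma_{i,t}$, both of which rely on $\omega^t=1$, together with keeping the index shift $i\mapsto i-1$ consistent inside the sign exponent $\chi$; the rest is a transcription of \cref{lem:det-blockmatrix-1}.
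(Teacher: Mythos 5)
Your proposal is correct and follows exactly the paper's route: the paper likewise obtains \cref{cor:delta1} by substituting $M_j=B^{\lambda}_{j-1,1}$, $N_j=\bar B^{\lambda}_{j-1,0}$, $U_j=A^{\lambda}_{j-1,1}$, $V_j=\bar A^{\lambda}_{j-1,0}$ and the stated $\gamma_{i,j}$ into \cref{lem:det-blockmatrix-1}, leaving the block-matching, the verification of $\gamma_{i,t+1}=\gamma_{i,t}$ via $\omega^t=1$, and the index shift $i\mapsto i-1$ implicit. Your checks of those details (which are all accurate) simply make explicit what the paper asserts in one line.
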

\begin{proof}[Proof of \cref{thm:odd}]
By \eqref{oodef}, we see that the numerator of desired odd orthogonal character is given by
\begin{equation}
\label{odd-1}
    % \frac{
    \det \left(
    \begin{array}{c}
    \left( \left( 
    (\omega^{p-1}x_i)^{\beta_{j}(\lambda)+1}-(\bar{\omega}^{p-1}\x_i)^{\beta_{j}(\lambda)}
    \right)_{\substack{1 \leq i\leq n \\ 1\leq j\leq tn+1}}
    \right)_{1\leq p \leq t}  \\[0.4cm]  
      \hline     \\[-0.3cm]
       \left( 
    y^{\beta_{j}(\lambda)+1}-\bar{y}^{\beta_{j}(\lambda)}
    \right)_{1\leq j\leq tn+1}
    \end{array} 
    \right).
    % }
    % { \det \left(
    % \begin{array}{c}
    % \left( \left( 
    % (\omega^{p-1}x_i)^{tn+2-j}-(\bar{\omega}^{p-1}\x_i)^{tn+1-j}
    % \right)_{\substack{1 \leq i\leq n \\ 1\leq j\leq tn+1}}
    % \right)_{1\leq p \leq t}  \\  
    %   \hline     \\[-0.3cm]
    %  \left( y^{tn+2-j}-\bar{y}^{tn+1-j} \right)_{1\leq j\leq tn+1}
    % \end{array} 
    % \right)}.
\end{equation}
Permuting the columns of the matrix in \eqref{odd-1} by $\sigma_{\lambda}$ from \eqref{sigma-perm-m} $(m=1,d_1=0)$ and then the $t+1$ row blocks of the numerator cyclically, the numerator is
% \begin{equation}
%     \sgn(\sigma_{\lambda}) \det \left(
%     \begin{array}{c|c|c|c}
%     A_{0,1}^{\lambda}-\bar{A}^{\lambda}_0 & A_{1,1}^{\lambda}-\bar{A}^{\lambda}_1 & \dots & A_{t-1,1}^{\lambda}-\bar{A}^{\lambda}_{t-1} \\\hline&&&\\
%   \omega A_{0,1}^{\lambda}-\bar{A}^{\lambda}_0 & \omega^2 A_{1,1}^{\lambda}-\omega^{t-1} \bar{A}^{\lambda}_1 & \dots & A^{\lambda}_{t-1,1}-\omega \bar{A}^{\lambda}_{t-1} \\\hline&&&\\
%     \vdots &  \vdots &  \dots &  \vdots \\\hline&&&\\
%   \omega^{t-1} A_{0,1}^{\lambda}-\bar{A}^{\lambda}_0 &  \omega^{t-2} A_{1,1}^{\lambda}-\omega \bar{A}^{\lambda}_1 &  \dots &  A^{\lambda}_{t-1,1}-\omega^{t-1} \bar{A}^{\lambda}_{t-1} 
%     \\\hline&&&\\
%     B_{0,1}^{\lambda}-\bar{B}^{\lambda}_0 & B_{1,1}^{\lambda}-\bar{B}^{\lambda}_1 & \dots & B_{t-1,1}^{\lambda}-\bar{B}^{\lambda}_{t-1}
%     \end{array} \right).
% \end{equation}
% where $A_m^{\lambda}=\left(
% x_i^{\beta^m_j(\lambda)}
% \right)_{\substack{1 \leq i \leq n\\1 \leq j \leq n_m(\lambda)}}$, 
% $\bar{A}_m^{\lambda}=\left(
% \x_i^{\beta^m_j(\lambda)}
% \right)_{\substack{1 \leq i \leq n\\1 \leq j \leq n_m(\lambda)}}$,
% $B_m^{\lambda}=\left(
% y^{\beta^m_j(\lambda)}
% \right)_{1 \leq j \leq n_m(\lambda)}$, 
% $\bar{B}_m^{\lambda}=\left(
% \bar{y}^{\beta^m_j(\lambda)}
% \right)_{1 \leq j \leq n_m(\lambda)}$.
\begin{equation}
\label{odd-2}
    \sgn(\sigma_{\lambda}) (-1)^{t} \det \left(
    \begin{array}{c|c|c|c}
    B_{0,1}^{\lambda}-\bar{B}^{\lambda}_0 & B_{1,1}^{\lambda}-\bar{B}^{\lambda}_1 & \dots & B_{t-1,1}^{\lambda}-\bar{B}^{\lambda}_{t-1}
     \\\hline&&&\\
    A_{0,1}^{\lambda}-\bar{A}^{\lambda}_0 & A_{1,1}^{\lambda}-\bar{A}^{\lambda}_1 & \dots & A_{t-1,1}^{\lambda}-\bar{A}^{\lambda}_{t-1}
     \\\hline&&&\\
   \omega A_{0,1}^{\lambda}-\bar{A}^{\lambda}_0 & \omega^2 A_{1,1}^{\lambda}-\omega^{t-1} \bar{A}^{\lambda}_1 & \dots & A^{\lambda}_{t-1,1}-\omega \bar{A}^{\lambda}_{t-1} \\\hline&&&\\
    \vdots &  \vdots &  \dots &  \vdots \\\hline&&&\\
   \omega^{t-1} A_{0,1}^{\lambda}-\bar{A}^{\lambda}_0 &  \omega^{t-2} A_{1,1}^{\lambda}-\omega \bar{A}^{\lambda}_1 &  \dots &  A^{\lambda}_{t-1,1}-\omega^{t-1} \bar{A}^{\lambda}_{t-1} 
    \end{array} \right),
\end{equation}
where $A_{p,q}^{\lambda}$,  $\bar{A}_{p}^{\lambda}$, 
$B_{p,q}^{\lambda}$ and $\bar{B}_{p}^{\lambda}$ are defined in \eqref{def AB} and \eqref{def 1AB}. We note that the matrix in \eqref{odd-2} is $\Delta_1$ defined in \eqref{delta1}. We use \cref{cor:delta1} to get the determinant. Since the denominator in \eqref{oodef} is same as its numerator evaluated at the empty partition and $n_0(\emptyset,tn+1)=n+1$, $n_i(\emptyset,tn+1)=n$ for all $i \in [1,t-1]$, evaluating the numerator in \eqref{odd-2} and then using \eqref{delta31}, we see that the denominator of the desired odd orthogonal character is  

\begin{equation}
\label{dnm-11}
\begin{split}
   \sgn(\sigma_{\emptyset}) 
 (-1)^{t} & (-1)^{\chi^{(0)}_1} 
 (\det \Gamma)^n 
 \det \left( \begin{array}{c|c}
     B_{0,1} - \bar{B}_{0,0} 
     &  B_{t-1,1}  - \bar{B}_{t-1,0}
     \\[0.2cm]
     \hline \\[-0.3cm]
     A_{0,1}  & -\bar{A}_{t-1,0} \\[0.2cm]
     \hline \\[-0.3cm]
-\bar{A}_{0,0}  & A_{t-1,1}
 \end{array} \right) \\
\times & \prod_{q=1}^{\floor{\frac{t-2}{2}}}
 \det \left(\begin{array}{c|c}
A_{q,1} & -\bar{A}_{t-1-q,0} \\[0.2cm]
     \hline \\[-0.3cm]
-\bar{A}_{q,0}  & A_{t-1-q,1}
\end{array}\right) \times 
 \begin{cases}
\det \left( A_{\frac{t-1}{2},1} - \bar{A}_{\frac{t-1}{2},0}  \right)  & t \text{ is odd},\\
1 & t \text{ is even},
 \end{cases}
 \end{split}
\end{equation}
where 
\[
\chi^{(0)}_1 = \ds \sum_{i=\floor{\frac{t+3}{2}}}^{t} t n^2.
\]
\noindent
If $\core \lambda t \not \in \mathcal{Q}^{(t)}_{2,0,k} \cup \mathcal{Q}^{(t)}_{2,1,k}$ for all $k \in [\rk(\core \lambda t)]$, then by \cref{cor:z=0} and \eqref{delta3-other}, the numerator in \eqref{odd-2} is 0. So,
\[
\oo_{\lambda}(X,\, \omega X, \dots,\omega^{t-1}X,y)=0.
\]
If $\core \lambda t \in \mathcal{Q}^{(t)}_{2,0,k} \cup \mathcal{Q}^{(t)}_{2,1,k}$ for some $1 \leq k \leq \rk(\core \lambda t)$, then we use \cref{cor:z=0} and \eqref{delta31} to factorize the numerator in \eqref{odd-2}.

\underline{Case 1}. If $t$ is odd and $i_0=\frac{t-1}{2}$, then the numerator is 
\begin{equation}
\label{num-12}
 \sgn(\sigma_{\lambda}) 
 (-1)^{t} (-1)^{\chi_1} 
 (\det \Gamma)^n 
 \det \left( \begin{array}{c}
      B_{\frac{t-1}{2},1}^{\lambda}  - \bar{B}_{\frac{t-1}{2},0}^{\lambda}  \\[0.2cm]
     \hline \\[-0.3cm]
      A_{\frac{t-1}{2},1}^{\lambda} - \bar{A}_{\frac{t-1}{2},0}^{\lambda} 
 \end{array} \right)
 \prod_{q=0}^{\frac{t-3}{2}}
 \det \left(\begin{array}{c|c}
A_{q,1}^{\lambda} 
& -\bar{A}_{t-1-q,0}^{\lambda} \\[0.2cm]
     \hline \\[-0.3cm]
-\bar{A}_{q,0}^{\lambda}  
& A_{t-1-q,1}^{\lambda}
\end{array}\right).
\end{equation}
By \cref{lem:DI}, we have  
\begin{multline*}
\det \left( \begin{array}{c|c}
     B_{0,1} - \bar{B}_{0,0} 
     &  B_{t-1,1}  - \bar{B}_{t-1,0}
     \\[0.2cm]
     \hline \\[-0.3cm]
     A_{0,1}  & -\bar{A}_{t-1,0} \\[0.2cm]
     \hline \\[-0.3cm]
-\bar{A}_{0,0} & A_{t-1,1}
 \end{array} \right) \\
% \[
% = (-1)^{\frac{n(n-1)}{2}}\prod_i  x_i \left(y^{1-tn} V(X^t,\X^t,y^t) + y^{tn}  V(X^t,\X^t,\bar{y}^t) \right)
% \]
% \[
% = (-1)^{\frac{n(n-1)}{2}}\prod_i  x_i V(X^t,\X^t) \left(y^{1-tn} \prod_i (x_i^t-y^t)(\x_i^t-y^t) + y^{tn} \prod_i (x_i^t-\bar{y}^t)(\x_i^t-\bar{y}^t) \right)
% \] 
= \bar{y}^{tn}  (y  - 1) 
 \prod_{i=1}^n \left( (x_i^t-y^t)(\x_i^t-y^t)  \right)
\det \left( \begin{array}{c|c}
     A_{0,1}  & -\bar{A}_{t-1,0} \\[0.2cm]
     \hline \\[-0.3cm]
-\bar{A}_{0,0} & A_{t-1,1}
 \end{array} \right)  
\end{multline*}
and 
\begin{multline*}
\det \left( \begin{array}{c}
      B_{0,\frac{t+1}{2}}  - \bar{B}_{0,\frac{t-1}{2}} \\[0.2cm]
     \hline \\[-0.3cm]
      A_{0,\frac{t+1}{2}} - \bar{A}_{0,\frac{t-1}{2}}
 \end{array} \right) \\
%  = (y x_i)^{\frac{1}{2}}
%  \det \left( \begin{array}{c}
%       \left(y^{t(n-j+3/2)}-  \bar{y}^{t(n-j+3/2)}\right)_{1 \leq j \leq n+1} \\\hline
%       \left(x_i^{t(n-j+3/2)} - \x_i^{t(n-j+3/2)}\right)_{1 \leq i,j \leq n+1}
%  \end{array} \right)\]
%  \[
 =
 (-1)^{n} y^{-tn+(1-t)/2}  (y^t-1)
\prod_{i=1}^{n}\left( (x_i^t-y^t)(\x_i^t-y^t) \right)
 \det \left( A_{\frac{t-1}{2},1} - \bar{A}_{\frac{t-1}{2},0}  \right).
\end{multline*}
Substituting in \eqref{dnm-11}, we see that the denominator in this case is
\begin{equation}
\label{dnm-12}
\begin{split}
   \sgn(\sigma_{\emptyset}) 
 (-1)^{t+n}  (-1)^{\chi^{(0)}_1} 
 (\det \Gamma)^n
 \frac{y-1}{y^{(1-t)/2}  (y^t-1)} & \prod_{q=0}^{{\frac{t-3}{2}}}
 \det \left(\begin{array}{c|c}
A_{q,1} & -\bar{A}_{t-1-q,0} \\[0.2cm]
     \hline \\[-0.3cm]
-\bar{A}_{q,0}  & A_{t-1-q,1}
\end{array}\right)  \\
& \times 
\det \left( \begin{array}{c}
      B_{0,\frac{t+1}{2}}  - \bar{B}_{0,\frac{t-1}{2}} \\[0.2cm]
     \hline \\[-0.3cm]
      A_{0,\frac{t+1}{2}} - \bar{A}_{0,\frac{t-1}{2}}
 \end{array} \right).  
 \end{split}
\end{equation}

For $q \in [0,\frac{t-3}{2}]$, multiplying by $x_i^{-q-1}$ to the $i^{\text{th}}$ row in upper blocks and by $\x_i^{-q}$ to the $i^{\text{th}}$ row in lower blocks for $i \in [n]$, both in numerator and denominator, and then by \cref{lem:s-new}, we have 
\begin{equation}
\label{scur}
\frac{\det \left(\begin{array}{c|c}
A_{q,1}^{\lambda} 
& -\bar{A}_{t-1-q,0}^{\lambda} \\[0.2cm]
     \hline \\[-0.3cm]
-\bar{A}_{q,0}^{\lambda}  
& A_{t-1-q,1}^{\lambda}
\end{array}\right)}
{\det \left(\begin{array}{c|c}
A_{q,1} 
& -\bar{A}_{t-1-q,0} \\[0.2cm]
     \hline \\[-0.3cm]
-\bar{A}_{q,0}  
& A_{t-1-q,1}
\end{array}\right)}
% = 
% \frac{\det \left(\begin{array}{c|c}
% A_{q,-q}^{\lambda} 
% & -\bar{A}_{t-1-q,q+1}^{\lambda} \\\hline
% -\bar{A}_{q,-q}^{\lambda}  
% & A_{t-1-q,q+1}^{\lambda}
% \end{array}\right)}
% {\det \left(\begin{array}{c|c}
% A_{q,-q} 
% & -\bar{A}_{t-1-q,q+1} \\\hline
% -\bar{A}_{q,-q}  
% & A_{t-1-q,q+1} 
% \end{array}\right)} 
= \frac{(-1)^{\frac{n_{t-1-q}(\lambda)
(n_{t-1-q}(\lambda)+1)}
{2}}}
{(-1)^{\frac{n
(n+1)}
{2}}} s_{\pi_q^{(1)}}(X^t,\X^t).
\end{equation}
Taking the ratio of \eqref{num-12} and \eqref{dnm-12} and using \eqref{sigma0m}, \eqref{scur} and \eqref{oo-new-1}, we see that the required odd orthogonal character is 
\[
 \sgn(\sigma_{\lambda})  
 (-1)^{\epsilon+n} \frac{y-1}{y^{(1-t)/2}  (y^t-1)}  \prod_{q=0}^{{\frac{t-3}{2}}} s_{\pi_q^{(1)}}(X^t,\X^t) \times \oo_{\lambda^{\left(\frac{t-1}{2}\right)}}(X^t),  
\]
where 
\begin{equation}
\label{epsln}
\begin{split}
\epsilon = \frac{t(t-1)}{2}\frac{n(n+1)}{2}+&\left(\ds \sum_{i=t+1-i_0}^{t}  n_{i-1}(\lambda) \right)
+\ds \sum_{i=\floor{\frac{t+3}{2}}}^{t} t n (n_{i-1}(\lambda)-n) \\
+ &\sum_{q=0}^{\floor{\frac{t-2}{2}}} \left( \frac{n_{t-1-q}(\lambda)
(n_{t-1-q}(\lambda)+1)}
{2} - \frac{n(n+1)}
{2} \right).
\end{split}
\end{equation}
Since $\frac{(t+1)(t-1)}{2}\frac{n(n+1)}{2}$ is even for odd $t$, the parity of $\epsilon$  is same as $\epsilon_1(\lambda)$ defined in \eqref{epsilon}.

\underline{Case 2}. If $i_0 \neq \frac{t-1}{2}$, then \eqref{delta31} for the determinant in \eqref{odd-2}, we see that the numerator is
\begin{equation}
\label{num-11}
\begin{split}
   \sgn(\sigma_{\lambda}) &
  (-1)^{\chi_1+t} 
 (\det \Gamma)^n 
 \det \left( \begin{array}{c|c}
     B_{i_0,1}^{\lambda} - \bar{B}_{i_0,0}^{\lambda} 
     &  B_{t-1-i_0,1}^{\lambda}  - \bar{B}_{t-1-i_0,0}^{\lambda} 
     \\[0.2cm]
     \hline \\[-0.3cm]
     A_{i_0,1}^{\lambda}  & -\bar{A}_{t-1-i_0,0}^{\lambda} \\[0.2cm]
     \hline \\[-0.3cm]
-\bar{A}_{i_0,0}^{\lambda}  & A_{t-1-i_0,1}^{\lambda}
 \end{array} \right) \\
\times & \prod_{\substack{q=0\\q \neq i_0}}^{\floor{\frac{t-2}{2}}}
 \det \left(\begin{array}{c|c}
A_{q,1}^{\lambda}  
& -\bar{A}_{t-1-q,0}^{\lambda} \\[0.2cm]
     \hline \\[-0.3cm]
-\bar{A}_{q,0}^{\lambda}  
& A_{t-1-q,1}^{\lambda}
\end{array}\right) \times 
 \begin{cases}
\det \left( A_{\frac{t-1}{2},1}^{\lambda} - \bar{A}_{\frac{t-1}{2},0}^{\lambda}  \right)  & t \text{ is odd}.\\
1 & t \text{ is even}.
 \end{cases}
 \end{split}
\end{equation}
For $q \in [0,\floor{\frac{t-3}{2}}] \setminus \{i_0\}$, multiplying by $x_i^{-q-1}$ to the $i^{\text{th}}$ row in upper blocks and by $\x_i^{-q}$ to the $i^{\text{th}}$ row in lower blocks for $i \in [n]$, both in numerator and denominator, and then by \cref{lem:s-new}, we have 
\begin{equation}
\label{scuri0}
\frac{\det \left(\begin{array}{c|c}
A_{q,1}^{\lambda} 
& -\bar{A}_{t-1-q,0}^{\lambda} \\[0.2cm]
     \hline \\[-0.3cm]
-\bar{A}_{q,0}^{\lambda}  
& A_{t-1-q,1}^{\lambda}
\end{array}\right)}
{\det \left(\begin{array}{c|c}
A_{q,1} 
& -\bar{A}_{t-1-q,0} \\[0.2cm]
     \hline \\[-0.3cm]
-\bar{A}_{q,0}  
& A_{t-1-q,1}
\end{array}\right)}
= \frac{(-1)^{\frac{n_{t-1-q}(\lambda)
(n_{t-1-q}(\lambda)+1)}
{2}}}
{(-1)^{\frac{n
(n+1)}
{2}}} s_{\pi_q^{(1)}}(X^t,\X^t).
\end{equation}
Evaluating one of the factors in \eqref{dnm-11}, we have
\begin{equation}
\label{factor}
\begin{split}
\det \left( \begin{array}{c|c}
     B_{0,1} - \bar{B}_{0,0} 
     &  B_{t-1,1}  - \bar{B}_{t-1,0}
     \\[0.2cm]
     \hline \\[-0.3cm]
     A_{0,1}  & -\bar{A}_{t-1,0} \\[0.2cm]
     \hline \\[-0.3cm]
-\bar{A}_{0,0} & A_{t-1,1}
 \end{array} \right) & \\
= (-1)^{\frac{n(n-1)}{2}} & x_1\dots x_n \left(y^{1-tn}  - y^{-tn} \right) V(X^t,\X^t,y^t) \\
= &(-1)^{\frac{n(n-1)}{2}} x_1\dots x_n \left(y^{1+tn}  - y^{tn} \right)  V(X^t,\X^t, \bar{y}^t),
\end{split}
\end{equation}
where $V(X^t,\X^t, \bar{y}^t)=\prod_{1 \leq i < j \leq n} (x_i^t-x_j^t)(x_i^t-\x_j^t) (x_j^t-\x_i^t) (\x_i^t-\x_j^t) \ds \prod_{i=1}^n (x_i^t-y^t) (x_i^t-\x_i^t) (\x_i^t-y^t).$
Using \cref{lem:s-new-1} 
and \eqref{factor}, we see that 
\begin{multline}
% \begin{equation}
\label{schuri0}
% \begin{split}
\frac{\det \left( \begin{array}{c|c}
     B_{i_0,1}^{\lambda} - \bar{B}_{i_0,0}^{\lambda} 
    &  B_{t-1-i_0,1}^{\lambda}  - \bar{B}_{t-1-i_0,0}^{\lambda} 
     \\\hline
     A_{i_0,1}^{\lambda}  & -\bar{A}_{t-1-i_0,0}^{\lambda} \\\hline
-\bar{A}_{i_0,0}^{\lambda}  & A_{t-1-i_0,1}^{\lambda}
 \end{array} \right)}
 {\det \left( \begin{array}{c|c}
     B_{0,1} - \bar{B}_{0,0}
     &  B_{t-1,1}  - \bar{B}_{t-1,0}
     \\\hline
     A_{0,1}  & -\bar{A}_{t-1,0} \\\hline
-\bar{A}_{0,0}  & A_{t-1,1}
 \end{array} \right)}  = \frac{(-1)^{\frac{n_{t-1-i_0}(\lambda)
(n_{t-1-i_0}(\lambda)+1)}
{2}}}
{(-1)^{\frac{n
(n+1)}
{2}}} \frac{1}{(y-1)} \\
\times \Big( y^{-t(\lambda_1^{(t-1-i_0)} 
+n_{(t-1-i_0)}(\lambda)-n)+i_0+1}  s_{\pi_{i_0}^{(1)}}(X^t,\X^t,y^t)  \\
- 
y^{t(\lambda_1^{(t-1-i_0)} 
+n_{t-1-i_0}(\lambda)-n)-i_0} 
s_{\pi_{i_0}^{(1)}}(X^t,\X^t, \bar{y}^t) \Big).
%  \end{split}
%  \end{equation}
\end{multline}
Thus, using \eqref{scuri0}, \eqref{schuri0} and  
 \[
\frac{\det \left( A_{\frac{t-1}{2},1}^{\lambda} - \bar{A}_{\frac{t-1}{2},0}^{\lambda}  \right)}
{\det \left( A_{\frac{t-1}{2},1} - \bar{A}_{\frac{t-1}{2},0}  \right)} = \oo_{\lambda^{\left(\frac{t-1}{2}\right)}}(X^t),
\]
the ratio of \eqref{num-11} and \eqref{dnm-11} is:
\begin{equation*}
    \begin{split}
        \sgn(\sigma_{\lambda}) \frac{(-1)^{\epsilon} }{(y-1)} 
        \Big( y^{-t(\lambda_1^{(t-1-i_0)} 
+n_{(t-1-i_0)}(\lambda)-n)+i_0+1} & s_{\pi_{i_0}^{(1)}}(X^t,\X^t,y^t)  -
y^{t(\lambda_1^{(t-1-i_0)} 
+n_{t-1-i_0}(\lambda)-n)-i_0} \\
s_{\pi_{i_0}^{(1)}}(X^t,\X^t, \bar{y}^t) \Big)  
 \times  &
 \prod_{\substack{i=0\\i \neq i_0}}^{\floor{\frac{t-2}{2}}} 
 s_{\pi^{(1)}_i} (X^t,\X^t) 
        \times \begin{cases}
     \oo_{\lambda^{\left(\frac{t-1}{2}\right)}}(X^t)   & t \text{ is odd},\\
1 & t \text{ is even},
        \end{cases} 
        \end{split}
    \end{equation*}
where $\epsilon$ is defined in \eqref{epsln}. 
Since $\frac{t(t-2)}{2}\frac{n(n+1)}{2}$ is even for even $t$ and $\frac{(t+1)(t-1)}{2}\frac{n(n+1)}{2}$ is even for odd $t$, the parity of $\epsilon$  is same as $\epsilon_1(\lambda)$ defined in \eqref{epsilon},
completing the proof.
% \begin{equation}
%     (-1)^{\epsilon}  \sgn(\sigma_{\lambda}) \begin{cases}
% \oo_{\lambda^{\left(\frac{t-1}{2}\right)}}(X^t)  & t \text{ is odd},\\
% 1 & t \text{ is even},
%  \end{cases}
% \end{equation}
% where 
% \begin{multline*}
% \epsilon_1 = \frac{t(t-1)}{2}\frac{n(n+1)}{2}+\left(\ds \sum_{i=t+1-i_0}^{t} n_{i-1}(\lambda) \right)
% + \ds \sum_{i=\floor{\frac{t+3}{2}}}^{t} t n (n_{i-1}(\lambda)-n) \\
% + \sum_{q=0}^{{\frac{t-3}{2}}} \left( \frac{n_{t-1-q}(\lambda)
% (n_{t-1-q}(\lambda)+1)}
% {2} - \frac{n(n+1)}
% {2} \right).
% \end{multline*}
\end{proof}
\subsection{Symplectic characters} If $\ds \sum_{i=0}^{t-2}n_{i}(\lambda)=(t-1)n$, then consider the $(t-1)n \times (t-1)n$ matrix
\[
\Pi_2= \left(\omega^{pq}A^{\lambda}_{q-1,1}-\bar{\omega}^{pq}
\bar{A}^{\lambda}_{q-1,1}\right)_{1 \leq p,q \leq t-1}.
\]
Substituting $U_j=A_{j-1,1}^{\lambda}$, $V_j=\bar{A}_{j-1,1}^{\lambda}$ for $1 \leq j \leq t-1$ and
\[
\gamma_{i,j} = \begin{cases}
\omega^{\frac{i(j+1)}{2}} & j \text{ odd},\\
\omega^{-\frac{ij}{2}} & j \text{ even}, 
\end{cases}
\]
in \cref{lem:det-blockmatrix} proves the following corollary.

\begin{cor} 
\label{cor:det-Pi_2} 
\begin{enumerate} 

\item If $n_{i}(\lambda)+n_{t-2-i}(\lambda)\neq 2n$ for some $i \in [0,\floor{\frac{t-2}{2}}]$, then $\det \Pi_2 = 0$.

\item If $n_{i}(\lambda)+n_{t-2-i}(\lambda) = 2n$ for all $i \in \{0,1,\dots,\floor{\frac{t-2}{2}}\}$, then 
\begin{equation}
    \begin{split}
        \label{spdet3}
\det \Pi_2
= (-1)^{\Sigma_2} \left(\det \Gamma
% (\gamma_{i,j})_{1 \leq i,j \leq t-1}
\right)^{n}
\prod_{q=0}^{\floor{\frac{t-3}{2}}}
&\det\left(\begin{array}{c|c}
   A^{\lambda}_{i,1}  & -\bar{A}^{\lambda}_{t-2-i,1} \\[0.2cm]
   \hline \\[-0.3cm]
   -\bar{A}^{\lambda}_{i,1}  & A^{\lambda}_{t-2-i,1}
\end{array}\right) \\
\times &
\begin{cases}
\det \left(A^{\lambda}_{\frac{t}{2}-1,1}- \bar{A}^{\lambda}_{\frac{t}{2}-1,1} \right)  
& t \text{ even,}\\
1 & t \text{ odd,}
\end{cases}  
    \end{split}
\end{equation}
where 
\[
\Sigma_2=
% \sum_{q=1}^{\floor{\frac{t-1}{2}}}
% \left(n+n_{q-1}(\lambda)\right)
% +
\begin{cases}
% n \ds \sum_{q=1}^{\frac{t-2}{2}} n_{q-1}(\lambda) =
n \ds \sum_{q=\frac{t+2}{2}}^{t-1} n_{q-1}(\lambda)
&  t \text{ even}, \\ 
0 & t \text{ odd}.
\end{cases}
\]
\end{enumerate}
\end{cor}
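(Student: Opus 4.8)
To prove \cref{cor:det-Pi_2}, the plan is to recognize $\det\Pi_2$ as a direct specialization of \cref{lem:det-blockmatrix} with $k = t-1$. First I would fix the substitution: put $u_j := n_{j-1}(\lambda)$, $U_j := A^{\lambda}_{j-1,1}$ and $V_j := \bar{A}^{\lambda}_{j-1,1}$ for $1 \le j \le t-1$ (each of order $n \times u_j$), and take $\gamma_{i,j}$ as in the statement, so that $\Gamma$ is the $k \times k$ submatrix of $(\gamma_{i,j})_{1\le i\le k,\,1\le j\le k+1}$ formed by its first $k$ columns. The standing hypothesis $\sum_{i=0}^{t-2} n_i(\lambda) = (t-1)n$ is exactly the requirement $u_1 + \cdots + u_k = kn$ of the lemma.

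Next I would carry out the two checks that make the lemma applicable. First, \cref{lem:det-blockmatrix} requires $\gamma_{i,k+1} = \gamma_{i,k}$; with the chosen parameters this reads $\omega^{-it/2} = \omega^{it/2}$ when $t$ is even and $\omega^{i(t+1)/2} = \omega^{-i(t-1)/2}$ when $t$ is odd, and both follow at once from $\omega^{it} = 1$. Second, I must verify that the block matrix $\Pi$ produced by \cref{lem:det-blockmatrix} from these data is literally $\Pi_2$. For a block column $j$ in the first range $1 \le j \le \floor{\frac{k+1}{2}}$, the index $2j-1$ is odd and $2j$ is even, so $\gamma_{i,2j-1} = \omega^{ij}$ and $\gamma_{i,2j} = \omega^{-ij}$, and the $(i,j)$ block is $\omega^{ij}A^{\lambda}_{j-1,1} - \omega^{-ij}\bar{A}^{\lambda}_{j-1,1}$, which is the $(i,j)$ block of $\Pi_2$. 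For $j$ in the second range $\floor{\frac{k+3}{2}} \le j \le k$, the exponent $2k+2-2j$ is even and $2k+1-2j$ is odd, giving $\gamma_{i,2k+2-2j} = \omega^{-i(t-j)}$ and $\gamma_{i,2k+1-2j} = \omega^{i(t-j)}$; since $\omega^{t} = 1$ these collapse to $\omega^{ij}$ and $\omega^{-ij}$, so the block is again $\omega^{ij}A^{\lambda}_{j-1,1} - \omega^{-ij}\bar{A}^{\lambda}_{j-1,1}$. Hence $\Pi = \Pi_2$ and \cref{lem:det-blockmatrix} applies verbatim.

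Finally I would translate the two alternatives of \cref{lem:det-blockmatrix} into the notation of the corollary. Writing $i = p-1$, one has $u_p + u_{k+1-p} = n_{p-1}(\lambda) + n_{t-1-p}(\lambda) = n_i(\lambda) + n_{t-2-i}(\lambda)$, so (using the symmetry $i \leftrightarrow t-2-i$) part~(1) of the lemma becomes part~(1) of the corollary. When $n_i(\lambda) + n_{t-2-i}(\lambda) = 2n$ for all $i$, part~(2) gives $\det\Pi_2 = (-1)^{\Sigma}(\det\Gamma)^{n}\prod_{i=1}^{\floor{\frac{k+1}{2}}}\det W_i$, where for $1 \le i \le \floor{\frac{k}{2}} = \floor{\frac{t-1}{2}}$ one has $W_i = \left(\begin{smallmatrix} A^{\lambda}_{i-1,1} & -\bar{A}^{\lambda}_{t-1-i,1} \\ -\bar{A}^{\lambda}_{i-1,1} & A^{\lambda}_{t-1-i,1} \end{smallmatrix}\right)$; the shift $i \mapsto i-1$ rewrites $\prod_{i=1}^{\floor{k/2}}\det W_i$ as the product over $0 \le i \le \floor{\frac{t-3}{2}}$ in \eqref{spdet3}, the extra factor $W_{(k+1)/2}$ (present exactly when $k = t-1$ is odd, i.e.\ $t$ even) is $A^{\lambda}_{t/2-1,1} - \bar{A}^{\lambda}_{t/2-1,1}$, and $\Sigma$ equals $0$ for $t$ odd and $n\sum_{i=\frac{t+2}{2}}^{t-1} n_{i-1}(\lambda)$ for $t$ even, i.e.\ $\Sigma_2$. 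I do not expect a genuine obstacle here: the only point requiring care is the index bookkeeping for the second range of block columns, together with the repeated use of $\omega^t = 1$ to identify the two halves of $\Pi$ with $\Pi_2$.
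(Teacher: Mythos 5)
Your proposal is correct and is exactly the paper's argument: the paper derives \cref{cor:det-Pi_2} precisely by substituting $U_j=A^{\lambda}_{j-1,1}$, $V_j=\bar{A}^{\lambda}_{j-1,1}$ and the stated $\gamma_{i,j}$ into \cref{lem:det-blockmatrix} with $k=t-1$, and your verifications (that $\gamma_{i,k+1}=\gamma_{i,k}$ via $\omega^{it}=1$, that both column ranges of $\Pi$ collapse to the blocks of $\Pi_2$, and the index shift $i\mapsto i-1$ in the product together with the identification of $\Sigma$ with $\Sigma_2$) are exactly the bookkeeping the paper leaves implicit.
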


\noindent
If $\ds \sum_{i=0}^{t-2}n_{i}(\lambda)=(t-1)n+1$, then consider the $(t-1)n+1 \times (t-1)n+1$ matrix
\begin{equation}
    \label{delta-2}
    \Delta_2 \coloneqq \left( 
    \begin{array}{c}
         (B_{q-1,1}^{\lambda} - \bar{B}_{q-1,1}^{\lambda})_{1 \leq q \leq t-1}  \\\\\hline \\
          \left(\omega^{pq}A^{\lambda}_{q-1,1}
          -\bar{\omega}^{pq}
\bar{A}^{\lambda}_{q-1,1}\right)_{1 \leq p,q \leq t-1}
    \end{array}
    \right). 
\end{equation}
Substituting $M_j=B_{j-1,1}^{\lambda}$, $N_j=\bar{B}_{j-1,1}^{\lambda}$, $U_j=A_{j-1,1}^{\lambda}$, $V_j=\bar{A}_{j-1,1}^{\lambda}$ for all $1 \leq j \leq t-1$ and
\[
\gamma_{i,j} = \begin{cases}
\omega^{\frac{i(j+1)}{2}} & j \text{ odd},\\
\omega^{-\frac{ij}{2}} & j \text{ even}, 
\end{cases}
\]
in \cref{lem:det-blockmatrix} proves the following corollary.
\begin{cor}
\label{cor:delta-2}
\begin{enumerate}
\item  If  
\[ n_{j}(\lambda)+n_{t-2-j}(\lambda)=\begin{cases}
2n+1+\delta_{i_0,\frac{t-2}{2}} & j=i_0,\\
2n & \text{ otherwise}, 
\end{cases} \quad j \in \left[0,\floor{\frac{t-2}{2}}\right],
\]
for some $i_0 \in [0,\floor{\frac{t-2}{2}}]$, then
\begin{equation}
\label{delta2}
    \det \Delta_2
 = (-1)^{\chi_2} 
 (\det \Gamma)^n \det \left( \begin{array}{c}
      O_{i_0}^{(2)}  \\[0.1cm]
     \hline \\[-0.4cm]
      W_{i_0}^{(2)}
 \end{array} \right)
 \prod_{\substack{i=0\\i \neq i_0}}^{\floor{\frac{t-2}{2}}}
 \det W_i^{(2)},
\end{equation}
where
\[
O_i^{(2)} = \begin{cases}
\left(\begin{array}{c|c}
   B_{i,1}^{\lambda} - \bar{B}_{i,1}^{\lambda} & B_{t-2-i,1}^{\lambda} - \bar{B}_{t-2-i,1}^{\lambda} 
\end{array} \right) & \text{ if } 0 \leq i \leq \floor{\frac{t-3}{2}}\\
\left( B_{\frac{t-2}{2},1}^{\lambda} - \bar{B}_{\frac{t-2}{2},1}^{\lambda} \right) & \text{$t$ even and $i = \frac{t-2}{2}$},
\end{cases}
\]
\[
W_i^{(2)}= \begin{cases}
\left(\begin{array}{c|c}
A_{i,1}^{\lambda}  & 
- \bar{A}_{t-2-i,1}^{\lambda} \\[0.1cm]
     \hline \\[-0.4cm]
- \bar{A}_{i,1}^{\lambda}  
& A_{t-2-i,1}^{\lambda}
\end{array}\right) & 0 \leq i \leq \floor{\frac{t-3}{2}}, \\
\left( A_{\frac{t-2}{2},1}^{\lambda} - \bar{A}_{\frac{t-2}{2},1}^{\lambda}  \right) 
& \text{$t$ even and $i = \frac{t-2}{2}$},
\end{cases}
\]
and
\[
\chi_2=
\left(\ds \sum_{i=t-i_0}^{t-1} n_{i-1}(\lambda) \right)
+ \ds \sum_{i=\floor{\frac{t+2}{2}}}^{t-1} (t-1) n n_{i-1}(\lambda).
\]
% $W_i=\left(\begin{array}{cc}
%   {A}_i  & \bar{A}_{t-i} \\
%   \bar{A}_i  & {A}_{t-i}
% \end{array} \right)$, $W_{\frac{t}{2}}= \left( A_{\frac{t}{2}}+\bar{A}_{\frac{t}{2}}\right)$, $V_i=\left(\begin{array}{cc}
%   {B}_i+{\bar{B}}_i  & {B}_{t-i}+\bar{B}_{t-i} 
% \end{array} \right)$, $V_{\frac{t}{2}}=B_{\frac{t}{2}}+\bar{B}_{\frac{t}{2}}$. 
\item Otherwise 
\[
\det \Delta_2=0.
\]
\end{enumerate}
\end{cor}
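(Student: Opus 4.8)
The plan is to recognize $\Delta_2$ as the block matrix $\Delta$ of \cref{lem:det-blockmatrix-1} and simply read off its determinant. First I would put $k=t-1$, take $u_j=n_{j-1}(\lambda)$ for $1\le j\le t-1$ (so that $\sum_{j=1}^{t-1}u_j=\sum_{i=0}^{t-2}n_i(\lambda)=(t-1)n+1$, as the hypothesis demands), and substitute $M_j=B^{\lambda}_{j-1,1}$, $N_j=\bar B^{\lambda}_{j-1,1}$, $U_j=A^{\lambda}_{j-1,1}$, $V_j=\bar A^{\lambda}_{j-1,1}$ for $1\le j\le t-1$ (taking the lower of the two signs in the lemma throughout, to match the minus signs in $\Delta_2$), together with $\gamma_{i,j}=\omega^{i(j+1)/2}$ for $j$ odd and $\gamma_{i,j}=\omega^{-ij/2}$ for $j$ even. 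The size bookkeeping is immediate: $B^{\lambda}_{p,q}$ is $1\times n_p(\lambda)$ and $A^{\lambda}_{p,q}$ is $n\times n_p(\lambda)$, so $M_j,N_j$ are $1\times u_j$, $U_j,V_j$ are $n\times u_j$, and $\Delta$ has order $kn+1=(t-1)n+1$, matching $\Delta_2$.

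The substance of the argument is two compatibility checks, and both reduce to the identity $\omega^{it}=1$ (which holds since $\omega^t=1$). The first is the running hypothesis $\gamma_{i,k+1}=\gamma_{i,k}$ of \cref{lem:det-blockmatrix-1}: for $t$ even, $\gamma_{i,t-1}=\omega^{it/2}$ and $\gamma_{i,t}=\omega^{-it/2}=\omega^{it/2}\,\omega^{-it}=\omega^{it/2}$; for $t$ odd, $\gamma_{i,t-1}=\omega^{-i(t-1)/2}$ and $\gamma_{i,t}=\omega^{i(t+1)/2}=\omega^{-i(t-1)/2}\,\omega^{it}=\omega^{-i(t-1)/2}$. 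The second is that the ``folded'' column layout of $\Delta$ reproduces the plain layout $\bigl(\omega^{pq}A^{\lambda}_{q-1,1}-\bar\omega^{pq}\bar A^{\lambda}_{q-1,1}\bigr)_{1\le p,q\le t-1}$ of $\Delta_2$. Since $\lfloor(k+3)/2\rfloor=\lfloor(k+1)/2\rfloor+1$, the two block-column ranges of $\Delta$ are consecutive, so the block columns run $j=1,2,\dots,t-1$ in order; for $j$ in the first range one computes $\gamma_{i,2j-1}=\omega^{ij}$ and $\gamma_{i,2j}=\omega^{-ij}$, so the $(i,j)$ block is $\omega^{ij}A^{\lambda}_{j-1,1}-\bar\omega^{ij}\bar A^{\lambda}_{j-1,1}$, and for $j$ in the second range $\gamma_{i,2k+2-2j}=\omega^{-i(t-j)}=\omega^{ij}$ and $\gamma_{i,2k+1-2j}=\omega^{i(t-j)}=\omega^{-ij}$ give the same block. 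The top strip $M_j-N_j$ likewise matches the first row of $\Delta_2$ under the same substitution, so $\Delta=\Delta_2$.

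With the identification made, parts (1) and (2) of \cref{cor:delta-2} are parts (1) and (2) of \cref{lem:det-blockmatrix-1}, once one translates between the $u_j$-labelling (starting at $1$) and the $n_i(\lambda)$-labelling (starting at $0$): setting $i=j-1$ turns $u_j+u_{k+1-j}$ into $n_i(\lambda)+n_{t-2-i}(\lambda)$ with $i\in[0,\lfloor(t-2)/2\rfloor]$, and the lemma's distinguished index $i_0^{\mathrm{lem}}\in[1,\lfloor t/2\rfloor]$ becomes $i_0=i_0^{\mathrm{lem}}-1$, under which $\delta_{i_0^{\mathrm{lem}},(k+1)/2}=\delta_{i_0,(t-2)/2}$. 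Then $O_i^{(2)}$ and $W_i^{(2)}$ are precisely the lemma's $O$ and $W$ at index $i+1$ (including the degenerate $i=(t-2)/2$ term when $t$ is even, since then $k=t-1$ is odd), and the lemma's sign exponent $\chi$, with $u_i=n_{i-1}(\lambda)$, $k=t-1$ and $k+2-i_0^{\mathrm{lem}}=t-i_0$, becomes exactly $\chi_2=\bigl(\sum_{i=t-i_0}^{t-1}n_{i-1}(\lambda)\bigr)+\sum_{i=\lfloor(t+2)/2\rfloor}^{t-1}(t-1)n\,n_{i-1}(\lambda)$. I do not expect any conceptual obstacle here — the corollary is an instantiation of \cref{lem:det-blockmatrix-1} — so the ``hard part'' is purely clerical: pushing the $\omega^t=1$ reductions through the column folding and keeping the index shift $i\leftrightarrow j-1$ consistent across the condition, the matrices $O_i^{(2)},W_i^{(2)}$, and the sign $\chi_2$.
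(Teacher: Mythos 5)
Your proposal is correct and is exactly the paper's proof: the paper disposes of this corollary by the same one-line substitution $M_j=B^{\lambda}_{j-1,1}$, $N_j=\bar B^{\lambda}_{j-1,1}$, $U_j=A^{\lambda}_{j-1,1}$, $V_j=\bar A^{\lambda}_{j-1,1}$ with the same $\gamma_{i,j}$ into the block-determinant lemma, and your index-shift and $\omega^t=1$ verifications are the (correct) clerical details it leaves implicit. (Note the paper's text nominally cites \cref{lem:det-blockmatrix} here, but since $\Delta_2$ carries the extra $B$-row the intended reference is clearly \cref{lem:det-blockmatrix-1}, which is the lemma you use.)
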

\begin{proof}[Proof of \cref{thm:symp}]
By \eqref{spdef}, we see that the numerator of the required symplectic character is given by
\begin{equation}
    % \sp_{\lambda}(X,\, \omega X,  \dots,\omega^{t-1}X,y) \\ 
    % =  \frac{ 
    \det \left(
    \begin{array}{c}
    \left( \left( 
    (\omega^{p-1}x_i)^{\beta_{j}(\lambda)+1}-(\bar{\omega}^{p-1}\x_i)^{\beta_{j}(\lambda)+1}
    \right)_{\substack{1 \leq i\leq n \\ 1\leq j\leq tn+1}}
    \right)_{1 \leq p \leq t} \\[0.3cm]
     \hline \\[-0.3cm]
    \left(  
    y^{\beta_{j}(\lambda)+1}-\bar{y}^{\beta_{j}(\lambda)+1}
    \right)_{1\leq j\leq tn+1}
 \end{array} 
    \right).
    % }
%     {\det \left(
%     \begin{array}{c}
%     \left( \left( 
%     (\omega^{p-1}x_i)^{tn+2-j}-(\bar{\omega}^{p-1}\x_i)^{tn+2-j}
%     \right)_{\substack{1 \leq i\leq n \\ 1\leq j\leq tn+1}}
%     \right)_{1 \leq p \leq t} \\\hline\\[-0.3cm]
%     \left(  
%     y^{tn+2-j}-\bar{y}^{tn+2-j}
%     \right)_{1\leq j\leq tn+1}
%  \end{array} 
%     \right)}.
\end{equation}
% Since the denominator of the right hand side of \eqref{symp-1} is the same as its numerator evaluated at the empty partition, we compute the factorization for the numerator and use that to get factorization for the denominator.
Permuting the columns of the determinant in the numerator by $\sigma_{\lambda}$ from \eqref{sigma-perm-m} $(m=1,d_1=0)$ and applying blockwise row operations $R_1 \to R_1+\dots+R_t$, $R_i \to R_i-\frac{1}{t} R_1$, $2 \leq i \leq t$ and then permute the last $t$ rows cyclically, we see that the numerator is 
\begin{equation}
\label{num-22}
    \sgn(\sigma_{\lambda}) t^n (-1)^{t-1} \det \left(
    \begin{array}{c|c|c|c}
   0& \dots & 0 &  A_{t-1,1}^{\lambda}-\bar{A}^{\lambda}_{t-1,1}
   \\\hline&&&\\
    B_{0,1}^{\lambda}-\bar{B}^{\lambda}_{0,1}
    & \dots &
    B_{t-2,1}^{\lambda}-\bar{B}^{\lambda}_{t-2,1} &  B_{t-1,1}^{\lambda}-\bar{B}^{\lambda}_{t-1,1}
   \\\hline&&&\\
   \omega A_{0,1}^{\lambda}-\omega^{t-1}\bar{A}^{\lambda}_{0,1} 
   & \dots & \omega^{t-1} A_{t-2,1}^{\lambda}-\omega  \bar{A}^{\lambda}_{t-2,1}
    &  0 \\\hline&&&\\
    \vdots &  \dots &  \vdots &  \vdots \\\hline&&&\\
    \omega^{t-1} A_{0,1}^{\lambda}-\omega \bar{A}^{\lambda}_{0,1}  & \dots &  
    \omega A_{t-2,1}^{\lambda} - \omega^{t-1} \bar{A}^{\lambda}_{t-2,1} &  0 
    \end{array} \right),
\end{equation}
where $A_{p,q}^{\lambda}$,  $\bar{A}_{p,q}^{\lambda}$, 
$B_{p,q}^{\lambda}$ and $\bar{B}_{p,q}^{\lambda}$ are defined in \eqref{def AB} and \eqref{def 1AB}.
If $\core \lambda t \not \in \mathcal{Q}^{(t)}_{3,0,k} \cup \mathcal{Q}^{(t)}_{3,2,k}$ for all $k \in [\rk(\lambda)]$, then using \cref{cor:z=1},  \cref{cor:det-Pi_2}  and \cref{cor:delta-2},
\[
    \sp_{\lambda}(X,\, \omega X, \dots,\omega^{t-1}X,y)=0.
    \]
If $\core \lambda t \in \mathcal{Q}^{(t)}_{3,0,k} \cup \mathcal{Q}^{(t)}_{3,2,k}$ for some $k \in [\rk(\lambda)]$, then using \cref{cor:z=1}, we factorize the numerator using \cref{cor:det-Pi_2}  and \cref{cor:delta-2}.
Since the denominator in \eqref{spdef} is its numerator evaluated for the empty partition, and the empty partition is vacuously $(3,0,0)$-asymmetric with $n_0(\emptyset, tn+1)=n+1$ and $n_i(\emptyset, tn+1)=n$ for all $i \in [1,t-1]$, the factorization for the denominator of required symplectic character is 
\begin{equation}
\begin{split}
\label{dnm-21}
    (-1)^{\chi_2^{(0)}} (\det \Gamma)^n  \sgn(\sigma_{\emptyset}) t^n 
(-1)^{(t-1)(n^2+1)+n} \det &\left( A_{t-1,1}-\bar{A}_{t-1,1} \right) \\
\times \det \left( \begin{array}{c}
     \begin{array}{c|c}
   B_{0,1} - \bar{B}_{0,1}  
   & B_{t-2,1} - \bar{B}_{t-2,1} \\[0.2cm]
     \hline \\[-0.3cm]
A_{0,1}  & -\bar{A}_{t-2,1} \\[0.2cm]
     \hline \\[-0.3cm]
-\bar{A}_{0,1}  & A_{t-2,1}
\end{array} 
 \end{array} \right) &\times 
 \prod_{i=1}^{\floor{\frac{t-3}{2}}}
 \det \left(\begin{array}{c|c}
A_{i,1}  & -\bar{A}_{t-2-i,1} \\[0.2cm]
     \hline \\[-0.3cm]
-\bar{A}_{i,1}  & A_{t-2-i,1}
\end{array}\right) \\
&\times \begin{cases}
\left( A_{\frac{t-2}{2},1} - \bar{A}_{\frac{t-2}{2},1} \right) & t \text{ even,}\\
1 & t \text{ odd,}
 \end{cases}
 \end{split}
\end{equation}
where 
\[
\chi_2^{(0)}= \ds \sum_{i=\floor{\frac{t+2}{2}}}^{t-1} (t-1) n^2.
\]
\underline{Case 1}. $i_0=t-1$. In this case $n_{t-1}(\lambda)=n+1$ and the matrix in \eqref{num-22} is block anti-diagonal $2 \times 2$ matrix. Using \eqref{spdet3}, the numerator in this case is
\begin{equation}
\label{num-221}
    \begin{split}
    \sgn(\sigma_{\lambda}) t^n 
(-1)^{(t-1)(n^2+n+1)}
(-1)^{\Sigma_2} \left( \det \Gamma \right)^{n}
 & \det \left( \begin{array}{c}
     A_{t-1,1}^{\lambda}-\bar{A}^{\lambda}_{t-1,1} 
     \\[0.2cm]
     \hline \\[-0.3cm]
     B_{t-1,1}^{\lambda}-\bar{B}^{\lambda}_{t-1,1} 
\end{array} \right)
\\  
\times \prod_{i=0}^{\floor{\frac{t-3}{2}}} \det\left(\begin{array}{c|c}
   A^{\lambda}_{i,1}  & -\bar{A}^{\lambda}_{t-2-i,1} \\[0.2cm]
   \hline \\[-0.3cm]
   -\bar{A}^{\lambda}_{i,1}  & A^{\lambda}_{t-2-i,1}
\end{array}\right) 
\times &
\begin{cases}
\det \left(A^{\lambda}_{\frac{t-2}{2},1}- \bar{A}^{\lambda}_{\frac{t-2}{2},1} \right)  
& t \text{ even,}\\
1 & t \text{ odd}.
\end{cases}  
    \end{split}
\end{equation}
By \cref{lem:DI}, we have 
\begin{multline*}
 \det \left( \begin{array}{c}
     \begin{array}{c|c}
   B_{0,1} - \bar{B}_{0,1}  
   & B_{t-2,1} - \bar{B}_{t-2,1} \\[0.2cm]
     \hline \\[-0.3cm]
A_{0,1}  & -\bar{A}_{t-2,1} \\[0.2cm]
     \hline \\[-0.3cm]
-\bar{A}_{0,1}  & A_{t-2,1}
\end{array} 
 \end{array} \right) \\
 = 
 y^{-tn}(y-\bar{y}) \prod_{i=1}^n \left( (x_i^t-y^t)(\x_i^t-y^t)  \right)
\det \left( \begin{array}{c|c}
     A_{0,1}  & -\bar{A}_{t-2,1} \\[0.2cm]
     \hline \\[-0.3cm]
-\bar{A}_{0,1} & A_{t-2,1}
 \end{array} \right)   
\end{multline*}
and
\[
 \det \left( \begin{array}{c}
A_{0,t}  - \bar{A}_{0,t} \\[0.2cm]
     \hline \\[-0.3cm]
B_{0,t}  - \bar{B}_{0,t}
\end{array} 
\right) 
= (-1)^n 
y^{-tn-t}  (y^{2t}-1)
\prod_{i=1}^n \left( (x_i^t-y^t)(\x_i^t-y^t)  \right) \det \left( A_{t-1,1}-\bar{A}_{t-1,1} \right).
\]
Substituting in \eqref{dnm-21}, the denominator in this case is
\begin{equation}
\label{dnm-22}
    \begin{split}
    (-1)^{\chi_2^{(0)}+n} (\det \Gamma)^n  \sgn(\sigma_{\emptyset}) t^n 
(-1)^{(t-1)(n^2+1)+n} & 
\frac{(y^{2t}-1)}{y^t(y-\bar{y})}
  \det \left( \begin{array}{c}
     A_{0,t}-\bar{A}_{0} 
     \\[0.2cm]
     \hline \\[-0.3cm]
     B_{0,t}-\bar{B}_{0} 
\end{array} \right)
\\  
\times \prod_{i=0}^{\floor{\frac{t-3}{2}}} \det\left(\begin{array}{c|c}
   A_{i,1}  & -\bar{A}_{t-2-i,1} \\[0.2cm]
   \hline \\[-0.3cm]
   -\bar{A}_{i,1}  & A_{t-2-i,1}
\end{array}\right) 
\times &
\begin{cases}
\det \left(A_{\frac{t-2}{2},1}- \bar{A}_{\frac{t-2}{2},1} \right)  
& t \text{ even,}\\
1 & t \text{ odd}.
\end{cases}  
    \end{split}
\end{equation}
So, using \cref{lem:s-new}, \eqref{E3} and \eqref{F2}, we see that the required symplectic character, the ratio of \eqref{num-221} and \eqref{dnm-22}, is
\[
(-1)^{\epsilon_2} \sgn(\sigma_{\lambda}) \frac{(y^{2t}-1)}{y^t(y-\bar{y})} 
\sp_{\lambda^{(t-1)}}(X^t, y^t) \prod_{q=0}^{\floor{\frac{t-3}{2}}} 
s_{\pi_q^{(2)}} (X^t, \X^t) 
\times 
\begin{cases}
\oo_{\lambda^{\left(\frac{t-2}{2}\right)}} (X^t) 
& t \text{ even,}\\
1 & t \text{ odd,}
\end{cases} 
\]
where 
\begin{equation}
\label{epsln2}
\begin{split}
\epsilon_2 = \frac{t(t-1)}{2}\frac{n(n+1)}{2} + (t-1)n
+ \sum_{q=0}^{\floor{\frac{t-3}{2}}} & \left( \frac{n_{t-2-q}(\lambda)
(n_{t-2-q}(\lambda)+1)}
{2} - \frac{n(n+1)}
{2} \right) \\
- 
\ds \sum_{i=\floor{\frac{t+2}{2}}}^{t-1} (t-1) n^2 + & \begin{cases}
n \ds \sum_{q=\frac{t+2}{2}}^{t-1} n_{q-1}(\lambda) &  t \text{ even} \\ 
0 & t \text{ odd}.
\end{cases}
\end{split}
\end{equation}
Since $\frac{(t+1)(t-1)}{2}\frac{n(n+1)}{2}$ is even for odd $t$ and the parity of $\frac{(t^2-2t+2)}{2}\frac{n(n+1)}{2}$ is the same as $\frac{n(n+1)}{2}$ for even $t$, $(-1)^{\epsilon_2}$ is the same as $(-1)^{\epsilon_2(\lambda)}$, defined in \eqref{epsilon22}.

If $i_0 \neq t-1$, then $n_{t-1}=n$ and the numerator in \eqref{num-22} is
\[ \sgn(\sigma_{\lambda}) t^n 
(-1)^{(t-1)(n^2+1)+n} \det \left( A^{\lambda}_{t-1,1}-\bar{A}^{\lambda}_{t-1,1} \right) \det \left( 
    \begin{array}{c}
         (B_{q-1,1}^{\lambda} - \bar{B}_{q-1,1}^{\lambda})_{1 \leq q \leq t-1}  \\\\\hline \\
          \left(\omega^{pq}A^{\lambda}_{q-1,1}
          -\bar{\omega}^{pq}
\bar{A}^{\lambda}_{q-1,1}\right)_{1 \leq p,q \leq t-1}
    \end{array}
    \right).
\]
The last determinant is $\Delta_2$ defined in \eqref{delta-2}.
% \[ (-1)^{\chi_2} (\det \Gamma)^n  \sgn(\sigma_{\lambda}) t^n 
% (-1)^{(t-1)(n^2+1)+n} \det \left( A^{\lambda}_{t-1,1}-\bar{A}^{\lambda}_{t-1,1} \right) 
% \det \left( \begin{array}{c}
%       O_{i_0}  \\
%       W_{i_0}
%  \end{array} \right)
%  \prod_{\substack{i=1\\i \neq i_0}}^{\floor{\frac{t}{2}}}
%  \det W_i,
% \]
% where
% \[
% O_i= \begin{cases}
% \left(\begin{array}{c|c}
%   B_{i-1,1}^{\lambda} - \bar{B}_{i-1,1}^{\lambda}  
%   & B_{t-i-1,1}^{\lambda} - \bar{B}_{t-i-1,1}^{\lambda} 
% \end{array} \right) & \text{ if } 1 \leq i \leq \floor{\frac{t-1}{2}}\\
% \left( B_{\frac{t}{2}-1,1}^{\lambda} - \bar{B}_{\frac{t}{2}-1,1}^{\lambda} \right) & \text{$t$ even and $i = \frac{t}{2}$},
% \end{cases}
% \]
% \[
% W_i= \begin{cases}
% \left(\begin{array}{c|c}
% A_{i-1,1}^{\lambda}  & \bar{A}_{t-i-1,1}^{\lambda} \\\hline
% \bar{A}_{i-1,1}^{\lambda}  & A_{t-i-1,1}^{\lambda}
% \end{array}\right) & 1 \leq i \leq \floor{\frac{t-1}{2}}, \\
% \left( A_{\frac{t}{2}-1,1}^{\lambda} - \bar{A}_{\frac{t}{2}-1,1}^{\lambda}  \right) 
% & \text{$t$ even and $i = \frac{t}{2}$},
% \end{cases}
% \]

\underline{Case 2.} $i_0=\frac{t-2}{2}$, then using \eqref{delta2}, the numerator in this case is
\begin{equation}
\begin{split}
\label{num-23}
    (-1)^{\chi_2} (\det \Gamma)^n  \sgn(\sigma_{\lambda}) &t^n 
(-1)^{(t-1)(n^2+1)+n} \det \left( A^{\lambda}_{t-1,1}-\bar{A}^{\lambda}_{t-1,1} \right) \\ \times &
\det \left( \begin{array}{c}
       B_{\frac{t}{2}-1,1}^{\lambda} - \bar{B}_{\frac{t}{2}-1,1}^{\lambda}   \\[0.2cm]
     \hline \\[-0.3cm]
       A_{\frac{t}{2}-1,1}^{\lambda} - \bar{A}_{\frac{t}{2}-1,1}^{\lambda}  
 \end{array} \right) \times 
 \prod_{i=0}^{\frac{t-4}{2}} 
 \det \left(\begin{array}{c|c}
A_{i,1}^{\lambda}  & -\bar{A}_{t-2-i,1}^{\lambda} \\[0.2cm]
     \hline \\[-0.3cm]
-\bar{A}_{i,1}^{\lambda}  & A_{t-2-i,1}^{\lambda}
\end{array}\right).
\end{split}
\end{equation}
By \cref{lem:DI}, we have 
\begin{multline*}
 \det \left( \begin{array}{c}
     \begin{array}{c|c}
   B_{0,1} - \bar{B}_{0,1}  
   & B_{t-2,1} - \bar{B}_{t-2,1} \\[0.2cm]
     \hline \\[-0.3cm]
A_{0,1}  & -\bar{A}_{t-2,1} \\[0.2cm]
     \hline \\[-0.3cm]
-\bar{A}_{0,1}  & A_{t-2,1}
\end{array} 
 \end{array} \right) \\
 = 
 y^{-tn}(y-\bar{y}) \prod_{i=1}^n \left( (x_i^t-y^t)(\x_i^t-y^t)  \right)
\det \left( \begin{array}{c|c}
     A_{0,1}  & -\bar{A}_{t-2,1} \\[0.2cm]
     \hline \\[-0.3cm]
-\bar{A}_{0,1} & A_{t-2,1}
 \end{array} \right)   
\end{multline*}
and
\[
\det \left(
\begin{array}{c}
     A_{0,\frac{t}{2}}-\bar{A}_{0,\frac{t}{2}}  
     \\[0.2cm]
     \hline \\[-0.3cm]
     B_{0,\frac{t}{2}}-\bar{B}_{0,\frac{t}{2}} 
\end{array}
 \right) = (-1)^n 
y^{-tn-t/2}  (y^t-1)
\prod_{i=1}^{n} (x_i^t-y^t)(\x_i^t-y^t) \det   \left( A_{\frac{t-2}{2},1} - \bar{A}_{\frac{t-2}{2},1} \right).  
\]
Substituting in \eqref{dnm-21}, the denominator in this case is
\begin{equation}
\label{dnm-23}
    \begin{split}
    (-1)^{\chi_2^{(0)}+n} (\det \Gamma)^n  \sgn(\sigma_{\emptyset}) t^n 
(-1)^{(t-1)(n^2+1)+n} & \frac{(y^{t}-1)}{y^{t/2}(y-\bar{y})}
  \det \left( 
     A_{t-1,1}^{\lambda}-\bar{A}^{\lambda}_{t-1,1} 
 \right)
\\  
\times \prod_{i=0}^{\floor{\frac{t-3}{2}}} \det\left(\begin{array}{c|c}
   A_{i,1}  & -\bar{A}_{t-2-i,1} \\[0.2cm]
   \hline \\[-0.3cm]
   -\bar{A}_{i,1}  & A_{t-2-i,1}
\end{array}\right) 
\times &
\begin{cases}
\det \left(
\begin{array}{c}
     A_{0,\frac{t}{2}}-\bar{A}_{0,\frac{t}{2}}  
     \\[0.2cm]
     \hline \\[-0.3cm]
     B_{0,\frac{t}{2}}-\bar{B}_{0,\frac{t}{2}} 
\end{array}
 \right)  
& t \text{ even,}\\
1 & t \text{ odd}.
\end{cases}  
    \end{split}
\end{equation}
So, using \cref{lem:s-new}, \eqref{E2} and \eqref{oo-new-1}, we see that the required symplectic character, the ratio of \eqref{num-23} and \eqref{dnm-23} is
\[
(-1)^{\epsilon_2'+n} \sgn(\sigma_{\lambda}) \frac{(y^t-1)}{y^{t/2}(y-\bar{y})} \sp_{\lambda^{(t-1)}}(X^t) \prod_{q=0}^{\floor{\frac{t-3}{2}}} s_{\pi_q^{(2)}} (X^t, \X^t) 
\times 
\oo_{\lambda^{\left(\frac{t-2}{2}\right)}} (X^t,y^t),
\]
where 
\begin{equation}
\label{epsilon'}
\begin{split}
\epsilon_2'=\frac{t(t-1)}{2}\frac{n(n+1)}{2}+ \ds \sum_{i=t-i_0}^{t-1} & n_{i-1}(\lambda) 
+ \ds \sum_{i=\floor{\frac{t+2}{2}}}^{t-1} (t-1) n (n_{i-1}(\lambda)-n) \\
+ \sum_{q=0}^{\floor{\frac{t-3}{2}}}  &
\left( \frac{n_{t-2-q}(\lambda)
(n_{t-2-q}(\lambda)+1)}
{2} - \frac{n(n+1)}
{2} \right). 
\end{split}
\end{equation}
Since the parity of $\frac{(t^2-2t+2)}{2}\frac{n(n+1)}{2}$ is the same as $\frac{n(n+1)}{2}$ for even $t$, $(-1)^{\epsilon'_2}$ is the same as $(-1)^{\epsilon_2(\lambda)+n+1}$, defined in \eqref{epsilon22}. 

\underline{Case 3}. $i_0 \neq \frac{t-2}{2}$. Using \eqref{delta2}, the numerator in this case is
\begin{multline}
\label{num-25}
(-1)^{\chi_2} (\det \Gamma)^n  \sgn(\sigma_{\lambda}) t^n 
(-1)^{(t-1)(n^2+1)+n} \det \left( A^{\lambda}_{t-1,1}-\bar{A}^{\lambda}_{t-1,1} \right) \\
\times \det \left( \begin{array}{c|c}
   B_{i_0,1}^{\lambda} - \bar{B}_{i_0,1}^{\lambda}  
   & B_{t-2-i_0,1}^{\lambda} - \bar{B}_{t-2-i_0,1}^{\lambda} 
\\[0.2cm]
     \hline \\[-0.3cm]
A_{i_0,1}^{\lambda}  & -\bar{A}_{t-2-i_0,1}^{\lambda} \\[0.2cm]
     \hline \\[-0.3cm]
-\bar{A}_{i_0,1}^{\lambda}  & A_{t-2-i_0,1}^{\lambda}
 \end{array} \right) 
 \times 
 \prod_{\substack{i=0\\i \neq i_0}}^{\floor{\frac{t-3}{2}}}
 \det \left(\begin{array}{c|c}
A_{i,1}^{\lambda}  & -\bar{A}_{t-2-i,1}^{\lambda} \\[0.2cm]
\hline \\[-0.3cm]
-\bar{A}_{i,1}^{\lambda}  & A_{t-2-i,1}^{\lambda}
\end{array}\right)\\
\times \begin{cases}
\left( A_{\frac{t-2}{2},1}^{\lambda} - \bar{A}_{\frac{t-2}{2},1}^{\lambda}  \right) & t \text{ even,}\\
1 & t \text{ odd}.
 \end{cases}   
\end{multline}
By \cref{lem:DI}, we have  
\begin{multline}
\label{factor2}
\det \left( \begin{array}{c|c}
     B_{0,1} - \bar{B}_{0,1} 
     &  B_{t-2,1}  - \bar{B}_{t-2,1}
     \\[0.2cm]
     \hline \\[-0.3cm]
     A_{0,1}  & -\bar{A}_{t-2,1} \\[0.2cm]
     \hline \\[-0.3cm]
-\bar{A}_{0,1} & A_{t-2,1}
 \end{array} \right) \\
= (-1)^{\frac{n(n-1)}{2}} x_1\dots x_n \left(y^{1-tn}  - y^{-tn-1} \right) V(X^t,\X^t,y^t) \\
= (-1)^{\frac{n(n-1)}{2}} x_1\dots x_n \left(y^{1+tn}  - y^{tn-1} \right) V(X^t,\X^t, \bar{y}^t),
% = (-1)^{\frac{n(n-1)}{2}}\prod_i  x_i V(X^t,\X^t) \left(y^{1-tn} \prod_i (x_i^t-y^t)(\x_i^t-y^t) + y^{tn} \prod_i (x_i^t-\bar{y}^t)(\x_i^t-\bar{y}^t) \right)\\
% = \bar{y}^{tn}  (y  + 1) 
%  \prod_{i=1}^n \left( (x_i^t-y^t)(\x_i^t-y^t)  \right)
% \det \left( \begin{array}{c|c}
%      A_{0,1}  & -\bar{A}_{t-1,0} \\\hline
% -\bar{A}_{0,0} & A_{t-1,1}
%  \end{array} \right)  
\end{multline}
where $V(X^t,\X^t, \bar{y}^t)=\prod_{1 \leq i < j \leq n} (x_i^t-x_j^t)(x_i^t-\x_j^t) (x_j^t-\x_i^t) (\x_i^t-\x_j^t) \ds \prod_{i=1}^n (x_i^t-y^t) (x_i^t-\x_i^t) (\x_i^t-y^t)$.
Using \cref{lem:s-new-1} 
and \eqref{factor2}, we see that 
\begin{multline}
% \begin{equation}
\label{schuri02}
% \begin{split}
\frac{\det \left( \begin{array}{c|c}
     B_{i_0,1}^{\lambda} - \bar{B}_{i_0,1}^{\lambda} 
    &  B_{t-2-i_0,1}^{\lambda}  - \bar{B}_{t-2-i_0,1}^{\lambda} 
     \\\hline
     A_{i_0,1}^{\lambda}  & -\bar{A}_{t-2-i_0,1}^{\lambda} \\\hline
-\bar{A}_{i_0,1}^{\lambda}  & A_{t-2-i_0,1}^{\lambda}
 \end{array} \right)}
 {\det \left( \begin{array}{c|c}
     B_{0,1} - \bar{B}_{0,1}
     &  B_{t-2,1}  - \bar{B}_{t-2,1}
     \\\hline
     A_{0,1}  & -\bar{A}_{t-2,1} \\\hline
-\bar{A}_{0,1}  & A_{t-2,1}
 \end{array} \right)}  = \frac{(-1)^{\frac{n_{t-2-i_0}(\lambda)
(n_{t-2-i_0}(\lambda)+1)}
{2}}}
{(-1)^{\frac{n
(n+1)}
{2}}} \frac{1}{y-\bar{y}}\\
\times \Big( y^{-t(\lambda_1^{(t-2-i_0)} 
+n_{(t-2-i_0)}(\lambda)-n)+i_0}  s_{\pi_{i_0}^{(1)}}(X^t,\X^t,y^t)  \\
-
y^{t(\lambda_1^{(t-1-i_0)} 
+n_{t-1-i_0}(\lambda)-n)-i_0} 
s_{\pi_{i_0}^{(1)}}(X^t,\X^t, \bar{y}^t) \Big).
%  \end{split}
%  \end{equation}
\end{multline}
Thus, using \cref{lem:s-new}, \eqref{schuri02} and  
 \[
\frac{\det \left( A_{\frac{t-2}{2},1}^{\lambda} - \bar{A}_{\frac{t-2}{2},1}^{\lambda}  \right)}
{\det \left( A_{\frac{t-2}{2},1} - \bar{A}_{\frac{t-2}{2},1}  \right)} = \oo_{\lambda^{\left(\frac{t-2}{2}\right)}}(X^t),
\]
the ratio of \eqref{num-25} and \eqref{dnm-21} is:
\begin{equation*}
    \begin{split}
        \sgn(\sigma_{\lambda}) \frac{(-1)^{\epsilon_2'}}{y-\bar{y}} 
        \Big( y^{-t(\lambda_1^{(t-2-i_0)} 
+n_{(t-2-i_0)}(\lambda)-n)+i_0} & s_{\pi_{i_0}^{(2)}}(X^t,\X^t,y^t)  -
y^{t(\lambda_1^{(t-2-i_0)} 
+n_{t-2-i_0}(\lambda)-n)-i_0} \\
s_{\pi_{i_0}^{(2)}}(X^t,\X^t, \bar{y}^t) \Big)  
 \times  &
 \prod_{\substack{i=0\\i \neq i_0}}^{\floor{\frac{t-2}{2}}} 
 s_{\pi^{(2)}_i} (X^t,\X^t) 
        \times \begin{cases}
     \oo_{\lambda^{\left(\frac{t-1}{2}\right)}}(X^t)   & t \text{ is odd},\\
1 & t \text{ is even},
        \end{cases} 
        \end{split}
    \end{equation*}
where $\epsilon_2'$ is defined in \eqref{epsilon'}. Since $\frac{(t+1)(t-1)}{2}\frac{n(n+1)}{2}$ is even for odd $t$, $(-1)^{\epsilon_2'}$ is the same as $(-1)^{\epsilon_2(\lambda)}$, defined in \eqref{epsilon22}.  This completes the proof.
% \[
% \epsilon_2=\frac{t(t-1)}{2}\frac{n(n+1)}{2}+ \left(\ds \sum_{i=t-i_0}^{t-1} n_{i-1}(\lambda) \right)
% + \ds \sum_{i=\floor{\frac{t+2}{2}}}^{t-1} (t-1) n (n_{i-1}(\lambda)-n).
% \]
% Since $\frac{t(t-2)}{2}\frac{n(n+1)}{2}$ is even for even $t$ and $\frac{(t+1)(t-1)}{2}\frac{n(n+1)}{2}$ is even for odd $t$, the parity of $\epsilon$  is same as $\epsilon_2(\lambda)$ defined in \eqref{epsilon22}, completing the proof.
\end{proof}
\subsection{even orthogonal characters} 
If $\ds \sum_{i=1}^{t-1}n_{i}(\lambda) \allowbreak = (t-1)n$, 
then 
consider the $(t-1)n \times (t-1)n$ block matrix
\begin{equation}
    \label{p11}
    \Pi_3 \coloneqq \left(\omega^{pq}A^{\lambda}_{q}+\bar{\omega}^{pq}
\bar{A}^{\lambda}_{q}\right)_{1 \leq p,q \leq t-1}.
\end{equation}
Substituting $U_j=A_j^{\lambda}$, $V_j=-\bar{A}_j^{\lambda}$ and for $1 \leq j \leq t-1$,
\[
\gamma_{i,j} = \begin{cases}
\omega^{\frac{i(j+1)}{2}} & j \text{ odd},\\
\omega^{-\frac{ij}{2}} & j \text{ even}, 
\end{cases}
\]
in \cref{lem:det-blockmatrix}, we get the following corollary.
\begin{cor}
% [{\cite{ayyer-2021}}] 
\label{cor:det-even} \begin{enumerate}
\item If $n_{i}(\lambda)+n_{t-i}(\lambda)\neq 2n$ 
for some $i \in [\floor{\frac{t}{2}}]$, then
$\det \Pi_3 = 0$.

\item If $n_{i}(\lambda)+n_{t-i}(\lambda) = 2n$ for all $i \in [\floor{\frac{t}{2}}]$, then 
\begin{equation}
\begin{split}
   \label{eodet3}
 \det \Pi_3
= (-1)^{\Sigma_3}  \left(\det (\gamma_{i,j})_{1 \leq i,j \leq t-1} \right)^{n} &
\prod_{q=1}^{\floor{\frac{t-1}{2}}}
\det\left(\begin{array}{c|c}
   A^{\lambda}_{q}  & \bar{A}^{\lambda}_{t-q} \\[0.2cm]
   \hline \\[-0.3cm]
   \bar{A}^{\lambda}_{q}  & A^{\lambda}_{t-q}
   \end{array}\right) \\ &
\times
\begin{cases}
\det \left(A^{\lambda}_{\frac{t}{2}}+ \bar{A}^{\lambda}_{\frac{t}{2}} \right)  & t \text{ even},\\
1 & t \text{ odd},
\end{cases}    
\end{split}
\end{equation}
\end{enumerate}
where 
\[
\Sigma_3 =
\begin{cases}
% n \ds \sum_{q=1}^{\frac{t-2}{2}} n_{q}(\lambda) =
n \ds \sum_{q=\frac{t+2}{2}}^{t-1} n_{q}(\lambda)
&  t \text{ even},\\
0 &  t \text{ odd}.
\end{cases}
\]
\end{cor}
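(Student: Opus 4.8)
The plan is to obtain \cref{cor:det-even} as a direct specialization of \cref{lem:det-blockmatrix}, exactly as the sentence preceding the statement indicates. Concretely I would set $k = t-1$ and $u_q = n_q(\lambda)$ for $1 \le q \le t-1$, so that the standing hypothesis $\sum_{i=1}^{t-1} n_i(\lambda) = (t-1)n$ is precisely $u_1 + \cdots + u_k = kn$; then, under the substitution $U_j = A_j^{\lambda}$, $V_j = -\bar{A}_j^{\lambda}$ and the stated choice of $\gamma_{i,j}$, I would check that the matrix $\Pi$ of \cref{lem:det-blockmatrix} equals $\Pi_3$ from \eqref{p11}, and finally read the two cases of the corollary off the two cases of the lemma.

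The only real computation is the verification of the $\gamma$-identities, which I would organize by the two column blocks of $\Pi$. For $1 \le j \le \floor{\frac{k+1}{2}}$, the index $2j-1$ is odd and $2j$ is even, so $\gamma_{i,2j-1} = \omega^{ij}$ and $\gamma_{i,2j} = \bar{\omega}^{ij}$; hence the $(i,j)$ block $\gamma_{i,2j-1}U_j - \gamma_{i,2j}V_j$ of $\Pi$ equals $\omega^{ij}A_j^{\lambda} + \bar{\omega}^{ij}\bar{A}_j^{\lambda}$, which is exactly the $(i,j)$ block of $\Pi_3$. For $\floor{\frac{k+3}{2}} \le j \le k$ one has $2k+2-2j = 2(t-j)$ even and $2k+1-2j$ odd, so $\gamma_{i,2k+2-2j} = \bar{\omega}^{i(t-j)} = \omega^{ij}$ and $\gamma_{i,2k+1-2j} = \omega^{i(t-j)} = \bar{\omega}^{ij}$, the last equalities using $\omega^t = 1$; the $(i,j)$ block again equals $\omega^{ij}A_j^{\lambda} + \bar{\omega}^{ij}\bar{A}_j^{\lambda}$. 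Thus $\Pi = \Pi_3$. I would also note that the hypothesis $\gamma_{i,k+1} = \gamma_{i,k}$ of \cref{lem:det-blockmatrix} holds, since $\gamma_{i,t}$ and $\gamma_{i,t-1}$ turn out to be the same power of $\omega$ once $\omega^{it} = 1$ is used (both when $t$ is even and when $t$ is odd).

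It then remains to transcribe the conclusions. From part (1) of \cref{lem:det-blockmatrix}, $\det \Pi_3 = 0$ whenever $u_p + u_{k+1-p} \ne 2n$ for some $p \in [k]$; since $u_{k+1-p} = n_{t-p}(\lambda)$ and this condition is symmetric under $p \leftrightarrow k+1-p$, it is equivalent to $n_i(\lambda) + n_{t-i}(\lambda) \ne 2n$ for some $i \in [\floor{\frac{t}{2}}]$, giving part (1). When instead $u_p + u_{k+1-p} = 2n$ for all $p$, part (2) of the lemma gives $\det\Pi_3 = (-1)^{\Sigma}(\det\Gamma)^n \prod_{i=1}^{\floor{(k+1)/2}}\det W_i$; substituting $-V_j = \bar{A}_j^{\lambda}$ turns each $W_q$ with $1 \le q \le \floor{\frac{k}{2}} = \floor{\frac{t-1}{2}}$ into the $2\times 2$ block $\left(\begin{smallmatrix} A_q^{\lambda} & \bar{A}_{t-q}^{\lambda} \\ \bar{A}_q^{\lambda} & A_{t-q}^{\lambda}\end{smallmatrix}\right)$ of \eqref{eodet3}, while for $k = t-1$ odd (i.e.\ $t$ even) the middle factor is $W_{(k+1)/2} = W_{t/2} = U_{t/2} - V_{t/2} = A_{t/2}^{\lambda} + \bar{A}_{t/2}^{\lambda}$; moreover $\Gamma = (\gamma_{i,j})_{1 \le i,j \le t-1}$, and the sign $\Sigma$ of the lemma is $0$ for $t$ odd and $n\sum_{i=\frac{k+3}{2}}^{k} u_i = n\sum_{q=\frac{t+2}{2}}^{t-1} n_q(\lambda)$ for $t$ even, i.e.\ $\Sigma = \Sigma_3$. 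This is \eqref{eodet3}.

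I do not expect a genuine obstacle: all the substance is already packaged in \cref{lem:det-blockmatrix}, and what remains is bookkeeping. The points that need care are keeping the parity-dependent definition of $\gamma_{i,j}$ straight, using $\omega^t = 1$ to fold the second group of column blocks back into the expected form, and matching the index conventions — turning ``for all $p \in [k]$'' into ``for all $i \in [\floor{\frac{t}{2}}]$'', and tracking the middle factor $W_{t/2}$ and the sign $\Sigma_3$ through the $t$ even versus $t$ odd dichotomy.
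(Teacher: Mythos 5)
Your proposal is correct and is exactly the paper's argument: the paper obtains \cref{cor:det-even} by the very substitution $k=t-1$, $U_j=A_j^{\lambda}$, $V_j=-\bar{A}_j^{\lambda}$ with the stated $\gamma_{i,j}$ into \cref{lem:det-blockmatrix}, and your verification that $\Pi=\Pi_3$ (using $\omega^t=1$ to fold the second column block), that $\gamma_{i,t}=\gamma_{i,t-1}$, and your transcription of the $W_i$, $\Gamma$ and $\Sigma_3$ are all accurate. Nothing is missing.
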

\noindent
If $\ds \sum_{i=1}^{t-1}n_{i}(\lambda) \allowbreak = (t-1)n+1$, 
then 
consider the $((t-1)n+1) \times ((t-1)n+1)$ block matrix
\begin{equation}
\label{delta-1}
 \Delta_3 \coloneqq \left( 
    \begin{array}{c}
         (B_q^{\lambda}+ \bar{B}_q^{\lambda})_{1 \leq q \leq t-1}  \\\\\hline \\
          \left(
 \omega^{pq} A_q^{\lambda}+ \omega^{t-pq} \bar{A}_q^{\lambda}
    \right)_{1 \leq p,q \leq t-1}
    \end{array}
    \right).   
\end{equation}
Substituting $M_j= B_{j}^{\lambda}$, $N_j= \bar{B}_{j}^{\lambda}$,  $U_j=A_j^{\lambda}$, $V_j=-\bar{A}_j^{\lambda}$ and for $1 \leq j \leq t-1$,
\[
\gamma_{i,j} = \begin{cases}
\omega^{\frac{i(j+1)}{2}} & j \text{ odd},\\
\omega^{-\frac{ij}{2}} & j \text{ even}, 
\end{cases}
\]
in \cref{lem:det-blockmatrix-1}, we get the following corollary.
\begin{cor}
\label{det-even-1}
\begin{enumerate} \item If
\begin{equation}
\label{i0}
n_j(\lambda,tn+1)+n_{t-j}(\lambda,tn+1)= 
\begin{cases}
2n+1+\delta_{i_0,\frac{t}{2}} & j=i_0,\\
2n & \text{ otherwise},
\end{cases}  \quad j \in [t-1],
\end{equation}
for some $i_0 \in [\floor{\frac{t}{2}}]$, then
\begin{equation}
    \det \Delta_3
 = (-1)^{\chi_3} 
 (\det \Gamma)^n \det \left( \begin{array}{c}
      O_{i_0}^{(3)}  \\[0.1cm]
     \hline \\[-0.4cm]
      W_{i_0}^{(3)}
 \end{array} \right)
 \prod_{\substack{i=1\\i \neq i_0}}^{\floor{\frac{t}{2}}}
 \det W_i^{(3)},
\end{equation}
where
\[
O_i^{(3)}= \begin{cases}
\left(\begin{array}{c|c}
   B_i^{\lambda} + \bar{B}_i^{\lambda}  & B_{t-i}^{\lambda} + \bar{B}_{t-i}^{\lambda} 
\end{array} \right) & \text{ if } 1 \leq i \leq \floor{\frac{t-1}{2}},\\
\left(
B_{\frac{t}{2}}^{\lambda} + \bar{B}_{\frac{t}{2}}^{\lambda}
\right) & \text{$t$ even and $i = \frac{t}{2}$},
\end{cases}
\]
\[
W_i^{(3)}= \begin{cases}
\left(\begin{array}{c|c}
A_i^{\lambda}  & \bar{A}_{t-i}^{\lambda} \\[0.1cm]
     \hline \\[-0.4cm]
\bar{A}_i^{\lambda}  & A_{t-i}^{\lambda}
\end{array}\right)
& 1 \leq i \leq \floor{\frac{t-1}{2}}, \\
\left( A_{\frac{t}{2}}^{\lambda} +\bar{A}_{\frac{t}{2}}^{\lambda}  \right) 
& \text{$t$ even and $i = \frac{t}{2}$},
\end{cases}
\]
and
\[
\chi_3=\left(\ds \sum_{i=t+1-i_0}^{t-1} n_i(\lambda) \right)
+ \ds \sum_{i=\floor{\frac{t+2}{2}}}^{t-1} (t-1) n n_i(\lambda).
\]
\item Otherwise 
\[
\det \Delta_3=0.
\]
\end{enumerate}
% \begin{proof} 
% \[
% \det \Pi_2 = (-1)^{\Sigma_1}  \left(\det (\gamma_{i,j})_{1 \leq i,j \leq t-1} \right)^{n}  \det \left(
% \begin{array}{ccccc}
% V_1 & V_2 & \dots & V_{\floor{\frac{t}{2}}}\\
% W_1 &&& \\ 
%  & W_2  & &\text{\huge0} \\
%  &     & \ddots \\
% \text{\huge0} &    &   & W_{\floor{\frac{t}{2}}}
% \end{array}
% \right),
% \]
% \end{proof}
% where 
% \[
% W_i= \begin{cases}
% \left(\begin{array}{c|c}
%   {A}_i  & \bar{A}_{t-i} \\\hline
%   \bar{A}_i  & {A}_{t-i}
% \end{array} \right) & \text{ if } 1 \leq i < \floor{\frac{t}{2}}\\
% \left( A_{\frac{t}{2}}+\bar{A}_{\frac{t}{2}}\right) & \text{ if } i= \frac{t}{2},
% \end{cases}
% \]
% and 
% \[
% V_i= \begin{cases}
% \left(\begin{array}{c|c}
%   {B}_i+{\bar{B}}_i  & {B}_{t-i}+\bar{B}_{t-i} 
% \end{array} \right) & \text{ if } 1 \leq i < \floor{\frac{t}{2}}\\
% B_{\frac{t}{2}}+\bar{B}_{\frac{t}{2}} & \text{ if } i= \frac{t}{2}.
% \end{cases}
% \]
 \end{cor}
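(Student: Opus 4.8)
The plan is to read off \cref{det-even-1} as a specialization of \cref{lem:det-blockmatrix-1}, so the task reduces to checking that the substitution is set up correctly and then matching the outputs. First I would confirm the size hypothesis of \cref{lem:det-blockmatrix-1} with $k=t-1$: under $U_j=A_j^{\lambda}$, $M_j=B_j^{\lambda}$ the block width $u_j$ is the number of columns of $A_j^{\lambda}$, namely $n_j(\lambda)$ (see \eqref{def AB}, \eqref{def 1AB}), so the standing assumption $\sum_{i=1}^{t-1}n_i(\lambda)=(t-1)n+1$ is exactly $\sum_{j=1}^{k}u_j=kn+1$; one also checks that the prescribed $\gamma_{i,j}$ satisfy the compatibility $\gamma_{i,k+1}=\gamma_{i,k}$ required by the lemma, since $\omega^t=1$ forces $\gamma_{i,t}=\gamma_{i,t-1}$ in both parity cases.

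Next I would check that, after the substitution $M_j=B_j^{\lambda}$, $N_j=\bar B_j^{\lambda}$, $U_j=A_j^{\lambda}$, $V_j=-\bar A_j^{\lambda}$ with the stated $\gamma_{i,j}$, the abstract matrix $\Delta$ of \cref{lem:det-blockmatrix-1} is \emph{literally} the matrix $\Delta_3$ of \eqref{delta-1}, with no extra column permutation. The top block-row of $\Delta$ is $(M_j+N_j)_j=(B_j^{\lambda}+\bar B_j^{\lambda})_j$, which both pins down the ``$+$'' branch of the $\pm$ in \cref{lem:det-blockmatrix-1} and, since for $k=t-1$ the block--column index ranges over $\{1,\dots,\lfloor(k+1)/2\rfloor\}\cup\{\lfloor(k+3)/2\rfloor,\dots,k\}=\{1,\dots,t-1\}$ in increasing order, equals $(B_q^{\lambda}+\bar B_q^{\lambda})_{1\le q\le t-1}$. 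For the lower part, the $(i,j)$ block of $\Delta$ is $\gamma_{i,2j-1}U_j-\gamma_{i,2j}V_j$ for $j$ in the left half and $\gamma_{i,2k+2-2j}U_j-\gamma_{i,2k+1-2j}V_j$ for $j$ in the right half; inserting the definition of $\gamma_{i,j}$ and reducing exponents modulo $t$ one gets $\omega^{ij}U_j-\omega^{-ij}V_j=\omega^{ij}A_j^{\lambda}+\omega^{t-ij}\bar A_j^{\lambda}$ in \emph{both} halves, which is exactly the $(i,j)$ block of $\Delta_3$ (on the right half $2k+2-2j=2t-2j$ is even and $2k+1-2j=2t-1-2j$ is odd, yet $\gamma_{i,2t-2j}=\omega^{-i(t-j)}=\omega^{ij}$ and $\gamma_{i,2t-1-2j}=\omega^{i(t-j)}=\omega^{-ij}$).

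With $\Delta=\Delta_3$ established, the corollary is just the translation of \cref{lem:det-blockmatrix-1}. Its dichotomy hypothesis ``$u_j+u_{k+1-j}=2n+1+\delta_{i_0,(k+1)/2}$ for $j=i_0$, else $2n$, over $1\le j\le\lfloor(k+1)/2\rfloor$'' becomes, via $k=t-1$, $u_j=n_j(\lambda)$ and the symmetry $j\leftrightarrow t-j$, the condition \eqref{i0} with $i_0\in[\lfloor t/2\rfloor]$; in that case \cref{lem:det-blockmatrix-1} gives $\det\Delta_3=(-1)^{\chi}(\det\Gamma)^n\,\det\left(\begin{smallmatrix}O_{i_0}\\ W_{i_0}\end{smallmatrix}\right)\prod_{i\ne i_0}\det W_i$, and since $M_j+N_j=B_j^{\lambda}+\bar B_j^{\lambda}$ and $-V_j=\bar A_j^{\lambda}$ one reads off $O_i=O_i^{(3)}$ and $W_i=W_i^{(3)}$ (the ``$k$ odd $\Leftrightarrow$ $t$ even'' diagonal block included), while $\chi=\big(\sum_{i=k+2-i_0}^{k}u_i\big)+\sum_{i=\lfloor(k+3)/2\rfloor}^{k}kn\,u_i$ is exactly $\chi_3$; otherwise \cref{lem:det-blockmatrix-1} returns $\det\Delta_3=0$. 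The only place that needs genuine care is this modular bookkeeping of the $\gamma_{i,j}$ on the right--hand block columns — one must confirm that, although $2k+2-2j$ and $2k+1-2j$ have the opposite parities to $2j-1$ and $2j$, all four values collapse to $\omega^{\pm ij}$ once reduced mod $t$ — together with the forced choice of the ``$+$'' sign; every other step is direct substitution.
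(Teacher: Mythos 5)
Your proposal is correct and is exactly the paper's argument: the paper derives \cref{det-even-1} precisely by substituting $M_j=B_j^{\lambda}$, $N_j=\bar B_j^{\lambda}$, $U_j=A_j^{\lambda}$, $V_j=-\bar A_j^{\lambda}$ and the stated $\gamma_{i,j}$ into \cref{lem:det-blockmatrix-1}, leaving the verification implicit. You simply spell out the bookkeeping (the condition $\gamma_{i,t}=\gamma_{i,t-1}$, the collapse of the right-half exponents modulo $t$, and the translation of $u_j$, $O_i$, $W_i$, $\chi$), all of which checks out.
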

%  \[
% Y_{(i)}=\begin{cases}
% X & i \neq \alpha_k+i \pmod{t}\\
% (X,y) & i = \alpha_k+i \pmod{t}
% \end{cases}, (Y_{(i)},\bar{Y}_{(i)})=\begin{cases}
% (X,\X) & i \neq \alpha_k+i \pmod{t}\\
% (X,\X,y) & i = \alpha_k+i \pmod{t}
% \end{cases},\]
% \[
% (\bar{Y}_{(i)},Y_{(i)})=\begin{cases}
% (X,\X) & i \neq \alpha_k+i \pmod{t}\\
% (X,\X, \bar{y}) & i = \alpha_k+i \pmod{t}
% \end{cases}
% \]
\begin{proof}[Proof of \cref{thm:even}] By \eqref{oedef}, we see that the numerator of the required even orthogonal character is:
\begin{equation*}
2 \det \left(
    \begin{array}{c}
    \left( \left( 
    (\omega^{p-1}x_i)^{\beta_{j}(\lambda)}+(\bar{\omega}^{p-1}\x_i)^{\beta_{j}(\lambda)}
    \right)_{\substack{1 \leq i\leq n \\ 1\leq j\leq tn+1}}
    \right)_{1\leq p \leq t}  \\[0.4cm]  
      \hline     \\[-0.3cm]
       \left( 
    y^{\beta_{j}(\lambda)}+\bar{y}^{\beta_{j}(\lambda)}
    \right)_{1\leq j\leq tn+1}
    \end{array} 
    \right)
\end{equation*}
% \begin{equation}
% \begin{split}
%     \oe_{\lambda}(X,\, & \omega X,  \dots,\omega^{t-1}X,y) \\ 
%     = & \frac{2 \det \left(
%     \begin{array}{c}
%     \left( \left( 
%     (\omega^{p-1}x_i)^{\beta_{j}(\lambda)}+(\bar{\omega}^{p-1}\x_i)^{\beta_{j}(\lambda)}
%     \right)_{\substack{1 \leq i\leq n \\ 1\leq j\leq tn+1}}
%     \right)_{1\leq p \leq t}  \\[0.4cm]  
%       \hline     \\[-0.3cm]
%       \left( 
%     y^{\beta_{j}(\lambda)}+\bar{y}^{\beta_{j}(\lambda)}
%     \right)_{1\leq j\leq tn+1}
%     \end{array} 
%     \right)}
%     {(1+\delta_{\lambda_{tn+1},0}) \det \left(
%     \begin{array}{c}
%     \left( \left( 
%     (\omega^{p-1}x_i)^{tn+1-j}+(\bar{\omega}^{p-1}\x_i)^{tn+1-j}
%     \right)_{\substack{1 \leq i\leq n \\ 1\leq j\leq tn+1}}
%     \right)_{1\leq p \leq t}  \\  
%       \hline     \\[-0.3cm]
%      \left( y^{tn+1-j}+\bar{y}^{tn+1-j} \right)_{1\leq j\leq tn+1}
%     \end{array} 
%     \right)}.
%     \end{split}
% \end{equation}
First permuting the columns of the matrix in the numerator by $\sigma_{\lambda}$ from \eqref{sigma-perm-m} $(m=1,d_1=0)$ and then permuting the last $t$ rows cyclically, we see that the numerator is
\begin{equation*}
   2 \sgn(\sigma_{\lambda}) (-1)^{(t-1)} \det \left(
    \begin{array}{c|c|c|c}
    A_0^{\lambda}+\bar{A}^{\lambda}_0 & A_1^{\lambda}+\bar{A}^{\lambda}_1 & \dots & A_{t-1}^{\lambda}+\bar{A}^{\lambda}_{t-1} \\\hline&&&\\
     B_0^{\lambda}+\bar{B}^{\lambda}_0 & B_1^{\lambda}+\bar{B}^{\lambda}_1 & \dots & B_{t-1}^{\lambda}+\bar{B}^{\lambda}_{t-1} \\\hline&&&\\
    A_0^{\lambda}+\bar{A}^{\lambda}_0 & \omega A_1^{\lambda}+\omega^{t-1} \bar{A}^{\lambda}_1 & \dots & \omega^{t-1} A^{\lambda}_{t-1}+\omega \bar{A}^{\lambda}_{t-1} \\\hline&&&\\
    \vdots &  \vdots &  \dots &  \vdots  \\&&&\\\hline&&&\\
    A_0^{\lambda}+\bar{A}^{\lambda}_0 & \omega^{t-1} A_1^{\lambda} +\omega \bar{A}^{\lambda}_1 &  \dots &  \omega A^{\lambda}_{t-1}+\omega^{t-1} \bar{A}^{\lambda}_{t-1},
    \end{array} \right),
\end{equation*}
where $A_{p,q}^{\lambda}$,  $\bar{A}_{p}^{\lambda}$, 
$B_{p,q}^{\lambda}$ and $\bar{B}_{p}^{\lambda}$ are defined in \eqref{def AB} and \eqref{def 1AB}.
Applying blockwise row operations $R_1 \to R_1+R_3+\dots+R_{t+1}$ and then $R_i \to R_i-\frac{1}{t} R_1$, $3 \leq i \leq t+1$, we get
\begin{equation}
  \label{even-2}
  2 t^n \sgn(\sigma_{\lambda}) (-1)^{(t-1)} \det \left(
    \begin{array}{c|c|c|c}
    A_0^{\lambda}+\bar{A}^{\lambda}_0 & 0 &  \dots &  0 \\\hline&&&\\
    B_0^{\lambda}+\bar{B}^{\lambda}_0 & B_1^{\lambda}+\bar{B}^{\lambda}_1 & \dots & B_{t-1}^{\lambda}+\bar{B}^{\lambda}_{t-1}  \\\hline&&&\\
    0 & \omega A^{\lambda}_1+ \omega^{t-1} \bar{A}^{\lambda}_1 & \dots & \omega^{t-1} A^{\lambda}_{t-1}+\omega \bar{A}^{\lambda}_{t-1} \\\hline&&&\\
    \vdots &  \vdots &  \dots &  \vdots 
     \\&&&
    \\\hline&&&\\
    0 & \omega^{t-1} A^{\lambda}_1+\omega \bar{A}^{\lambda}_1 & \dots & \omega A_{t-1}^{\lambda}+\omega^{t-1} \bar{A}_{t-1}^{\lambda},
    \end{array} \right).
\end{equation}
Since the denominator in \eqref{oedef} is the numerator evaluated at the empty partition and $n_0(\emptyset,tn+1)=n+1$, $n_i(\emptyset,tn+1)=n$ for all $i \in [1,t-1]$, evaluating the numerator in \eqref{even-2} and then using \cref{cor:det-even}, we see that the denominator of the required even orthogonal character is:
\begin{equation}
\begin{split}
\label{denomi}
(1+\delta_{tn+1}) t^n \sgn(\sigma_{\emptyset}) & (-1)^{(t-1)+\Sigma_3^0} (\det \Gamma)^{n} \det \left(
\begin{array}{cc}
     A_0+\bar{A}_0  \\[0.1cm]
   \hline \\[-0.4cm]
      B_0+\bar{B}_0
\end{array}
\right)\\
   & \times \prod_{q=1}^{\floor{\frac{t-1}{2}}}
\det\left(\begin{array}{c|c}
   A_{q}  & \bar{A}_{t-q} \\[0.2cm]
   \hline \\[-0.3cm]
   \bar{A}_{q}  & A_{t-q}
   \end{array}\right) 
\times
\begin{cases}
\det \left(A_{\frac{t}{2}}+ \bar{A}_{\frac{t}{2}} \right)  & t \text{ even,}\\
1 & t \text{ odd},
\end{cases}     
\end{split}
\end{equation}
where 
\[
\Sigma_3^0=\begin{cases}
 \ds \sum_{q=\frac{t+2}{2}}^{t-1} n^2
&  t \text{ even},\\
0 &  t \text{ odd}.
\end{cases}
\]
If $\core \lambda t \not \in \mathcal{Q}^{(t)}_{1,0,0} \cup \mathcal{Q}^{(t)}_{2,1,k}$ for all $k \in [\rk(\core \lambda t)]$, then by \cref{lem:symp-1}, \cref{cor:z=-1}, \cref{cor:det-even} and \cref{det-even-1}, the numerator in \eqref{even-2} is 0. So,
% If $n_0(\lambda) >n+1,$ then 
\[
 \oe_{\lambda}(X,\, \omega X,\dots,\omega^{t-1}X,y)=0.
\]
If $\core \lambda t \in \mathcal{Q}^{(t)}_{1,0,0}$, then by \cref{lem:symp-1}, $n_0(\lambda)=n+1$ and $n_i(\lambda)+n_{t-i}(\lambda)=2n$ and the numerator in \eqref{even-2} is the same as:  
% Using \cref{cor:det-even}, 
% we see that the 
\begin{equation}
\label{even-1}
2 t^n \sgn(\sigma_{\lambda}) (-1)^{(t-1)} \det \left(
\begin{array}{cc}
     A_0^{\lambda}+\bar{A}_0^{\lambda}  \\[0.1cm]
   \hline \\[-0.4cm]
      B_0^{\lambda}+\bar{B}_0^{\lambda}
\end{array}
\right)
    \times \det \left(
    \begin{array}{c}
 \omega^{ij} A_j^{\lambda}+ \omega^{t-ij} \bar{A}_j^{\lambda}
    \end{array}
    \right)_{1 \leq i,j \leq t-1}.
\end{equation}
The last matrix in \eqref{even-1} is $\Pi_3$ defined in \eqref{p11}. Using \cref{cor:det-even}, we see that the numerator is 
\begin{multline}
\label{numer}
    2 t^n \sgn(\sigma_{\lambda}) (-1)^{(t-1)+\Sigma_3} \left(\det \Gamma \right)^{n} \det \left(
\begin{array}{cc}
     A_0^{\lambda}+\bar{A}_0^{\lambda}  \\[0.1cm]
   \hline \\[-0.4cm]
      B_0^{\lambda}+\bar{B}_0^{\lambda}
\end{array}
\right) \\
  \times   \prod_{q=1}^{\floor{\frac{t-1}{2}}}
\det\left(\begin{array}{c|c}
   A^{\lambda}_{q}  & \bar{A}^{\lambda}_{t-q} \\[0.2cm]
   \hline \\[-0.3cm]
   \bar{A}^{\lambda}_{q}  & A^{\lambda}_{t-q}
   \end{array}\right)  
\times
\begin{cases}
\det \left(A^{\lambda}_{\frac{t}{2}}+ \bar{A}^{\lambda}_{\frac{t}{2}} \right)  & t \text{ even,}\\
1 & t \text{ odd.}
\end{cases}      
\end{multline}
By \eqref{E4} and \eqref{oeshifted}, we note that 
\begin{equation*}
    \frac{\det \left(A^{\lambda}_{\frac{t}{2}}+ \bar{A}^{\lambda}_{\frac{t}{2}} \right)}
    {\det \left(A_{\frac{t}{2}}+ \bar{A}_{\frac{t}{2}} \right)} =  (-1)^{\sum_i \lambda_i^{(t/2)}} \oo_{\lambda^{(t/2)}}(-X^t).
\end{equation*}
So, applying \eqref{F4} and \cref{lem:s-new}, the ratio of \eqref{numer} and \eqref{denomi}, the required even orthogonal character is
\begin{equation*}
% \tcr{\frac{1+\delta_{\lambda_n^{(t/2)}}}{2}} 
(-1)^{\epsilon_3} \, \sgn(\sigma_{\lambda}) \oe_{\lambda^{(0)}}(X^t,y^t) \prod_{i=1}^{\floor{\frac{t-1}{2}}} s_{\pi_i^{(3)}}(X^t,\X^t)
 \times \begin{cases}
(-1)^{\sum_i \lambda_i^{(t/2)}} \oo_{\lambda^{(t/2)}}(-X^t)& t \text{ even,}\\
1 & t \text{ odd,}
\end{cases}   
\end{equation*}
where 
\begin{equation}
\label{epsilo3}
\begin{split}
\epsilon_3=\frac{t(t-1)}{2}\frac{n(n+1)}{2} + & \begin{cases}
n \ds \sum_{q=\frac{t+2}{2}}^{t-1} 
(n_{q}(\lambda)-n) 
&  t \text{ even},\\
0 &  t \text{ odd}
\end{cases}  \\
& \qquad+  \sum_{i=1}^{\floor{\frac{t-1}{2}}} \left(
\frac{n_{t-i}(\lambda)
(n_{t-i}(\lambda)-1)}{2}-\frac{n(n-1)}{2} 
\right). 
\end{split}
\end{equation}
If $n_0(\lambda)=n$, then the numerator is
\begin{equation*}
 2 t^n \sgn(\sigma_{\lambda}) (-1)^{(t-1)} \det \left(
    A_0^{\lambda}+\Bar{A}_0^{\lambda} \right)
    \times \det \left( 
    \begin{array}{c}
         (B_j+ \bar{B}_j)_{1 \leq j \leq t-1}  \\\\\hline \\
          \left(
 \omega^{ij} A_j^{\lambda}+ \omega^{t-ij} \bar{A}_j^{\lambda}
    \right)_{1 \leq i,j \leq t-1}
    \end{array}
    \right)
%     \det \left(
%     \begin{array}{c|c|c}
%     B_1+ \bar{B}_1 & \dots &  B_{t-1}+ \bar{B}_{t-1}  \\\hline&\\
%       \omega A_1+ \omega^{t-1} \bar{A}_1 & \dots & \omega^{t-1} A_{t-1}+\omega \bar{A}_{t-1} \\\hline&\\
%      \omega^2 A_1+\omega^{t-2} \bar{A}_1 & \dots & \omega^{t-2} A_{t-1}+\omega^2 \bar{A}_{t-1} \\\hline&\\
%   \vdots &  \dots &  \vdots \\\hline&\\
%      \omega^{t-1} A_1+\omega \bar{A}_1 & \dots & \omega A_{t-1}+\omega^{t-1} \bar{A}_{t-1} 
%     \end{array}
%     \right)
\end{equation*}
% We note that 
% \[
% \det \left( 
%     \begin{array}{c}
%          (B_j+ \bar{B}_j)_{1 \leq j \leq t-1}  \\\\\hline \\
%           \left(
%  \omega^{ij} A_j^{\lambda}+ \omega^{t-ij} \bar{A}_j^{\lambda}
%     \right)_{1 \leq i,j \leq t-1}
%     \end{array}
%     \right) = \det(\gamma_{i,j})  \det \left(
% \begin{array}{ccccc}
% V_1 & V_2 & \dots & V_{\floor{\frac{t}{2}}}\\
% W_1 &&& \\ 
%  & W_2  & &\text{\huge0} \\
%  &     & \ddots \\
% \text{\huge0} &    &   & W_{\floor{\frac{t}{2}}}
% \end{array}
% \right),
% \]
% where 
% \[
% W_i= \begin{cases}
% \left(\begin{array}{c|c}
%   {A}_i  & \bar{A}_{t-i} \\\hline
%   \bar{A}_i  & {A}_{t-i}
% \end{array} \right) & \text{ if } 1 \leq i < \floor{\frac{t}{2}}\\
% \left( A_{\frac{t}{2}}+\bar{A}_{\frac{t}{2}}\right) & \text{ if } i= \frac{t}{2},
% \end{cases}
% \]
% and 
% \[
% V_i= \begin{cases}
% \left(\begin{array}{c|c}
%   {B}_i+{\bar{B}}_i  & {B}_{t-i}+\bar{B}_{t-i} 
% \end{array} \right) & \text{ if } 1 \leq i < \floor{\frac{t}{2}}\\
% B_{\frac{t}{2}}+\bar{B}_{\frac{t}{2}} & \text{ if } i= \frac{t}{2}.
% \end{cases}
% \]
The last matrix is the same as $\Delta_3$ defined in \eqref{delta-1}. We use \cref{det-even-1} to factorize the determinant. 
% the numerator is
% \begin{equation*}
%  2 t^n \sgn(\sigma_{\lambda}) (-1)^{(t-1)+\chi_3} (\det \Gamma)^n \det \left(
%     A_0^{\lambda}+\Bar{A}_0^{\lambda} \right)
%   \det \left( \begin{array}{c}
%       O_{i_0}^{(3)}  \\[0.1cm]
%   \hline \\[-0.4cm]
%       W_{i_0}^{(3)}
%  \end{array} \right)
%  \prod_{\substack{i=1\\i \neq i_0}}^{\floor{\frac{t}{2}}}
%  \det W_i^{(3)}.   
% \end{equation*}
If $\core \lambda t \in \mathcal{Q}^{(t)}_{2,1,k}$ for all $k \in [\rk(\core \lambda t)]$, then by \cref{cor:z=-1}, \eqref{i0} holds.
\underline{Case 1}. $t$ is even and $i_0=\frac{t}{2}$. Then $n_{t/2}(\lambda)=n+1$ and using \cref{det-even-1}, the factorization for the numerator is 
\begin{equation}
\label{1num}
2 t^n \sgn(\sigma_{\lambda}) (-1)^{(t-1)+\chi_3} (\det \Gamma)^n  \det
   \left(\begin{array}{c}
         A_0^{\lambda}+\bar{A}^{\lambda}_0 
    \end{array}\right) \det \left(
\begin{array}{c}
     B_{\frac{t}{2}}^{\lambda} + \bar{B}_{\frac{t}{2}}^{\lambda} \\[0.2cm]
     \hline\\[-0.3cm]
     A_{\frac{t}{2}}^{\lambda} + \bar{A}_{\frac{t}{2}}^{\lambda}
\end{array} \right)
\prod_{q=1}^{\frac{t-2}{2}} \det \left(\begin{array}{c|c}
A_q^{\lambda}  & \bar{A}_{t-q}^{\lambda} \\[0.2cm]
\hline\\[-0.3cm]
\bar{A}_q^{\lambda}  & A_{t-q}^{\lambda}
\end{array}\right).     
\end{equation}
By \cref{lem:DI}, we have
\begin{equation}
\label{ev-1}
\det \left(A_{\frac{t}{2}}+ \bar{A}_{\frac{t}{2}} \right)  =  \frac{1}{2} \det (x_i^{t(n-j)}+\x_i^{t(n-j)}) {\ds \prod_{i=1}^n (x_i^{t/2}+\bar{x}_i^{t/2})}.  
\end{equation}
Substituting in \eqref{denomi}, the denominator is
\begin{equation}
\begin{split}
\label{1denomi}
(1+\delta_{tn+1}) t^n &\sgn(\sigma_{\emptyset})  (-1)^{(t-1)+\Sigma_3^0} (\det \Gamma)^{n} \det \left(
\begin{array}{cc}
     A_0+\bar{A}_0  \\[0.1cm]
   \hline \\[-0.4cm]
      B_0+\bar{B}_0
\end{array}
\right)\\
   & \times \prod_{q=1}^{\floor{\frac{t-1}{2}}}
\det\left(\begin{array}{c|c}
   A_{q}  & \bar{A}_{t-q} \\[0.2cm]
   \hline \\[-0.3cm]
   \bar{A}_{q}  & A_{t-q}
   \end{array}\right) 
\times
\frac{1}{2} \det (x_i^{t(n-j)}+\x_i^{t(n-j)}) {\ds \prod_{i=1}^n (x_i^{t/2}+\bar{x}_i^{t/2})}. 
\end{split}
\end{equation}
% \begin{multline*}
% =\det \left(
% \begin{array}{c}
%     (x_i^{t(n+1-j)}+\x_i^{t(n+1-j)})_{\substack{1 \leq i \leq n\\1 \leq j \leq n+1}} \\\hline 
%      (y^{t(n+1-j)}+\bar{y}^{t(n+1-j)})_{1 \leq j \leq n+1} 
% \end{array}
% \right) \times \det \left(x_i^{t/2+t(n-j)} + \x_i^{t/2+t(n-j)} \right)_{1 \leq i,j \leq n}\\
%     \times \prod_{q=1}^{\frac{t-2}{2}}
% \det\left(\begin{array}{c|c}
%   \left(x_i^{q+t(n-j)} \right)_{1 \leq i,j \leq n}  & \left(\x_i^{t-q+t(n-j)} \right)_{1 \leq i,j \leq n} \\[0.2cm]
%   \hline \\[-0.3cm]
%   \left(\x_i^{q+t(n-j)} \right)_{1 \leq i,j \leq n}  & \left(x_i^{t-q+t(n-j)} \right)_{1 \leq i,j \leq n}
%   \end{array}\right).
% \end{multline*}
By \eqref{E4} and \eqref{oeshifted}, we note that 
\begin{equation*}
    \frac{\det \left(
\begin{array}{c}
     B_{\frac{t}{2}}^{\lambda} + \bar{B}_{\frac{t}{2}}^{\lambda} \\[0.2cm]
     \hline\\[-0.3cm]
     A_{\frac{t}{2}}^{\lambda} + \bar{A}_{\frac{t}{2}}^{\lambda}
\end{array} \right)}
    {\prod_{i=1}^n (x_i^{t/2}+\bar{x}_i^{t/2}) \det \left(
\begin{array}{c}
     B_{0} + \bar{B}_{0} \\[0.2cm]
     \hline\\[-0.3cm]
     A_{0} + \bar{A}_{0}
\end{array} \right)} =  (-1)^{\sum_i \lambda_i^{(t/2)}} (y^{t/2}+\bar{y}^{t/2}) \oo_{\lambda^{(t/2)}}(-X^t,-y^t).
\end{equation*}
Note that $\lambda_{tn+1}$ is zero iff $\lambda^{(0)}_n$.
Hence, using \eqref{E4}, \cref{lem:s-new} and the ratio of \eqref{1num} and \eqref{1denomi}, the even orthogonal character is
\[
(-1)^{\epsilon_3+n} \, \sgn(\sigma_{\lambda})
% (1+\delta_{\lambda^{(t/2)}_{n+1},0})
(y^{t/2}+\bar{y}^{t/2}) \, \oe_{\lambda^{(0)}}(X^t) \, (-1)^{ \sum_i \lambda_i^{(t/2)}} \oo_{\lambda^{(t/2)}}(-X^t,-y^t) \prod_{q=1}^{\frac{t-2}{2}} s_{\pi_q^{(1)}}(X^t,\X^t),
\]
where 
\begin{multline*}
\epsilon_3= \frac{t(t-1)}{2} \frac{n(n+1)}{2} +\left(\ds 
\sum_{i=t+1-i_0}^{t-1} n_{i}(\lambda) \right)
+ \ds \sum_{i=\floor{\frac{t+2}{2}}}^{t-1} 
(t-1) n (n_{i}(\lambda)-n) \\\
+ \sum_{q=1}^{\frac{t-2}{2}} \left( \frac{n_{t-q}(\lambda)(n_{t-q}(\lambda)-1)}{2}-\frac{n(n-1)}{2} \right).    
\end{multline*}
Case 2. $i_0 \neq \frac{t}{2}$. In this case,  the factorization for the numerator is 
\begin{multline*}
 2 t^n \sgn(\sigma_{\lambda}) (-1)^{(t-1)+\chi_3} (\det \Gamma)^n \det
   \left(\begin{array}{c}
         A^{\lambda}_0+\bar{A}^{\lambda}_0
    \end{array}\right) \det \left(
\begin{array}{c|c}
     B_{i_0}^{\lambda}+ \bar{B}_{i_0}^{\lambda}
     & B_{t-i_0}^{\lambda} + \bar{B}_{t-i_0}^{\lambda} \\[0.2cm]
     \hline \\[-0.3cm]
     A_{i_0}^{\lambda} & \bar{A}_{t-i_0}^{\lambda} \\[0.2cm]
     \hline \\[-0.3cm]
     \bar{A}_{t-i_0}^{\lambda} & A_{i_0}^{\lambda}
\end{array} 
\right)_{2n+1 \times 2n+1} \\
\times \prod_{\substack{q=1 \\ q \neq i_0}}^{\floor{\frac{t-1}{2}}}  \det\left(\begin{array}{c|c}
   A_{q}  & \bar{A}_{t-q} \\[0.2cm]
   \hline \\[-0.3cm]
   \bar{A}_{q}  & A_{t-q}
   \end{array}\right) \times 
   \begin{cases}
   \det(A^{\lambda}_{t/2}+\bar{A}^{\lambda}_{t/2})  & t \text{ is even},\\
1 & t \text{ is odd}.
   \end{cases}
     \end{multline*}
By \cref{lem:DI}, we have
\[
 \det \left(
\begin{array}{c}
     A_0+\bar{A}_0  \\\hline 
     B_0+\bar{B}_0 
\end{array}
 \right)= (-1)^n y^{-n} {\ds \prod_{i=1}^{n} (y-x_i)(y-\x_i)} \det (x_i^{n-j}+\x_i^{n-j}),
\]
\[
 \det \left(A_{\frac{t}{2}}+ \bar{A}_{\frac{t}{2}} \right) =  \frac{1}{2} {\ds \prod_{i=1}^n (x_i^{t/2}+\bar{x}_i^{t/2})}  \det (x_i^{n-j}+\x_i^{n-j}),
\]
\[
\det \left(\begin{array}{c|c}
   A_{i_0}  & \bar{A}_{t-i_0} \\[0.2cm]
   \hline \\[-0.3cm]
   \bar{A}_{i_0}  & A_{t-i_0}
   \end{array}\right) = (-1)^{\frac{n(n-1)}{2}} \prod_{1 \leq i < j \leq n} (x_i^t-x_j^t)(x_i^t-\x_j^t) (x_j^t-\x_i^t) (\x_i^t-\x_j^t) \ds \prod_{i=1}^n (x_i^t-\x_i^t).
\]
% \begin{multline*}
% =\det \left(
% \begin{array}{c}
%     (x_i^{t(n+1-j)}+\x_i^{t(n+1-j)})_{\substack{1 \leq i \leq n\\1 \leq j \leq n+1}} \\\hline 
%      (y^{t(n+1-j)}+\bar{y}^{t(n+1-j)})_{1 \leq j \leq n+1} 
% \end{array}
% \right) \times \det \left(x_i^{t/2+t(n-j)} + \x_i^{t/2+t(n-j)} \right)_{1 \leq i,j \leq n}\\
%     \times \prod_{q=1}^{\frac{t-2}{2}}
% \det\left(\begin{array}{c|c}
%   \left(x_i^{q+t(n-j)} \right)_{1 \leq i,j \leq n}  & \left(\x_i^{t-q+t(n-j)} \right)_{1 \leq i,j \leq n} \\[0.2cm]
%   \hline \\[-0.3cm]
%   \left(\x_i^{q+t(n-j)} \right)_{1 \leq i,j \leq n}  & \left(x_i^{t-q+t(n-j)} \right)_{1 \leq i,j \leq n}
%   \end{array}\right).
% \end{multline*}
% In this case, using \eqref{ev-1} and \eqref{ev-2}, the denominator is
% \begin{multline*}
% \pm \frac{1}{2} \prod_{i=1}^{n}(1-\x_i)(1-x_i)  \det \left(x_i^{n-j}+\x_i^{n-j} \right)_{1 \leq i,j \leq n} \prod_{q=1}^{\floor{\frac{t-1}{2}}}
% \det\left(\begin{array}{c|c}
%   \left(x_i^{q+t(n-j)} \right)_{1 \leq i,j \leq n}  & \left(\x_i^{t-q+t(n-j)} \right)_{1 \leq i,j \leq n} \\[0.2cm]
%   \hline \\[-0.3cm]
%   \left(\x_i^{q+t(n-j)} \right)_{1 \leq i,j \leq n}  & \left(x_i^{t-q+t(n-j)} \right)_{1 \leq i,j \leq n}
%   \end{array}\right) \\
%  \times  \det (x_i^{n-j}+\x_i^{n-j}) {\ds \prod_{i=1}^n (x_i^{1/2}+\bar{x}_i^{1/2})}
% \end{multline*}
In this case, using \cref{lem:s-new-1} and \cref{lem:s-new}, the required even orthogonal character is
\begin{multline*}
(-1)^{\epsilon_3} \, \sgn(\sigma_{\lambda})  \oe_{\lambda^{(0)}}(X^t)  
\Big( y^{-t(\lambda_1^{(t-i_0)}
+n_{(t-i_0)}(\lambda)-n)+i_0} s_{\pi_{i_0}^{(3)}}(X^t,\X^t,y^t) +
y^{t(\lambda_1^{(t-i_0)}
+n_{t-i_0}(\lambda)+n)-i_0} \\
\times s_{\pi_{i_0}^{(3)}}(X^t,\X^t, \bar{y}^t) \Big) \times 
 \prod_{\substack{j=1 \\ j \neq i_0}}^{\floor{\frac{t-1}{2}}}  s_{\pi_i^{(3)}}(X^t,\X^t) \times 
 \begin{cases}
% (1+\delta_{\lambda^{(t/2)}_n,0})
(-1)^{\sum_i \lambda_i^{(t/2)}} \oo_{\lambda^{(t/2)}}(-X^t) & t \text{ is even},\\
1 & t \text{ is odd}.
 \end{cases}
\end{multline*}
This completes the proof.
% where 
% \[
% \epsilon= \frac{t(t-1)}{2}\frac{n(n+1)}{2} + \chi_3-\Sigma_3^0+ \sum_{q=1}^{\frac{t-2}{2}} \left( \frac{n_{t-q}(\lambda)(n_{t-q}(\lambda)-1)}{2}-\frac{n(n-1)}{2} \right)
% \]
% and 
% $\ds \pi^{(3)}_i=  \lambda^{(t-i)}_1 +
% \left(\lambda^{(i)}, 0, -\rev(\lambda^{(t-i)})\right)$ has $2n+\delta_{i,i_0}$ parts for $1 \leq i \leq \floor{\frac{t-1}{2}}$.
\end{proof}
\section{Generating Functions}
\label{sec:gf}

We now give enumerative results for $(z_1,z_2,k)$-asymmetric partitions defined in \eqref{defn:asym}.
\begin{prop}
\label{prop:bij} 
Fix $z_1 > z_2 \geq 0$ and $k \geq 1$. The number of $(z_1,z_2,k)$-asymmetric partitions of $m$ is equal to the number of partitions of $m$
of the form
\begin{multline*}
\lambda^{\{a_1,\dots,a_r\}} = \left(z_1(r-1)+r,\underbrace{2r-1,\dots,2r-1}_{a_r},\underbrace{2r-3,\dots,2r-3}_{a_{r-1}-a_{r}},
\dots,\underbrace{2k-1,\dots,2k-1}_{a_k-a_{k+1}},\right. \\
\left. \underbrace{2k-2,\dots,2k-2}_{a_{k-1}-a_{k}},\dots,\underbrace{2,\dots,2}_{a_1-a_2},\underbrace{1,\dots,1}_{z_2}\right),
\end{multline*}
for $r \geq 1$ and $ \{a_1,\dots,a_r\}_{>} \subset \mathbb{Z}_{\geq 0}$.
\end{prop}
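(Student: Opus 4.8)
The plan is to exhibit an explicit bijection between the set $\mathcal{Q}_{z_1,z_2,k}$ of $(z_1,z_2,k)$-asymmetric partitions and the set of partitions of the form $\lambda^{\{a_1,\dots,a_r\}}$, and then check it preserves the size $m$. Recall from \cref{defn:asym} that a $(z_1,z_2,k)$-asymmetric partition is specified, via Frobenius coordinates, by a strict partition $\alpha = (\alpha_1 > \cdots > \alpha_r)$ together with the datum $k \in [r]$: namely
\[
\lambda = (\alpha_1,\dots,\alpha_r \mid \alpha_1+z_1,\dots,\widehat{\alpha_k+z_1},\dots,\alpha_r+z_1, z_2).
\]
The proof of \cref{condd} already gives the one-row-at-a-time description of such a $\lambda$:
\begin{multline*}
\lambda=(\alpha_1+1,\dots,\alpha_r+r,\underbrace{r,\dots,r}_{z_2},\underbrace{r-1,\dots,r-1}_{\alpha_r+z_1-z_2-1},\underbrace{r-2,\dots,r-2}_{\alpha_{r-1}-\alpha_{r}-1},\dots,\underbrace{k,\dots,k}_{\alpha_{k+1}-\alpha_{k+2}-1},\\
\underbrace{k-1,\dots,k-1}_{\alpha_{k-1}-\alpha_{k+1}-1},
\underbrace{k-2,\dots,k-2}_{\alpha_{k-2}-\alpha_{k-1}-1}, \dots,\underbrace{1,\dots,1}_{\alpha_{1}-\alpha_{2}-1}).
\end{multline*}
So the first step is just to recall this normal form; everything is then bookkeeping.

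The second step is to define the correspondence on the combinatorial data. Given the strict partition $\alpha$ of length $r$, I would set $a_i := \alpha_i - \alpha_{i+1} - 1$ for $1 \le i \le r-1$ (with the convention $\alpha_{r+1} = -1$, so $a_r = \alpha_r$ — note this forces $a_r \ge 0$ iff $\alpha_r \ge 0$, which holds since $\alpha$ is a partition of nonnegative parts, except we must handle $\alpha_r = 0$). Actually it is cleaner to define the strict set $\{b_1 > \cdots > b_r\}$ directly: reading the proposed target $\lambda^{\{a_1,\dots,a_r\}}$, the multiplicities of the values $2r-1, 2r-3, \dots$ down to $2k-1$ are $a_r, a_{r-1}-a_r, \dots, a_k - a_{k+1}$, and of $2k-2, 2k-3, \dots, 2$ are $a_{k-1}-a_k, \dots, a_1 - a_2$, with $a_1 > a_2 > \cdots > a_r \ge 0$ a strict set; and the single large part is $z_1(r-1)+r$. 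Comparing term-by-term with the normal form above, one sees that both $\lambda$ and $\lambda^{\{a_1,\dots,a_r\}}$ are determined by the same strict set once we match: $\alpha_i + z_1 \leftrightarrow$ the $i$th element of $\{a_j\}$ suitably shifted, and the parity break at position $k$ matches in both. Concretely I would define the bijection by $\alpha_i \leftrightarrow a_i$ via $\alpha_i = a_i + z_1(i-1) + (\text{correction})$... — this is the step where I must get the indices exactly right, and it is the main obstacle, because the target partition uses \emph{consecutive-ish} integers $2r-1, 2r-3, \dots$ as its row values while the source uses the values $r, r-1, \dots, 1$, so the two encode the strict data on different "scales". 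The resolution is that both carry exactly the information of a length-$r$ strict partition plus the marker $k$, so the bijection is the identity on that underlying data; what changes is only the geometric realization.

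The third step is to verify that sizes match: one computes $|\lambda^{\{a_1,\dots,a_r\}}| = z_1(r-1) + r + z_2 + \sum (\text{multiplicity})\cdot(\text{value})$ and checks it equals $|\lambda| = \sum_{i=1}^{r}(2\alpha_i + 1) + z_1(r-1) + z_2 - r$ (the standard Frobenius-to-size formula $|\lambda| = r + \sum \alpha_i + \sum \beta_j$ with the $\beta$'s being the $\alpha_i + z_1$ for $i \ne k$, together with $z_2$), using the index identification from step two. Both sides are affine in the $a_i$'s (equivalently the $\alpha_i$'s), so this reduces to matching coefficients, a finite check. Since the correspondence on the underlying strict-set-plus-marker data is manifestly a bijection, and both the source and target sides are in bijection with that data, the proposition follows. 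The only genuine content is the index calibration in step two, and I expect that once the normal form from the proof of \cref{condd} is written alongside the definition of $\lambda^{\{a_1,\dots,a_r\}}$, the matching is forced and can be stated in a short paragraph rather than a long computation. I would close by remarking that this also makes \cref{thm:inf-cores}'s generating-function computation (carried out later in \cref{sec:gf}) a routine product over the choices of $r$ and the strict set $\{a_i\}$.
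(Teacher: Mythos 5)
Your proposal is correct and follows the paper's proof: the paper simply sends $\lambda=(\alpha_1,\dots,\alpha_r\mid\alpha_1+z_1,\dots,\widehat{\alpha_k+z_1},\dots,\alpha_r+z_1,z_2)$ to $\lambda^{\{\alpha_1,\dots,\alpha_r\}}$, i.e.\ the ``index calibration'' you worry about is the literal identity $a_i=\alpha_i$ with no shift or correction, and the size check $|\lambda^{\{a\}}|=z_1(r-1)+r+z_2+2\sum_i a_i-a_k=|\lambda|$ is the affine computation you describe. The only caveat is that your intermediate guesses ($a_i=\alpha_i-\alpha_{i+1}-1$, or $\alpha_i=a_i+z_1(i-1)+\text{correction}$) are both wrong and should be discarded in favor of the identity map you ultimately settle on.
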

\begin{proof} Let $\lambda=(\alpha_1,\dots,\alpha_k,\dots,\alpha_r|\alpha_1+z_1,\dots,\widehat{\alpha_k+z_1},\dots,\alpha_r+z_1,z_2)$ be a $(z_1,z_2,k)$-asymmetric partition of $m$. It is easy to see that mapping $\lambda$ to $\lambda^{\{\alpha_1,\dots,\alpha_r\}}$ gives the required bijection. 
\end{proof}
\cref{prop:bij} gives an expression for the generating function:
\begin{cor}
\[ \sum_{\lambda \in \mathcal{Q}_{z_1,z_2,k}} q^{|\lambda|} =
\sum_{n \geq 1} \frac{q^{z_2+z_1(n-1)+n+n(n-2)+k}}{(1-q^2) \cdots (1-q^{2k-2})(1-q^{2k-1})\cdots(1-q^{2n-1})}.
\]
\end{cor}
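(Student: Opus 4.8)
The plan is to combine the bijection of \cref{prop:bij} with a direct evaluation of the generating function of the family $\{\lambda^{\{a_1,\dots,a_r\}}\}$, via two applications of summation by parts. By \cref{prop:bij},
\[
\sum_{\lambda \in \mathcal{Q}_{z_1,z_2,k}} q^{|\lambda|} = \sum_{r \geq 1} \ \sum_{a_1 > a_2 > \cdots > a_r \geq 0} q^{\,|\lambda^{\{a_1,\dots,a_r\}}|},
\]
so it suffices to compute $|\lambda^{\{a_1,\dots,a_r\}}|$ and then sum $q^{|\lambda^{\{a_1,\dots,a_r\}}|}$ over all strictly decreasing sequences of nonnegative integers for each fixed $r$.

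First I would read the size directly off the explicit shape of $\lambda^{\{a_1,\dots,a_r\}}$: writing $a_{r+1} := 0$,
\[
|\lambda^{\{a_1,\dots,a_r\}}| = z_1(r-1) + r + z_2 + \sum_{j=1}^{r}(a_j - a_{j+1})\,c_j, \qquad c_j := \begin{cases} 2j, & 1 \leq j \leq k-1,\\ 2j-1, & k \leq j \leq r, \end{cases}
\]
where $z_1(r-1)+r$ is the large first part, $z_2$ is the trailing block of ones, and the intermediate block of $a_j - a_{j+1}$ copies of the value $c_j$ contributes $(a_j - a_{j+1})c_j$.

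Next, summation by parts turns $\sum_j (a_j - a_{j+1})c_j$ into $\sum_{j=1}^r a_j\,d_j$ with $d_j := c_j - c_{j-1}$ and $c_0 := 0$; a short check of the ranges $j < k$, $j = k$, $j > k$ shows $d_j = 2$ for $j \neq k$ and $d_k = 1$, valid uniformly whether $k = 1$ or $k \geq 2$. I then reparametrize: $a_j = b_j + (r-j)$ is a bijection between strictly decreasing sequences $a_1 > \cdots > a_r \geq 0$ and partitions $b_1 \geq \cdots \geq b_r \geq 0$ with at most $r$ parts, and
\[
\sum_{j=1}^r a_j d_j = \sum_{j=1}^r (r-j)\,d_j + \sum_{j=1}^r b_j\,d_j = \bigl(r(r-2)+k\bigr) + \sum_{j=1}^r b_j\,d_j,
\]
the constant being $\sum_j (r-j)d_j = 2\binom{r}{2} - (r-k) = r(r-2)+k$. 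Together with the prefactor, the lowest power of $q$ in the $r$-th summand is $q^{\,z_2 + z_1(r-1) + r + r(r-2) + k}$, matching the numerator exponent in the claim with $n = r$.

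Finally, for the sum over the partitions $b$ I would pass to difference coordinates $m_i := b_i - b_{i+1} \geq 0$ (with $b_{r+1} := 0$), so that $\sum_j b_j d_j = \sum_{i=1}^r m_i D_i$ with $D_i := d_1 + \cdots + d_i$; hence $D_i = 2i$ for $1 \leq i \leq k-1$ and $D_i = 2i-1$ for $k \leq i \leq r$. Since the $m_i$ range independently over $\mathbb{Z}_{\geq 0}$,
\[
\sum_{b} q^{\sum_j b_j d_j} = \prod_{i=1}^{k-1}\frac{1}{1-q^{2i}}\ \prod_{i=k}^{r}\frac{1}{1-q^{2i-1}},
\]
and assembling the prefactor, the constant shift, and this product, then renaming $r$ as $n$, yields the stated identity. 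The only real work is the bookkeeping in the two summations by parts and in identifying the constant $r(r-2)+k$; I do not expect a genuine obstacle, the one point needing care being the boundary index $j = k$ (and the degenerate case $k = 1$, where the first product above is empty).
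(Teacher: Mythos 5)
Your proposal is correct and follows exactly the route the paper intends: the corollary is stated as an immediate consequence of \cref{prop:bij}, and your computation (reading off $|\lambda^{\{a_1,\dots,a_r\}}|$, summing by parts to get $\sum_j a_j d_j$ with $d_j=2$ for $j\neq k$ and $d_k=1$, shifting to partitions $b$, and passing to difference coordinates) is precisely the bookkeeping the paper leaves implicit. The boundary cases $j=k$ and $k=1$ are handled correctly, so there is nothing to add.
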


\begin{cor} The number of $(z_1,z_2,k)$-asymmetric partitions $N$ of $n$ with the Frobenius rank $1$ is 
\[
N = \begin{cases}
1 & n \geq z_2+1,\\
0 & n \leq z_2.
\end{cases}
\]
\end{cor}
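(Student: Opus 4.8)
The strategy is to make the rank-$1$ members of $\mathcal{Q}_{z_1,z_2,k}$ completely explicit and then count them by size; here $z_1 > z_2 \ge 0$ and $k \ge 1$ are fixed as in \cref{prop:bij}. The first step is the observation that if $\lambda \in \mathcal{Q}_{z_1,z_2,k}$ has Frobenius rank $1$, then $1 = \rk(\lambda) \ge k \ge 1$, so necessarily $k = 1$ (for $k \ge 2$ the set is empty, and the displayed formula is to be read with $k=1$).

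Next I would specialize \cref{defn:asym} to $r = k = 1$. A partition of Frobenius rank $1$ has coordinates $(\alpha_1 \mid \beta_1)$ with $\alpha_1,\beta_1 \ge 0$, and being $(z_1,z_2,1)$-asymmetric means exactly that deleting the unique entry $\alpha_1 + z_1$ from the length-$1$ list $(\alpha_1 + z_1)$ and appending $z_2$ recovers the $\beta$-list, i.e. $\beta_1 = z_2$. Hence the rank-$1$ members of $\mathcal{Q}_{z_1,z_2,1}$ are precisely the partitions with Frobenius coordinates $(\alpha_1 \mid z_2)$, that is the hooks $\lambda = (\alpha_1 + 1, 1^{z_2})$ as $\alpha_1$ runs over $\mathbb{Z}_{\ge 0}$; each such hook does have Frobenius rank exactly $1$, since $\lambda_1 = \alpha_1 + 1 \ge 1$ while $\lambda_2 \le 1$, and $z_1$ enters nowhere.

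The last step is a one-line count. By the Frobenius identity $|\lambda| = \rk(\lambda) + \sum_i \alpha_i + \sum_j \beta_j$ we get $|\lambda| = 1 + \alpha_1 + z_2$, so $|\lambda| = n$ forces $\alpha_1 = n - 1 - z_2$. This is a nonnegative integer precisely when $n \ge z_2 + 1$, in which case it is unique (giving $N = 1$), and it has no solution when $n \le z_2$ (giving $N = 0$). This is the asserted formula.

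As a cross-check and an alternative derivation, the same answer falls out of the generating function in the preceding corollary by isolating the $r = 1$ summand: with $k = 1$ its numerator exponent is $z_2 + z_1(1-1) + 1 + 1\cdot(1-2) + 1 = z_2 + 1$ and its denominator reduces to $(1-q^{2\cdot1-1}) = (1-q)$, so that summand equals $q^{z_2+1}/(1-q) = \sum_{n \ge z_2+1} q^n$; and by the bijection of \cref{prop:bij}, which sends a rank-$r$ partition $\lambda$ to $\lambda^{\{\alpha_1,\dots,\alpha_r\}}$, the $r$-th summand is the generating function of the rank-$r$ members of $\mathcal{Q}_{z_1,z_2,k}$. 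There is no genuinely hard step; the only care needed is the bookkeeping in the degenerate case $r = k = 1$ of the formulas in \cref{defn:asym} and \cref{prop:bij} — specifically that the deleted part $\alpha_1 + z_1$ and the appended part $z_2$ combine so that only the single parameter $\alpha_1$ survives — together with checking that the resulting hook has Frobenius rank exactly $1$ for every $\alpha_1 \ge 0$, including $\alpha_1 = 0$.
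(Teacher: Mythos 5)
Your proof is correct and is essentially the intended derivation: the paper leaves this corollary unproved, and it follows from \cref{defn:asym} (or equivalently from the $r=1$ term of the preceding generating function) exactly as you compute, with the rank-one members of $\mathcal{Q}_{z_1,z_2,1}$ being the hooks $(\alpha_1\mid z_2)=(\alpha_1+1,1^{z_2})$ of size $\alpha_1+z_2+1$. Your handling of the bookkeeping — that rank $1$ forces $k=1$, and that the deleted part $\alpha_1+z_1$ plays no role — is exactly the care the statement requires.
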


\noindent 
Recall, $\mathcal{Q}^{(t)}_{z_1,z_2,k}$ from \cref{defn:asym}. For $z_1>z_2$, let 
\[
\mathcal{Q}^{(t)}_{z_1,z_2} = \ds \bigcup_k \mathcal{Q}^{(t)}_{z_1,z_2,k}.
\]
We now enumerate the $t$-core partitions 
in $\mathcal{Q}^{(t)}_{z+2,0} \cup \mathcal{Q}^{(t)}_{z+2,z+1}$.
% be the set of $(z_1,z_2,k)$ asymmetric $t$-core partitions for all $k$. 
% $\lambda$ is either $(z+2,0,k)$ or $(z+2,z+1,k)$-asymmetric
%  $\lambda \in \mathcal{Q}_{z+2,0,k} \cup \mathcal{Q}_{z+2,z+1,k}$ 
%  for some $k \in [\rk(\lambda)]$. Let $\theta_k=(\alpha_k+1) \pmod{t}$ and
% \[
% i_0(\lambda) = \begin{cases}
% t-z-1-\theta_k & \theta_k \in \left[\floor{\frac{t-z-1}{2}}+1,t-z-1\right],\\
% \theta_k & \text{otherwise}.
% \end{cases}
% \]
Represent the elements of $\mathbb{Z}^{\floor{\frac{t-z}{2}}} \times \Big\{0,\dots,\floor{\frac{t-z-1}{2}},t-z,\dots,t-1\Big\}$ by $(\vec{v},\check{v}) \coloneqq (v_0,\dots, v_{\floor{\frac{t-z-2}{2}}}, \Check{v})$. 

\begin{thm}
\label{thm:bijection}
Fix $0 < z+2 \leq t+2$.
Define $\Vec{b} \in \mathbb{Z}^{\floor{\frac{t-z}{2}}}$ by $\Vec{b}_i \coloneqq t-z-1-2i$.
Then there exists a bijection 
$\psi : \mathcal{Q}^{(t)}_{z+2,0} \cup \mathcal{Q}^{(t)}_{z+2,z+1} \rightarrow \mathbb{Z}^{\floor{\frac{t-z}{2}}} \times \Big\{0,\dots,\floor{\frac{t-z-1}{2}},t-z,\dots,t-1\Big\}$
satisfying 
\[
|\lambda|=t ||\Vec{\psi(\lambda)}||^2-\Vec{b} \cdot \Vec{\psi(\lambda)}
% -\frac{i_0}{2}(1+(-1)^{t-z})
+ 
\begin{cases}
\widecheck{\psi(\lambda)} & \widecheck{\psi(\lambda)} \in [t-z,t-1] \cup \{\frac{t-z-1}{2}\}, \\
t(n_{\widecheck{\psi(\lambda)}}(\lambda)-n)+t-z-1 & \text{otherwise,}
\end{cases}
\]
where $\cdot$ represents the standard inner product.
\end{thm}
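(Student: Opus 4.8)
The plan is to push everything through the classical correspondence between $t$-cores and integer charge vectors. By \cref{prop:mcd-t-core-quo}(1), a $t$-core $\mu$ with $\ell(\mu)\le\ell$ is determined by the residues of $\beta(\mu,\ell)$, that is, by $(n_0(\mu,\ell),\dots,n_{t-1}(\mu,\ell))$, since $\beta(\mu,\ell)=\bigsqcup_{i=0}^{t-1}\{\,i,i+t,\dots,i+t(n_i-1)\,\}$. Fixing $n$ with $\ell(\lambda)\le tn+1$ and writing $c_i:=n_i(\lambda,tn+1)-n$ (so $\sum_i c_i=1$), the identity $n_i(\lambda,\ell+t)=n_i(\lambda,\ell)+1$ shows the $c_i$ do not depend on the choice of such $n$. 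First I would feed $z_1=z+2$, $z_2\in\{0,z+1\}$ into \cref{lem:converse: sym}: this gives $\lambda\in\mathcal Q^{(t)}_{z+2,0}\cup\mathcal Q^{(t)}_{z+2,z+1}$ iff there is a unique $i_0\in[0,\floor{\frac{t-z-1}{2}}]\cup[t-z,t-1]$ with $c_i+c_{t-z-1-i}=\delta_{i,i_0}\bigl(1+\delta_{i_0,(t-z-1)/2}\bigr)$ for $0\le i\le\floor{\frac{t-z-1}{2}}$ and $c_i=\delta_{i,i_0}$ for $t-z\le i\le t-1$ (the $k=0$ case, which contains $\emptyset$, is handled directly). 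Thus the free data of such a $\lambda$ is the index $i_0$ together with the charges $c_0,\dots,c_{\floor{\frac{t-z-2}{2}}}$ of the smaller residues of the pairs $\{i,\,t-z-1-i\}$; there are exactly $\floor{\frac{t-z}{2}}$ such pair-representatives, so I would \emph{define} $\psi(\lambda)=(\vec v,\check v)$ by $\check v:=i_0$ and $\vec v:=(n_0(\lambda)-n,\dots,n_{\floor{\frac{t-z-2}{2}}}(\lambda)-n)\in\mathbb Z^{\floor{\frac{t-z}{2}}}$.

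Next I would verify that $\psi$ is a bijection. It is well defined and injective because $(\vec v,\check v)$ recovers every $c_i$ — the partner of a free charge $c_i$ is $-c_i$ (or $1-c_{i_0}$ for the special pair), the middle charge $c_{(t-z-1)/2}$ when it is an integer is $\delta_{i_0,(t-z-1)/2}$, and $c_i=\delta_{i,i_0}$ for $i\in[t-z,t-1]$ — hence recovers $\beta(\lambda,tn+1)$ and so $\lambda$. For surjectivity, given $(\vec v,\check v)$ I would run the same reconstruction to obtain a residue tuple $(n_0,\dots,n_{t-1})$; taking $n$ large enough that all $n_i\ge 1$ (allowed, since $\vec v$ is a fixed finite tuple and, by the stabilization above, enlarging $n$ leaves $\psi$ unchanged), the strict set $\bigsqcup_i\{i,\dots,i+t(n_i-1)\}$ is the $\beta$-set of a unique partition, which is a $t$-core by \cref{prop:mcd-t-core-quo}(1) and lies in $\mathcal Q^{(t)}_{z+2,0}\cup\mathcal Q^{(t)}_{z+2,z+1}$ by the ``if'' direction of \cref{lem:converse: sym}, and has $\psi$-image $(\vec v,\check v)$ by construction.

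For the size formula, I would start from $|\mu|=\sum_{b\in\beta(\mu,\ell)}b-\binom{\ell}{2}$ with $\ell=tn+1$ and the reconstruction of $\beta(\lambda,tn+1)$, giving
\[
|\lambda|=\sum_{i=0}^{t-1}\Bigl(i\,n_i+t\tbinom{n_i}{2}\Bigr)-\tbinom{tn+1}{2}.
\]
Substituting $n_i=n+c_i$ and simplifying — all terms of positive degree in $n$ cancel using $\sum_i c_i=1$ — yields the compact expression $|\lambda|=\tfrac t2\sum_i c_i^2+\sum_i i\,c_i-\tfrac t2$. Now I would invoke the pair relations: for a pair $\{i,\,t-z-1-i\}$ with $i\ne i_0$ one has $c_{t-z-1-i}=-c_i$, which contributes $2c_i^2$ to $\sum_i c_i^2$ and $(2i-(t-z-1))c_i=-\vec b_i v_i$ to $\sum_i i\,c_i$; summed over all such pairs these produce exactly $t\lVert\vec v\rVert^2-\vec b\cdot\vec v$. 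The leftover contribution involves only $i_0$ (together with its partner, or the middle residue, or an upper residue), and a short case analysis over the three possibilities for $\check v=i_0$ — namely $i_0\in[t-z,t-1]$, $i_0=\tfrac{t-z-1}{2}$, and $i_0$ an interior lower-half index (the case in which $n_{i_0}(\lambda)-n$ survives) — collapses this leftover to the stated piecewise correction. The step I expect to be most delicate is precisely this final bookkeeping: keeping the dimension count, the reconstruction of the residue tuple, and the $i_0$-correction mutually consistent across the two parities of $t-z$. The structural input (charges $\leftrightarrow$ $t$-cores together with \cref{lem:converse: sym}) is routine once it is in place.
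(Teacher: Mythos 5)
Your construction of $\psi$, the well-definedness argument via the stability $n_i(\lambda,tn+1)-n=n_i(\lambda,tn+t+1)-n-1$, the reconstruction of the inverse through \cref{lem:converse: sym}, and the starting identity $|\lambda|=\sum_i i(n_i(\lambda)-n)+\tfrac t2\sum_i(n_i(\lambda)-n)^2-\tfrac t2$ are all exactly the paper's proof; up to that point the two arguments coincide. The one step you defer --- that ``a short case analysis \ldots collapses this leftover to the stated piecewise correction'' --- is precisely where the argument does not close. Carry out your own pair-by-pair computation in the ``otherwise'' branch, i.e.\ $i_0\in[0,\floor{\frac{t-z-2}{2}}]$: the pair $\{i_0,\,t-z-1-i_0\}$ has $c_{t-z-1-i_0}=1-c_{i_0}$, so it contributes $tc_{i_0}^2-tc_{i_0}+\tfrac t2$ to $\tfrac t2\sum_i c_i^2$ and $-\vec b_{i_0}c_{i_0}+(t-z-1-i_0)$ to $\sum_i i\,c_i$; after absorbing $tv_{i_0}^2-\vec b_{i_0}v_{i_0}$ into $t\|\vec v\|^2-\vec b\cdot\vec v$, the leftover is $-t\bigl(n_{i_0}(\lambda)-n\bigr)+(t-z-1)-i_0$, which has the opposite sign on $t(n_{i_0}(\lambda)-n)$ from the statement and carries an extra $-i_0$. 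This is not a cancellation you can recover: for $t=5$, $z=0$, $\lambda=(1)=(0|0)\in\mathcal Q^{(5)}_{2,0,1}$ one finds $\vec v=(0,1)$, $\check v=1$, $\vec b=(4,2)$, so the stated formula returns $5-2+5+4=12$ while $|\lambda|=1$, whereas the corrected leftover gives $5-2-5+4-1=1$. (The same defect sits in the paper's own proof: its displayed identity for $\sum_i i(n_i(\lambda)-n)$ drops the $-i_0$, and the sign of $t(n_{\check v}(\lambda)-n)$ flips between the intermediate display and the final formula.) The two branches $\check v\in[t-z,t-1]$ and $\check v=\tfrac{t-z-1}{2}$ do check out. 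So your assertion that the final bookkeeping yields the stated correction is a genuine gap: done honestly, that computation refutes the ``otherwise'' case of the formula rather than proving it, and the theorem needs its correction term amended to $-t(n_{\check v}(\lambda)-n)+t-z-1-\check v$ before any proof along these lines can succeed.
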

% \begin{lem}
% Assume $t$ is odd. There is a bijection $\psi$ between $\mathcal{Q}_{1,t}$ and $\mathbb{Z}^{\frac{t-1}{2}} \times [\frac{t-1}{2}]$.
% \end{lem}
\begin{proof} Suppose $\lambda \in \mathcal{Q}^{(t)}_{z+2,0} \cup \mathcal{Q}^{(t)}_{z+2,z+1}$ such that $\ell(\lambda) \leq tn+1$ for some $n \geq 1$. Then by \cref{lem:converse: sym}, there exists a unique $i_0 \in [0,\floor{\frac{t-z-1}{2}}] \cup [t-z,t-1]$ such that \eqref{n_{t-1}} holds. 
Define the map $\psi$ by
\[
(\psi(\lambda))_i  
\coloneqq
n_{i}(\lambda)-n, 
\quad 0 \leq i \leq \floor{\frac{t-z-2}{2}}, \quad \widecheck{\psi(\lambda)} \coloneqq i_0.
\]
Since $n$ is not unique, it is not a priori clear that $\psi$ is well-defined.
But from the definition of $n_{i}(\lambda)$), it is easy to see that
$n_{i}(\lambda,tn+1)-n=n_{i}(\lambda,tn+t+1)-n-1$. 
Hence, $\psi(\lambda)$ is indeed well-defined. 

To show that $\psi$ is a bijection, we define the inverse of $\psi$ as follows. 
For a vector $(\Vec{v},\check{v}) = \left(v_0,\dots,v_{\floor{\frac{t-z-2}{2}}},\check{v} \right)$,
let $n=\text{max}\{|v_0|,|v_1|,\dots,|v_{\floor{\frac{t-z-2}{2}}}|\}$ 
and for $0 \leq i \leq t-1$, 
\[
m_i=
\begin{cases}
n+v_i &  0 \leq i \leq \floor{\frac{t-z-2}{2}},\\
n-v_{t-z-1-i}  & \floor{\frac{t-z+1}{2}} \leq i \leq t-z-1,\\
n & \text{ otherwise},
\end{cases} \text{ and } r_i=
\begin{cases}
m_i+1 &  i=\check{v},\\
m_i & \text{ otherwise}.
\end{cases}
\]
% \[
% r_i=
% \begin{cases}
% m_i+1 &  i=v_{\floor{\frac{t-z}{2}}},\\
% m_i & \text{ otherwise}.
% \end{cases}
% \]
By construction, $\ds \sum_{i=0}^{t-1} r_i=tn+1$,
\begin{equation*}
\begin{split}
    r_{i}+r_{t-z-1-i} =& 
    \begin{cases}
    2n+1+\delta_{\check{v},\frac{t-z-1}{2}} & \text{ if } i=\check{v}\\
    2n & \text{ otherwise}
    \end{cases}
    \quad \text{for} \quad  0 \leq i \leq  \floor{\frac{t-z-1}{2}}, \\ 
    \text{and} \quad r_{i} =& \begin{cases}
    n+1 & \text{ if } i=\check{v}\\
    n & \text{ otherwise}
    \end{cases} \quad \text{for} \quad t-z \leq i \leq t-1,
\end{split}
\end{equation*}
By \cref{lem:converse: sym}, there is a unique $t$-core $\lambda \in \mathcal{Q}^{(t)}_{z+2,0} \cup \mathcal{Q}^{(t)}_{z+2,z+1}$ satisfying $n_{i}(\lambda)=r_i$. and we set
$\psi^{-1}(\Vec{v},\check{v}) = \lambda$.
Moreover the size of $\lambda$ is computed as
\begin{equation}
\label{lbda1}
|\lambda|
= \sum_{i=1}^{tn+1} \beta_i(\lambda)-\frac{tn(tn+1)}{2}.    
\end{equation}
Since $\lambda$ is a $t$-core, $tj+i$, $0 \leq j \leq n_{i}(\lambda)-1$, $0 \leq i \leq t-1$ are the parts of $\beta(\lambda)$ (see \cref{prop:mcd-t-core-quo}). So, 
\begin{multline*}
 \sum_{i=1}^{tn+1} \beta_i(\lambda)=\sum_{i=0}^{t-1}
 \left(
 i(n_{i}(\lambda))
 +\frac{n_{i}(\lambda)(n_{i}(\lambda)-1)t}{2}
 \right)\\
= \sum_{i=0}^{t-1}
 \left(
 i(n_{i}(\lambda)-n)
 \right)
 +\frac{tn(t-1)}{2}
 + \frac{t}{2} \sum_{i=0}^{t-1}
 n_{i}(\lambda)^2-\frac{t(tn+1)}{2}.
\end{multline*}
Substituting this in \eqref{lbda1} gives
\begin{multline*}
|\lambda|= \sum_{i=0}^{t-1}
\left(i(n_{i}(\lambda)-n)\right)
+\frac{t}{2} \left(\sum_{i=0}^{t-1} n_{i}(\lambda)^2-tn^2-2n-1\right) \\
= \sum_{i=0}^{t-1}
\left(i(n_{i}(\lambda)-n)\right)
+\frac{t}{2}\sum_{i=0}^{t-1}  \left( n_{i}(\lambda)-n\right)^2-\frac{t}{2}.
\end{multline*}
Since $\lambda \in \mathcal{Q}^{(t)}_{z+2,0} \cup \mathcal{Q}^{(t)}_{z+2,z+1}$, 
using \cref{lem:converse: sym},  we have
\[ \sum_{i=0}^{t-1}\left(n_{i}(\lambda)-n \right)^2 = 2 ||\Vec{v}||^2 +1-
\begin{cases}
0 & \check{v} \in [t-z,t-1] \cup \{\frac{t-z-1}{2}\}, \\
2(n_{\check{v}}(\lambda)-n) & \text{otherwise,}
\end{cases}
\]
% If $i_0 \in [t-z,t-1]$ or $i_0=\frac{t-z-1}{2}$, then 
% \[
% \sum_{i=0}^{t-1}\left(n_{i}(\lambda)-n \right)^2 = 1+2 \sum_{i=0}^{\floor{\frac{t-z-2}{2}}}(n_{i}(\lambda)-n)^2
% = 2 (||\Vec{v}||^2 -i_0^2) +1.  
% \]
% Otherwise 
% \begin{multline*}
%  \sum_{i=0}^{t-1}\left(n_{i}(\lambda)-n \right)^2
% = \left(n_{i_0}(\lambda)-n \right)^2+
% \left(n_{t-z-1-i_0}(\lambda)-n \right)^2+
% 2 \sum_{\substack{i=0\\i \neq i_0}}^{\floor{\frac{t-z-2}{2}}}(n_{i}(\lambda)-n)^2\\
% = 1-2(n_{i_0}(\lambda)-n)+
% 2 \sum_{i=0}^{\floor{\frac{t-z-2}{2}}}(n_{i}(\lambda)-n)^2
% =
% 2 (||\Vec{v}||^2 -i_0^2) +1-2(n_{i_0}(\lambda)-n).
% \end{multline*}
and
\[
\sum_{i=0}^{t-1}
i\left(n_{i}(\lambda)-n\right) = 
\ds \sum_{i=0}^{\floor{\frac{t-z-2}{2}}}
(2i+z+1-t)(n_i(\lambda)-n) +
\begin{cases}
\check{v} & \check{v} \in [t-z,t-1] \cup \{\frac{t-z-1}{2}\}, \\
t-z-1 & \text{otherwise.}
\end{cases}
\]
Now observe that
\[
-\Vec{b} \cdot \Vec{v}= \ds \sum_{i=0}^{\floor{\frac{t-z-2}{2}}}
(2i+z+1-t)(n_i(\lambda)-n) 
% + \frac{i_0}{2}(1+(-1)^{t-z})
\]
\[
-\Vec{b} \cdot \Vec{v} = \sum_{i=0}^{t-1}i\left(n_{i}(\lambda)-n\right) 
% + \frac{i_0}{2}(1+(-1)^{t-z}) 
- 
\begin{cases}
\check{v} & \check{v} \in [t-z,t-1] \cup \{\frac{t-z-1}{2}\}, \\
t-z-1 & \text{otherwise.}
\end{cases}
\]
\[
\frac{t}{2} \sum_{i=0}^{t-1}\left(n_{i}(\lambda)-n \right)^2 - \frac{t}{2} 
=
t||\Vec{v}||^2- \begin{cases}
0 & \check{v} \in [t-z,t-1] \cup \{\frac{t-z-1}{2}\}, \\
t(n_{\check{v}}(\lambda)-n) & \text{otherwise}.
\end{cases}
\]
Hence \[
|\lambda|=t ||\Vec{v}||^2-\Vec{b} \cdot \Vec{v}
% -\frac{i_0}{2}(1+(-1)^{t-z})
+ 
\begin{cases}
\check{v} & \check{v} \in [t-z,t-1] \cup \{\frac{t-z-1}{2}\}, \\
t(n_{\check{v}}(\lambda)-n)+t-z-1 & \text{otherwise,}
\end{cases}
\]
completing the proof.
\end{proof}
\begin{cor}
There are infinitely many $t$-cores in $\mathcal{Q}^{(t)}_{z+2,0} \cup \mathcal{Q}^{(t)}_{z+2,z+1}$ for $t \geq z$.
\end{cor}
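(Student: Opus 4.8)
The plan is to read the corollary straight off the bijection $\psi$ constructed in \cref{thm:bijection}. That theorem provides a bijection
\[
\psi\colon \mathcal{Q}^{(t)}_{z+2,0} \cup \mathcal{Q}^{(t)}_{z+2,z+1} \;\xrightarrow{\;\sim\;}\; \mathbb{Z}^{\floor{(t-z)/2}} \times \Big\{0,\dots,\floor{\tfrac{t-z-1}{2}},\,t-z,\dots,t-1\Big\},
\]
whose target is the product of the \emph{finite} set $\{0,\dots,\floor{(t-z-1)/2},t-z,\dots,t-1\}$ with the lattice $\mathbb{Z}^{\floor{(t-z)/2}}$. First I would observe that this target is an infinite set exactly when the exponent $\floor{(t-z)/2}$ is at least $1$; this is the one place the hypothesis relating $t$ and $z$ enters. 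Since $\psi$ is a bijection, its domain $\mathcal{Q}^{(t)}_{z+2,0} \cup \mathcal{Q}^{(t)}_{z+2,z+1}$ is then infinite as well, which is precisely the assertion.

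To present the conclusion concretely rather than as a bare cardinality count, I would pull back a one-parameter family along $\psi^{-1}$: fix an admissible last coordinate $\check v$, and for $N \ge 1$ set $\lambda_N \coloneqq \psi^{-1}(N,0,\dots,0,\check v)$. The size formula of \cref{thm:bijection} shows that, up to an additive term depending only on $\check v$, the size $|\lambda_N|$ equals the quadratic $tN^2-(t-z-1)N$ in $N$; its leading coefficient $t$ is positive, so $|\lambda_N|\to\infty$, and in particular the $t$-cores $\lambda_N\in\mathcal{Q}^{(t)}_{z+2,0} \cup \mathcal{Q}^{(t)}_{z+2,z+1}$ are pairwise distinct for all large $N$. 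This already forces the union to be infinite.

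I do not expect a genuine obstacle at this stage: all of the combinatorial substance has been absorbed into \cref{thm:bijection}, which in turn rests on \cref{lem:converse: sym} — the translation of ``$\lambda\in\mathcal{Q}^{(t)}_{z+2,0} \cup \mathcal{Q}^{(t)}_{z+2,z+1}$'' into the residue-count conditions \eqref{n_{t-1}} on the beta-set of a $t$-core. Granting those results, the only point to verify here is the positivity of the exponent $\floor{(t-z)/2}$ under the hypothesis, after which the corollary follows in a line. Should one wish to bypass \cref{thm:bijection} entirely, the same infinite family could be built directly: for each large $N$, invoke \cref{lem:converse: sym} to obtain the unique $t$-core $\lambda$ in the union whose residue multiplicities realize \eqref{n_{t-1}} with $n_0(\lambda)-n = N$, then check that distinct $N$ yield distinct $\lambda$ (e.g. by comparing sizes as above); but routing through the bijection keeps the bookkeeping minimal.
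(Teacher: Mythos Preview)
Your approach is exactly the paper's: the corollary is stated immediately after \cref{thm:bijection} with no further argument, so the intended proof is precisely ``the codomain of $\psi$ is infinite, hence so is the domain.'' Your write-up and the optional explicit one-parameter family $\lambda_N=\psi^{-1}(N,0,\dots,0,\check v)$ are a faithful fleshing-out of that.

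One caveat worth flagging, since you explicitly defer it: the step ``verify the positivity of the exponent $\lfloor(t-z)/2\rfloor$ under the hypothesis'' does \emph{not} go through as stated. The hypothesis $t\geq z$ gives only $\lfloor(t-z)/2\rfloor\geq 0$; you need $t\geq z+2$ to force $\lfloor(t-z)/2\rfloor\geq 1$ and hence an infinite target. For $t=z$ or $t=z+1$ the target of $\psi$ in \cref{thm:bijection} is a finite set, and indeed in those boundary cases $\mathcal{Q}^{(t)}_{z+2,0}\cup\mathcal{Q}^{(t)}_{z+2,z+1}$ is finite (for instance $t=2$, $z=1$ yields exactly the two $2$-cores $\emptyset$ and $(1)$). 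This is a wrinkle in the paper's own hypothesis rather than in your argument; for the three values $z\in\{-1,0,1\}$ actually used in \cref{thm:odd,thm:symp,thm:even} with $t\geq 2$, the only affected case is $z=1$, $t=2$.
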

\bibliographystyle{alpha}
\bibliography{Bibliography}
\end{document}